\theoremstyle{plain}
\newtheorem{thm}{Theorem}[section]
\newtheorem{cor}{Corollary}[section]
\newtheorem{lem}{Lemma}[section]
\newtheorem{prop}{Proposition}[section]
\theoremstyle{remark}
\theoremstyle{definition}
\newtheorem{defn}{Definition}[section]
\newtheorem{exmp}{Example}[section]
\newtheorem{rem}{Remark}[section]
\title{Conic reductions for Hamiltonian actions of $U(2)$ and its maximal torus}
\author{Roberto Paoletti\footnote{\noindent{\bf Address:}
Dipartimento di Matematica e Applicazioni, Universit\`a degli Studi
di Milano-Bicocca, Via Roberto Cozzi 55, 20126 Milano,
Italy; {\bf e-mail}: roberto.paoletti@unimib.it }}
\date{}
\begin{document}
\maketitle

\begin{abstract}
Suppose given a Hamiltonian and holomorphic action of 
$G=U(2)$ on a compact K\"{a}hler manifold 
$M$, with nowhere vanishing moment map. Given an integral coadjoint orbit $\mathcal{O}$
for $G$, under transversality assumptions 
we shall consider two naturally associated \lq conic\rq\, reductions. One, which will be denoted 
$\overline{M}^G_{\mathcal{O}}$, is taken with respect to the action of $G$
and the cone over $\mathcal{O}$; another, which will be denoted 
$\overline{M}^T_{\boldsymbol{\nu}}$, is taken with respect to the action of the standard maximal 
torus $T\leqslant G$ and the ray $\mathbb{R}_+\,\imath\boldsymbol{\nu}$ 
along which the cone over 
$\mathcal{O}$
intersects the positive Weyl chamber. 
These two reductions share a common \lq divisor\rq,\, which may be viewed heuristically 
as bridging between their structures. This point of view 
motivates studying the (rather different) ways
in which the two reductions relate to the the latter divisor. In this paper we provide some results in this
directions. Furthermore, we give explicit transversality criteria for a large class of such actions
in the projective setting,
as well as a description of corresponding reductions as weighted projective
varieties, depending on combinatorial data associated to the action and the orbit. 
\end{abstract}

\section{Introduction}

Let $M$ be a $d$-dimensional compact and connected K\"{a}hler manifold,
with complex structure $J$, and K\"{a}hler form $\omega$.
For instance, $M$ might be complex projective space $\mathbb{P}^d$,
and $\omega$ the Fubini-Study form.

Let us assume, in addition, that $G=U(2)$ 
and $\phi:G\times M\rightarrow M$ is a holomorphic
and Hamiltonian action, with moment map
$\Phi:M\rightarrow \mathfrak{g}^\vee$, 
where $\mathfrak{g}=\mathfrak{u}(2)$ is the Lie algebra of $G$ 
(we refer to \cite{gs st} for generalities on Hamiltonian actions and moment maps). For example, $M$ might be $\mathbb{P}W$, where $W$ is a 
complex unitary representation space for $G$, with the naturally associated
$G$-action.
We shall equivariantly identify $\mathfrak{g}\cong \mathfrak{g}^\vee$ 
by the inner product $\langle\beta_1,\beta_2\rangle :=
\mathrm{trace}\big(\beta_1\,\overline{\beta}_2^t\big)$; hence one can
equivalently view $\Phi$ as being a $\mathfrak{g}$-valued equivariant map.

An important and ubiquitous geometric construction associated to 
Hamiltonian actions is the symplectic reduction with respect to an
invariant submanifold $\mathcal{R}\subset
\mathfrak{g}^\vee$, assuming that $\Phi$ is transverse to $\mathcal{R}$;  
the geometry of the action
may lead to different 
choices of $\mathcal{R}$ (\cite{gs-gq}, \cite{gs-hq}).

Here we shall assume that $\mathbf{0}\not\in \Phi (M)$. 
In this situation, a natural choice for $\mathcal{R}$, suggested 
by geometric quantization, is the cone 
$\mathcal{C}(\mathcal{O})
=\mathbb{R}_+\,\mathcal{O}\subset \mathfrak{g}^\vee$
over an integral coadjoint orbit $\mathcal{O}$ \cite{gs-hq}. 

\begin{exmp}
\label{exmp:PWLK}
To fix ideas on a specific case, consider the Hamiltonian $G$-space
$\mathbb{P}(W_{\mathbf{L},\mathbf{K}})$
associated to a unitary representation 
\begin{equation}
\label{eqn:WLK}
W_{\mathbf{L},\mathbf{K}}:=
\bigoplus_{a=1}^r
{\det}^{\otimes l_a}\otimes \mathrm{Sym}^{k_a}\left(\mathbb{C}^2\right),
\end{equation}
where $\mathbf{L}=(l_a)\in \mathbb{Z}^r$, $\mathbf{K}=(k_a)\in \mathbb{N}^r$.
Then $\mathbf{0}\not\in \Phi (M)$ if and only if either
$k_a+2\,l_a>0$ for all $a=1,\ldots,r$, or $k_a+2\,l_a<0$ for all 
$a=1,\ldots,r$ (see Proposition \ref{prop:equivalent non zero phi}).

More explicitly (to be precise, 
with an extra genericity assumption on
$W_{\mathbf{L},\mathbf{K}}$ - 
see Definition \ref{defn:restricted class}) 
the image of $\Phi$ is the convex hull of the subsets
$\imath\,L_{k_a}+\imath\,l_a\,I_2\subset \mathfrak{g}$,
where $L_{k_a}$ is the set of positive semidefinite Hermitian matrices of
trace $k_a$, for $a=1,\ldots,r$
(see (\ref{eqn:immaginePhi'}) and Proposition \ref{prop:Phi inviluppo convesso}).
Furthermore, if 
$\boldsymbol{\nu}=\begin{pmatrix}
\nu_1&\nu_2
\end{pmatrix}\in \mathbb{R}^2$ and $D_{\boldsymbol{\nu}}$ is the diagonal
matrix with entries $\nu_1,\,\nu_2$, then 
$\imath\,D_{\boldsymbol{\nu}}$ belongs to the image of $\Phi$ if and
only if $\boldsymbol{\nu}$ belongs to the convex hull of the all the vectors
$\begin{pmatrix}
k_a+l_a&l_a
\end{pmatrix}$ and $\begin{pmatrix}
l_a&k_a+l_a
\end{pmatrix}$, for $a=1,\ldots,r$
(Corollaries \ref{cor:PHIKL convex hull} and \ref{cor:intersetion=projection general}).
In addition, if $\nu_1\neq \nu_2$ then
$\Phi$ is transverse to the cone over the orbit
$\mathcal{O}_{\boldsymbol{\nu}}$ 
of $\imath\,D_{\boldsymbol{\nu}}$
if and only if $\boldsymbol{\nu}$ 
does not belong to the one of rays sprayed by the vectors
$\begin{pmatrix}
k_a-j+l_a &j+l_a
\end{pmatrix}$, for $a=1,\ldots,r$ and $j=0,\ldots,k_a$ (Theorem \ref{thm:transversality general}).

\end{exmp}

Assume that $\mathbf{0}\not\in \Phi(M)$, that $\mathcal{O}$ is an
integral orbit, and that 
$\Phi$ is transverse to 
$\mathcal{C}(\mathcal{O})$; then the (coisotropic, real) hypersurface
$M^G_{\mathcal{O}}:=\Phi^{-1}\big(\mathcal{C}(\mathcal{O})\big)\subset M$ is compact and 
connected (Theorem 1.2 of \cite{gp}). 
Let $\sim$ be the equivalence relation given by the null foliation.
The symplectic reduction of $M$ with
respect to $\mathcal{C}(\mathcal{O})$ is 
$\overline{M}^G_{\mathcal{O}}:=M^G_{\mathcal{O}}/\sim$,
together with its naturally induced reduced orbifold symplectic structure 
$\omega_{•\overline{M}^G_{\mathcal{O}}}$. 
We shall refer to $(\overline{M}^G_{\mathcal{O}},\,\omega_{•\overline{M}^G_{\mathcal{O}}})$ 
as \textit{the conic reduction of $M$ with respect to $G$ and $\mathcal{O}$}.

There are other reductions associated to the integral orbit $\mathcal{O}$
built into this picture. 
Let $T\leqslant G$ be the maximal torus of diagonal unitary
matrices, and $\psi:T\times M\rightarrow M$ the restricted action.
Then $\psi$ is also Hamiltonian; let $\Psi:M\rightarrow \mathfrak{t}
\cong \mathfrak{t}^\vee$ be its moment map. We shall identify
$\mathfrak{t}$ with $\imath\,\mathbb{R}^2$.

Assume that $\mathbf{0}\not\in \Psi (M)$ (this is in principle
a stronger hypothesis than  $\mathbf{0}\not\in 
\Phi(M)$), and that $\Psi$ is transverse to a ray $\mathbb{R}_+\cdot \imath 
\,\boldsymbol{\nu}$, 
where $\boldsymbol{\nu}=\begin{pmatrix}
\nu_1&\nu_2
\end{pmatrix}\in \mathbb{Z}^2\setminus \{\mathbf{0}\}$.
Let us set $\boldsymbol{\nu}_\perp:=
\begin{pmatrix}
-\nu_2&\nu_1
\end{pmatrix}\in \mathbb{Z}^2$. Let
$T^1_{\boldsymbol{\nu}_\perp}\leqslant T$ be 
the subgroup generated by 
$\imath\,\boldsymbol{\nu}_\perp$.
If non-empty, $M^T_{•\boldsymbol{\nu}}:=\Psi^{-1}(\mathbb{R}_+\cdot \imath 
\,\boldsymbol{\nu})$ is then a connected compact hypersurface in $M$, 
whose null foliation $\sim'$ is given by the orbits of 
$T^1_{\boldsymbol{\nu}_\perp}$.

The quotient 
$\overline{M}^T_{•\boldsymbol{\nu}}=M^T_{•\boldsymbol{\nu}}/\sim'$ is then  
also an orbifold, with a reduced K\"{a}hler structure 
$(\overline{M}^T_{•\boldsymbol{\nu}}, J_0,\Omega_0)$, which can be viewed
as the symplectic quotient (symplectic reduction at $0$) for the Hamiltonian
action of $T^1_{\boldsymbol{\nu}_\perp}$ on $M$.
We shall refer to $(\overline{M}^T_{•\boldsymbol{\nu}}, J_0,\Omega_0)$
as the \textit{conic reduction of $M$ with respect to $T$ and $\boldsymbol{\nu}$}.

The two hypersurfaces $M^G_{\mathcal{O}}$ and 
$M^T_{•\boldsymbol{\nu}}$ meet tangentially along the smooth connected locus
$M^G_{\boldsymbol{\nu}}:=\Phi^{-1}(\mathbb{R}_+\cdot \imath 
\,\boldsymbol{\nu})$ (Theorem 1.2 of \cite{gp} - in \textit{loc. cit.}
$M$ was assumed to be projective, but Theorem 1.2 holds true in the K\"{a}hler setting). 
Furthermore, 
the null foliations of $M^G_{\mathcal{O}}$ and 
$M^T_{•\boldsymbol{\nu}}$ are tangent to $M^G_{\boldsymbol{\nu}}$
since the latter is $T$-invariant, and they actually coincide along it. 
Therefore, the quotient 
$\overline{M}^G_{\boldsymbol{\nu}}:=M^G_{\boldsymbol{\nu}}/\sim$ is an orbifold.
$\overline{M}^G_{\boldsymbol{\nu}}$
has an intrinsic symplectic structure
$\omega_{•\overline{M}^G_{\boldsymbol{\nu}}}$, and in fact
$\big(\overline{M}^G_{\boldsymbol{\nu}},\omega_{•\overline{M}^G_{\boldsymbol{\nu}}}\big)$ 
can be interpreted as a symplectic quotient of a symplectic cross section for the
$G$-action, in the sense of \cite{gs cp1}. Furthermore, 
$\big(\overline{M}^G_{\boldsymbol{\nu}},\omega_{•\overline{M}^G_{\boldsymbol{\nu}}}\big)$
embeds symplectically in both
$(\overline{M}^T_{•\boldsymbol{\nu}}, \Omega_0)$ 
and $(\overline{M}^G_{\mathcal{O}},\,\omega_{•\overline{M}^G_{\mathcal{O}}})$.
Hence, $\overline{M}^G_{\boldsymbol{\nu}}$ can be viewed
as bridging between $\overline{M}^G_{\mathcal{O}}$
and $\overline{M}^T_{•\boldsymbol{\nu}}$.
This heuristic point of view motivates investigating 
$\overline{M}^G_{\mathcal{O}}$
and $\overline{M}^T_{•\boldsymbol{\nu}}$ in relation to 
$\overline{M}^G_{\boldsymbol{\nu}}$. 

Regarding $\overline{M}^G_{\mathcal{O}}$, we shall prove that
in a large class of cases the 
symplectic orbifold 
$(\overline{M}^G_{\mathcal{O}},\,\omega_{•\overline{M}^G_{\mathcal{O}}})$ 
factors as the product of
$(\overline{M}^G_{\boldsymbol{\nu}},\omega_{•\overline{M}^G_{\boldsymbol{\nu}}})$
and $\mathbb{P}^1$, endowed with a suitable rescaling of the Fubini-Study form 
(Theorem \ref{thm:Delta simplettomorfismo}). In the more general situation, 
$\overline{M}^G_{\mathcal{O}}$ is still, in some sense, topologically close to 
being a product (Theorem \ref{thm:MGO general case}).

Regarding $\overline{M}^T_{\boldsymbol{\nu}}$, we shall see that 
$\overline{M}^G_{\boldsymbol{\nu}}$ embeds in it as the zero locus of a transverse 
section of an orbifold line bundle $L$; this section is naturally associated to 
the moment map (Theorem \ref{thm:orbifold MTnu}). 
The curvature of $L$ is the form $\Omega_0'$ introduced in 
\cite{dh} to study the variation of the cohomology class of a symplectic
reduction, namely, the curvature to the orbifold $S^1$-bundle 
$M^T_{\boldsymbol{\nu}}\rightarrow \overline{M}^T_{\boldsymbol{\nu}}$
(striclty speaking, $\Omega_0'$ is not uniquely defined as a form, but in our context there will be a natural choice).
If $\Omega_0'$ is symplectic and there exists an
orbifold complex structure on $\overline{M}^T_{\boldsymbol{\nu}}$ 
compatible with $\Omega_0'$, we shall call the triple 
$\big(\overline{M}^T_{\boldsymbol{\nu}},J'_0,\Omega_0'\big)$ 
the \textit{$\boldsymbol{\nu}$-th DH-conic reduction of $M$}.

We shall see that this is the case for the spaces
$\mathbb{P}(W_{\mathbf{L},\mathbf{K}})$ in Example \ref{exmp:PWLK}.
More precisely, we shall classify the corresponding DH-reductions
and explicitly describe them as K\"{a}hler weighted projective varieties 
parametrized by certain combinatoric data depending on
$\boldsymbol{\nu},\,\mathbf{L},\,\mathbf{K}$.
In these cases $L$ is an ample orbifold line bundle on $M$
(Theorem \ref{thm:general MTnu}). Furthermore, for a class of representations
that we call \textit{uniform} (Definition \ref{defn:uniform rep})
the complex orbifold
$\big(\overline{M}^T_{\boldsymbol{\nu}},J_0'\big)$ remains constant
as $\boldsymbol{\nu}$ ranges within one of the fundamental wedges cut out
by the \lq critical rays\rq\, (see Example \ref{exmp:PWLK}).

Finally, we shall focus on the specific case of the irreducible representations 
$\mathrm{Sym}^k(\mathbb{C}^2)$. We shall see that if $\nu_1>(k-1)\,\nu_2>0$
then $\overline{M}^T_{\boldsymbol{\nu}}$ is the weighted projective space $\mathbb{P}(1,2,\ldots,k)$, and that if $\nu_1\gg \nu_2>0$
(the bounds might be made effective and depend on $k$) then
$\overline{M}^T_{\boldsymbol{\nu}}$ is smoothly and symplectically 
isotopic to $\mathbb{P}(2,\ldots,k)\subset \mathbb{P}(1,2,\ldots,k)$
(Theorem \ref{thm:isotopy}).

\section{Transversality criteria}
\label{sctn:transversality criteria}
In this section we shall provide some general transversality criteria
involving the moment map $\Phi:M\rightarrow \mathfrak{g}^\vee$ and 
a cone $\mathcal{C}(\mathcal{O})$ over a coadjoint orbit
in the case of Hamiltonian $G$-actions associated to unitary 
$G$-representations.
We shall equivariantly identify $\mathfrak{g}\cong \mathfrak{g}^\vee$
and $\mathfrak{t}\cong \mathfrak{t}^\vee$.

Let $\mathcal{C}:=(e_1,e_2)$ be the standard basis of $\mathbb{C}^2$.
For any $k=1,2,\ldots$, 
$W_k:=\mathrm{Sym}^k\left(\mathbb{C}^2\right)$ has an Hermitian structure
naturally induced from the standard one of $\mathbb{C}^2$.
An orthonormal basis of $W_k$ may be taken
$\mathcal{B}_k=\left(E_{k,j}\right)$, where 
\begin{equation}
\label{eqn:defn Ejk}
E_{k,j}:=c_{k,j}\,e_1^{k-j}\,e_2^j,
\quad c_{k,j}:=\sqrt{•\frac{(k+1)!}{\pi\,j!\,(k-j)!•}},\quad
j=0,1,\ldots,k.
\end{equation}
%
By means of $\mathcal{B}_k$, we shall unitarily identify $W_k\cong \mathbb{C}^{k+1}$, and a
point $w=\sum_{j=0}^k\,z_j\,E_{k,j}\in W_k$ with $Z=(z_j)_{j=0}^k\in \mathbb{C}^{k+1}$.

Consider the unitary representation $\mu=\mu_1$
of $G=U(2)$ on $W_1:=\mathbb{C}^2$ 
given by $B\mapsto (B^t)^{-1}$ with respect to  
$\mathcal{C}$. 
Then $\mu_1$ naturally induces for every $k$ a unitary representation of 
$G$ on $W_k$, which we may regard (given $\mathcal{B}_k$) as a 
a Lie group homomorphism 
$\mu_k:G\rightarrow U(k+1)$, with derivative
$\mathrm{d}\mu_k:\mathfrak{g}\rightarrow \mathfrak{u}(k+1)$.
Consequently, we have an induced holomorphic Hamiltonian action 
$\phi_k$
of $G$ on 
$\mathbb{P}^k=\mathbb{P}(W_k)$ with respect to $2\,\omega_{FS}$
(here $\omega_{FS}$ is the Fubini-Study form); let us compute its moment map
$\Phi_k:\mathbb{P}^k\rightarrow \mathfrak{g}$.

Let us set for simplicity 
$E_j=E_{k,j}$. We have for $\alpha\in \mathfrak{g}$
\begin{eqnarray}
\label{eqn:differential alpha}
\mathrm{d}\mu_k(\alpha)(E_j)&=&-\sqrt{j\,(k-j+1)}\,\alpha_{21}\,E_{j-1}\nonumber\\
&&-
\big[(k-j)\,\alpha_{11}+
j\,\alpha_{22}\big]\,E_{j}\nonumber\\
&&-\sqrt{(k-j)\,(j+1)}\,\alpha_{12}\,E_{j+1}.
\end{eqnarray}
Hence the only non-zero entries of $\mathrm{d}\mu_k(\alpha)$ are
\begin{eqnarray}
\label{eqn:non zero entries}
\mathrm{d}\mu_k(\alpha)_{j-1,j}&=&-\sqrt{j\,(k-j+1)}\,\alpha_{21},\nonumber\\
\mathrm{d}\mu_k(\alpha)_{j,j}&=&-
\big[(k-j)\,\alpha_{11}+
j\,\alpha_{22}\big],\nonumber\\
\mathrm{d}\mu_k(\alpha)_{j+1,j}&=&-\sqrt{(k-j)\,(j+1)}\,\alpha_{12}
\end{eqnarray}
for $j=0,\ldots,k$.
For $Z=(z_0,\ldots,z_k)^t\in \mathbb{C}^{k+1}$,
let us define the Hermitian matrix $(Z\odot \overline{Z})_{ij}:=
z_i\,\overline{z}_j$. As is well-known, 
the moment map for the action of $U(k+1)$ on 
$\left(\mathbb{P}^k,2\,\omega_{FS}\right)$, 
$\Gamma:\mathbb{P}^k\rightarrow \mathfrak{u}(k+1)$, 
is 
\begin{equation}
\label{eqn:moment map projective space}
\Gamma([Z]):=-\frac{\imath}{\|Z\|^2•}\,Z\odot \overline{Z}.
\end{equation}
Given (\ref{eqn:non zero entries}) and (\ref{eqn:moment map projective space}),
one obtains by standard arguments that the entries $\Phi_{ij}$ are given by
\begin{eqnarray}
\label{eqn:Phikji}
(\Phi_k)_{11}([Z])&=&\frac{\imath}{•\|Z\|^2}\,\sum_{j=0}^k\,(k-j)\,|z_j|^2,\\
(\Phi_k)_{12}([Z])&=&\frac{\imath}{•\|Z\|^2}\,
\sum_{j=0}^{k-1}\,\sqrt{•(k-j)\,(j+1)}z_{j+1}\,\overline{z}_j,\nonumber\\
(\Phi_k)_{21}([Z])&=&\frac{\imath}{•\|Z\|^2}\,\sum_{j=1}^k\,\sqrt{j\,(k-j+1)}\,
z_{j-1}\,\overline{z}_j,\nonumber\\
(\Phi_k)_{22}([Z])&=&\frac{\imath}{•\|Z\|^2}\,\sum_{j=0}^k\,j\,|z_j|^2.\nonumber
\end{eqnarray}

We can reformulate this in a more compact form, as follows. Let us define $F_{k,a}:
\mathbb{C}^{k+1}\rightarrow \mathbb{C}^k$ for $a=1,2$
by setting
\begin{eqnarray}
F_{k,1}(Z)&:=&
\begin{pmatrix}
\sqrt{k}	\,z_0\\
\sqrt{k-1}\,z_1\\
\vdots\\
z_{k-1}
\end{pmatrix}=\left(\sqrt{k-j+1}\,z_{j-1}\right)_{j=1}^k,\label{eqn:F1}\\
F_{k,2}(Z)&:=&
\begin{pmatrix}
z_1\\
\sqrt{2}\,z_2\\
\vdots\\
\sqrt{k} z_{k}
\end{pmatrix}=\left(\sqrt{j}\,z_{j}\right)_{j=1}^k.\label{eqn:F2}
\end{eqnarray}

Then 
\begin{equation}
\label{eqn:PhiFj}
\Phi_k([Z])=\frac{\imath}{\|Z\|^2•}\,
\begin{pmatrix}
\|F_{k,1}(Z)\|^2& F_{k,2}(Z)^t\,\overline{F_{k,1}(Z)}\\
F_{k,1}(Z)^t\,\overline{F_{k,2}(Z)}&\|F_{k,2}(Z)\|^2
\end{pmatrix}.
\end{equation}

\begin{defn}
\label{defn:L-1L_k}
Let $k\ge 1$. We shall denote by $L_k'$ the set
of all positive semidefinite Hermitian matrices of trace $k$ and rank $1$; thus
$L'_1$ is the set of orthogonal projectors onto a
1-dimensional vector subspace of $\mathbb{C}^2$, and $L'_k=k\,L'_1$.
Similarly, $L_k$ will denote the set
of all $2\times 2$ Hermitian positive semidefinite matrices of trace $k$.

\end{defn}
In particular, $L_k$ is the convex hull of $L_k'$, and $L_k=k\,L_1$.

\begin{prop}
\label{prop:Phi(M)}
$\Phi_1 \left(\mathbb{P}^1\right)=\imath\,L_1'$. If $k\ge 2$,
$\Phi_k \left(\mathbb{P}^k\right)=\imath\,L_k$.
\end{prop}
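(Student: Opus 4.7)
The plan is to extract the image directly from the compact form (\ref{eqn:PhiFj}). The Hermitian matrix
\[
H(Z) := \begin{pmatrix} \|F_{k,1}(Z)\|^2 & F_{k,2}(Z)^t \overline{F_{k,1}(Z)} \\ F_{k,1}(Z)^t \overline{F_{k,2}(Z)} & \|F_{k,2}(Z)\|^2 \end{pmatrix}
\]
is the Gram matrix of the pair $\bigl(F_{k,1}(Z), F_{k,2}(Z)\bigr)$ in $\mathbb{C}^k$, and hence is positive semidefinite of rank at most $2$. Using (\ref{eqn:F1})--(\ref{eqn:F2}) one finds $\mathrm{tr}\,H(Z) = \sum_{j=0}^{k-1}(k-j)|z_j|^2 + \sum_{j=1}^{k} j|z_j|^2 = k\|Z\|^2$, so $-\imath\,\Phi_k([Z]) = H(Z)/\|Z\|^2$ is a positive semidefinite Hermitian matrix of trace $k$ and rank at most $2$; that is, $\Phi_k(\mathbb{P}^k)\subseteq \imath\, L_k$.

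When $k = 1$, the vectors $F_{1,1}(Z) = z_0$ and $F_{1,2}(Z) = z_1$ are scalars, so $H(Z)$ has rank $1$ whenever $Z \neq 0$, and $H(Z)/\|Z\|^2$ is therefore the rank-$1$ orthogonal projector onto some line in $\mathbb{C}^2$. As $[Z]$ varies in $\mathbb{P}^1$ this line evidently sweeps out every $1$-dimensional subspace, so $\Phi_1(\mathbb{P}^1) = \imath L_1'$.

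For $k\ge 2$, the reverse inclusion $\imath\,L_k\subseteq \Phi_k(\mathbb{P}^k)$ follows by $G$-equivariance of $\Phi_k$ and $G$-invariance of $L_k$ once one realizes every diagonal matrix $\mathrm{diag}(\lambda_1,\lambda_2)$ with $\lambda_1,\lambda_2\ge 0$ and $\lambda_1+\lambda_2 = k$, since any element of $L_k$ is $G$-conjugate to such a diagonal form. Take $Z = (z_0,0,\ldots,0,z_k)$: as $k\ge 2$ the indices $0$ and $k$ are non-adjacent, so every term in the off-diagonal sums of (\ref{eqn:Phikji}) vanishes, while the diagonal entries collapse to $\Phi_k([Z]) = \imath\,\tfrac{k}{\|Z\|^2}\,\mathrm{diag}\bigl(|z_0|^2, |z_k|^2\bigr)$. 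Letting $|z_0|^2,|z_k|^2$ range over all non-negative reals (not both zero) realizes every admissible pair $(\lambda_1,\lambda_2)$.

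The only substantive issue is this non-adjacency of supports, which also accounts for the $k=1$ exception: for $k=1$ the indices $0,1$ are necessarily adjacent, so the off-diagonal terms cannot be made to vanish except by setting one of $z_0,z_1$ to zero, and $H(Z)$ is forced to have rank $1$; for $k\ge 2$ there is enough room in $\mathbb{C}^k$ to attain rank $2$, yielding the full $L_k$.
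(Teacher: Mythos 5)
Your proposal is correct and follows essentially the same route as the paper: the inclusion $\Phi_k(\mathbb{P}^k)\subseteq\imath\,L_k$ read off from (\ref{eqn:PhiFj}) (your Gram-matrix and trace computation just makes explicit what the paper calls evident), and the reverse inclusion for $k\ge 2$ by $G$-equivariance plus the test points supported on $z_0$ and $z_k$, exactly as in the paper's choice $z_0=\sqrt{\lambda/k}$, $z_k=\sqrt{(k-\lambda)/k}$. The $k=1$ case via the rank-one projector also matches the paper's use of (\ref{eqn:PhiF1}).
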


\begin{proof}
For $k=1$,
(\ref{eqn:PhiFj}) specializes to
\begin{equation}
\label{eqn:PhiF1}
\Phi_1([Z])=\frac{\imath}{\|Z\|^2•}\,
\begin{pmatrix}
|z_0|^2& z_1\,\overline{z_0}\\
z_0\,\overline{z_1}&|z_1|^2
\end{pmatrix},
\end{equation}
which implies the first statement.

Let us then assume $k\ge 2$.
It is evident from (\ref{eqn:Phikji}) and (\ref{eqn:PhiFj})
that $\Phi_k \left(\mathbb{P}^k\right)\subseteq \imath\,L_k$. Since
$\Phi_k \left(\mathbb{P}^k\right)$ is $G$-invariant in view of the 
$G$-equivariance of $\Phi_k$, to prove the reverse inclusion it suffices
to show that for any $\lambda\in [0,k]$ we have
$$
\imath\,\begin{pmatrix}
\lambda &0\\
0&k-\lambda
\end{pmatrix}\in \Phi_k \left(\mathbb{P}^k\right).
$$
To this end, we need only set $z_0=\sqrt{\lambda/k}$, $z_j=0$ for $j=1,\ldots,k-1$,
$z_k=\sqrt{(k-\lambda)/k}$.

\end{proof}

If $\boldsymbol{\nu}=\begin{pmatrix}
\nu_1&\nu_2
\end{pmatrix}^t\in \mathbb{R}^2$, we shall denote by
$D_{\boldsymbol{\nu}}$ the diagonal matrix with entries $\nu_1,\,\nu_2$ and
by $\mathcal{O}_{\boldsymbol{\nu}}\subset \mathfrak{g}$ the orbit of 
$\imath\,D_{\boldsymbol{\nu}}$.

Also, let us set
$$
J_k:=\left\{\begin{pmatrix}
\nu_1\\
\nu_2
\end{pmatrix}\,:\,\nu_1,\,\nu_2\ge 0,\,
\nu_1+\nu_2=k\right\},\quad J_{k+}:=\left\{\begin{pmatrix}
\nu_1\\
\nu_2
\end{pmatrix}\in J\,:\,\nu_1\ge \nu_2\right\}.
$$
In other words, $J_k$ is the segment joining the points 
$\begin{pmatrix}
k&0
\end{pmatrix}^t,\,\begin{pmatrix}
0&k
\end{pmatrix}^t\in \mathbb{R}^2$.

\begin{cor}
\label{cor:nonemptyintersection}
In the situation of Proposition \ref{prop:Phi(M)}, 
$\Phi_1 \left(\mathbb{P}^1\right)=
\mathcal{O}_{\boldsymbol{\epsilon}_1}$,
where $\boldsymbol{\epsilon}_1=
\begin{pmatrix}
1&0
\end{pmatrix}$, while 
for any $k\ge 2$
\begin{equation}
\label{eqn:Phi(M)orbits}
\Phi_k \left(\mathbb{P}^k\right)=\bigcup_{\boldsymbol{\nu}\in J_k}\mathcal{O}_{\boldsymbol{\nu}}
=\bigcup_{\boldsymbol{\nu}\in J_{k+}}\mathcal{O}_{\boldsymbol{\nu}}.
\end{equation}
In particular, if $\boldsymbol{\nu}\neq \mathbf{0}$ and 
$\nu_1\ge \nu_2$, then 
$\Phi_k \left(\mathbb{P}^k\right)\cap \mathcal{C}(\mathcal{O}_{\boldsymbol{\nu}})
\neq \emptyset$ if and only if $\nu_2\ge 0$.
\end{cor}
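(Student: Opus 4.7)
The plan is to deduce the corollary directly from Proposition \ref{prop:Phi(M)} together with the spectral theorem for $2 \times 2$ Hermitian matrices. The underlying observation is that coadjoint orbits of $G = U(2)$ on $\imath\,\mathfrak{u}(2)$ are parametrized by the unordered pair of eigenvalues, so decomposing the sets $L_1'$ and $L_k$ into such orbits reduces the problem to a trivial bookkeeping exercise.

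For the first claim I would simply note that by Proposition \ref{prop:Phi(M)}, $\Phi_1(\mathbb{P}^1) = \imath\,L_1'$ consists of the $\imath$-multiples of the rank-one orthogonal projections in $\mathbb{C}^2$. These matrices all have spectrum $\{1, 0\}$ and form a single $G$-conjugacy class, hence $\imath\,L_1' = \mathcal{O}_{\boldsymbol{\epsilon}_1}$. For $k \ge 2$, every $\imath H \in \imath\,L_k$ is, by the spectral theorem, $G$-conjugate to $\imath\,D_{\boldsymbol{\nu}}$ for a unique unordered pair $\boldsymbol{\nu}$ with $\nu_1, \nu_2 \ge 0$ and $\nu_1 + \nu_2 = k$, that is, $\boldsymbol{\nu} \in J_k$; the reverse inclusion is obvious since $\imath\,D_{\boldsymbol{\nu}} \in \imath\,L_k$ for $\boldsymbol{\nu} \in J_k$. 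This proves the first equality in \eqref{eqn:Phi(M)orbits}. The reduction to the union over $J_{k+}$ uses the Weyl group element of $U(2)$, namely the permutation matrix, which conjugates $D_{(\nu_1,\nu_2)}$ to $D_{(\nu_2,\nu_1)}$; thus every orbit in the union is represented by a point of $J_{k+}$.

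For the final assertion (taking $k \ge 2$, the case to which the \emph{in particular} refers), a point of $\mathcal{C}(\mathcal{O}_{\boldsymbol{\nu}})$ has the form $\imath\,r\,U\,D_{\boldsymbol{\nu}}\,U^{-1}$ with $r > 0$ and $U \in G$, and lies in $\imath\,L_k$ if and only if the Hermitian matrix $r\,D_{\boldsymbol{\nu}}$ is positive semidefinite with trace $k$. The trace condition forces $r = k/(\nu_1 + \nu_2)$ and hence requires $\nu_1 + \nu_2 > 0$, while positive semidefiniteness demands $\nu_1, \nu_2 \ge 0$. Under the hypotheses $\boldsymbol{\nu} \neq \mathbf{0}$ and $\nu_1 \ge \nu_2$, the condition $\nu_2 \ge 0$ automatically entails $\nu_1 > 0$ and yields an admissible $r$; conversely, $\nu_2 < 0$ rules out positive semidefiniteness of any positive multiple of $D_{\boldsymbol{\nu}}$, so the intersection is empty. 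There is no substantive obstacle: the whole argument is a direct application of the spectral theorem, with the only minor care needed being to keep track of the Weyl-group symmetry in passing from $J_k$ to $J_{k+}$.
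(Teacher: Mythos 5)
Your proof is correct and follows essentially the same route as the paper, which treats the corollary as an immediate consequence of Proposition \ref{prop:Phi(M)} via the spectral decomposition of matrices in $L_k$ into $U(2)$-conjugates of diagonal matrices, noting only (as you do) that the Weyl-group symmetry $\mathcal{O}_{(\nu_1,\nu_2)}=\mathcal{O}_{(\nu_2,\nu_1)}$ gives the reduction from $J_k$ to $J_{k+}$. Your explicit verification of the last assertion (trace and positive semidefiniteness of $r\,D_{\boldsymbol{\nu}}$, with the correct reading that it concerns $k\ge 2$) fills in details the paper leaves implicit, but introduces nothing different in substance.
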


The second equality in (\ref{eqn:Phi(M)orbits}) follows from the fact that
if $\boldsymbol{\nu}=\begin{pmatrix}
\nu_1&\nu_2
\end{pmatrix}^t$ and 
$\boldsymbol{\nu}'=\begin{pmatrix}
\nu_2&\nu_1
\end{pmatrix}^t$, then $\mathcal{O}_{\boldsymbol{\nu}}=
\mathcal{O}_{\boldsymbol{\nu}'}$.

Let us denote by $\psi_k$ the restricted action of $T$ on
$\mathbb{P}^k$, and by $\Psi_k:M\rightarrow \mathfrak{t}^\vee\cong \mathfrak{t}$ 
its moment map. Then $\Psi_k$
is the composition of $\Phi_k$ with the orthogonal projection
$\pi:\mathfrak{g}\rightarrow \mathfrak{t}$; the latter amounts to selecting the diagonal
component of a matrix in $\mathfrak{g}$.

\begin{cor}
\label{cor:imagePsi}
For any $k\ge 1$,
$\Psi_k \left(\mathbb{P}^k\right)=\imath\,J_k\subset \imath\,\mathbb{R}^2$.

\end{cor}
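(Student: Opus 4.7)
The plan is to deduce the corollary directly from Proposition \ref{prop:Phi(M)} together with the identity $\Psi_k = \pi \circ \Phi_k$, where $\pi : \mathfrak{g} \to \mathfrak{t}$ extracts the diagonal part of a matrix.

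For the inclusion $\Psi_k(\mathbb{P}^k) \subseteq \imath\,J_k$, I would argue as follows. By Proposition \ref{prop:Phi(M)}, every element of $\Phi_k(\mathbb{P}^k)$ has the form $\imath A$ with $A$ Hermitian, positive semidefinite, and of trace $k$ (the statement is that $\Phi_k(\mathbb{P}^k) = \imath L_k$ for $k\ge 2$ and $\Phi_k(\mathbb{P}^1)=\imath L_1'\subseteq \imath L_1$). Because the diagonal entries of any Hermitian positive semidefinite matrix are non-negative and sum to its trace, the diagonal $\pi(\imath A)$ lies in $\imath\,J_k$.

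For the reverse inclusion $\imath\,J_k \subseteq \Psi_k(\mathbb{P}^k)$, I would exhibit an explicit preimage: given $\boldsymbol{\nu}=(\nu_1,\nu_2)^t\in J_k$, set
\[
z_0 := \sqrt{\nu_1/k},\qquad z_k := \sqrt{\nu_2/k},\qquad z_j := 0 \text{ for } 0<j<k,
\]
so that $\|Z\|^2 = 1$. Plugging into (\ref{eqn:Phikji}), the diagonal entries of $\Phi_k([Z])$ evaluate to $\imath\,\nu_1$ and $\imath\,\nu_2$ respectively, since the only non-vanishing term in $\sum_j (k-j)|z_j|^2$ is $k\cdot |z_0|^2 = \nu_1$, and similarly for the $(2,2)$-entry. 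Hence $\Psi_k([Z]) = \imath\,D_{\boldsymbol{\nu}}$, which represents $\imath\,\boldsymbol{\nu}$ under the identification $\mathfrak{t}\cong\imath\,\mathbb{R}^2$. (For $k\ge 2$ the off-diagonal entries in (\ref{eqn:Phikji}) also vanish because each summand involves at least one of $z_1,\ldots,z_{k-1}$; for $k=1$ they do not vanish, but they are killed by $\pi$ anyway.)

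I do not expect a genuine obstacle here: the result is essentially contained in Proposition \ref{prop:Phi(M)}, and the preimage construction is the same one used in its proof. One could alternatively invoke the Schur--Horn theorem to identify $\pi(\imath L_k)$ with $\imath\,J_k$, but the direct computation above is shorter and keeps the argument self-contained.
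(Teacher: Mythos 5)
Your proof is correct and follows essentially the same route as the paper: the inclusion $\Psi_k(\mathbb{P}^k)\subseteq\imath\,J_k$ via the diagonal part of matrices in $\imath\,L_k$ (resp.\ $\imath\,L_1'$), and the reverse inclusion via the same explicit point $z_0=\sqrt{\nu_1/k}$, $z_k=\sqrt{\nu_2/k}$ used in the proof of Proposition \ref{prop:Phi(M)}. The only cosmetic difference is that you treat $k=1$ uniformly (noting the off-diagonal terms are killed by the projection $\pi$), whereas the paper disposes of $k=1$ separately via (\ref{eqn:PhiF1}).
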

\begin{proof}
[Proof of Corollary \ref{cor:imagePsi}]
For $k=1$, this is immediate from
(\ref{eqn:PhiF1}). Assume then $k\ge 2$.
Any matrix in $L_k$ has diagonal part in $J_k$, hence
$\Psi_k \left(\mathbb{P}^k\right)\subseteq\imath\,J_k\subset \imath\,\mathbb{R}^2$
by Proposition \ref{prop:Phi(M)}.
Conversely, for any $\boldsymbol{\lambda}:=\begin{pmatrix}
\lambda&k-\lambda
\end{pmatrix}^t\in J_k$ in the proof of Proposition \ref{prop:Phi(M)}
 we have found $[Z]\in \mathbb{P}^k$ such that $\Phi ([Z])=\imath\,D_{\boldsymbol{\lambda}}$.
 Hence $\Psi_k([Z])=\imath\,\boldsymbol{\lambda}$. 
\end{proof}

Let us notice the following consequence of 
Proposition \ref{prop:Phi(M)}, due to the fact the diagonal part of a matrix in $L_k$ is in $L_k$:

\begin{cor}
\label{cor:intersetion=projection k}
For any $k\ge 2$,
$\Psi_k \left(\mathbb{P}^k\right)=\Phi_k \left(\mathbb{P}^k\right)
\cap \mathfrak{t}$.
\end{cor}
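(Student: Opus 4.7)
The plan is to reduce the equality to a purely linear-algebraic statement about $L_k$, using the two preceding results. By Proposition \ref{prop:Phi(M)} and Corollary \ref{cor:imagePsi}, we know $\Phi_k(\mathbb{P}^k)=\imath L_k$ and $\Psi_k(\mathbb{P}^k)=\imath J_k$, while $\mathfrak{t}$ is identified with the space of purely imaginary diagonal matrices. So the claim is equivalent to the identity
\[
J_k \;=\; L_k\cap \{\text{diagonal matrices}\}.
\]
Once this is established, the corollary follows at once.

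For the inclusion $\Psi_k(\mathbb{P}^k)\subseteq \Phi_k(\mathbb{P}^k)\cap \mathfrak{t}$, the containment in $\mathfrak{t}$ is tautological since $\Psi_k$ takes values in $\mathfrak{t}$. For the containment in $\Phi_k(\mathbb{P}^k)$, I would use the hint given just before the statement: the diagonal part of a positive semidefinite Hermitian matrix is itself positive semidefinite and has the same trace, hence the diagonal of an element of $L_k$ again lies in $L_k$. Applying $\pi$ to $\Phi_k([Z])=\imath A$ with $A\in L_k$ therefore yields an element of $\imath L_k=\Phi_k(\mathbb{P}^k)$.

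For the reverse inclusion $\Phi_k(\mathbb{P}^k)\cap \mathfrak{t}\subseteq \Psi_k(\mathbb{P}^k)$, I would take $\imath D_{\boldsymbol{\lambda}}\in \Phi_k(\mathbb{P}^k)$ diagonal; positive semidefiniteness and trace condition force $\boldsymbol{\lambda}\in J_k$. By Corollary \ref{cor:imagePsi} (or directly by exhibiting the explicit preimage $z_0=\sqrt{\lambda/k}$, $z_k=\sqrt{(k-\lambda)/k}$, $z_j=0$ otherwise, as in the proof of Proposition \ref{prop:Phi(M)}), there exists $[Z]\in \mathbb{P}^k$ with $\Phi_k([Z])=\imath D_{\boldsymbol{\lambda}}$, and then $\Psi_k([Z])=\pi(\imath D_{\boldsymbol{\lambda}})=\imath D_{\boldsymbol{\lambda}}$.

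There is no real obstacle here: the argument is essentially a bookkeeping exercise chaining together Proposition \ref{prop:Phi(M)}, Corollary \ref{cor:imagePsi}, and the elementary fact that the diagonal of a PSD matrix is PSD. The only mild point worth flagging explicitly is that one must invoke $k\ge 2$ exactly where Proposition \ref{prop:Phi(M)} was used, since for $k=1$ one has $\Phi_1(\mathbb{P}^1)=\imath L_1'$ rather than $\imath L_1$, and the diagonal of a rank-one PSD matrix need not have rank one, so the clean intersection identity would fail.
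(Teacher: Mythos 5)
Your proof is correct and follows essentially the paper's route: the key step in both is the elementary fact (stated just before the corollary) that the diagonal part of an element of $L_k$ again lies in $L_k$, combined with Proposition \ref{prop:Phi(M)} and Corollary \ref{cor:imagePsi}; your treatment of the inclusion $\Phi_k(\mathbb{P}^k)\cap\mathfrak{t}\subseteq\Psi_k(\mathbb{P}^k)$ via explicit preimages is slightly more laborious than the paper's one-line observation (any diagonal $\alpha=\Phi_k([Z])$ satisfies $\Psi_k([Z])=\pi(\alpha)=\alpha$), but it is equally valid. Your remark on why $k\ge 2$ is needed (failure of the identity for $\imath L_1'$) is accurate and consistent with the paper.
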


\begin{proof}
[Proof of Corollary \ref{cor:intersetion=projection k}]
Obviously $\Psi_k \left(\mathbb{P}^k\right)\supseteq
\Phi_k \left(\mathbb{P}^k\right)
\cap \mathfrak{t}$. Conversely, suppose $\alpha\in \Psi_k \left(\mathbb{P}^k\right)$.
Viewing $\alpha$ as the diagonal component of a matrix $\alpha'\in \Phi_k \left(\mathbb{P}^k\right)$, we conclude that $-\imath\,\alpha$ 
has non-negative (diagonal) entries and trace $k$. Hence $\alpha\in \imath\,L_k=\Phi_k \left(\mathbb{P}^k\right)$.
\end{proof}

Having characterized the images of $\Phi_k$ and $\Psi_k$, let us determine
the orbital cones 
to which they are transverse.
By Corollary \ref{cor:nonemptyintersection}
we may assume $k\ge 2$.

\begin{thm}
\label{thm:trasnversality1}
Assume that $k\ge 2$, 
$\nu_1,\,\nu_2\ge 0$ and $\nu_1\neq \nu_2$. Then the following conditions are equivalent:
\begin{enumerate}
\item $\Phi_k$ is transverse to $\mathcal{C}(\mathcal{O}_{\boldsymbol{\nu}})$;
\item $j\,\nu_1\neq (k-j)\,\nu_2$ for all $j\in \{0,1,\ldots,k\}$.
\end{enumerate}

\end{thm}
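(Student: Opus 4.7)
The plan is to reduce transversality of $\Phi_k$ to $\mathcal{C}(\mathcal{O}_{\boldsymbol{\nu}})$ to a purely toric condition on $\Psi_k$, and then pin down the obstruction via the weights of $T$ on $W_k$. By the $G$-equivariance of $\Phi_k$ and the $G$-invariance of $\mathcal{C}(\mathcal{O}_{\boldsymbol{\nu}})$, transversality at $m$ is equivalent to transversality at $g\cdot m$. Since every adjoint orbit in $\mathfrak{g}$ meets $\mathfrak{t}$, we may assume $\Phi_k(m)=\imath\,t\,D_{\boldsymbol{\nu}}$ for some $t>0$ (the Weyl representative $D_{\boldsymbol{\nu}'}$ with $\boldsymbol{\nu}'=(\nu_2,\nu_1)$ being handled identically). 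At $p=\imath\,t\,D_{\boldsymbol{\nu}}$, the tangent space to the cone is $T_p\mathcal{C}(\mathcal{O}_{\boldsymbol{\nu}})=\mathbb{R}\,\imath\,\boldsymbol{\nu}+[\mathfrak{g},p]$, and a direct computation using $\nu_1\ne\nu_2$ shows that $[\mathfrak{g},p]$ is precisely the subspace of anti-Hermitian matrices with vanishing diagonal. Hence $\mathfrak{g}/T_p\mathcal{C}(\mathcal{O}_{\boldsymbol{\nu}})\cong\mathfrak{t}/\mathbb{R}\,\imath\,\boldsymbol{\nu}$, and since $\Psi_k=\pi\circ\Phi_k$, transversality of $\Phi_k$ at $m$ reduces to the condition
\begin{equation*}
d\Psi_k|_m(T_m\mathbb{P}^k)\not\subseteq\mathbb{R}\,\imath\,\boldsymbol{\nu}.
\end{equation*}

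For the abelian moment map $\Psi_k$, one has $d\Psi_k|_m(T_m\mathbb{P}^k)=\mathfrak{t}_m^{\perp}$ inside $\mathfrak{t}$, where $\mathfrak{t}_m:=\mathrm{Lie}(\mathrm{Stab}_T(m))$; failure of the displayed condition at $m$ therefore amounts to $\mathfrak{t}_m\supseteq\mathbb{R}\,\imath\,\boldsymbol{\nu}_\perp$. From (\ref{eqn:differential alpha}), $T$ acts on $E_{k,j}$ with weight $\chi_j\colon\imath\,\mathrm{diag}(a_1,a_2)\mapsto-\imath\,[(k-j)\,a_1+j\,a_2]$, and the difference $\chi_j(\imath\,\boldsymbol{\nu}_\perp)-\chi_{j'}(\imath\,\boldsymbol{\nu}_\perp)$ simplifies to $(j-j')(\nu_1+\nu_2)$, which is nonzero for $j\ne j'$ since $\boldsymbol{\nu}\ne\mathbf{0}$ and $\nu_1,\nu_2\ge 0$. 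Thus the weights are pairwise distinct on $\imath\,\boldsymbol{\nu}_\perp$, so $\imath\,\boldsymbol{\nu}_\perp\in\mathfrak{t}_{[Z]}$ forces $Z$ to be supported on a single basis vector. Consequently the only possible non-transverse points are the $T$-fixed points $[E_{k,j}]$, $j=0,\dots,k$. At such a point, (\ref{eqn:Phikji}) gives $\Phi_k([E_{k,j}])=\imath\,D_{(k-j,j)}$, which lies in $\mathcal{C}(\mathcal{O}_{\boldsymbol{\nu}})$ iff the multiset $\{k-j,j\}$ is a positive scalar multiple of $\{\nu_1,\nu_2\}$, i.e.\ iff $j\,\nu_1=(k-j)\,\nu_2$ or its Weyl-image $(k-j)\,\nu_1=j\,\nu_2$. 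These two equations exchange under $j\mapsto k-j$, hence cut out the same subset of $\{0,\dots,k\}$, and so (1) holds iff no $[E_{k,j}]$ sits in $M^G_{\mathcal{O}_{\boldsymbol{\nu}}}$, iff (2) holds.

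The main subtle step is the reduction carried out in the first paragraph: the hypothesis $\nu_1\ne\nu_2$ is precisely what makes the coadjoint-orbit direction $[\mathfrak{g},p]$ exhaust the off-diagonal anti-Hermitian subspace, thereby collapsing the original four-dimensional transversality problem into a one-dimensional condition on $\Psi_k$. Everything that follows is routine weight bookkeeping on the torus action.
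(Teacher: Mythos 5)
Your argument is correct, but it runs along a different track than the paper's. The paper never works directly with the differential-geometric transversality condition on $\mathbb{P}^k$: it lifts everything to the unit sphere $X_k=S^{2k+1}$ and invokes the criterion (from \cite{pao-tori}, \cite{gp}) that transversality of $\Phi_k$ to $\mathcal{C}(\mathcal{O}_{\boldsymbol{\nu}})$ is equivalent to local freeness of the lifted $G$-action along $(X_k)^G_{\boldsymbol{\nu}}$; the forward implication is then proved by showing that $T$ acts locally freely there (via the analogue of your weight bookkeeping, Lemma \ref{lem:nonzerocomponents}) and bootstrapping to $G$ through the identity $\mathrm{d}_W\tilde{\Phi}(\beta_{X_k}(W))=[\beta,\tilde{\Phi}(W)]$ and the computation (\ref{eqn:xieta Phi-derivative}), while the converse exhibits the coordinate vectors as points with one-dimensional stabilizers. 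You instead stay on $M$, compute $T_p\mathcal{C}(\mathcal{O}_{\boldsymbol{\nu}})=\mathbb{R}\,p\oplus[\mathfrak{g},p]$ with $[\mathfrak{g},p]=\mathfrak{a}$ for diagonal $p$ with distinct eigenvalues, and reduce transversality to the one-dimensional condition $\mathrm{d}\Psi_k|_m(T_m\mathbb{P}^k)\not\subseteq\mathbb{R}\,\imath\,\boldsymbol{\nu}$, which you settle with the standard fact $\mathrm{im}(\mathrm{d}\Psi_k|_m)=\mathrm{ann}(\mathfrak{t}_m)$ and the weight decomposition; your computation that the orbit directions exhaust $\mathfrak{a}$ is exactly the role played by (\ref{eqn:xieta Phi-derivative}) in the paper. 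What the paper's route buys is the reusable local-freeness criterion (Lemma \ref{lem:trasversality criterion} and its corollary, which are quoted repeatedly later for $\Phi_{k,l}$ and $\Phi_{\mathbf{L},\mathbf{K}}$); what yours buys is a shorter, self-contained argument that needs no circle bundle, no contact lift, and no external criterion, and that identifies the non-transverse points as precisely the $T$-fixed points $[E_{k,j}]$ lying over the cone. One phrasing to repair: the two equations $j\,\nu_1=(k-j)\,\nu_2$ and $(k-j)\,\nu_1=j\,\nu_2$ do not cut out the same subset of $\{0,\dots,k\}$; they cut out subsets exchanged by $j\mapsto k-j$, which is all you need, since these are simultaneously empty or non-empty, and condition (2) is exactly the emptiness of the first.
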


\begin{rem}
Since $\Phi_k (\mathbb{P}^k)=\imath\,L_k$, if $\boldsymbol{\nu}=\pm
\begin{pmatrix}
1&-1
\end{pmatrix}$ then $\Phi_k (\mathbb{P}^k)\cap \imath\,\mathbb{R}_+\cdot \boldsymbol{\nu}
=\emptyset$, hence we may assume $\nu_1+\nu_2\neq 0$.
Furthermore, 
$\Phi_k (\mathbb{P}^k)$ is $G$-invariant and if
$\boldsymbol{\nu}':=\begin{pmatrix}
\nu_2&\nu_1
\end{pmatrix}$ 
then the matrices 
the diagonal matrices $\imath\,D_{\boldsymbol{\nu}}$ and 
$\imath\,D_{\boldsymbol{\nu}'}$ belong to the same orbit. We may assume therefore 
$\nu_1\ge \nu_2$, hence - under the hypothesis of Theorem \ref{thm:trasnversality1} - that
$\nu_1>\nu_2$.
\end{rem}

\begin{proof}[Proof of Theorem \ref{thm:trasnversality1}]
Let $X_k=S^{2k+1}$ be viewed as the unit circle bundle of the
tautological line bundle on $\mathbb{P}^k=\mathbb{P}(W_k)$, with projection
$\pi_k:X_k\rightarrow \mathbb{P}^k$ (the Hopf map), and let us set
$$
(X_k)^G_{\boldsymbol{\nu}}:=
\pi_k^{-1}\left(\mathbb{P}(W_k)^G_{\boldsymbol{\nu}}\right),
\quad
(X_k)^G_{\mathcal{O}}:=\pi_k^{-1}\left(\mathbb{P}(W_k)^G_{\mathcal{O}}\right).
$$
Since $\phi_k$ is induced by the unitary representation $\mu_k$
on $W_k$, there is
by restriction of $\mu_k$ a natural lift of $\phi_k$ to an action on $X_k$, 
which we shall denote $\tilde{\phi}_k$. 
We shall also set
$\tilde{\Phi}_k:=\Phi_k\circ \pi_k:X_k\rightarrow \mathfrak{g}$,
$Z\mapsto \Phi_k([Z])$.

By the discussions in \S 2.2 of \cite{pao-tori} 
and \S 4.1.1 of \cite{gp}, $\Phi_k$ is transverse to 
$\mathcal{C}(\mathcal{O}_{\boldsymbol{\nu}})$ if and only if
$\tilde{\phi}_k$ is locally free on $ (X_k)^G_{\mathcal{O}}$;
furthermore, since
$ (X_k)^G_{\mathcal{O}}$ is the $G$-saturation of $(X_k)^G_{\boldsymbol{\nu}}$,
the latter condition is in turn equivalent to $\tilde{\phi}_k$ being locally free
along $(X_k)^G_{\boldsymbol{\nu}}$.

For any $\beta\in \mathfrak{g}$, let $\beta_{X_k}\in \mathfrak{X}(X_k)$ denote
the associated vector field on $X_k$. For any 
$Z\in X_k$, let $\mathfrak{g}_{X_k}(Z)\subseteq T_ZX_k$ denote the vector subspace
given by the evaluations of all the $\beta_{X_k}$'s at $Z$, 
and similarly for $\mathfrak{t}$. 
Then $\tilde{\phi}$ is
locally free at $Z$ 
if and only if the evaluation map 
$\mathrm{val}_Z:\mathfrak{g}\rightarrow T_ZX_k$, $\beta\mapsto \beta_{X_k}(Z)$,
has maximal rank, that is, $\mathfrak{g}\cong \mathfrak{g}_{X_k}(Z)$. 

Let us prove that 2.)
implies 1.). Let us remark that 2.) can be equivalently reformulated as follows:

\begin{equation}
\label{eqn:critical values}
\nu_1\cdot \nu_2\neq 0 \quad
\text{and}\quad\nu_1\neq \frac{k-j}{j•}\,\nu_2,\quad \text{for all}\quad j=1,\ldots,k-1.
\end{equation}

Let us consider $Z=(z_0,\ldots,z_k)^t\in (X_k)^G_{\boldsymbol{\nu}}$, so that
$$
\tilde{\Phi}_k(Z)=\imath\,
\begin{pmatrix}
\|F_{k,1}(Z)\|^2& F_{k,2}(Z)^t\,\overline{F_{k,1}(Z)}\\
F_{k,1}(Z)^t\,\overline{F_{k,2}(Z)}&\|F_{k,2}(Z)\|^2
\end{pmatrix}=\imath\,\frac{k}{\nu_1+\nu_2}\,
\begin{pmatrix}
\nu_1&0\\
0&\nu_2
\end{pmatrix}.
$$
In particular, 
\begin{equation}
\label{eqn:norms relation}
\nu_2\,\|F_{k,1}(Z)\|^2=\nu_1\,\|F_{k,2}(Z)\|^2.
\end{equation}

\begin{lem} 
\label{lem:nonzerocomponents}
Given (\ref{eqn:critical values}), 
for any $Z\in (X_k)^G_{\boldsymbol{\nu}}$ there exist $j,l\in \{0,1,\ldots,k\}$ with $j\neq l$ and
$z_j\cdot z_l\neq 0$.
\end{lem}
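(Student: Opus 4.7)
The plan is to argue by contradiction: suppose that $Z=(z_0,\ldots,z_k)^t\in (X_k)^G_{\boldsymbol{\nu}}$ has at most one nonzero component. Since $Z\in X_k=S^{2k+1}$, there must then be exactly one index $j\in\{0,1,\ldots,k\}$ with $|z_j|=1$, while all other components vanish.

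First I would substitute this $Z$ into the entry-wise formulas (\ref{eqn:Phikji}). Every off-diagonal entry of $\Phi_k$ involves a product $z_{j+1}\overline{z}_j$ or $z_{j-1}\overline{z}_j$ of two \emph{distinct} components, and hence vanishes under our assumption. The diagonal entries each collapse to a single nonzero term, and one obtains
\[
\tilde{\Phi}_k(Z)\;=\;\imath\,\begin{pmatrix} k-j & 0 \\ 0 & j \end{pmatrix}.
\]

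Next, I would compare this with the value of $\tilde{\Phi}_k(Z)$ imposed by $Z\in (X_k)^G_{\boldsymbol{\nu}}$, which as recalled just above the lemma equals $\imath\,\frac{k}{\nu_1+\nu_2}\,D_{\boldsymbol{\nu}}$. Matching diagonal entries yields
\[
(k-j)(\nu_1+\nu_2)\;=\;k\,\nu_1,\qquad j(\nu_1+\nu_2)\;=\;k\,\nu_2,
\]
both of which rearrange to the single relation $j\,\nu_1=(k-j)\,\nu_2$, i.e.\ exactly condition 2.) of Theorem \ref{thm:trasnversality1} with equality in place of inequality at the given index $j$. I would then split into cases: if $j=0$, this forces $\nu_2=0$; if $j=k$, it forces $\nu_1=0$; and if $1\le j\le k-1$, it becomes $\nu_1=\frac{k-j}{j}\,\nu_2$. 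Each of these contradicts the hypothesis (\ref{eqn:critical values}) equivalent to condition 2.).

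There is no serious obstacle here; the argument is really just a careful case check. The only minor point to watch is treating $j=0$ and $j=k$ separately when reformulating $j\,\nu_1=(k-j)\,\nu_2$ in the ratio form $\nu_1=\frac{k-j}{j}\,\nu_2$ used in (\ref{eqn:critical values}), so as to avoid division by zero.
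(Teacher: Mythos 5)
Your argument is correct, and it is essentially the paper's own proof: both reduce to the observation that a $Z$ with a single nonzero component $z_j$ forces the diagonal of $\tilde{\Phi}_k(Z)$ (equivalently, the relation $\nu_2\,\|F_{k,1}(Z)\|^2=\nu_1\,\|F_{k,2}(Z)\|^2$) to yield $j\,\nu_1=(k-j)\,\nu_2$, contradicting (\ref{eqn:critical values}). The only cosmetic difference is that the paper first rules out $j=0$ and $j=k$ via the nonvanishing of $F_{k,1}(Z)$ and $F_{k,2}(Z)$, whereas you fold these endpoint cases into the final case check; both are fine.
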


\begin{proof}
[Proof of Lemma \ref{lem:nonzerocomponents}]
If not, $Z$ has only one non-zero component, say $z_j\in S^1$.
Since by (\ref{eqn:critical values}) and (\ref{eqn:norms relation}) $F_1(Z),\,F_2(Z)\neq \mathbf{0}$,
we need to have $0<j<k$ in view of the definition of $F_j$. We conclude again
by (\ref{eqn:norms relation}) that
$\nu_2\,(k-j)=\nu_1\,j$ for some $j=1,\ldots,k-1$, against the assumption.
\end{proof}

Let $D\in T\leqslant G$ be a diagonal matrix
with entries $e^{\imath\,\vartheta_1},\,e^{\imath\,\vartheta_2}\in S^1$. 
By definition of $\phi$ and of the $E_a$'s in (\ref{eqn:defn Ejk}), 
we have with $Z=(z_a)_{a=0}^k$
$$
\tilde{\phi}_D(Z)=\left(
e^{-\imath\,[(k-a)\,\vartheta_1+a\,\vartheta_2]}\,z_a\right).
$$
Now suppose that $D$ is close to $I_2$, so that we may assume 
$\vartheta_1,\,\vartheta_2\sim 0$,
and that $D$ fixes $Z$. Then 
$e^{\imath\,[(k-a)\,\vartheta_1+a\,\vartheta_2]}\,z_a=z_a$ 
for every $a=0,\ldots,k$ implies in particular
$(k-j)\,\vartheta_1+j\,\vartheta_2=(k-l)\,\vartheta_1+l\,\vartheta_2=0$,
and so $\vartheta_1=\vartheta_2=0$. Hence, there is a neighborhood
$T'\subseteq T$ of $I_2$ such that the only $D\in T'$ that fixes $Z$ is 
$I_2$. In other words, $T$ acts locally freely on $(X_k)^G_{\boldsymbol{\nu}}$
at $Z$. 
In particular, $\mathrm{val}_Z:\mathfrak{t}\rightarrow T_ZX_k$
is injective.

By the equivariance of $\Phi$, for
any $W\in X_k$ and $\beta\in \mathfrak{g}$ we have
\begin{equation}
\label{eqn:derivata Phi}
\mathrm{d}_W\tilde{•\Phi }\big(\beta_{X_k}(W)\big)=
\left[\beta,\tilde{•\Phi }(W)\right].
\end{equation}
Hence if $\beta\in \mathfrak{t}\subset \mathfrak{g}$ 
and $Z\in (X_k)^G_{\boldsymbol{\nu}}$ then
$\mathrm{d}_Z\tilde{•\Phi }\big(\beta_{X_k}(Z)\big)=0$; that is,
\begin{equation}
\label{eqn:kerdPhiandt}
{t}_{X_k}(Z)\subseteq \ker\big(\mathrm{d}_Z\tilde{•\Phi }\big)
\qquad (Z\in (X_k)^G_{\boldsymbol{\nu}}).
\end{equation}

Now let us define
\begin{equation}
\label{eqn:defn of xieta}
\eta:=\begin{pmatrix}
0&1\\
-1&0
\end{pmatrix},
\quad
\xi:=\begin{pmatrix}
0&\imath\\
\imath&0
\end{pmatrix},\quad
\mathfrak{a}:=\mathrm{span}(\eta,\,\xi)\subset \mathfrak{g},
\end{equation}
so that $\mathfrak{g}=\mathfrak{a}\oplus \mathfrak{t}$. 
By (\ref{eqn:derivata Phi}) we have at $Z\in (X_k)^G_{\boldsymbol{\nu}}$:
\begin{equation}
\label{eqn:xieta Phi-derivative}
\mathrm{d}_Z\tilde{\Phi}\big(\xi_{X_k}(Z)\big)
=\frac{k\,(\nu_1-\nu_2)}{\nu_1+\nu_2}\,\eta,\quad 
\mathrm{d}_Z\tilde{\Phi}\big(\eta_{X_k}(Z)\big)
=-\frac{k\,(\nu_1-\nu_2)}{\nu_1+\nu_2}\,\xi .
\end{equation}
Let us set
$$
\rho:=\begin{pmatrix}
\imath&0\\
0&0
\end{pmatrix},
\quad \gamma
:=\begin{pmatrix}
0&0\\
0&\imath
\end{pmatrix}.
$$
Then $(\rho,\gamma)$ is a basis for $\mathfrak{t}$, 
and $(\eta,\xi,\rho,\gamma)$ is a basis for $\mathfrak{g}$.

Suppose that for some $x,y,z,t\in \mathbb{R}$ we have
$x\,\eta+y\,\xi+z\,\rho+t\,\gamma\in \ker (\mathrm{val}_Z)$:
\begin{equation}
\label{eqn:zero evaluation}
x\,\eta_{X_k}(Z)+y\,\xi_{X_k}(Z)+z\,\rho_{X_k}(Z)+t\,\gamma_{X_k}(Z)=0.
\end{equation}
Applying $\mathrm{d}_Z\Phi$, we get by 
(\ref{eqn:kerdPhiandt}) and (\ref{eqn:xieta Phi-derivative}):
\begin{eqnarray}
\label{eqn:xietarhogamma Phi-derivative}
0&=&x\,\mathrm{d}_Z\tilde{\Phi}\big(\eta_{X_k}(Z)\big)
+y\,\mathrm{d}_Z\tilde{\Phi}\big(\xi_{X_k}(Z)\big)\nonumber\\
&=&\frac{k\,(\nu_1-\nu_2)}{\nu_1+\nu_2}\,(-x\,\xi+y\,\eta).
\end{eqnarray}
Hence $x=y=0$, so that $z\,\rho_{X_k}(Z)+t\,\gamma_{X_k}(Z)=0$. But this
means that $z\,\rho+t\,\gamma\in 
\ker\left(\left.\mathrm{val}_Z\right|_{\mathfrak{t}}\right)$=(0);
thus we also have $z=t=0$.
We conclude that $\ker (\mathrm{val}_Z)=(0)$ 
for any $Z\in (X_k)^G_{\boldsymbol{\nu}}$, as claimed.

Now let us suppose instead 
that (\ref{eqn:critical values}) does not hold.
We aim to show
that then $\tilde{\phi}$ is not everywhere locally free along $(X_k)^G_{\boldsymbol{\nu}}$.  
If $\nu_2=0$, let $Z:=
\begin{pmatrix}
1&0&\cdots&0
\end{pmatrix}^t$. Then $\tilde{\Phi}=\imath\,D$, where $D$ is the diagonal matrix 
with diagonal entries $\begin{pmatrix}
k&0
\end{pmatrix}$; hence $Z\in (X_k)^G_{\boldsymbol{\nu}}$. On the other hand,
$Z$ is fixed by the 1-dimensional subgroup of $G$ 
of diagonal matries with diagonal entries $\begin{pmatrix}
1&e^{\imath \vartheta}
\end{pmatrix}$, hence $\tilde{\phi}$ is not free at $Z$. One argues similarly when
$\nu_1=0$, by choosing instead $Z:=
\begin{pmatrix}
0&\cdots&0&1
\end{pmatrix}^t$. 
If instead $\nu_1\cdot \nu_2\neq 0$, then
$\nu_1=[(k-j)/j]\,\nu_2$ for some $j=1,\ldots,k-1$. 
Let us consider $Z=(z_l)$ with $z_l=\delta_{lj}$, $l=0,\ldots,k$. Then
by (\ref{eqn:PhiFj}) $Z\in (X_k)^G_{\boldsymbol{\nu}}$.
On the other hand now $Z$ is fixed by the 1-dimensional subgroup of diagonal matrices 
with diagonal entries $\begin{pmatrix}
e^{-\imath\,j\,\vartheta}&e^{\imath (k-j)\,\vartheta}
\end{pmatrix}$, hence again $\tilde{\phi}$ is not free at $Z$.

\end{proof}

Let us note in passing that 
the argument in the proof of Theorem \ref{thm:trasnversality1} can be phrased 
in slightly more general terms and actally establishes the
following criterion.

\begin{lem}
\label{lem:trasversality criterion}
Suppose that $(M,J)$ is a complex projective manifold, with
$\omega$ a Hodge form on it, associated to a positive line bundle 
$(A,h)$.
Let $\phi:G\times M\rightarrow M$ be a holomorphic Hamiltonian 
action on $(M,2\,\omega)$, with moment map $\Phi:M\rightarrow \mathfrak{g}$.
Let $X\subset A^\vee$ be the unit circle bundle, with projection
$\pi:X\rightarrow M$, and assume
that there is a contact lift $\tilde{\phi}:G\times X\rightarrow X$
of the Hamiltonian action $(\phi,\Phi)$.
Suppose $\nu_1\neq \nu_2$, $x\in X$, $\Phi\circ \pi(x)\in \mathbb{R}_+\cdot \imath\,D_{\boldsymbol{\nu}}$,
and that $T$ acts locally freely at $x$. Then $G$ acts locally freely at $x$.
\end{lem}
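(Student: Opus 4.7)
The plan is to abstract the computational core of the proof of Theorem \ref{thm:trasnversality1}, retaining only those ingredients that depend on the Lie algebra of $U(2)$ and the equivariance of the moment map, rather than on the specific representation $\mu_k$ or the fact that $M=\mathbb{P}^k$. Fix $x\in X$ as in the statement and write $\tilde\Phi(x)=\Phi(\pi(x))=t\,\imath\,D_{\boldsymbol{\nu}}$ for some $t>0$. Decompose $\mathfrak{g}=\mathfrak{a}\oplus\mathfrak{t}$ with $\mathfrak{a}=\mathrm{span}_{\mathbb{R}}(\eta,\xi)$ as in (\ref{eqn:defn of xieta}). The goal is to show that the evaluation map $\mathrm{val}_x:\mathfrak{g}\to T_xX$, $\beta\mapsto\beta_X(x)$, is injective.

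First I would note that the equivariance identity $\mathrm{d}_x\tilde\Phi(\beta_X(x))=[\beta,\tilde\Phi(x)]$ derived in (\ref{eqn:derivata Phi}) holds in full generality for any equivariant lift of a moment map to a contact circle bundle, so it is available here verbatim. Because $\mathfrak{t}$ is abelian, this gives $\mathfrak{t}_X(x)\subseteq\ker(\mathrm{d}_x\tilde\Phi)$, exactly as in (\ref{eqn:kerdPhiandt}). Second, a direct matrix computation (implicit in (\ref{eqn:xieta Phi-derivative})) yields
\begin{equation*}
[\eta,\imath\,D_{\boldsymbol{\nu}}]=-(\nu_1-\nu_2)\,\xi,\qquad [\xi,\imath\,D_{\boldsymbol{\nu}}]=(\nu_1-\nu_2)\,\eta,
\end{equation*}
so combining the two we get $\mathrm{d}_x\tilde\Phi\bigl(a\,\eta_X(x)+b\,\xi_X(x)\bigr)=t\,(\nu_1-\nu_2)\,(-a\,\xi+b\,\eta)$ for any $a,b\in\mathbb{R}$.

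Now suppose $\beta=a\,\eta+b\,\xi+\tau\in\ker(\mathrm{val}_x)$ with $a,b\in\mathbb{R}$ and $\tau\in\mathfrak{t}$. Applying $\mathrm{d}_x\tilde\Phi$ to the relation $a\,\eta_X(x)+b\,\xi_X(x)+\tau_X(x)=0$ and using the two facts above, the $\mathfrak{t}$-summand drops out, leaving $t(\nu_1-\nu_2)(-a\,\xi+b\,\eta)=0$. Since $t>0$, $\nu_1\neq\nu_2$, and $\eta,\xi$ are linearly independent, this forces $a=b=0$. Hence $\tau_X(x)=0$, and the assumption that $T$ acts locally freely at $x$, which is precisely the injectivity of $\mathrm{val}_x|_{\mathfrak{t}}$, gives $\tau=0$. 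Thus $\mathrm{val}_x$ is injective and $G$ acts locally freely at $x$, as claimed.

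No genuine obstacle is expected; the only point to check carefully is that nothing in the argument depends on the projective ambient or on the explicit form of $\mu_k$, which is transparent once the proof of Theorem \ref{thm:trasnversality1} is distilled in this way. The role of the extra hypothesis that $T$ acts locally freely at $x$ is merely to take over the function that Lemma \ref{lem:nonzerocomponents} played in Theorem \ref{thm:trasnversality1}, where the $\mathfrak{t}$-injectivity of $\mathrm{val}_x$ was deduced from the explicit description of $(X_k)^G_{\boldsymbol{\nu}}$.
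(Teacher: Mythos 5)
Your proof is correct and is essentially the paper's own argument: the paper proves this lemma simply by observing that the portion of the proof of Theorem \ref{thm:trasnversality1} following (\ref{eqn:defn of xieta}) — the equivariance identity (\ref{eqn:derivata Phi}), the vanishing of $\mathrm{d}_x\tilde{\Phi}$ on $\mathfrak{t}_X(x)$, and the commutator computation (\ref{eqn:xieta Phi-derivative}) — uses nothing about $\mu_k$ or $\mathbb{P}^k$, which is exactly the distillation you carried out.
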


\begin{cor}
In the situation of Lemma \ref{lem:trasversality criterion}, assume in addition that
$T$ acts locally freely along the inverse image $X^G_{\boldsymbol{\nu}}$
of $M^G_{\boldsymbol{\nu}}$ in $X$. Then $\Phi$ is transverse to 
$\mathcal{C}(\mathcal{O}_{\boldsymbol{\nu}})$.
\end{cor}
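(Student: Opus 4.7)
The plan is to reduce the corollary to a pointwise application of Lemma \ref{lem:trasversality criterion}, via the standard dictionary between transversality of the moment map to an orbital cone and local freeness of the lifted action on the associated contact manifold.

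First, I would invoke the equivalence recalled in the proof of Theorem \ref{thm:trasnversality1}, following \S 2.2 of \cite{pao-tori} and \S 4.1.1 of \cite{gp}: $\Phi$ is transverse to $\mathcal{C}(\mathcal{O}_{\boldsymbol{\nu}})$ if and only if $\tilde{\phi}$ is locally free along $X^G_{\mathcal{O}}=\pi^{-1}\bigl(\Phi^{-1}(\mathcal{C}(\mathcal{O}_{\boldsymbol{\nu}}))\bigr)$. Since $\mathcal{C}(\mathcal{O}_{\boldsymbol{\nu}})$ is the $G$-saturation of $\mathbb{R}_+\cdot \imath\,D_{\boldsymbol{\nu}}$ and $\tilde{\phi}$ commutes with $\pi$, the set $X^G_{\mathcal{O}}$ is the $G$-saturation of $X^G_{\boldsymbol{\nu}}$. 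Because stabilizers at $\tilde{\phi}_g(x)$ and at $x$ are conjugate, local freeness is a $G$-invariant condition, and therefore it holds on $X^G_{\mathcal{O}}$ if and only if it holds along $X^G_{\boldsymbol{\nu}}$.

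Next, I would apply Lemma \ref{lem:trasversality criterion} at each point of $X^G_{\boldsymbol{\nu}}$. Given $x\in X^G_{\boldsymbol{\nu}}$, by definition of $M^G_{\boldsymbol{\nu}}$ we have $\Phi\circ \pi(x)\in \mathbb{R}_+\cdot \imath\,D_{\boldsymbol{\nu}}$. The standing hypothesis $\nu_1\neq \nu_2$ is inherited from the lemma, and the additional hypothesis of the corollary is precisely that $T$ acts locally freely at $x$. All assumptions of Lemma \ref{lem:trasversality criterion} are therefore in force, so $G$ acts locally freely at $x$. As $x\in X^G_{\boldsymbol{\nu}}$ was arbitrary, local freeness of $\tilde{\phi}$ holds along $X^G_{\boldsymbol{\nu}}$, hence along its $G$-saturation $X^G_{\mathcal{O}}$, and the first step yields the desired transversality.

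There is no genuine obstacle here: the corollary is really a packaging of the lemma. The one point that deserves a line of justification is the propagation of local freeness from $X^G_{\boldsymbol{\nu}}$ to its $G$-saturation $X^G_{\mathcal{O}}$, which is immediate from the equivariance of the contact lift. A marginal remark is that if $X^G_{\boldsymbol{\nu}}$ is empty then so is $X^G_{\mathcal{O}}$, and the transversality condition is vacuous.
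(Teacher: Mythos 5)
Your proof is correct and follows essentially the same route the paper intends: the transversality-to-local-freeness dictionary from \cite{pao-tori} and \cite{gp}, propagation of local freeness from $X^G_{\boldsymbol{\nu}}$ to its $G$-saturation $X^G_{\mathcal{O}}$ by equivariance, and a pointwise application of Lemma \ref{lem:trasversality criterion}. Nothing is missing.
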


Next we shall consider the transversality issue for $\Psi_k$.

\begin{thm}
\label{thm:psi transversality}
For any $k\ge 1$,
$\Psi_k$ is not transverse to a ray 
$\mathbb{R}_+\,\boldsymbol{\nu}\subset \imath\,\mathfrak{t}\cong \mathbb{R}^2$
if and only if $\boldsymbol{\nu}$ is a positive multiple of 
$\begin{pmatrix}
k-j&j
\end{pmatrix}^t$ for some
$j=0,1,\ldots,k$.
\end{thm}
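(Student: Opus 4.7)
The plan is to translate transversality of $\Psi_k$ to a ray $R = \mathbb{R}_+\cdot\imath\boldsymbol{\nu}\subset \imath\,\mathfrak{t}$ into a statement about the stabilizers of the $T$-action on $\mathbb{P}^k$. Transversality at $p\in \Psi_k^{-1}(R)$ amounts to $\operatorname{im}(d_p\Psi_k) + \mathbb{R}\,\imath\boldsymbol{\nu} = \mathfrak{t}$. I would exploit the standard identity $\operatorname{im}(d_p\Psi_k) = \mathfrak{t}_p^{\perp}$ for abelian moment maps, where $\mathfrak{t}_p\leqslant \mathfrak{t}$ is the Lie algebra of the stabilizer of $p$ in $T$ and the orthogonal complement is taken with respect to the invariant pairing; this is immediate from $d_p\Psi_k(v)\cdot \xi = 2\,\omega_{FS}\bigl(\xi_{\mathbb{P}^k}(p),v\bigr)$ and the non-degeneracy of $\omega_{FS}$. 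Transversality then becomes a purely linear-algebraic condition controlled by $\mathfrak{t}_p$ and the direction $\boldsymbol{\nu}$.

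Next I would compute $\mathfrak{t}_p$ explicitly. Using the formula
\[
\tilde{\phi}_D(Z)\;=\;\bigl(e^{-\imath[(k-a)\vartheta_1+a\vartheta_2]}\,z_a\bigr)_a
\]
derived in the proof of Theorem \ref{thm:trasnversality1}, a diagonal $D\in T$ with phases $\vartheta_1,\vartheta_2$ fixes $[Z]\in\mathbb{P}^k$ iff the numbers $(k-a)\vartheta_1+a\vartheta_2$ coincide mod $2\pi$ as $a$ ranges over the support $S:=\{a:z_a\neq 0\}$. Passing to the Lie algebra: if $|S|=1$, i.e.\ $[Z]=[E_{k,j}]$ for some $j$, no infinitesimal constraint is imposed and $\mathfrak{t}_p = \mathfrak{t}$; if $|S|\geq 2$, comparing any two coordinates forces $\vartheta_1 = \vartheta_2$ and so $\mathfrak{t}_p = \mathbb{R}\,\imath\,(1,1)$, the Lie algebra of the center of $G$ (which acts trivially on $\mathbb{P}^k$). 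Dually, $\operatorname{im}(d_p\Psi_k) = \mathbb{R}\,\imath\,(1,-1)$ at every $[Z]$ with $|S|\geq 2$, and $\operatorname{im}(d_p\Psi_k) = 0$ at each weight vector $[E_{k,j}]$.

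With these ingredients the argument closes. By Corollary \ref{cor:imagePsi}, $\Psi_k^{-1}(R)$ is nonempty only when $\boldsymbol{\nu}$ is a positive multiple of some $(\lambda,k-\lambda)$ with $\lambda\in[0,k]$; such a $\boldsymbol{\nu}$ has non-negative entries summing to a positive quantity, so it is never proportional to $(1,-1)$, and transversality is automatic at every $p\in\Psi_k^{-1}(R)$ with $|S|\geq 2$. At $p=[E_{k,j}]$, on the contrary, $\operatorname{im}(d_p\Psi_k)=0$ and the one-dimensional subspace $\mathbb{R}\,\imath\boldsymbol{\nu}$ alone cannot span $\mathfrak{t}$, so transversality fails. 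Substituting $z_l=\delta_{lj}$ into (\ref{eqn:Phikji}) gives $\Phi_k([E_{k,j}])=\imath\,D_{(k-j,j)}$, hence $\Psi_k([E_{k,j}])=\imath\,(k-j,j)$; thus $[E_{k,j}]\in\Psi_k^{-1}(R)$ precisely when $\boldsymbol{\nu}$ is a positive multiple of $(k-j,j)$, yielding the theorem.

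I do not expect any real obstacle. The only step requiring any care is the identity $\operatorname{im}(d_p\Psi_k)=\mathfrak{t}_p^{\perp}$ and its use to convert transversality into a stabilizer computation; everything else is elementary bookkeeping of the explicit weight-vector images and stabilizers already made available by the formulas following (\ref{eqn:Phikji}).
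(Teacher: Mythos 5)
Your proof is correct. It differs from the paper's in the mechanism used to detect non-transversality. The paper never computes $\mathrm{d}\Psi_k$ directly: it invokes the criterion (quoted from \cite{pao-tori} and \cite{gp}, and set up in the proof of Theorem \ref{thm:trasnversality1}) that $\Psi_k$ is transverse to the ray if and only if the lifted action $\tilde{\psi}_k$ on the unit circle bundle $X_k=S^{2k+1}$ is locally free along the relevant locus; it then observes that $\tilde{\psi}_k$ fails to be locally free exactly at the points $Z$ with $|z_l|=\delta_{lj}$, and reads off their moment images $\imath\begin{pmatrix}k-j&j\end{pmatrix}^t$. You instead stay on the base $\mathbb{P}^k$ and use the standard identity $\operatorname{im}(\mathrm{d}_p\Psi_k)=\mathrm{ann}(\mathfrak{t}_p)\cong\mathfrak{t}_p^{\perp}$, so that transversality becomes a rank condition governed by the stabilizer algebra: $\mathfrak{t}_p=\mathfrak{t}$ precisely at the weight points $[E_{k,j}]$, and $\mathfrak{t}_p=\mathbb{R}\,\imath\begin{pmatrix}1&1\end{pmatrix}^t$ (the central direction, acting trivially) otherwise, whence $\operatorname{im}(\mathrm{d}_p\Psi_k)$ is $0$ or $\mathbb{R}\,\imath\begin{pmatrix}1&-1\end{pmatrix}^t$ respectively; since any ray meeting $\imath J_k$ is never along $\begin{pmatrix}1&-1\end{pmatrix}^t$, failure of transversality occurs exactly when some $[E_{k,j}]$ lies in the preimage. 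The combinatorial core (the bad points are the coordinate directions, with images $\imath\begin{pmatrix}k-j&j\end{pmatrix}^t$) is the same in both arguments, but your route is more self-contained and elementary, needing only the moment-map annihilator identity rather than the contact lift and the external local-freeness criterion; the paper's route has the advantage of uniformity, since the same circle-bundle formalism carries the remaining transversality statements (Theorems \ref{thm:trasnversality1}, \ref{thm:transversalitykl}, \ref{thm:transversality2}, \ref{thm:transversality general}) where the full nonabelian group $G$ is involved and the simple annihilator identity for torus actions no longer suffices by itself.
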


In other words, the critical rays are those through the points in the intersection
$J\cap \mathbb{Z}^2$, up to the factor $\imath$.

\begin{proof}[Proof of Theorem \ref{thm:psi transversality}]
Let $\tilde{\psi}_k$ denote the action of $T$ on $X_k$.
As argued in the proof of Theorem \ref{thm:trasnversality1},
$\tilde{•\psi}_k$ is not locally free at $Z=(z_l)\in X$ if and only if $|z_l|=\delta_{lj}$
for some $j=0,\ldots,k$. Hence the rays in $\mathfrak{t}$ to which 
$\Psi$ is not transverse are those through the images 
under $\Psi$ if the vectors of the standard basis of $\mathbb{C}^{k+1}$. As we
have remarked, their images under $\tilde{•\Phi}_k$ form the set
$$
\left\{ \imath\,\begin{pmatrix}
k-j&0\\
0&j
\end{pmatrix}:j=0,\ldots,k\right\},
$$
and we need only take the diagonal part to reach the claimed conclusion.

\end{proof}

Let us now extend the previous considerations 
to a general irreducible representation of $G$. 
More precisely, 
we shall denote by $\mu_{k,l}$ the composition of the representation 
$\det^{\otimes l}\otimes \mathrm{Sym}^k\left(\mathbb{C}^2\right)$ with
the Lie group automorphism $B\mapsto (B^t)^{-1}$:
\begin{equation}
\label{eqn:composition mukl}
(\mu_{k,l})_B(v):=\det(B)^{- l}\,{\mu_k}_{(B^t)^{-1}}(v)\quad
(B\in G, \,v\in W_k\cong
\mathbb{C}^{k+1}).
\end{equation} 

The induced action $\phi_{k,l}$ on $\mathbb{P}^k$ equals $\phi_k$; however,
the change in linearization implies a change in the moment map.
For any $l\in \mathbb{Z}$,
$\mu_{0,l}$ is the representation on $\mathbb{C}$ given by the character 
$\det ^{-l}$. In this case, $\mathbb{P}^0=\{[1]\}$ is just a point, and we shall
take as definition of moment map the function 
$\Phi_{0,l}:[1]\mapsto \imath\,l\,I_2$.
For $k\ge 1$, let us view $\mu_{k,l}$ as a Lie group morphism
$G\rightarrow U(k+1)$.
Then, in place of (\ref{eqn:differential alpha}), we have for $\alpha\in \mathfrak{g}$
\begin{eqnarray}
\label{eqn:differential alphatilde}
\mathrm{d}\mu_{k,l}(\alpha)(E_j)&=&-\sqrt{j\,(k-j+1)}\,\alpha_{21}\,E_{j-1}\nonumber\\
&&-
\big[l\,\mathrm{trace}(\alpha)+(k-j)\,\alpha_{11}+
j\,\alpha_{22}\big]\,E_{j}\nonumber\\
&&-\sqrt{(k-j)\,(j+1)}\,\alpha_{12}\,E_{j+1}.
\end{eqnarray}
It follows that the new moment map, $\Phi_{k,l}:\mathbb{P}^k\rightarrow \mathfrak{g}$,
is given by
\begin{equation}
\label{eqn:Phi'}
\Phi_{k,l}([Z]):=\Phi_k([Z])+\imath\,l\,I_2,
\end{equation}
where $\Phi_k$ is as in (\ref{eqn:PhiFj}). Therefore, 
with the notation of Proposition \ref{prop:Phi(M)},
\begin{equation}
\label{eqn:immaginePhi'}
\Phi_{1,l}\left(\mathbb{P}^1\right)=\imath\,L_1'+\imath\,l\,I_2,\qquad 
\Phi_{k,l}\left(\mathbb{P}^k\right)=\imath\,L_k+\imath\,l\,I_2\quad\forall\,k\ge 2.
\end{equation}

Let us set 
$$\boldsymbol{\zeta}:=\begin{pmatrix}
1&1
\end{pmatrix} ^t,
\qquad 
J_{k,l}:=J_k+l\,\boldsymbol{\zeta}\subset \mathbb{R}^2.
$$ 
Thus $J_{k,l}$ is the segment joining 
$\begin{pmatrix}
k+l&l
\end{pmatrix}$ and $\begin{pmatrix}
l&k+l
\end{pmatrix}$. 
Also, let $\mathcal{C}_{k,l}\subset \mathbb{R}^2\setminus  \{0\}$
be the closed cone through $J_{k,l}$.

Then in place of Corollaries \ref{cor:nonemptyintersection}
and \ref{cor:imagePsi} we have:

\begin{cor}
\label{cor:nonemptyintersection'}
Under the previous assumptions,
$$
\Phi_{1,l}\left(\mathbb{P}^1\right)=\mathcal{O}_{\boldsymbol{\epsilon}_1+l\,\boldsymbol{\zeta}}
=\mathcal{O}_{\boldsymbol{\epsilon}_1}
+l\,I_2,
$$
and for $k\ge 2$
\begin{equation}
\label{eqn:Phi'(M)orbits}
\Phi_{k,l}\left(\mathbb{P}^k\right)=\bigcup_{\boldsymbol{\nu}\in J_k}\mathcal{O}_{\boldsymbol{\nu}+l\,\boldsymbol{\zeta}}
=\bigcup_{\boldsymbol{\nu}\in J_{k+}}\mathcal{O}_{\boldsymbol{\nu}+l\,\boldsymbol{\zeta}}
=\bigcup_{\boldsymbol{\nu}\in J_{k,l}}\mathcal{O}_{\boldsymbol{\nu}}.
\end{equation}
In particular, if $\boldsymbol{\nu}\neq \mathbf{0}$
then 
$\Phi_{k,l}\left(\mathbb{P}^k\right)\cap \mathcal{C}(\mathcal{O}_{\boldsymbol{\nu}})
\neq \emptyset$ if and only if $\boldsymbol{\nu}\in \mathcal{C}_{k,l}$.

\end{cor}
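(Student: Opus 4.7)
The plan is to bootstrap Corollary \ref{cor:nonemptyintersection} via the identity $\Phi_{k,l}=\Phi_k+\imath\,l\,I_2$ from (\ref{eqn:Phi'}), exploiting the fact that $I_2$ is central in $U(2)$. Since $g\,I_2\,g^{-1}=I_2$ for every $g\in G$, the key observation to record is that
\[
\mathcal{O}_{\boldsymbol{\nu}}+\imath\,l\,I_2=\mathcal{O}_{\boldsymbol{\nu}+l\boldsymbol{\zeta}}\qquad(\boldsymbol{\nu}\in\mathbb{R}^2),
\]
because $\imath\,D_{\boldsymbol{\nu}}+\imath\,l\,I_2=\imath\,D_{\boldsymbol{\nu}+l\boldsymbol{\zeta}}$ and conjugation commutes with translation by the central element. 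In other words, shifting the moment image by $\imath\,l\,I_2$ merely shifts the index of each coadjoint orbit by $l\boldsymbol{\zeta}$.

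With this at hand, both cases reduce to routine manipulations. For $k=1$, Corollary \ref{cor:nonemptyintersection} gives $\Phi_1(\mathbb{P}^1)=\mathcal{O}_{\boldsymbol{\epsilon}_1}$, and combining with (\ref{eqn:Phi'}) and the shift identity yields the first displayed equality. For $k\ge2$, Corollary \ref{cor:nonemptyintersection} provides $\Phi_k(\mathbb{P}^k)=\bigcup_{\boldsymbol{\nu}\in J_k}\mathcal{O}_{\boldsymbol{\nu}}=\bigcup_{\boldsymbol{\nu}\in J_{k+}}\mathcal{O}_{\boldsymbol{\nu}}$; applying (\ref{eqn:Phi'}) orbit by orbit and invoking the shift identity yields the first two equalities in (\ref{eqn:Phi'(M)orbits}). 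The third equality is the change of variable $\boldsymbol{\nu}'=\boldsymbol{\nu}+l\boldsymbol{\zeta}$, under which $J_k$ maps bijectively onto $J_{k,l}$ by the very definition $J_{k,l}=J_k+l\boldsymbol{\zeta}$.

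For the final assertion I would write $\mathcal{C}(\mathcal{O}_{\boldsymbol{\nu}})=\bigcup_{t>0}\mathcal{O}_{t\boldsymbol{\nu}}$, so non-empty intersection with $\Phi_{k,l}(\mathbb{P}^k)$ amounts to the existence of $t>0$ and $\boldsymbol{\mu}\in J_{k,l}$ with $\mathcal{O}_{t\boldsymbol{\nu}}=\mathcal{O}_{\boldsymbol{\mu}}$. Since coadjoint orbits are invariant under the Weyl swap $\boldsymbol{\mu}\mapsto(\mu_2,\mu_1)$ and $J_{k,l}$ itself is stable under this swap, this is equivalent to $t\boldsymbol{\nu}\in J_{k,l}$ for some $t>0$, i.e.\ $\boldsymbol{\nu}\in\mathcal{C}_{k,l}$. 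No genuine obstacle arises; the only point requiring mild care is the interplay of the Weyl symmetry with the definition of $\mathcal{C}_{k,l}$. Otherwise the corollary is pure bookkeeping on top of Corollary \ref{cor:nonemptyintersection} and the centrality of $I_2$.
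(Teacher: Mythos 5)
Your argument is correct and coincides with the paper's (implicit) one: the corollary is stated there as an immediate consequence of $\Phi_{k,l}=\Phi_k+\imath\,l\,I_2$ in (\ref{eqn:Phi'}) together with Corollary \ref{cor:nonemptyintersection}, i.e.\ precisely the centrality shift $\mathcal{O}_{\boldsymbol{\nu}}+\imath\,l\,I_2=\mathcal{O}_{\boldsymbol{\nu}+l\boldsymbol{\zeta}}$ and the reparametrization $J_{k,l}=J_k+l\boldsymbol{\zeta}$ that you spell out. Your treatment of the last assertion via $\mathcal{C}(\mathcal{O}_{\boldsymbol{\nu}})=\bigcup_{t>0}\mathcal{O}_{t\boldsymbol{\nu}}$ and the Weyl-swap stability of $J_{k,l}$ is the intended bookkeeping (understood, as in the paper, for $k\ge 2$).
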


\begin{cor}
\label{cor:imagePsi'}
If $\Psi_{k,l}:\mathbb{P}^k\rightarrow \mathfrak{t}\cong \imath\,\mathbb{R}^2$ is the moment
map for $\psi$ with respect to $\mu_{k,l}$, 
then
\begin{equation}
\label{eqn:immaginePsi'}
\Psi_{k,l}\left(\mathbb{P}^k\right)=\imath\,J_{k,l}.
\end{equation}
Hence $\Psi_{k,l}\left(\mathbb{P}^k\right)\cap \mathbb{R}_+\cdot\boldsymbol{\nu}\neq \emptyset$ 
if and only if $\boldsymbol{\nu}\in \mathcal{C}_{k,l}$.
\end{cor}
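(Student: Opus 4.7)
The strategy is to reduce the statement to Corollary \ref{cor:imagePsi} by means of the shift formula (\ref{eqn:Phi'}) for the moment map under a twist by the character $\det^{-l}$. Since the $T$-moment map $\Psi_{k,l}$ is obtained from $\Phi_{k,l}$ by composing with the orthogonal projection $\pi:\mathfrak{g}\rightarrow \mathfrak{t}$ (which simply extracts the diagonal part), and since $\imath\,l\,I_2\in \mathfrak{t}$ is itself diagonal, relation (\ref{eqn:Phi'}) immediately yields
\begin{equation*}
\Psi_{k,l}([Z])\;=\;\pi\circ \Phi_{k,l}([Z])\;=\;\Psi_k([Z])+\imath\,l\,I_2.
\end{equation*}

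Applying this pointwise and using the identification $\mathfrak{t}\cong \imath\,\mathbb{R}^2$ under which $\imath\,D_{\boldsymbol{\mu}}$ corresponds to $\imath\,\boldsymbol{\mu}$, the matrix $\imath\,l\,I_2$ corresponds to $\imath\,l\,\boldsymbol{\zeta}$ with $\boldsymbol{\zeta}=(1,1)^t$. Combining this with Corollary \ref{cor:imagePsi}, which gives $\Psi_k(\mathbb{P}^k)=\imath\,J_k$, we obtain
\begin{equation*}
\Psi_{k,l}\left(\mathbb{P}^k\right)\;=\;\imath\,J_k+\imath\,l\,\boldsymbol{\zeta}\;=\;\imath\,(J_k+l\,\boldsymbol{\zeta})\;=\;\imath\,J_{k,l},
\end{equation*}
which is (\ref{eqn:immaginePsi'}).

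For the second assertion, by definition $\mathcal{C}_{k,l}=\mathbb{R}_+\cdot J_{k,l}\subset \mathbb{R}^2\setminus\{0\}$ is the open cone generated by the segment $J_{k,l}$. Therefore the ray $\mathbb{R}_+\cdot \imath\,\boldsymbol{\nu}$ meets $\imath\,J_{k,l}=\Psi_{k,l}(\mathbb{P}^k)$ if and only if there exist $t>0$ and $\boldsymbol{\mu}\in J_{k,l}$ with $t\,\boldsymbol{\nu}=\boldsymbol{\mu}$, i.e.\ if and only if $\boldsymbol{\nu}\in \mathbb{R}_+\cdot J_{k,l}=\mathcal{C}_{k,l}$.

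No real obstacle is expected here: the statement is essentially a bookkeeping consequence of (\ref{eqn:Phi'}) together with Corollary \ref{cor:imagePsi}. The only minor point to keep track of is the identification of $\imath\,l\,I_2$ with $\imath\,l\,\boldsymbol{\zeta}$ under $\mathfrak{t}\cong \imath\,\mathbb{R}^2$, and the observation that projection onto $\mathfrak{t}$ fixes this element.
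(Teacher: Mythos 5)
Your argument is correct and is exactly the (implicit) route the paper takes: Corollary \ref{cor:imagePsi'} is presented as an immediate consequence of the shift formula (\ref{eqn:Phi'}), the fact that $\imath\,l\,I_2\in\mathfrak{t}$ is fixed by the projection onto the diagonal, and Corollary \ref{cor:imagePsi}, which is precisely your computation $\Psi_{k,l}=\Psi_k+\imath\,l\,I_2$ giving $\imath\,J_k+\imath\,l\,\boldsymbol{\zeta}=\imath\,J_{k,l}$. The paper only adds the side remark that the same statement could alternatively be deduced from the Convexity Theorem of Atiyah and Guillemin--Sternberg, but no different argument is actually used.
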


The latter Corollary can of course be derived also by the Convexity Theorem in
\cite{atiyah} and \cite{gs cp1}. Let us also remark the following analogue of Corollary \ref{cor:intersetion=projection k}:

\begin{cor}
\label{cor:intersetion=projection kl}
For any $k\ge 2$ and $l\in \mathbb{Z}$,
$\Psi_{k,l} \left(\mathbb{P}^k\right)=\Phi_{k,l} \left(\mathbb{P}^k\right)
\cap \mathfrak{t}$.
\end{cor}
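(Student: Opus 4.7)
The plan is to reduce the statement directly to the $l=0$ case, namely Corollary \ref{cor:intersetion=projection k}, by exploiting the additive shift formula (\ref{eqn:Phi'}). Since the projection $\pi:\mathfrak{g}\to\mathfrak{t}$ fixes the scalar matrix $\imath\,l\,I_2\in\mathfrak{t}$, applying $\pi$ to both sides of (\ref{eqn:Phi'}) yields the analogous identity at the torus level:
\[
\Psi_{k,l}=\pi\circ\Phi_{k,l}=\pi\circ\Phi_k+\imath\,l\,I_2=\Psi_k+\imath\,l\,I_2.
\]
Hence $\Psi_{k,l}(\mathbb{P}^k)=\Psi_k(\mathbb{P}^k)+\imath\,l\,I_2$ and $\Phi_{k,l}(\mathbb{P}^k)=\Phi_k(\mathbb{P}^k)+\imath\,l\,I_2$. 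Since the translation by $\imath\,l\,I_2$ preserves $\mathfrak{t}$, it commutes with intersection with $\mathfrak{t}$.

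The inclusion $\Psi_{k,l}(\mathbb{P}^k)\supseteq \Phi_{k,l}(\mathbb{P}^k)\cap\mathfrak{t}$ is immediate: any $\alpha\in\Phi_{k,l}(\mathbb{P}^k)\cap\mathfrak{t}$ is of the form $\Phi_{k,l}([Z])$ and is already diagonal, so $\Psi_{k,l}([Z])=\pi(\alpha)=\alpha$. For the reverse inclusion, take $\alpha\in\Psi_{k,l}(\mathbb{P}^k)$; then $\alpha-\imath\,l\,I_2\in\Psi_k(\mathbb{P}^k)$, which by Corollary \ref{cor:intersetion=projection k} equals $\Phi_k(\mathbb{P}^k)\cap\mathfrak{t}$. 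Adding $\imath\,l\,I_2$ back gives $\alpha\in\big(\Phi_k(\mathbb{P}^k)+\imath\,l\,I_2\big)\cap\mathfrak{t}=\Phi_{k,l}(\mathbb{P}^k)\cap\mathfrak{t}$, as required.

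There is no real obstacle here; the content of the corollary is already captured by the $l=0$ case, and the only thing to verify is that the linearization twist by $\det^{\otimes l}$ contributes a central (hence diagonal) term to the moment map, so that it is unaffected by the projection $\pi$ and by intersection with $\mathfrak{t}$. Alternatively, one could give a self-contained argument mirroring the proof of Corollary \ref{cor:intersetion=projection k}: for $\alpha\in\Psi_{k,l}(\mathbb{P}^k)$, $-\imath(\alpha-\imath\,l\,I_2)$ is the diagonal part of a positive semidefinite Hermitian matrix of trace $k$, hence lies in $L_k$, so $\alpha\in\imath L_k+\imath\,l\,I_2=\Phi_{k,l}(\mathbb{P}^k)$ by (\ref{eqn:immaginePhi'}); but the reduction via the shift is more economical.
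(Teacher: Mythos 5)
Your proof is correct and follows essentially the route the paper intends: the corollary is stated there as an immediate analogue of Corollary \ref{cor:intersetion=projection k}, the point being exactly that the twist by $\det^{\otimes l}$ only shifts both moment maps by the central, diagonal element $\imath\,l\,I_2\in\mathfrak{t}$ (see (\ref{eqn:Phi'}) and (\ref{eqn:immaginePhi'})), so the statement reduces to the $l=0$ case by translation. Your explicit verification that this translation commutes with intersection with $\mathfrak{t}$ is exactly the (omitted) content of the paper's remark.
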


Let us now consider the issue of transversality in this case. 
By Corollary \ref{cor:nonemptyintersection'},
we may assume $k\ge 1$.
Furthermore,
by Proposition \ref{prop:Phi(M)}
and (\ref{eqn:immaginePhi'}), $\Phi_{k,l}\left(\mathbb{P}^k\right)\subset V_{k+2l}$,
where $V_r\subset \mathfrak{g}$ is the affine subspace of skew-Hermitian matrices of trace
$\imath\,r$. If, in particular, $k+2l=0$ then $\Phi_{k,l}\left(\mathbb{P}^k\right)$ 
lies in a proper invariant 
vector subspace (the kernel of the trace), and is therefore not transverse to any cone 
$\mathcal{C}(\mathcal{O})$ in $\mathfrak{g}$
intersecting its image. In fact, if $\mathcal{C}(\mathcal{O})\cap V_0\neq \emptyset$
then by invariance $\mathcal{C}(\mathcal{O})\subset V_0$.
Thus we assume $k+2l\neq 0$.

Let us denote by $\tilde{\phi}_{k,l}$ and $\tilde{\psi}_{k,l}$, respectively, 
the actions of $G$ 
and $T$ on $X_k$ given by the restrictions of the unitary representation 
$\mu_{k,l}$. Let $(X'_k)^G_{\mathcal{O}}$, $(X'_k)^G_{\boldsymbol{\nu}}$ and 
$(X_k')^T_{\boldsymbol{\nu}}$ be defined as $(X_k)^G_{\mathcal{O}}$, 
$(X_k)^G_{\boldsymbol{\nu}}$ and 
$(X_k)^T_{\boldsymbol{\nu}}$, but in terms of the new moment maps $\Phi_{k,l}$ and $\Psi_{k,l}$.
Then, just as before, $\Phi_{k,l}$ is transverse to $\mathcal{C}(\mathcal{O}_{\boldsymbol{\nu}})$
if and only if $\tilde{\phi}_{k,l}$ is locally free at every $Z\in (X_k')^G_{\boldsymbol{\nu}}$,
and $\Psi_{k,l}$ is transverse to $\mathbb{R}_+\cdot \imath\,\boldsymbol{\nu}$ 
if and only if $\tilde{\psi}_{k,l}$
is locally free on $(X'_k)^T_{\boldsymbol{\nu}}$.

Suppose that $Z\in X_k$.
If for some $j=0,\ldots,k$ we have $z_i=0$ for all $i\neq j$, then arguing as in the
proof of Theorem \ref{thm:trasnversality1} one sees that $\tilde{\psi}_{k,l}$ 
is not locally free at $Z$, and therefore neither is $\tilde{\phi}_{k,l}$. 
In this case we 
have, with $\tilde{\Phi}_{k,l}:=\Phi_{k,l}\circ \pi$:
\begin{equation}
\label{eqn:tildePhikl}
\tilde{\Phi}_{k,l}(Z):=\imath\,
\begin{pmatrix}
k-j+l & 0\\
0& j+l
\end{pmatrix}.
\end{equation}

If, conversely, $Z\in X_k$ and $z_l\cdot z_j\neq 0$ for distinct $j,\,l\in \{0,\ldots,k\}$, then
a slight adaptation of the previous arguments shows that
$\tilde{\psi}_{k,l}$ is locally free at $Z$.
Hence we conclude the following variant of Theorem \ref{thm:psi transversality}:

\begin{thm}
\label{thm:transversalitykl}
Suppose that $k\ge 2$ and $k+2\,l\neq 0$.
Let us define
$$
\boldsymbol{\nu}_{k,j,l}:=\begin{pmatrix}
k-j+l & j+l
\end{pmatrix}^t,\qquad 
j=0,\ldots,k.
$$ 
Then $\Psi_{k,l}$ is not transverse 
to $\mathbb{R}_+\,\imath\,\boldsymbol{\nu}$ if and only if 
$\boldsymbol{\nu}\in \mathbb{R}_+\cdot \boldsymbol{\nu}_{k,j,l}$ for some $j=0,\ldots,k$.
\end{thm}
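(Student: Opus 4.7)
The plan is to combine two dichotomies that have been set up in the paragraphs immediately preceding the statement. First, by the general reduction-theoretic argument recalled there, transversality of $\Psi_{k,l}$ to the ray $\mathbb{R}_+\imath\,\boldsymbol{\nu}$ is equivalent to local freeness of $\tilde{\psi}_{k,l}$ along $(X_k')^T_{\boldsymbol{\nu}}$. Second, a direct inspection of the weights of $\mu_{k,l}$ restricted to $T$ (a slight adaptation of the computation carried out in the proof of Theorem \ref{thm:trasnversality1}, which the discussion before the statement explicitly invokes) shows that $\tilde{\psi}_{k,l}$ fails to be locally free at $Z=(z_0,\ldots,z_k)\in X_k$ if and only if $Z$ has a single non-zero coordinate. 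Granted these two facts, the claim reduces to reading off, via (\ref{eqn:tildePhikl}), which rays in $\imath\,\mathfrak{t}$ contain the $\Psi_{k,l}$-image of one such exceptional point.

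For the ``only if'' direction, assume transversality fails. Then there exists $Z\in (X_k')^T_{\boldsymbol{\nu}}$ at which $\tilde{\psi}_{k,l}$ is not locally free, so $z_i=0$ for $i\ne j$ and $|z_j|=1$ for some $j\in\{0,\ldots,k\}$. By (\ref{eqn:tildePhikl}), $\Psi_{k,l}(\pi_k(Z))=\imath\,\boldsymbol{\nu}_{k,j,l}$, which lies in $\mathbb{R}_+\imath\,\boldsymbol{\nu}$. Since $(k-j+l)+(j+l)=k+2l\ne 0$, the vector $\boldsymbol{\nu}_{k,j,l}$ is non-zero, and so $\boldsymbol{\nu}\in\mathbb{R}_+\cdot\boldsymbol{\nu}_{k,j,l}$.

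For the ``if'' direction, fix $j\in\{0,\ldots,k\}$ and suppose $\boldsymbol{\nu}\in \mathbb{R}_+\cdot \boldsymbol{\nu}_{k,j,l}$. Take $Z=(z_i)\in X_k$ with $z_i=\delta_{ij}$. Then (\ref{eqn:tildePhikl}) gives $\Psi_{k,l}(\pi_k(Z))=\imath\,\boldsymbol{\nu}_{k,j,l}\in\mathbb{R}_+\imath\,\boldsymbol{\nu}$, hence $Z\in (X_k')^T_{\boldsymbol{\nu}}$; but the second dichotomy tells us that $\tilde{\psi}_{k,l}$ is not locally free at this $Z$, so $\Psi_{k,l}$ is not transverse to the ray.

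The only point calling for some care is to check that the hypothesis $k+2l\ne 0$ precisely prevents the degeneration $\boldsymbol{\nu}_{k,j,l}=\mathbf{0}$, which would render the corresponding ray ill-defined; this is immediate from the identity $(k-j+l)+(j+l)=k+2l$. There is no substantive obstacle: the theorem follows by a direct concatenation of the structural observations on $\tilde{\psi}_{k,l}$ already in place with the explicit moment-map computation (\ref{eqn:tildePhikl}) at the exceptional standard-basis points.
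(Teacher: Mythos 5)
Your proposal is correct and follows essentially the same route as the paper: transversality is reduced to local freeness of $\tilde{\psi}_{k,l}$ along $(X_k')^T_{\boldsymbol{\nu}}$, the failure of local freeness is characterized by $Z$ having a single non-zero coordinate (by the weight computation adapted from Theorem \ref{thm:trasnversality1}), and the critical rays are then read off from (\ref{eqn:tildePhikl}), with $k+2l\neq 0$ guaranteeing $\boldsymbol{\nu}_{k,j,l}\neq\mathbf{0}$. No gaps.
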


The previous argument clearly also shows that $\Phi_{k,l}$ is not transverse to 
$\mathcal{C}(\mathcal{O}_{\boldsymbol{\nu}_{k,j,l}})\subset \mathfrak{g}$.
In fact, on the one hand if $Z$ is the $j$-th basis vectors, then $\tilde{\psi}_{k,l}$ is not locally
free at $Z$, and therefore \textit{a fortiori} neither is $\tilde{\phi}_{k,l}$.
On the other hand, by (\ref{eqn:tildePhikl}) we also have $Z\in (X'_k)^G_{\boldsymbol{\nu}}$.

Let us assume on the other hand that $\boldsymbol{\nu}\not\in \mathbb{R}_+\boldsymbol{\nu}_{j,l}$ for every
$j$ and that $\nu_1\neq \nu_2$.
If $Z\in (X'_k)^G_{\boldsymbol{\nu}}$,
then there exist $l,j\in \{0,\ldots,k\}$ such that $z_l\cdot z_j\neq 0$.
If $D\in T$ is a diagonal matrix with diagonal entries 
$\begin{pmatrix}
e^{\imath\,\vartheta_1} & e^{\imath\,\vartheta_2}
\end{pmatrix}$ that fixes $Z$, then we need to have
$e^{\imath\,[l(\vartheta_1+\vartheta_2)+(k-a)\,\vartheta_1+a\,\vartheta_2]}=1$ for 
$a=j,\,l$. If $D$ is close to $I_2$, and we assume that $\vartheta_j\sim 0$, we deduce as
in the proof of Theorem \ref{thm:trasnversality1}
that $\vartheta_1=\vartheta_2=0$. Hence $\tilde{\psi}_{k,l}$ 
is locally free on $(X'_k)^G_{\boldsymbol{\nu}}$.
To conclude that $\tilde{\phi}_{k,l}$ is also locally free 
along $(X'_k)^G_{\boldsymbol{\nu}}$, we may now argue
using (\ref{eqn:derivata Phi}) 
as in the proof of Theorem \ref{thm:trasnversality1} (the second summand in (\ref{eqn:Phi'})
does not alter commutators).
Hence we have the following variant of Theorem \ref{thm:trasnversality1}:

\begin{thm}\label{thm:transversality2}
Suppose $k\ge 2$, $k+2l\neq 0$ and $\nu_1\neq \nu_2$. Then $\Phi_{k,l}$ is transverse to $\mathcal{C}(\mathcal{O}_{\boldsymbol{\nu}})$
if and only if $\boldsymbol{\nu}\not\in \mathbb{R}_+\cdot \boldsymbol{\nu}_{k,j,l}$ for every 
$j=0,\ldots,k$. 
\end{thm}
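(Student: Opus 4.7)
The plan is to mimic the proof of Theorem \ref{thm:trasnversality1}, exploiting the fact (stated in (\ref{eqn:Phi'})) that $\Phi_{k,l}$ differs from $\Phi_k$ only by the central additive constant $\imath\,l\,I_2$. As recalled in the paragraph after Corollary \ref{cor:intersetion=projection kl}, transversality of $\Phi_{k,l}$ to $\mathcal{C}(\mathcal{O}_{\boldsymbol{\nu}})$ is equivalent to local freeness of the lifted action $\tilde{\phi}_{k,l}$ on $(X'_k)^G_{\boldsymbol{\nu}}$, so I translate everything into a statement about the lifted action.

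For the \emph{only if} direction I take the contrapositive: if $\boldsymbol{\nu}\in \mathbb{R}_+\cdot \boldsymbol{\nu}_{k,j,l}$ for some $j\in\{0,\dots,k\}$, I produce a point of $(X'_k)^G_{\boldsymbol{\nu}}$ at which $\tilde{\phi}_{k,l}$ is not locally free. The natural candidate is the $j$-th standard basis vector $Z=(\delta_{aj})_{a=0}^k$: by (\ref{eqn:tildePhikl}) one has $\tilde{\Phi}_{k,l}(Z)=\imath\,D_{\boldsymbol{\nu}_{k,j,l}}$, so $Z\in (X'_k)^G_{\boldsymbol{\nu}}$, and by (\ref{eqn:composition mukl}) it is fixed by the 1-dimensional subgroup of $T$ of diagonal matrices with entries $(e^{-\imath(j+l)\vartheta}, e^{\imath(k-j+l)\vartheta})$, which is non-trivial since $k+2l\neq 0$ forces the exponents not to vanish simultaneously.

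For the \emph{if} direction I fix $Z=(z_a)\in (X'_k)^G_{\boldsymbol{\nu}}$ under the hypothesis that $\boldsymbol{\nu}\notin \mathbb{R}_+\cdot\boldsymbol{\nu}_{k,j,l}$ for every $j$ and that $\nu_1\neq\nu_2$. First, by (\ref{eqn:tildePhikl}) the hypothesis rules out $Z$ having only one non-zero component; hence there exist distinct indices $j,l\in\{0,\dots,k\}$ with $z_j\cdot z_l\neq 0$. A direct computation with $\mu_{k,l}$, as carried out on the vectors $E_a$ in (\ref{eqn:differential alphatilde}), shows that if a diagonal matrix $D$ with entries $(e^{\imath\vartheta_1},e^{\imath\vartheta_2})$ close to $I_2$ fixes $Z$ then the two equations
\begin{equation*}
l(\vartheta_1+\vartheta_2)+(k-j)\vartheta_1+j\vartheta_2=0,\quad l(\vartheta_1+\vartheta_2)+(k-l)\vartheta_1+l\vartheta_2=0
\end{equation*}
(both taken $\bmod\, 2\pi$) must hold, and these force $\vartheta_1=\vartheta_2=0$. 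Therefore $\tilde{\psi}_{k,l}$, and \emph{a fortiori} its evaluation map $\left.\mathrm{val}_Z\right|_{\mathfrak{t}}$, is injective at $Z$.

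Finally, to promote local freeness of $T$ to local freeness of $G$, I use that the equivariance identity (\ref{eqn:derivata Phi}) is insensitive to the central term $\imath\,l\,I_2$, since $[\beta,\imath\,l\,I_2]=0$ for every $\beta\in\mathfrak{g}$. Thus, writing $\mathfrak{g}=\mathfrak{a}\oplus \mathfrak{t}$ as in (\ref{eqn:defn of xieta}) and reproducing (\ref{eqn:xieta Phi-derivative}) for $\tilde{\Phi}_{k,l}$, the coefficient that arises is again proportional to $\nu_1-\nu_2$ and hence non-zero; combined with (\ref{eqn:kerdPhiandt}) and the injectivity of $\left.\mathrm{val}_Z\right|_{\mathfrak{t}}$ established in the previous paragraph, this forces $\ker(\mathrm{val}_Z)=(0)$. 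The only non-routine point—and the part deserving care—is precisely checking that the proportionality constant in the analogue of (\ref{eqn:xieta Phi-derivative}) remains non-zero under the hypothesis $\nu_1\neq \nu_2$ (rather than $\nu_1-l\neq \nu_2-l$, which is of course the same condition), so no genuine new obstacle appears.
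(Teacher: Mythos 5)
Your proposal is correct and follows essentially the same route as the paper: reduce transversality to local freeness of the lifted action along $(X'_k)^G_{\boldsymbol{\nu}}$, exhibit the $j$-th basis vector (fixed by a one-parameter subgroup of $T$ and mapped by $\tilde{\Phi}_{k,l}$ to $\imath\,D_{\boldsymbol{\nu}_{k,j,l}}$) when $\boldsymbol{\nu}$ lies on a critical ray, and otherwise use two non-vanishing components to get local freeness of $T$, promoted to local freeness of $G$ via (\ref{eqn:derivata Phi}), the central summand $\imath\,l\,I_2$ not affecting commutators. The one detail worth stating explicitly is that $k+2l\neq 0$ is what forces $\vartheta_1=\vartheta_2=0$ in your two displayed equations and what guarantees the scalar multiple of $D_{\boldsymbol{\nu}}$ (hence the coefficient in the analogue of (\ref{eqn:xieta Phi-derivative})) is non-zero, rather than the vacuous remark about $\nu_1-l\neq\nu_2-l$ in your last paragraph.
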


Let us now come to a general representation space of the form
\begin{equation}
\label{eqn:general representation}
W_{\mathbf{L},\mathbf{K}}:=
\bigoplus_{a=1}^r
{\det}^{\otimes l_a}\otimes \mathrm{Sym}^{k_a}\left(\mathbb{C}^2\right),
\end{equation}
where $\mathbf{L}=(l_a)\in \mathbb{Z}^r$, $\mathbf{K}=(k_a)\in \mathbb{N}^r$,
as usual composed with 
the Lie group automorphism $B\mapsto (B^t)^{-1}$ 
(see (\ref{eqn:composition mukl})).
As an abstract vector space, 
$$
W_{\mathbf{L},\mathbf{K}}\cong \bigoplus_{a=1}^r\mathbb{C}^{k_a+1}\cong
\mathbb{C}^{|\mathbf{K}|+r}\quad\Rightarrow\quad
\mathbb{P}(W_{\mathbf{L},\mathbf{K}})\cong \mathbb{P}^{|\mathbf{K}|+r-1},
$$
where $|\mathbf{K}|=\sum_ak_a$. Hence the corresponding morphism
of Lie groups $\mu_{\mathbf{L},\mathbf{K}}:G\rightarrow U(|\mathbf{K}|+r)$
is given by
$$
\mu_{\mathbf{L},\mathbf{K}}(g):=
\begin{pmatrix}
\mu_{l_1,k_1}(g)&& \\
&\ddots& \\
&&\mu_{l_r,k_r}(g)
\end{pmatrix}.
$$
Let us denote by $\phi_{\mathbf{L},\mathbf{K}}$ and $\psi_{\mathbf{L},\mathbf{K}}$,
respectively, the induced Hamiltonian actions of $G$ and $T$ on 
$\mathbb{P}(W_{\mathbf{L},\mathbf{K}})$, and by 
$\Phi_{\mathbf{L},\mathbf{K}}:\mathbb{P}(W_{\mathbf{L},\mathbf{K}})\rightarrow
\mathfrak{g}$, $\Psi_{\mathbf{L},\mathbf{K}}:\mathbb{P}(W_{\mathbf{L},\mathbf{K}})\rightarrow \mathfrak{t}$ their moment maps. If, with abuse of notation, we denote the general $Z\in W_{\mathbf{L},\mathbf{K}}$ as $Z=(Z_a)$, with $Z_a=
\begin{pmatrix}
z_{a,0}&\cdots&z_{a,k_a}
\end{pmatrix}^t\in \mathbb{C}^{k_a+1}$, we have 
\begin{eqnarray}
\label{eqn:moment map general LK}
\lefteqn{\Phi_{\mathbf{L},\mathbf{K}}([Z])}\\
&=&\frac{\imath}{\|Z\|^2•}\,\sum_{a=1}^r
\begin{pmatrix}
\|F_{k_a,1}(Z_a)\|^2+l_a\,\|Z_a\|^2& F_{k_a,2}(Z_a)^t\,\overline{F_{k_a,1}(Z_a)}\\
F_{k_a,1}(Z_a)^t\,\overline{F_{k_a,2}(Z_a)}&\|F_{k_a,2}(Z_a)\|^2+l_a\,\|Z_a\|^2
\end{pmatrix}.\nonumber
\end{eqnarray}

Let us first consider the case where
$\mathbf{K}=\mathbf{1}:=
\begin{pmatrix}
1&\cdots&1
\end{pmatrix}$, $\mathbf{L}=\mathbf{l}:=\begin{pmatrix}
l&\cdots&l
\end{pmatrix}$.
Thus $W_{\mathbf{l},\mathbf{1}}=
\det^{\otimes l}\otimes W_1^{\oplus r}$ is isomorphic to $(\mathbb{C}^2)^r$
as a complex vector space. Then the moment map
$\Phi_{\mathbf{l},\mathbf{1}}:
\mathbb{P}((\mathbb{C}^2)^r)
\rightarrow\mathfrak{g}$ is as follows.
Let us write the general element of
$(\mathbb{C}^2)^r$ as
$Z=\begin{pmatrix}
Z_1&\cdots&Z_r
\end{pmatrix}$ where $Z_j\in \mathbb{C}^2$. Then
\begin{equation}
\label{moment map Phi01rge2}
\Phi_{\mathbf{l},\mathbf{1}}
\big([Z_1:\cdots:Z_r]\big)
=\imath\,\left[{\sum}_{j=1}^{r}\frac{\|Z_j\|^2}{•\|Z\|^2}\,P_{Z_j}+l\,I_2\right],
\end{equation}
where $P_{\mathbf{0}}$ is the null endomorphism of $\mathbb{C}^2$, 
while for $Z\neq \mathbf{0}$ we let
$P_Z$ denote the orthogonal projector of $\mathbb{C}^2$ on
$\mathrm{span}(Z)$.

Let us set $\boldsymbol{\nu}_{1,j,l}:=\begin{pmatrix}
1-j+l&j+l
\end{pmatrix}$, $j=0,1$.

\begin{prop}
\label{prop:Phi1 rge2}
For any $r\ge 2$, the following holds:
\begin{enumerate}
\item $\Phi_{\mathbf{l},\mathbf{1}}\big(\mathbb{P}(W_1^{\oplus r})\big)
=\imath\,L_1+\imath\,l\,I_2$;
\item $\Psi_{\mathbf{l},\mathbf{1}}$ is transverse to $\imath\,\mathbb{R}_+\cdot
\boldsymbol{\nu}$ if and only if $\boldsymbol{\nu}\not\in  \mathbb{R}_+\cdot\boldsymbol{\nu}_{1,j,l}$
for $j=1,2$;
\item $\Phi_{\mathbf{l},\mathbf{1}}$ is transverse to 
$\mathcal{C}(\mathcal{O}_{\boldsymbol{\nu}})$ 
if and only if $\boldsymbol{\nu}\not\in  \mathbb{R}_+\cdot\boldsymbol{\nu}_{1,j,l}$
for $j=1,2$.
\end{enumerate}

\end{prop}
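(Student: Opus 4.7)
My plan is to address the three claims in sequence, following the patterns used in the proofs of Proposition \ref{prop:Phi(M)} and Theorems \ref{thm:trasnversality1}, \ref{thm:psi transversality}, and \ref{thm:transversalitykl}. For (1), I read off from (\ref{moment map Phi01rge2}) that $-\imath\,\Phi_{\mathbf{l},\mathbf{1}}([Z])-l\,I_2$ is the convex combination $\sum_a(\|Z_a\|^2/\|Z\|^2)\,P_{Z_a}$ of rank-one orthogonal projectors $P_{Z_a}\in L_1'$, with weights summing to $1$ after dropping the zero terms. Since $L_1$ is the convex hull of $L_1'$, this gives the inclusion $\Phi_{\mathbf{l},\mathbf{1}}(\mathbb{P}(W_{\mathbf{l},\mathbf{1}}))\subseteq \imath\,L_1+\imath\,l\,I_2$. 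For the reverse inclusion, every $A\in L_1$ has a spectral decomposition $A=\lambda\,P_{v_1}+(1-\lambda)\,P_{v_2}$ with $\{v_1,v_2\}$ orthonormal in $\mathbb{C}^2$ and $\lambda\in[0,1]$; since $r\ge 2$, the value $\imath\,A+\imath\,l\,I_2$ is realized by taking $Z_1=\sqrt{\lambda}\,v_1$, $Z_2=\sqrt{1-\lambda}\,v_2$ and $Z_a=\mathbf{0}$ for $a\ge 3$.

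For (2) and (3), I lift the action to the unit circle bundle $X$ of the dual tautological bundle on $\mathbb{P}(W_{\mathbf{l},\mathbf{1}})$ and invoke, as in the proof of Theorem \ref{thm:trasnversality1}, the criterion that transversality of $\Phi_{\mathbf{l},\mathbf{1}}$ (resp.\ $\Psi_{\mathbf{l},\mathbf{1}}$) to $\mathcal{C}(\mathcal{O}_{\boldsymbol{\nu}})$ (resp.\ to $\imath\,\mathbb{R}_+\,\boldsymbol{\nu}$) is equivalent to local freeness of the lifted $G$-action (resp.\ $T$-action) along the corresponding preimage in $X$. The diagonal action of $D=\operatorname{diag}(e^{\imath\,\vartheta_1},e^{\imath\,\vartheta_2})\in T$ on the two coordinate slots of the $a$-th summand is governed by the characters $\chi_0=(l+1)\vartheta_1+l\vartheta_2$ and $\chi_1=l\vartheta_1+(l+1)\vartheta_2$, whose $2\times 2$ coefficient determinant $2l+1$ is nonzero. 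Arguing as in Theorem \ref{thm:trasnversality1}, the $T$-stabilizer of $Z=(Z_a)\in X$ is therefore discrete exactly when both character types are active, i.e.\ when there exist indices $a_0,a_1$ with $z_{a_0,0}\,z_{a_1,1}\neq 0$; when this fails, (\ref{moment map Phi01rge2}) places $\Phi_{\mathbf{l},\mathbf{1}}([Z])$, hence also $\Psi_{\mathbf{l},\mathbf{1}}([Z])$, at either $\imath\,\boldsymbol{\nu}_{1,0,l}$ or $\imath\,\boldsymbol{\nu}_{1,1,l}$. A short linear calculation, again using $2l+1\neq 0$, conversely shows that the entire preimage under $\Psi_{\mathbf{l},\mathbf{1}}$ of either critical ray consists of such one-sided configurations. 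This establishes (2) and the ``only if'' direction of (3), since non-freeness of $T$ at $Z$ entails non-freeness of $G$ at $Z$.

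For the ``if'' direction of (3), suppose $\boldsymbol{\nu}$ lies off both critical rays and that $Z$ is a point of $X$ with $\Phi_{\mathbf{l},\mathbf{1}}\circ\pi(Z)\in\mathbb{R}_+\imath\,D_{\boldsymbol{\nu}}$. If $\nu_1\neq\nu_2$, the preceding discussion yields local $T$-freeness at $Z$, and Lemma \ref{lem:trasversality criterion} upgrades this to local $G$-freeness. The main subtlety is the remaining case $\nu_1=\nu_2=\nu$, where $\Phi_{\mathbf{l},\mathbf{1}}(\pi(Z))$ is a scalar matrix, so $[\beta,\Phi_{\mathbf{l},\mathbf{1}}(\pi(Z))]=0$ and neither Lemma \ref{lem:trasversality criterion} nor the strategy of Theorem \ref{thm:trasnversality1} applies. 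I will handle this by a direct stabilizer computation: by (1), $\Phi_{\mathbf{l},\mathbf{1}}(\pi(Z))$ must equal $\imath\,\tfrac{1+2l}{2}\,I_2$, which forces $\sum_a(\|Z_a\|^2/\|Z\|^2)\,P_{Z_a}=\tfrac{1}{2}\,I_2$ and hence compels $\{Z_a\}$ to span $\mathbb{C}^2$. For such $Z$, the equation $\mu_{\mathbf{l},\mathbf{1}}(D)\,Z=Z$ forces $\det(D)^{-l}\,(D^t)^{-1}=I_2$, so $D=\alpha\,I_2$ with $\alpha^{1+2l}=1$; since $1+2l\neq 0$, this stabilizer is a finite cyclic group, giving the required local freeness at $Z$.
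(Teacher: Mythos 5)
Your proof is correct and follows essentially the same route as the paper: part (1) via the observation that the image consists of convex combinations of rank-one projectors together with the spectral decomposition for the converse, and parts (2)--(3) by lifting to the circle bundle, identifying the non-locally-free configurations (all $Z_a$ proportional to $\boldsymbol{\epsilon}_1$, or all proportional to $\boldsymbol{\epsilon}_2$) with the rays through $\boldsymbol{\nu}_{1,j,l}$, and upgrading $T$-local freeness to $G$-local freeness via Lemma \ref{lem:trasversality criterion} (equivalently, the argument of Theorem \ref{thm:trasnversality1}). The only real difference is that you work with general $l$ directly and handle the diagonal case $\nu_1=\nu_2$ by an explicit stabilizer computation using $2l+1\neq 0$, a case the paper's proof (which reduces to $l=0$ and invokes the $\nu_1\neq\nu_2$ criterion) leaves implicit.
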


\begin{proof}
[Proof of Proposition \ref{prop:Phi1 rge2}]
Let us assume $l=0$; the general case is similar.
By (\ref{moment map Phi01rge2}), the image of $-\imath\,\Phi_{\mathbf{0},\mathbf{1}}$
consists of all convex linear combinations of $r\ge 2$ orthogonal projectors,
and is therefore contained in $L_1$. Conversely, any matrix in $L_1$ is a 
convex linear combination of two such projectors, and so the reverse implication holds.

To prove the second statement, 
consider $[Z]=[Z_1:\cdots:Z_r]$, with $\|Z\|=1$, such that every $Z_j$
is a scalar multiple of $\boldsymbol{\epsilon}_1:=\begin{pmatrix}
1&0
\end{pmatrix}$.
Then $\Phi_{\mathbf{0},\mathbf{1}}([Z])=\imath\,D_{\boldsymbol{\epsilon}_1}$, and
on the other hand $T$ does not acts locally freely on $S^{4r-1}$ at $Z$.
Hence $\Psi_{\mathbf{0},\mathbf{1}}$ is not transverse to $\mathbb{R}_+\,\imath\,\boldsymbol{\epsilon}_1$,
and $\Phi_{\mathbf{0},\mathbf{1}}$ is not transverse 
to $\mathcal{C}(\mathcal{O}_{\boldsymbol{\epsilon}_1})$.
The argument for $\boldsymbol{\epsilon}_2$ is similar.
If on the other hand the $Z_j$'s are neither all multiples of $\boldsymbol{\epsilon}_1$,
nor all multiples of $\boldsymbol{\epsilon}_2$, then $T$ acts locally freely at $Z$ and arguing as
in the proof of Theorem \ref{thm:trasnversality1} (or applying Lemma 
\ref{lem:trasversality criterion}),
one concludes that the same holds of
$G$. This proves the second and third statement.
\end{proof}

Let us return to (\ref{eqn:general representation}). For the sake of simplicity,
we shall consider a slightly restricted class of representation.

\begin{defn}
\label{defn:restricted class}
A representation $W_{\mathbf{L},\mathbf{K}}$ is \textit{generic}
if it satisfies the following property.
Suppose that for some
$l\in \mathbb{Z}$ the pair $(l,1)$ appears in the sequence 
$(l_1,k_1),\ldots,(l_r,k_r)$. Then there are $1\le a<b\le r$ such 
that $(1,r)=(l_a,k_a)=(l_b,k_b)$. 
\end{defn}

In other words, if 
$\det^{\otimes l}\otimes \mathbb{C}^2$ appears in the isotypical decomposition 
of $W_{\mathbf{L},\mathbf{K}}$, then it does so with multiplicity $\ge 2$.
For example, $W_1$ and $W_1^{\oplus 2}\oplus (\det^{-2}\otimes W_1)\oplus W_2$
are not generic, while $W_1^{\otimes 2}\oplus W_2$ is.

If $Z_a=\mathbf{0}$ for some $a$, then the $a$-th summand in
(\ref{eqn:moment map general LK}) vanishes; therefore, we may restrict the
sum to those $a$'s for which $Z_a\neq \mathbf{0}$, and this restricted sum will be indicated by
a prime. Hence

\begin{eqnarray}
\label{eqn:Phi LK a}
\lefteqn{\Phi_{\mathbf{L},\mathbf{K}}([Z])}\\
&=&\imath\,{\sum}_{a=1}^{\prime \,r}\,\frac{\|Z_a\|^2}{\|Z\|^2•}\cdot 
\frac{1}{•\|Z_a\|^2}\begin{pmatrix}
\|F_{k_a,1}(Z_a)\|^2+l_a\,\|Z_a\|^2& F_{k_a,2}(Z_a)^t\,\overline{F_{k,1}(Z_a)}
\nonumber\\
F_{k_a,1}(Z_a)^t\,\overline{F_{k_a,2}(Z_a)}&\|F_{k_a,2}(Z_a)\|^2+l_a\,\|Z_a\|^2
\end{pmatrix}\\
&=&{\sum}_{a=1}^{\prime \,r}\,\frac{\|Z_a\|^2}{\|Z\|^2•}\,
\Phi_{k_a,l_a}([Z_a])\nonumber.
\end{eqnarray}

\begin{prop}
\label{prop:Phi inviluppo convesso}
Assume that $W_{\mathbf{L},\mathbf{K}}$ is \textit{generic}. Then
$\Phi_{\mathbf{L},\mathbf{K}}\big(\mathbb{P}(W_{\mathbf{L},\mathbf{K}})\big)
\subset \mathfrak{g}$
is the convex hull of the union of the images $\Phi_{k_a,l_a}\left(\mathbb{P}^{k_a}\right)$.
\end{prop}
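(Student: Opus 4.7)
\emph{Proof plan.}
The inclusion $\Phi_{\mathbf{L},\mathbf{K}}\big(\mathbb{P}(W_{\mathbf{L},\mathbf{K}})\big)\subseteq \mathrm{conv}\big(\bigcup_a \Phi_{k_a,l_a}(\mathbb{P}^{k_a})\big)$ is immediate from the expression (\ref{eqn:Phi LK a}): the coefficients $\|Z_a\|^2/\|Z\|^2$ in the prime sum are non-negative and, restricted to the indices with $Z_a\neq\mathbf{0}$, sum to $1$, while each $\Phi_{k_a,l_a}([Z_a])$ visibly lies in $\Phi_{k_a,l_a}(\mathbb{P}^{k_a})$. The content of the statement is therefore the reverse inclusion, and this is where the \emph{generic} hypothesis enters.

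To prepare the reverse inclusion, the plan is to record from Proposition \ref{prop:Phi(M)} and (\ref{eqn:immaginePhi'}) that $\Phi_{k_a,l_a}(\mathbb{P}^{k_a})=\imath\,L_{k_a}+\imath\,l_a\,I_2$ is itself convex whenever $k_a\ge 2$, while for $k_a=1$ this image equals the non-convex Bloch-type set $\imath\,L_1'+\imath\,l_a\,I_2$ whose convex hull is $\imath\,L_1+\imath\,l_a\,I_2$. Routine regrouping of equal-index summands in a convex combination then identifies
\begin{equation*}
\mathrm{conv}\Big(\bigcup_a \Phi_{k_a,l_a}(\mathbb{P}^{k_a})\Big)=\Big\{\textstyle\sum_a s_a\,\sigma_a\,:\,s_a\ge 0,\,\sum_a s_a=1,\,\sigma_a\in \imath\,L_{k_a}+\imath\,l_a\,I_2\Big\}.
\end{equation*}
Given any $\xi$ in the right-hand set with such a presentation, the task reduces to realizing each $\sigma_a$ as a moment-map value on a suitable factor and gluing the pieces inside $\mathbb{P}(W_{\mathbf{L},\mathbf{K}})$.

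For indices with $k_a\ge 2$ and $s_a>0$, Proposition \ref{prop:Phi(M)} directly supplies $[Z_a]\in \mathbb{P}^{k_a}$ with $\Phi_{k_a,l_a}([Z_a])=\sigma_a$, normalized so that $\|Z_a\|^2=s_a$. The real obstacle is the case $k_a=1$, because $\sigma_a$ may lie in $\imath\,L_1+\imath\,l_a\,I_2\setminus \Phi_{1,l_a}(\mathbb{P}^1)$ and no single $\mathbb{P}^1$ factor realizes such a point. This is exactly what the genericity hypothesis is designed for: I group the indices with $k_a=1$ and a fixed $l_a=l$ into a set $A_l$ of cardinality $\ge 2$, whose joint contribution to $\xi$ is $\sum_{a\in A_l}s_a\,\sigma_a=t_l\,\eta_l$ with $t_l:=\sum_{a\in A_l}s_a$ and $\eta_l\in \imath\,L_1+\imath\,l\,I_2$. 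By Proposition \ref{prop:Phi1 rge2} applied to the sub-representation $\bigoplus_{a\in A_l}\det^{\otimes l}\otimes\mathbb{C}^2$, there exists $\tilde{Z}^{(l)}=(Z_a)_{a\in A_l}$ with $\Phi_{\mathbf{l},\mathbf{1}}([\tilde{Z}^{(l)}])=\eta_l$, rescaled so $\|\tilde{Z}^{(l)}\|^2=t_l$.

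Finally, assembling $Z=(Z_a)$ from these prescribed components (and setting $Z_a=\mathbf{0}$ when $s_a=0$) gives $\|Z\|^2=1$, and regrouping the prime sum in (\ref{eqn:Phi LK a}) --- first by $A_l$, where formula (\ref{moment map Phi01rge2}) collapses the contribution to $t_l\,\Phi_{\mathbf{l},\mathbf{1}}([\tilde{Z}^{(l)}])=t_l\,\eta_l$, and then by the remaining $k_a\ge 2$ summands contributing $s_a\,\sigma_a$ each --- yields $\Phi_{\mathbf{L},\mathbf{K}}([Z])=\xi$. Thus the only genuine obstacle is the non-convexity of $\Phi_{1,l}(\mathbb{P}^1)$, and the definition of \emph{generic} is tailored precisely to neutralize it through Proposition \ref{prop:Phi1 rge2}.
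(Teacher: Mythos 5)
Your proof is correct and follows essentially the same route as the paper: both inclusions come from the scaling identity (\ref{eqn:Phi LK a}), assembling $Z=(Z_a)$ with $\|Z_a\|^2$ equal to the convex weights and reading off the moment map. The only difference is explicitness: the paper simply asserts that any point of the hull can be written with one moment-map value per summand (the step where genericity is silently used), whereas you spell this out by noting the non-convexity of $\Phi_{1,l}(\mathbb{P}^1)$, regrouping the $k_a=1$ factors with equal $l_a$, and invoking Proposition \ref{prop:Phi1 rge2} --- a welcome clarification, but not a genuinely different argument.
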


\begin{proof}
[Proof of Proposition \ref{prop:Phi inviluppo convesso}]
Let us denote by 
$H_{\mathbf{L},\mathbf{K}}\subset \mathfrak{g}$ the convex hull in
point. By (\ref{eqn:moment map general LK}), 
$\Phi_{\mathbf{L},\mathbf{K}}\big(\mathbb{P}(W_{\mathbf{L},\mathbf{K}})\big)
\subseteq H_{\mathbf{L},\mathbf{K}}$. Conversely, suppose $\alpha\in H_{\mathbf{L},\mathbf{K}}$. Then there exist 
$\lambda_a\ge 0$, $a=1,\ldots,r$, such that $\sum_a'\lambda_a=1$, and for each 
$a$ with $\lambda_a>0$ 
there exists $V_a\in \mathbb{C}^{k_a+1}$ of unit norm, such that
$$
\alpha={\sum}_{a=1}^{\prime \,r}\,\lambda_a\,\Phi_{k_a,l_a}([V_a]).
$$
Let us set $Z_a:=\sqrt{•\lambda_a}\,V_a$ if $\lambda_a>0$,
$Z_a=0\in \mathbb{C}^{k_a+1}$ if $\lambda_a=0$, and 
$Z:=\left(Z_a\right)\in \mathbb{C}^{|\mathbf{K}|+r}$. Then 
$\|Z\|=1$ and 
$\Phi_{\mathbf{L},\mathbf{K}}([Z])=\alpha$
by (\ref{eqn:Phi LK a}), hence 
$\alpha\in 
\Phi_{\mathbf{L},\mathbf{K}}\big(\mathbb{P}(W_{\mathbf{L},\mathbf{K}})\big)$.

\end{proof}

We can describe $\Psi_{\mathbf{L},\mathbf{K}}$ in a similar manner, and
deduce the following:
\begin{prop}
\label{prop:Psi inviluppo convesso}
$\Psi_{\mathbf{L},\mathbf{K}}\big(\mathbb{P}(W_{\mathbf{L},\mathbf{K}})\big)
\subset \mathfrak{t}$
is the convex hull of the union of the images $\Psi_{k_a,l_a}\left(\mathbb{P}^{k_a}\right)$.
\end{prop}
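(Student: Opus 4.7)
The plan is to mimic the proof of Proposition \ref{prop:Phi inviluppo convesso} almost verbatim, exploiting the fact that $\Psi_{\mathbf{L},\mathbf{K}}$ is obtained from $\Phi_{\mathbf{L},\mathbf{K}}$ by composing with the orthogonal projection $\pi:\mathfrak{g}\to\mathfrak{t}$ (extraction of the diagonal part), which is a $\mathbb{R}$-linear map. In particular, $\pi$ commutes with convex combinations, and $\Psi_{k_a,l_a}=\pi\circ\Phi_{k_a,l_a}$ for every $a$.

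First I would apply $\pi$ to the decomposition in (\ref{eqn:Phi LK a}) to obtain
\begin{equation*}
\Psi_{\mathbf{L},\mathbf{K}}([Z])\;=\;{\sum}_{a=1}^{\prime\,r}\,\frac{\|Z_a\|^2}{\|Z\|^2}\,\Psi_{k_a,l_a}([Z_a]),
\end{equation*}
where the primed sum runs over those $a$ with $Z_a\neq\mathbf{0}$. Since the coefficients $\|Z_a\|^2/\|Z\|^2$ are non-negative and sum to $1$ (restricting to the non-zero terms), this exhibits $\Psi_{\mathbf{L},\mathbf{K}}([Z])$ as a convex combination of elements lying in the union $\bigcup_a\Psi_{k_a,l_a}(\mathbb{P}^{k_a})$. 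This gives the inclusion $\Psi_{\mathbf{L},\mathbf{K}}\big(\mathbb{P}(W_{\mathbf{L},\mathbf{K}})\big)\subseteq H'_{\mathbf{L},\mathbf{K}}$, where $H'_{\mathbf{L},\mathbf{K}}\subset\mathfrak{t}$ denotes the convex hull in question.

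For the reverse inclusion I would copy the construction used for $\Phi_{\mathbf{L},\mathbf{K}}$. Given $\beta\in H'_{\mathbf{L},\mathbf{K}}$, pick non-negative scalars $\lambda_a\ge 0$ with $\sum_a\lambda_a=1$ and, for each $a$ with $\lambda_a>0$, a unit vector $V_a\in\mathbb{C}^{k_a+1}$ such that
\begin{equation*}
\beta\;=\;{\sum}_{a=1}^{\prime\,r}\,\lambda_a\,\Psi_{k_a,l_a}([V_a]).
\end{equation*}
Set $Z_a:=\sqrt{\lambda_a}\,V_a$ when $\lambda_a>0$ and $Z_a:=0$ otherwise, and let $Z:=(Z_a)\in W_{\mathbf{L},\mathbf{K}}$. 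Then $\|Z\|=1$, and applying $\pi$ to the analogue of (\ref{eqn:Phi LK a}) yields $\Psi_{\mathbf{L},\mathbf{K}}([Z])=\beta$, so $\beta\in\Psi_{\mathbf{L},\mathbf{K}}\big(\mathbb{P}(W_{\mathbf{L},\mathbf{K}})\big)$.

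There is essentially no obstacle: the whole argument is bookkeeping, since projection onto the diagonal commutes with the convex averaging that already underlies Proposition \ref{prop:Phi inviluppo convesso}. The only minor point of care is to note that the genericity hypothesis on $W_{\mathbf{L},\mathbf{K}}$ is not needed for this statement (it was only used for $\Phi_{\mathbf{L},\mathbf{K}}$ insofar as each $\Phi_{k_a,l_a}(\mathbb{P}^{k_a})$ must itself be taken as given by Corollary \ref{cor:nonemptyintersection'}), and to remark that by Corollary \ref{cor:imagePsi'} the images $\Psi_{k_a,l_a}(\mathbb{P}^{k_a})=\imath\,J_{k_a,l_a}$ are explicit segments, so the resulting convex hull is easy to describe combinatorially if one wishes.
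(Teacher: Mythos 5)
Your proof is correct and follows essentially the same route as the paper, which simply notes that $\Psi_{\mathbf{L},\mathbf{K}}$ admits the analogous decomposition obtained by projecting (\ref{eqn:Phi LK a}) onto $\mathfrak{t}$ and then repeats the two-inclusion argument of Proposition \ref{prop:Phi inviluppo convesso}. Your side remark is also apt: since each $\Psi_{k_a,l_a}\left(\mathbb{P}^{k_a}\right)=\imath\,J_{k_a,l_a}$ is already convex (unlike $\Phi_{1,l}\left(\mathbb{P}^1\right)$), the one-point-per-summand representation of an element of the hull needs no genericity hypothesis here.
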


On the other hand, $-\imath\,\Psi_{k_a,l_a}\left(\mathbb{P}^{k_a}\right)$
is the segment joining 
$\begin{pmatrix}
k_a+l_a&l_a
\end{pmatrix}^t$ and $\begin{pmatrix}
l_a&k_a+l_a
\end{pmatrix}^t$ for each $a$. Therefore we conclude the following
(which might be also obtained by the Convexity Theorem):

\begin{cor}
\label{cor:PHIKL convex hull}
$-\imath\,\Psi_{\mathbf{L},\mathbf{K}}\big(\mathbb{P}(W_{\mathbf{L},\mathbf{K}})\big)
\subset \mathbb{R}^2
$
is the convex hull of the collection of the points  
 $\begin{pmatrix}
k_a+l_a&l_a
\end{pmatrix}^t$ and $\begin{pmatrix}
l_a&k_a+l_a
\end{pmatrix}^t$, $a=1,\ldots,r$, or equivalently of the segments
$J_{k_a,l_a}$.
\end{cor}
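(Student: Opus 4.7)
The plan is to chain together the earlier results, since the corollary is essentially a repackaging of Proposition \ref{prop:Psi inviluppo convesso} in explicit coordinates. The key inputs are (i) the description of $\Psi_{\mathbf{L},\mathbf{K}}\big(\mathbb{P}(W_{\mathbf{L},\mathbf{K}})\big)$ as a convex hull of images of the irreducible summands, (ii) Corollary \ref{cor:imagePsi'} which identifies each such image with $\imath\,J_{k_a,l_a}$, and (iii) the explicit description of $J_{k,l}=J_k+l\,\boldsymbol{\zeta}$ as a segment with known endpoints.

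First, I would apply Proposition \ref{prop:Psi inviluppo convesso} to write
\[
\Psi_{\mathbf{L},\mathbf{K}}\big(\mathbb{P}(W_{\mathbf{L},\mathbf{K}})\big)
=\mathrm{conv}\!\left({\bigcup}_{a=1}^r \Psi_{k_a,l_a}(\mathbb{P}^{k_a})\right).
\]
Next, applying Corollary \ref{cor:imagePsi'} to each summand gives $\Psi_{k_a,l_a}(\mathbb{P}^{k_a})=\imath\,J_{k_a,l_a}$. Multiplying by $-\imath$ and using the very definition of $J_{k_a,l_a}=J_{k_a}+l_a\,\boldsymbol{\zeta}$ recorded before Corollary \ref{cor:nonemptyintersection'}, we see that $J_{k_a,l_a}$ is the closed segment in $\mathbb{R}^2$ joining $\begin{pmatrix} k_a+l_a & l_a\end{pmatrix}^t$ and $\begin{pmatrix} l_a & k_a+l_a\end{pmatrix}^t$. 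Hence
\[
-\imath\,\Psi_{\mathbf{L},\mathbf{K}}\big(\mathbb{P}(W_{\mathbf{L},\mathbf{K}})\big)
=\mathrm{conv}\!\left({\bigcup}_{a=1}^r J_{k_a,l_a}\right),
\]
which is the second formulation asserted in the corollary.

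To pass to the first formulation, I would invoke the elementary fact that for any finite collection of segments $S_1,\ldots,S_r\subset\mathbb{R}^2$ with endpoint set $E$, one has $\mathrm{conv}(\bigcup_a S_a)=\mathrm{conv}(E)$: each $S_a$ lies in the convex hull of its two endpoints, so $\bigcup_a S_a\subseteq\mathrm{conv}(E)$ and hence $\mathrm{conv}(\bigcup_a S_a)\subseteq\mathrm{conv}(E)$; conversely $E\subseteq\bigcup_a S_a$ yields the reverse inclusion. Applied to the segments $J_{k_a,l_a}$ with endpoints $\begin{pmatrix} k_a+l_a & l_a\end{pmatrix}^t$ and $\begin{pmatrix} l_a & k_a+l_a\end{pmatrix}^t$, this gives the stated description as the convex hull of the $2r$ listed points.

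There is no real obstacle here: the genuine content has already been packaged into Proposition \ref{prop:Psi inviluppo convesso} (whose proof mirrors that of Proposition \ref{prop:Phi inviluppo convesso} via the block-diagonal form of $\mu_{\mathbf{L},\mathbf{K}}$ and the expression \eqref{eqn:moment map general LK}), and the remainder is a formal manipulation of convex hulls. The only minor caveat worth flagging is that some of the listed endpoints may coincide (e.g.\ the two endpoints collapse to a single point when $k_a=0$), but this does not affect the convex-hull identity.
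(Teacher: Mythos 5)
Your proof is correct and follows essentially the same route as the paper: the corollary is deduced directly from Proposition \ref{prop:Psi inviluppo convesso} together with the identification of each $-\imath\,\Psi_{k_a,l_a}(\mathbb{P}^{k_a})$ with the segment $J_{k_a,l_a}$, the convex hull of the union of segments being the convex hull of their endpoints. The paper only adds the remark that the statement could alternatively be obtained from the Convexity Theorem, but its actual derivation is the one you gave.
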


We have the following analogue of Corollaries 
\ref{cor:intersetion=projection k} and
\ref{cor:intersetion=projection kl}:

\begin{cor}
\label{cor:intersetion=projection general}
If $W_{\mathbf{L},\mathbf{K}}$ is generic, then
$\Psi_{\mathbf{L},\mathbf{K}}\big(\mathbb{P}(W_{\mathbf{L},\mathbf{K}})\big)=\Phi_{\mathbf{L},\mathbf{K}}\big(\mathbb{P}(W_{\mathbf{L},\mathbf{K}})\big)
\cap \mathfrak{t}$.
\end{cor}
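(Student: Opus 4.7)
The plan is to split the equality into the trivial inclusion and the substantive one, and to reduce the substantive direction to a summand-by-summand realization argument. The inclusion $\Phi_{\mathbf{L},\mathbf{K}}(\mathbb{P}(W_{\mathbf{L},\mathbf{K}}))\cap \mathfrak{t}\subseteq \Psi_{\mathbf{L},\mathbf{K}}(\mathbb{P}(W_{\mathbf{L},\mathbf{K}}))$ is immediate, since $\Psi_{\mathbf{L},\mathbf{K}}=\pi_{\mathfrak{t}}\circ \Phi_{\mathbf{L},\mathbf{K}}$ and $\pi_{\mathfrak{t}}$ fixes points already lying in $\mathfrak{t}$.

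For the reverse inclusion, given $\alpha\in \Psi_{\mathbf{L},\mathbf{K}}(\mathbb{P}(W_{\mathbf{L},\mathbf{K}}))$ the aim is to produce a unit vector $Z\in W_{\mathbf{L},\mathbf{K}}$ with $\Phi_{\mathbf{L},\mathbf{K}}([Z])=\alpha$. First I would regroup the summands by isotypic type: for each pair $(l,k)$ appearing in $\{(l_a,k_a)\}$ with multiplicity $r_{l,k}\ge 1$, set $W^{(l,k)}\cong W_{l,k}^{\oplus r_{l,k}}$. By Proposition \ref{prop:Psi inviluppo convesso} together with Corollary \ref{cor:imagePsi'}, one writes $\alpha=\sum_{(l,k)} s_{l,k}\,\imath D_{\boldsymbol{\mu}_{l,k}}$ with $s_{l,k}\ge 0$, $\sum_{(l,k)} s_{l,k}=1$, and each $\boldsymbol{\mu}_{l,k}\in J_{k,l}$.

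The heart of the argument is then to lift each $\imath D_{\boldsymbol{\mu}_{l,k}}$ (for $s_{l,k}>0$) to a unit vector $V_{l,k}\in W^{(l,k)}$ realizing it as moment map value. When $k\ge 2$ this is supplied at once by Corollary \ref{cor:intersetion=projection kl}, which asserts $\Psi_{k,l}(\mathbb{P}^k)=\Phi_{k,l}(\mathbb{P}^k)\cap \mathfrak{t}$. When $k=1$, however, a single copy of $W_{l,1}$ does \emph{not} suffice: the image $\Phi_{1,l}(\mathbb{P}^1)=\imath L'_1+\imath l\,I_2$ is the (shifted) sphere of rank-one projectors, whose intersection with $\mathfrak{t}$ consists only of the two endpoints of the segment $\imath J_{1,l}$, so an interior $\boldsymbol{\mu}_{l,1}$ cannot be realized. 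This is precisely the obstruction the genericity hypothesis is designed to bypass: it forces $r_{l,1}\ge 2$ whenever the $(l,1)$-type appears, and Proposition \ref{prop:Phi1 rge2}(1) then upgrades the isotypic image to $\imath L_1+\imath l\,I_2$, which does contain $\imath D_{\boldsymbol{\mu}_{l,1}}$ for every $\boldsymbol{\mu}_{l,1}\in J_{1,l}$; a suitable $V_{l,1}\in W^{(l,1)}$ therefore exists.

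Finally, assembling $Z:=\bigoplus_{(l,k)}\sqrt{s_{l,k}}\,V_{l,k}\in W_{\mathbf{L},\mathbf{K}}$ gives a unit vector, and the grouped form of formula (\ref{eqn:Phi LK a}) yields $\Phi_{\mathbf{L},\mathbf{K}}([Z])=\sum_{(l,k)}s_{l,k}\,\imath D_{\boldsymbol{\mu}_{l,k}}=\alpha$, as required. The main obstacle is precisely the $k=1$ step just described, which is also the reason the statement is confined to generic representations.
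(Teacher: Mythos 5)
Your proof is correct. The paper actually gives no explicit argument for this corollary: it is stated as an analogue of Corollaries \ref{cor:intersetion=projection k} and \ref{cor:intersetion=projection kl}, and the implied route is to view $\alpha\in\Psi_{\mathbf{L},\mathbf{K}}\big(\mathbb{P}(W_{\mathbf{L},\mathbf{K}})\big)$ as the diagonal part of some $\alpha'$ in the image of $\Phi_{\mathbf{L},\mathbf{K}}$, describe that image by Proposition \ref{prop:Phi inviluppo convesso} (this is where genericity enters on the paper's side), and use that taking diagonal parts preserves $\imath L_{k_a}+\imath\,l_a\,I_2$ blockwise, the $(l,1)$ blocks being rescued by multiplicity $\ge 2$. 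You instead work entirely on the $\mathfrak{t}$-side: you decompose $\alpha$ using Proposition \ref{prop:Psi inviluppo convesso} and Corollary \ref{cor:imagePsi'} and then build an explicit preimage type-by-type, quoting Corollary \ref{cor:intersetion=projection kl} for $k\ge 2$ and Proposition \ref{prop:Phi1 rge2}(1) for $k=1$. This variant is constructive, avoids any convexity statement about the $\Phi$-image, and isolates exactly where genericity is needed; your remark that $\Phi_{1,l}\left(\mathbb{P}^1\right)\cap\mathfrak{t}$ consists only of the two endpoints of $\imath\,J_{1,l}$ is indeed the whole obstruction. Two small points you should make explicit: first, the regrouping $\alpha=\sum_{(l,k)}s_{l,k}\,\imath\,D_{\boldsymbol{\mu}_{l,k}}$ with a single $\boldsymbol{\mu}_{l,k}\in J_{k,l}$ per isotypic type uses that each segment $\imath\,J_{k_a,l_a}$ is convex, so that a point of the convex hull of their union is a convex combination taking at most one point from each segment — worth a sentence, precisely because the analogous statement for the nonconvex sets $\imath\,L_1'+\imath\,l\,I_2$ is what fails without genericity; second, summands with $k_a=0$ are allowed in (\ref{eqn:general representation}) and are missing from your dichotomy, but they are trivial, since $J_{0,l}$ is the single point $\begin{pmatrix} l & l\end{pmatrix}^t$ and $\Phi_{0,l}$ already takes the diagonal value $\imath\,l\,I_2$.
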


\begin{prop}
\label{prop:equivalent non zero phi}
Assume that $W_{\mathbf{L},\mathbf{K}}$ is generic.
Then the following conditions are equivalent:
\begin{enumerate}
\item $\mathbf{0}\not\in \Psi_{\mathbf{L},\mathbf{K}}\big(\mathbb{P}(W_{\mathbf{L},\mathbf{K}})\big)$;
\item $\mathbf{0}\not\in \Phi_{\mathbf{L},\mathbf{K}}\big(\mathbb{P}(W_{\mathbf{L},\mathbf{K}})\big)$;
\item either $k_a+2\,l_a>0$ for all $a=1,\ldots,r$, or $k_a+2\,l_a<0$ for all 
$a=1,\ldots,r$.
\end{enumerate}

\end{prop}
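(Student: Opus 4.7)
The plan is to prove the chain $(1)\Leftrightarrow(2)\Leftrightarrow(3)$, with the first equivalence essentially formal and the second decided by a trace argument.

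First, the equivalence $(1)\Leftrightarrow(2)$ is immediate from Corollary \ref{cor:intersetion=projection general}, which gives $\Psi_{\mathbf{L},\mathbf{K}}\big(\mathbb{P}(W_{\mathbf{L},\mathbf{K}})\big)=\Phi_{\mathbf{L},\mathbf{K}}\big(\mathbb{P}(W_{\mathbf{L},\mathbf{K}})\big)\cap\mathfrak{t}$: since $\mathbf{0}\in\mathfrak{t}$, it lies in the first set if and only if it lies in the second.

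For $(2)\Leftrightarrow(3)$, the crucial tool is Proposition \ref{prop:Phi inviluppo convesso}, which (under genericity) identifies $\Phi_{\mathbf{L},\mathbf{K}}\big(\mathbb{P}(W_{\mathbf{L},\mathbf{K}})\big)$ with the convex hull of the union $\bigcup_{a}\Phi_{k_a,l_a}(\mathbb{P}^{k_a})$. By (\ref{eqn:immaginePhi'}) and the analogous formulas for $k_a=0,1$, each $\Phi_{k_a,l_a}(\mathbb{P}^{k_a})$ is contained in the affine hyperplane of skew-Hermitian matrices of trace $\imath\,(k_a+2l_a)$, since elements of $L_{k_a}$ and $L'_1$ both have trace $k_a$. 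Write $c_a:=k_a+2l_a$. Then the direction $(3)\Rightarrow(2)$ is essentially immediate: the trace of any convex combination $\sum_a t_a\,p_a$ with $p_a\in\Phi_{k_a,l_a}(\mathbb{P}^{k_a})$ equals $\imath\,\sum_a t_a\,c_a$, which is strictly nonzero if all $c_a$ share a common strict sign. Hence $\mathbf{0}$ does not lie in the convex hull, which by Proposition \ref{prop:Phi inviluppo convesso} is the image.

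For $\neg(3)\Rightarrow\neg(2)$, the plan is to exhibit $\mathbf{0}$ as a convex combination of matrices scalar on the diagonal. The key observation is that for each $a$ the scalar matrix $\imath\,(c_a/2)\,I_2$ lies in the convex hull of $\Phi_{k_a,l_a}(\mathbb{P}^{k_a})$: for $k_a\geq 2$ one has $(k_a/2)\,I_2\in L_{k_a}$ directly, for $k_a=1$ one has $(1/2)\,I_2\in L_1=\operatorname{conv}(L'_1)$, and for $k_a=0$ the image is already the single point $\imath\,l_a\,I_2$. Thus $\imath\,(c_a/2)\,I_2$ lies in the image of $\Phi_{\mathbf{L},\mathbf{K}}$. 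Now, if some $c_a=0$, then $\imath\,(c_a/2)\,I_2=\mathbf{0}$ lies in the image; otherwise there exist $a,b$ with $c_a>0$ and $c_b<0$, and the convex combination with weight $t:=c_b/(c_b-c_a)\in(0,1)$ of the two scalar matrices $\imath\,(c_a/2)\,I_2$ and $\imath\,(c_b/2)\,I_2$ equals $\mathbf{0}$, which is therefore in the image.

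The main subtlety, rather than a true obstacle, is the case $k_a=1$: here $\Phi_{1,l_a}(\mathbb{P}^1)$ consists only of rank-$1$ matrices (shifted by $l_a\,I_2$), so the scalar $\imath\,(1/2+l_a)\,I_2$ is only reached after passing to the convex hull. This is precisely what Proposition \ref{prop:Phi inviluppo convesso} (whose proof exploits the genericity hypothesis by combining two copies of the corresponding isotype) allows us to do, so the argument goes through uniformly.
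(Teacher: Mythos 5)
Your proof is correct and follows essentially the same route as the paper: the equivalence $(1)\Leftrightarrow(2)$ via Corollary \ref{cor:intersetion=projection general}, the implication from $(3)$ via positivity of the trace of convex combinations, and the converse by placing the scalar matrices $\imath\,(k_a+2l_a)/2\,I_2$ in the image and joining two of opposite sign (or noting one is $\mathbf{0}$) by a segment. Your explicit remark that for $k_a=1$ the scalar matrix is only reached after taking the convex hull (i.e.\ after invoking Proposition \ref{prop:Phi inviluppo convesso} and genericity) is a welcome precision that the paper's wording glosses over.
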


\begin{proof}
By Corollary \ref{cor:intersetion=projection general},
1) and 2) are equivalent. Suppose that 2) holds.
By (\ref{eqn:immaginePhi'}), we have 
$\Phi_{k_a,l_a}\left(\mathbb{P}^{k_a}\right)=\imath\,L_{k_a}+\imath\,l_a\,I_2$
for every $a$; if $k_a+2\,l_a=0$ for some $a$, then $l_a\le 0$ and so 
$$
(0)=\imath\,\begin{pmatrix}
-l_a&0\\
0&-l_a
\end{pmatrix}+\imath\,l_a\,I_2\in \Phi_{k_a,l_a}\left(\mathbb{P}^{k_a}\right).
$$
Hence assuming 2) we need to have 
$k_a+2\,l_a\neq 0$ for every $a=1,\ldots,r$.
Suppose that $k_a+2\,l_a>0$ and $k_b+2\,l_b<0$ for some 
$a,\,b=1,\ldots,r$. Then 
$$
\frac{1}{•2}\,(k_a+2\,l_a)\,I_2=\frac{k_a}{2•}\,I_2+l_a\,I_2\in \Phi_{k_a,l_a}\left(\mathbb{P}^{k_a}\right),
$$
and similarly 
$$
\frac{\imath}{•2}\,(k_b+2\,l_b)\,I_2=\imath\,\frac{k_b}{2•}\,I_2+\imath\,l_b\,I_2\in \Phi_{k_b,l_b}\left(\mathbb{P}^{k_b}\right).
$$
Hence by the previous dicussion the segment joining these two matrices is contained in $\Phi_{\mathbf{L},\mathbf{K}}\big(\mathbb{P}(W_{\mathbf{L},\mathbf{K}})\big)$, and it is obvious that 
it meets the origin, absurd. Hence 2) implies 3).

Suppose that 3) holds, say with $>0$. Then for every $a=1,\ldots,r$ and every
$\alpha\in \Psi_{l_a,k_a}\left(\mathbb{P}^{k_a}\right)$ we have 
$-\imath\,\mathrm{trace}(\alpha)= k_a+2\,l_a>0$. 
Since the convex linear combination of matrices with positive trace
has positive trace, 1) also holds by Proposition \ref{prop:Psi inviluppo convesso}.

\end{proof}

\begin{cor}
\label{cor:non zero trace moment}
Assume that $W_{\mathbf{L},\mathbf{K}}$ is generic.
Then
$\mathbf{0}\not\in \Phi_{\mathbf{L},\mathbf{K}}\big(\mathbb{P}(W_{\mathbf{L},\mathbf{K}})\big)$ if and only if 
$\Phi_{\mathbf{L},\mathbf{K}}\big(\mathbb{P}(W_{\mathbf{L},\mathbf{K}})\big)\subset \mathfrak{g}$ is contained in one of the
half-spaces defined by the hyperplane 
$\mathfrak{su}(2)=\ker (\mathrm{trace})\subset \mathfrak{g}$. In particular, if
$\mathbf{0}\not\in \Phi_{\mathbf{L},\mathbf{K}}\big(\mathbb{P}(W_{\mathbf{L},\mathbf{K}})\big)$ and 
$\Phi_{\mathbf{L},\mathbf{K}}\big(\mathbb{P}(W_{\mathbf{L},\mathbf{K}})\big)\cap \mathbb{R}_+\cdot \boldsymbol{\nu}\neq \emptyset$, then $\nu_1+\nu_2\neq 0$. 
\end{cor}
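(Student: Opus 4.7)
The plan is to read off the first assertion directly from Proposition \ref{prop:equivalent non zero phi} together with the convex-hull description of Proposition \ref{prop:Phi inviluppo convesso}, using the trace as a separating linear functional, and then to deduce the second assertion as a one-line consequence.

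First I would observe that by (\ref{eqn:immaginePhi'}) every element of $\Phi_{k_a,l_a}(\mathbb{P}^{k_a})=\imath L_{k_a}+\imath l_a I_2$ has trace $\imath(k_a+2l_a)$, since every matrix in $L_{k_a}$ has trace $k_a$. Thus each building block $\Phi_{k_a,l_a}(\mathbb{P}^{k_a})$ lies entirely in an affine hyperplane of $\mathfrak{g}$ parallel to $\mathfrak{su}(2)=\ker(\mathrm{trace})$. Invoking Proposition \ref{prop:Phi inviluppo convesso}, the image $\Phi_{\mathbf{L},\mathbf{K}}(\mathbb{P}(W_{\mathbf{L},\mathbf{K}}))$ is the convex hull of the union of these blocks, so the linear functional $\alpha\mapsto -\imath\,\mathrm{trace}(\alpha)$ takes values on it exactly in the interval $[m,M]$, where $m=\min_a(k_a+2l_a)$ and $M=\max_a(k_a+2l_a)$.

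By Proposition \ref{prop:equivalent non zero phi}, the condition $\mathbf{0}\notin \Phi_{\mathbf{L},\mathbf{K}}(\mathbb{P}(W_{\mathbf{L},\mathbf{K}}))$ is equivalent to all the integers $k_a+2l_a$ sharing a common strict sign, which in turn is equivalent to $0\notin[m,M]$. This last condition just says that the trace functional is nowhere vanishing on the image, i.e.\ that $\Phi_{\mathbf{L},\mathbf{K}}(\mathbb{P}(W_{\mathbf{L},\mathbf{K}}))$ lies in one of the two open half-spaces of $\mathfrak{g}$ cut out by $\mathfrak{su}(2)$. This proves the first claim.

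For the second claim, assume $\mathbf{0}\notin \Phi_{\mathbf{L},\mathbf{K}}(\mathbb{P}(W_{\mathbf{L},\mathbf{K}}))$ and pick any $\alpha$ in the intersection with $\mathbb{R}_+\cdot\boldsymbol{\nu}$, which under the identification used throughout means $\alpha=t\,\imath D_{\boldsymbol{\nu}}$ for some $t>0$. By what has just been proved, $\mathrm{trace}(\alpha)\neq 0$; since $\mathrm{trace}(\alpha)=\imath\,t\,(\nu_1+\nu_2)$, this forces $\nu_1+\nu_2\neq 0$. I do not anticipate a serious obstacle here: the whole corollary is essentially a geometric repackaging of Propositions \ref{prop:Phi inviluppo convesso} and \ref{prop:equivalent non zero phi}, with the trace playing the role of the separating linear functional.
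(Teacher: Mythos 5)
Your argument is correct and is essentially the reasoning the paper intends: the corollary is stated without a separate proof precisely because, as you show, it follows at once from Proposition \ref{prop:equivalent non zero phi} together with the convex-hull description of Proposition \ref{prop:Phi inviluppo convesso}, with the trace as the separating functional (each block $\Phi_{k_a,l_a}(\mathbb{P}^{k_a})$ having constant trace $\imath(k_a+2l_a)$). Your reading of ``half-space'' as the open half-space is the right one for the ``if and only if'', and the final deduction $\nu_1+\nu_2\neq 0$ from $\mathrm{trace}\big(t\,\imath D_{\boldsymbol{\nu}}\big)\neq 0$ is exactly as intended.
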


\begin{defn}
\label{defn:uniform rep}
The representation $W_{\mathbf{L},\mathbf{K}}$ will be called \textit{uniform}
if it is generic and $k_a+2\,l_a=k_b+2\,l_b$ for all $a,\,b=1,\ldots,r$.
\end{defn}

The proof of the following Lemma is left to the reader.

\begin{lem}
\label{lem:uniform rep}
The following conditions are equivalent:
\begin{enumerate}
\item $W_{\mathbf{L},\mathbf{K}}$ is uniform;
\item $\phi_{\mathbf{L},\mathbf{K}}$ (equivalently, $\psi_{\mathbf{L},\mathbf{K}}$)
is trivial on $Z(G)$ (the center of $G$).
\end{enumerate}

\end{lem}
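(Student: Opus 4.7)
The plan is to explicitly compute how a central element acts on each isotypical summand of $W_{\mathbf{L},\mathbf{K}}$ and read off the claimed equivalence. Since $Z(G)=\{e^{\imath\theta}I_2:\theta\in\mathbb{R}\}$ is contained in $T$, the statements for $\phi_{\mathbf{L},\mathbf{K}}$ and $\psi_{\mathbf{L},\mathbf{K}}$ coincide, so it suffices to work with the former. Recall also that a unitary operator on $W_{\mathbf{L},\mathbf{K}}$ induces the identity on $\mathbb{P}(W_{\mathbf{L},\mathbf{K}})$ if and only if it is a scalar multiple of $\mathrm{id}$, and that in the definition of \emph{uniform} the genericity hypothesis is a standing one.

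The key computation is as follows. Since $\bigl((e^{\imath\theta}I_2)^t\bigr)^{-1}=e^{-\imath\theta}I_2$, the operator $\mu_{k_a}\bigl((B^t)^{-1}\bigr)$ acts on $\mathrm{Sym}^{k_a}(\mathbb{C}^2)$ as the $k_a$-th symmetric power of multiplication by $e^{-\imath\theta}$, that is, as the scalar $e^{-\imath k_a\theta}$. Together with the extra twist $\det(B)^{-l_a}=e^{-2\imath l_a\theta}$, formula (\ref{eqn:composition mukl}) gives
\[
\mu_{k_a,l_a}\bigl(e^{\imath\theta}I_2\bigr)\;=\;e^{-\imath(k_a+2l_a)\theta}\,I_{k_a+1},
\]
so $\mu_{\mathbf{L},\mathbf{K}}\bigl(e^{\imath\theta}I_2\bigr)$ is block-diagonal with $a$-th block the scalar $e^{-\imath(k_a+2l_a)\theta}\,I_{k_a+1}$.

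The conclusion is then immediate. Such a block-diagonal unitary is a global scalar for \emph{every} $\theta$ if and only if the numbers $e^{-\imath(k_a+2l_a)\theta}$ agree for all $a$ and all $\theta$, equivalently $k_a+2l_a=k_b+2l_b$ for all $a,b$; this proves (1)$\Rightarrow$(2). Conversely, if some $k_a+2l_a\neq k_b+2l_b$, one picks $[Z]\in\mathbb{P}(W_{\mathbf{L},\mathbf{K}})$ with $Z_a,Z_b\neq\mathbf{0}$ and observes that for any sufficiently small $\theta\neq 0$ the two corresponding components of $\mu_{\mathbf{L},\mathbf{K}}(e^{\imath\theta}I_2)Z$ are scaled by distinct unit complex numbers, so the induced action on projective space is nontrivial. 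No substantive obstacle arises beyond this unwinding of definitions.
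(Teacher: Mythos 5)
The paper leaves this proof to the reader, and your argument is exactly the intended one: the central element $e^{\imath\theta}I_2$ acts on the $a$-th summand by the scalar $e^{-\imath(k_a+2l_a)\theta}$, so the induced action on $\mathbb{P}(W_{\mathbf{L},\mathbf{K}})$ is trivial on $Z(G)$ precisely when all the integers $k_a+2l_a$ coincide, and your two-component test point settles the converse. The computation is correct, and you rightly flag that the implication $2)\Rightarrow 1)$ uses genericity (which is built into Definition \ref{defn:uniform rep} and is the standing hypothesis in this part of the paper), since equality of the $k_a+2l_a$ alone does not imply genericity.
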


Let us now assume that the equivalent conditions in Proposition 
\ref{prop:equivalent non zero phi} are satisfied, and
consider transversality. Let us denote by 
$X_{\mathbf{K}}\subset \mathbb{C}^{|\mathbf{K}|+r}$
the unit sphere, by $\pi_{\mathbf{K}}:X_{\mathbf{K}}\rightarrow
\mathbb{P}^{\mathbf{K}+r-1}$ the Hopf map, and set
$\tilde{\Phi}_{\mathbf{L},\mathbf{K}}=\Phi_{\mathbf{L},\mathbf{K}}\circ \pi_{\mathbf{K}}:
X_{\mathbf{K}}\rightarrow \mathfrak{g}$. 
Also, let $\tilde{\phi}_{\mathbf{L},\mathbf{K}}$ and 
$\tilde{\psi}_{\mathbf{L},\mathbf{K}}$ denote,
respectively, the actions of $G$ and $T$ on $X_{\mathbf{K}}$
by restriction of $\tilde{\phi}_{\mathbf{L},\mathbf{K}}$.
These are liftings of the actions $\phi_{\mathbf{L},\mathbf{K}}$
and $\psi_{\mathbf{L},\mathbf{K}}$ on
$\mathbb{P}(W_{\mathbf{L},\mathbf{K}})$

Let us fix $Z\in X_{\mathbf{K}}$,
and denote by $\mathcal{O}^Z\subset \mathfrak{g}$ the orbit through $\tilde{\Phi}_{\mathbf{L},\mathbf{K}}(Z)$. Perhaps after replacing $Z$ with $(\tilde{\phi}_{\mathbf{L},\mathbf{K}})_g(Z)$ for some $g\in G$, without changing $\mathcal{O}^Z$
we may as well assume
that $\tilde{\Phi}_{\mathbf{L},\mathbf{K}}(Z)\in \mathfrak{t}$.

Suppose that only one component of $Z$ in non-zero, say
$z_{aj}$ for some $a\in \{1,\ldots,r\}$ and $j\in \{0,\ldots,k_a\}$.
Then, as in the case $r=1$, one sees that there is a 1-dimensional torus fixing
$Z$; therefore, neither is $\Phi_{\mathbf{L},\mathbf{K}}$ transverse to 
$\mathcal{C}\left(\mathcal{O}^Z\right)$, nor is 
$\Psi_{\mathbf{L},\mathbf{K}}$ transverse to
$\mathbb{R}_+\,\Psi_{\mathbf{L},\mathbf{K}}(Z)$.
In this case, in view of (\ref{eqn:Phi LK a}) and (\ref{eqn:tildePhikl})
we have
$$
\tilde{\Phi}_{\mathbf{L},\mathbf{K}}(Z)=
\Phi_{k_a,l_a}([Z_a])=
\imath\,
\begin{pmatrix}
k_a-j+l_a & 0\\
0& j+l_a
\end{pmatrix}.
$$

Hence, if we set 
\begin{equation}
\label{eqn:defn di nuaj}
\boldsymbol{\nu}_{k_a,j,l_a}:=\begin{pmatrix}
k_a-j+l_a &j+l_a
\end{pmatrix}^t\qquad (a=1,\ldots,r,\quad j=0,\ldots,k_a),
\end{equation}
we conclude that $\Phi_{\mathbf{L},\mathbf{K}}$ is not transverse to
$\mathcal{C}(\mathcal{O}_{\boldsymbol{\nu}_{k_a,j,l_a}})$ and that
$\Psi_{\mathbf{L},\mathbf{K}}$ is not transverse to
$\mathbb{R}_+\cdot\boldsymbol{\nu}_{k_a,j,l_a}$ for every $a,\,j$.

If, on the other hand, there exist $a\in \{1,\ldots,r\}$
and $j,h\in \{0,\ldots,k_a\}$ with 
$j\neq h$ and $z_{aj}\cdot z_{ah}\neq 0$, then 
the arguments used in the proof of Theorems \ref{thm:trasnversality1},
\ref{thm:transversality2} imply that 
both $\tilde{•\psi}_{\mathbf{L},\mathbf{K}}$ and 
$\tilde{•\phi}_{\mathbf{L},\mathbf{K}}$ are locally free at $Z$.

Thus we reduced to the case where for each $a=1,\ldots,r$ at most one component
of $Z_a$ is non-zero, and $Z_a\neq \mathbf{0}$ for at least two distinct 
values of $a$. We shall make this assumption in the following.

So there exist $a,\,b\in \{1,\ldots,r\}$, $a\neq b$ and $j_a\in 
\{0,\ldots,k_a\}$, $j_b\in \{0,\ldots,k_b\}$ such that 
$z_{a,j_a}\cdot z_{b,j_b}\neq 0$, and 
furthermore $z_{a,j}=0$ if $j\neq j_a$, $z_{b,j}=0$
if $j\neq j_b$.

Consider, as before, a diagonal matrix $D\in T$, 
with diagonal entries $e^{\imath\vartheta_i}$, $i=1,2$, and suppose that
$D$ fixes $Z$. Also, let us assume that $D$ is in a small neighborhood of
$I_2$, so that without loss $\vartheta_j\sim 0$. Then 
the condition $(\tilde{•\phi}_{\mathbf{L},\mathbf{K}})_D(Z)=Z$
implies that
$e^{\imath\,[l_a\,(\vartheta_1+\vartheta_2)+(k_a-j_a)\,\vartheta_1+j_a\,\vartheta_2]}
\,z_{a,j_a}=z_{a,j_a}$ and $e^{\imath\,[l_b\,(\vartheta_1+\vartheta_2)+(k_b-j_b)\,\vartheta_1+j_b\,\vartheta_2]}
\,z_{b,j_b}=z_{b,j_b}$. Since $\vartheta_j\sim 0$, 
this forces
$$
(l_a+k_a-j_a)\,\vartheta_1+(l_a+j_a)\,\vartheta_2=
(l_b+k_b-j_b)\,\vartheta_1+(l_b+j_b)\,\vartheta_2=0.
$$
This system has non-trivial solutions if and only if the vectors 
$\boldsymbol{\nu}_{k_a,j_a,l_a}
$ and $\boldsymbol{\nu}_{k_b,,j_b,l_b}$
are linearly dependent
(see (\ref{eqn:defn di nuaj})); if this is 
the case, then
$\Phi_{k_a,l_a}([Z_a])$ and 
$\Phi_{k_b,l_b}([Z_b])$ are both
scalar multiples of the 
diagonal matrix 
$\imath\,D_{\boldsymbol{\nu}_{k_a,j_a,l_a}}$.

Hence we have the following alternatives. 

Let $I\subseteq \{1,\ldots,r\}$ be the 
non-empty subset of those $a$'s 
such that $Z_a\neq \mathbf{0}$.
If the vectors $\boldsymbol{\nu}_{k_a,j_a,l_a}
$, $a\in I$, are all pairwise linearly dependent, then 
$\tilde{\psi}_{\mathbf{L},\mathbf{K}}$
is not locally free at $Z$, and therefore neither is $\tilde{\phi}_{\mathbf{L},\mathbf{K}}$. Hence,
$\Phi_{\mathbf{L},\mathbf{K}}$ is not transverse to
$\mathcal{C}\left(\mathcal{O}^Z\right)$ at $Z$, and similarly $\Psi_{\mathbf{L},\mathbf{K}}$ is not transverse to
 $\mathbb{R}_+\cdot \Psi_{\mathbf{L},\mathbf{K}}(Z)$ at $Z$.
Furthermore, in this case we also obtain that 
$\Phi_{\mathbf{L},\mathbf{K}}([Z])$
is a multiple of $\imath\,D_{\boldsymbol{\nu}_{k_a,j_a,l_a}}$, and so $\Psi_{\mathbf{L},\mathbf{K}}([Z])$
is a multiple of $\imath\,\boldsymbol{\nu}_{k_a,j_a,l_a}$.

Suppose, on the other hand, that there exist
$a,b\in I$ such that 
$\boldsymbol{\nu}_{k_a,j_a,l_a}
\wedge\boldsymbol{\nu}_{k_b,j_b,l_b}\neq \mathbf{0}$. Then 
$\tilde{\psi}_{\mathbf{L},\mathbf{K}}$ 
is locally free at $Z$. Since we are assuming that $\Phi_{\mathbf{L},\mathbf{K}}([Z])$ is diagonal and non-zero, we can apply the argument used in the proof of Theorem \ref{thm:trasnversality1}, following (\ref{eqn:defn of xieta}), to obtain 
the stronger statement
that 
$\tilde{\phi}_{\mathbf{L},\mathbf{K}}$ 
is also locally free at $Z$,
and so 
$\Phi_{\mathbf{L},\mathbf{K}}$ is transverse to $\mathcal{C}\left(\mathcal{O}^Z\right)$ 
at $Z$. 

The outcome of the previous discussion is the following statement.
Recall that $\boldsymbol{\nu}_{a,j}$ was
defined in
(\ref{eqn:defn di nuaj}).

\begin{thm}
\label{thm:transversality general}
Suppose $\nu_1\neq \nu_2$ and that
the equivalent conditions in
Proposition 
\ref{prop:equivalent non zero phi} are satisfied. Then the following conditions are equivalent:

\begin{enumerate}
\item $\Phi_{\mathbf{L},\mathbf{K}}$ is not transverse to $\mathcal{C}(\mathcal{O}_{\boldsymbol{\nu}})$;
\item $\Psi_{\mathbf{L},\mathbf{K}}$ is not transverse to $\mathbb{R}_+\,\imath\,\boldsymbol{\nu}$;
\item there exist $a\in \{1,\ldots,r\}$ and
$j\in \{0,\ldots,k_a\}$, such that
$\boldsymbol{\nu}=\boldsymbol{\nu}_{k_a,j,l_a}$.
\end{enumerate}
\end{thm}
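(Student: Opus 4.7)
The plan is to extract the proof by assembling the case analysis already carried out in the paragraphs preceding the statement, which reduces the transversality of $\Phi_{\mathbf{L},\mathbf{K}}$ (resp.\ $\Psi_{\mathbf{L},\mathbf{K}}$) at $\boldsymbol{\nu}$ to local freeness of the lifted action $\tilde{\phi}_{\mathbf{L},\mathbf{K}}$ (resp.\ $\tilde{\psi}_{\mathbf{L},\mathbf{K}}$) along the preimage in $X_{\mathbf{K}}$ of $\mathcal{C}(\mathcal{O}_{\boldsymbol{\nu}})$ (resp.\ of $\mathbb{R}_+\,\imath\,\boldsymbol{\nu}$). For the implications $(3) \Rightarrow (1)$ and $(3) \Rightarrow (2)$, I would choose as test point the standard basis vector $Z \in X_{\mathbf{K}}$ with $z_{a,j} = 1$ and all other entries $0$; then (\ref{eqn:Phi LK a}) and (\ref{eqn:tildePhikl}) give $\tilde{\Phi}_{\mathbf{L},\mathbf{K}}(Z) = \imath\, D_{\boldsymbol{\nu}_{k_a,j,l_a}}$, so $Z$ sits in both preimages, while being fixed by a nontrivial one-parameter subgroup of $T$; hence neither $\tilde{\psi}_{\mathbf{L},\mathbf{K}}$ nor $\tilde{\phi}_{\mathbf{L},\mathbf{K}}$ is locally free at $Z$.

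For the converse, assume that $\boldsymbol{\nu} \not\in \mathbb{R}_+\,\boldsymbol{\nu}_{k_a,j,l_a}$ for every $(a,j)$, and consider $Z \in X_{\mathbf{K}}$ with $\tilde{\Phi}_{\mathbf{L},\mathbf{K}}(Z) \in \mathbb{R}_+\,\imath\, D_{\boldsymbol{\nu}}$ (replacing $Z$ by a $G$-translate, we may further assume $\tilde{\Phi}_{\mathbf{L},\mathbf{K}}(Z) \in \imath\,\mathfrak{t}$, which is legitimate since transversality at $\boldsymbol{\nu}$ is $G$-invariant and, for $\Psi$, only local freeness along the $T$-preimage matters). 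I would then run the trichotomy already anticipated in the text: (i) if $Z$ has a unique nonzero component $z_{a,j}$, then $\tilde{\Phi}_{\mathbf{L},\mathbf{K}}(Z)$ is a positive multiple of $\imath\, D_{\boldsymbol{\nu}_{k_a,j,l_a}}$, contradicting the assumption on $\boldsymbol{\nu}$; (ii) if some $Z_a$ has two distinct nonzero entries $z_{a,j}, z_{a,h}$, then the linear-algebra argument of Theorems \ref{thm:trasnversality1} and \ref{thm:transversality2} applies directly to give local freeness of $\tilde{\psi}_{\mathbf{L},\mathbf{K}}$ at $Z$; (iii) otherwise each nonzero $Z_a$ reduces to a single entry $z_{a,j_a}$ and $I := \{a : Z_a \neq \mathbf{0}\}$ has at least two elements. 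In case (iii), $\tilde{\Phi}_{\mathbf{L},\mathbf{K}}(Z)$ is, by (\ref{eqn:Phi LK a}) and (\ref{eqn:tildePhikl}), a convex combination with strictly positive coefficients of the diagonal matrices $\imath\, D_{\boldsymbol{\nu}_{k_a,j_a,l_a}}$, $a \in I$; if all these vectors $\boldsymbol{\nu}_{k_a,j_a,l_a}$ were pairwise linearly dependent, then (using that by Proposition \ref{prop:equivalent non zero phi} the coordinate sums $k_a + 2\,l_a$ all share a common sign, so the vectors lie on a common open half-line from the origin) $\boldsymbol{\nu}$ would itself lie on this common ray, contradicting the assumption; hence two of them are linearly independent, and the diagonal-subgroup computation preceding the statement shows that $\tilde{\psi}_{\mathbf{L},\mathbf{K}}$ is locally free at $Z$.

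This already yields $(\neg 3) \Rightarrow (\neg 2)$. To pass from local freeness of $\tilde{\psi}_{\mathbf{L},\mathbf{K}}$ to that of $\tilde{\phi}_{\mathbf{L},\mathbf{K}}$ at $Z$, and so obtain $(\neg 3) \Rightarrow (\neg 1)$, I would invoke Lemma \ref{lem:trasversality criterion} applied with $A = \mathcal{O}_{\mathbb{P}^{|\mathbf{K}|+r-1}}(1)$: the hypothesis $\nu_1 \neq \nu_2$ is exactly what is needed, as the factor $\nu_1 - \nu_2$ appears in (\ref{eqn:xieta Phi-derivative}), so that $\mathrm{d}_Z\tilde{\Phi}_{\mathbf{L},\mathbf{K}}$ is injective on the off-diagonal subspace $\mathfrak{a} = \mathrm{span}(\xi,\eta)$; combined with $T$-local freeness and (\ref{eqn:kerdPhiandt}), this forces $\ker(\mathrm{val}_Z) = (0)$. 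The main obstacle — essentially the only delicate point — is case (iii): one must rule out \emph{a priori} the possibility that a convex superposition of basis vectors from several isotypic summands could produce a moment-map value proportional to $\imath\, D_{\boldsymbol{\nu}}$ for some generic $\boldsymbol{\nu}$ off the critical rays. The constant-sign hypothesis of Proposition \ref{prop:equivalent non zero phi} is precisely the ingredient that turns pairwise parallelism of the $\boldsymbol{\nu}_{k_a,j_a,l_a}$'s into coray-membership and so closes the argument.
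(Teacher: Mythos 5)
Your proposal follows essentially the same route as the paper: the same reduction of transversality to local freeness of the lifted actions on $X_{\mathbf{K}}$, the same trichotomy on the nonvanishing components of $Z$ (one component; two components within a single summand, handled by the argument of Theorems \ref{thm:trasnversality1} and \ref{thm:transversality2}; one component per summand over several summands), and the same passage from $T$- to $G$-local freeness via Lemma \ref{lem:trasversality criterion}, i.e.\ the computation following (\ref{eqn:defn of xieta}). Your only genuine addition is to make explicit why pairwise linear dependence of the $\boldsymbol{\nu}_{k_a,j_a,l_a}$'s forces the moment value onto a single critical ray — namely the common sign of the traces $k_a+2\,l_a$ from Proposition \ref{prop:equivalent non zero phi} — a point the paper's discussion leaves implicit when it asserts that $\Phi_{\mathbf{L},\mathbf{K}}([Z])$ is then a multiple of $\imath\,D_{\boldsymbol{\nu}_{k_a,j_a,l_a}}$.
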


If $M\subseteq \mathbb{P}(W_{\mathbf{L},\mathbf{K}})$ is a projective submanifold, then the restriction to $M$ of the
Fubini-Study form is a K\"{a}hler form $\omega$ on $M$. If $M$ is $G$-invariant, the induced action of $G$ on $M$ is Hamiltonian with respect to $2\,\omega$, with moment map
$\Phi_M:=\left.\Phi_{\mathbf{L},\mathbf{K}}\right|_M:M\rightarrow \mathfrak{g}$.
Similar considerations apply to the action of $T$ on $M$, which is Hamiltonian with respect to
$2\,\omega$, with moment map $\Psi_M:=\left.\Psi_{\mathbf{L},\mathbf{K}}\right|_M:M\rightarrow\mathfrak{t}$.

For $\boldsymbol{\nu}=\begin{pmatrix}
\nu_1&\nu_2
\end{pmatrix}^t$ with $\nu_j\ge 0$ and $\boldsymbol{\nu}\neq \mathbf{0}$, 
let us denote
by $\mathbb{P}_{\boldsymbol{\nu}}\subseteq	 \mathbb{P}(W_{\mathbf{L},\mathbf{K}})$ the locus 
of those $[Z]=[Z_1:\ldots:Z_r]$, where $Z_a=(z_{aj})\in \mathbb{C}^{k_a+1}$, 
such that $z_{aj}=0$ if $\begin{pmatrix}
k_a-j+l_a&j+l_a
\end{pmatrix}^t$ is not a (positive) multiple of 
$\begin{pmatrix}
\nu_1&\nu_2
\end{pmatrix}^t$. Then $\mathbb{P}_{\boldsymbol{\nu}}=\emptyset$ unless 
$\boldsymbol{\nu}=\boldsymbol{\nu}_{k_a,j,l_a}$ for some $a=1,\ldots,r$ 
and $j=0,\ldots,k_a$, and each $\mathbb{P}_{\boldsymbol{\nu}_{k_a,j,l_a}}$
is a projective subspace. 
For any $(a,j)$ and $(b,j')$, either $\mathbb{P}_{\boldsymbol{\nu}_{k_a,j,l_a}}=
\mathbb{P}_{\boldsymbol{\nu}_{k_b,j',l_{a'}}}$, or else 
$\mathbb{P}_{\boldsymbol{\nu}_{k_a,j,l_a}}\cap 
\mathbb{P}_{\boldsymbol{\nu}_{k_b,j',l_{a'}}}=\emptyset$;
also, the inverse image in $X_{\boldsymbol{K},\boldsymbol{L}}$
of $\bigcup_{a,j}\mathbb{P}_{\boldsymbol{\nu}_{k_a,j,l_a}}$
is the locus over which $\Psi_{\boldsymbol{K},\boldsymbol{L}}$
is not locally free.

\begin{thm}
\label{thm:general manifold}
In the situation of Theorem \ref{thm:transversality general}, suppose that 
$M\subseteq \mathbb{P}(W_{\mathbf{L},\mathbf{K}})$ is a $G$-invariant projective manifold. 
Consider $\boldsymbol{\nu}\in \mathbb{N}^2\setminus \{\mathbf{0}\}$.
Then the following conditions are equivalent:
\begin{description}
\item[1)] $\Psi_M$ is not transverse to $\mathbb{R}_+\cdot \imath\,\boldsymbol{\nu}$;
\item[2)] $\boldsymbol{\nu}=\boldsymbol{\nu}_{k_a,j,l_a}$ for some $(a,j)$, and 
$M\cap \mathbb{P}_{\boldsymbol{\nu}_{k_a,j,l_a}}\neq \emptyset$.
\end{description}

If, in addition, $\nu_1\neq \nu_2$, then 1) and 2) are equivalent 
to
\begin{description}
\item[3)]  $\Phi_M$ is not transverse to $\mathcal{C}(\mathcal{O}_{\boldsymbol{\nu}})$.
\end{description}

\end{thm}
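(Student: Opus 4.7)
The strategy is to lift everything to the tautological unit circle bundle and reduce the transversality question on $M$ to the corresponding ambient question on $\mathbb{P}(W_{\mathbf{L},\mathbf{K}})$ already analyzed in Theorem \ref{thm:transversality general}. Let $X_M\subseteq X_{\mathbf{K}}$ denote the unit circle bundle of the restriction to $M$ of the tautological bundle, a $G$-invariant submanifold on which $G$ and $T$ act via the restrictions of $\tilde{\phi}_{\mathbf{L},\mathbf{K}}$ and $\tilde{\psi}_{\mathbf{L},\mathbf{K}}$. As in \S 2, transversality of $\Psi_M$ (resp.\ $\Phi_M$) to $\mathbb{R}_+\cdot\imath\,\boldsymbol{\nu}$ (resp.\ $\mathcal{C}(\mathcal{O}_{\boldsymbol{\nu}})$) is equivalent to the local freeness of the induced $T$-action (resp.\ $G$-action) on the preimage in $X_M$. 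The crucial point is that, since $X_M$ is cut out as a $G$-invariant submanifold of $X_{\mathbf{K}}$, the stabilizer of any $Z\in X_M$ in $G$ or $T$ coincides with its stabilizer computed in the ambient space. Hence local freeness at $Z$ for the restricted action is equivalent to local freeness at $Z$ for the ambient action.

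To prove (1)$\Leftrightarrow$(2), one reads off from the discussion preceding Theorem \ref{thm:transversality general} that the non-locally-free locus of $\tilde{\psi}_{\mathbf{L},\mathbf{K}}$ on $X_{\mathbf{K}}$ projects exactly to $\bigcup_{a,j}\mathbb{P}_{\boldsymbol{\nu}_{k_a,j,l_a}}$; moreover, for $[Z]\in\mathbb{P}_{\boldsymbol{\nu}_{k_a,j,l_a}}$ the moment map image $\Psi_{\mathbf{L},\mathbf{K}}([Z])$ is a convex combination of multiples of $\imath\,\boldsymbol{\nu}_{k_a,j,l_a}$ and therefore lies on $\mathbb{R}_+\cdot\imath\,\boldsymbol{\nu}_{k_a,j,l_a}$. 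Intersecting this description with $M$ yields the desired equivalence: $\Psi_M$ fails to be transverse to $\mathbb{R}_+\cdot\imath\,\boldsymbol{\nu}$ iff some $[Z]\in M$ lies in the ambient non-locally-free stratum mapping to that ray, iff $\boldsymbol{\nu}$ lies on one of the critical rays $\mathbb{R}_+\cdot\boldsymbol{\nu}_{k_a,j,l_a}$ and $M\cap\mathbb{P}_{\boldsymbol{\nu}_{k_a,j,l_a}}\neq\emptyset$.

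For the equivalence (1)$\Leftrightarrow$(3) under the hypothesis $\nu_1\neq\nu_2$, one direction follows immediately from $T\leqslant G$: if $T$ acts with non-trivial stabilizer at $Z$, then so does $G$, and since such a $Z\in\mathbb{P}_{\boldsymbol{\nu}_{k_a,j,l_a}}$ automatically satisfies $\Phi_M([Z])\in\mathcal{C}(\mathcal{O}_{\boldsymbol{\nu}})$, non-transversality of $\Psi_M$ forces non-transversality of $\Phi_M$. For the reverse implication, given $Z\in X_M$ with $\tilde{\Phi}_M(Z)\in\mathcal{C}(\mathcal{O}_{\boldsymbol{\nu}})$ one may, after replacing $Z$ by a $G$-translate, assume that $\tilde{\Phi}_M(Z)\in\mathbb{R}_+\cdot\imath\,D_{\boldsymbol{\nu}}$; then Lemma \ref{lem:trasversality criterion} (applicable thanks to $\nu_1\neq\nu_2$) shows that if $T$ acts locally freely at $Z$ then so does $G$. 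Contrapositively, failure of transversality of $\Phi_M$ forces the existence of a point where even $T$ fails to act locally freely, i.e.\ non-transversality of $\Psi_M$.

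The main subtlety—really the only one—lies in verifying that the passage from the ambient space to the submanifold $M$ preserves both the relevant stabilizer data and the compatibility between the moment map image and the stratification by the loci $\mathbb{P}_{\boldsymbol{\nu}_{k_a,j,l_a}}$. Once the local-freeness reformulation is established and the image of each $\mathbb{P}_{\boldsymbol{\nu}_{k_a,j,l_a}}$ under $\Psi_{\mathbf{L},\mathbf{K}}$ is pinned down to the single ray $\mathbb{R}_+\cdot\imath\,\boldsymbol{\nu}_{k_a,j,l_a}$, all three equivalences follow formally.
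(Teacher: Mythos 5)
Your proposal is correct and follows essentially the same route as the paper's proof: reduce transversality to local freeness of the lifted actions on $X_M$ (noting stabilizers are unchanged by restriction to the invariant submanifold), identify the non-locally-free locus with $\bigcup_{a,j}\mathbb{P}_{\boldsymbol{\nu}_{k_a,j,l_a}}$ whose moment image lies on the corresponding ray, and for $\nu_1\neq\nu_2$ pass between the $T$- and $G$-statements via Lemma \ref{lem:trasversality criterion} (equivalently the argument around (\ref{eqn:derivata Phi})--(\ref{eqn:xieta Phi-derivative})) after moving $Z$ within its $G$-orbit so that $\tilde{\Phi}_M(Z)$ is diagonal. No gaps worth flagging.
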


\begin{proof}
[Proof of Theorem \ref{thm:general manifold}]
Let $X_M\subseteq X$ be the inverse image of $M$ in $X_{\mathbf{L},\mathbf{K}}$; thus, $X_M$ is the circle bundle of the induced polarization. Then
$(X_M)_{\boldsymbol{\nu}}^G=(X_{\mathbf{L},\mathbf{K}})^G_{\boldsymbol{\nu}}\cap X_M$ \textit{etc}. 
Let us denote by $\tilde{\phi}_M$ and $\tilde{\psi}_M$, respectively, 
the restrictions of $\tilde{\phi}_{\mathbf{L},\mathbf{K}}$ and 
$\tilde{\psi}_{\mathbf{L},\mathbf{K}}$ to $X_M$.

Let us prove the equivalence of 1) and 2). 

As recalled above,
$\Psi_M$ is not transverse to $\mathbb{R}_+\cdot \imath\,\boldsymbol{\nu}$
if and only if there exists $Z\in (X_M)^T_{\boldsymbol{\nu}}$ such that 
$\tilde{\psi}_M$ is not locally free at $Z$, that is, such that $\tilde{\psi}_{\mathbf{L},\mathbf{K}}$
is not locally free at $Z$. On the other hand, the previous discussion shows that
$\tilde{\psi}_{\mathbf{L},\mathbf{K}}$ is not locally free at $Z$ if and only if
$[Z]\in \mathbb{P}_{\boldsymbol{\nu}_{a,j}}$ for some $(a,j)$, and that
if this happens then $\Psi_M([Z])=\Psi_{\mathbf{K},\mathbf{L}}([Z])$
is a positive multiple of $\imath\,\boldsymbol{\nu}_{a,j}$.

Let us assume that $\nu_1\neq \nu_2$, and prove the equivalence with 3).

Suppose that 2) holds, and suppose $Z\in X_M$, 
$[Z]\in M\cap \mathbb{P}_{a,j}$. 
Then $\tilde{\psi}_M$ is not locally free at $Z$, and therefore
\textit{a fortiori}
neither is $\tilde{\phi}_M$.
Furthermore, 
by the previous discussion $\Phi_M([Z])$ is a positive multiple of
$\imath\,D_{\boldsymbol{\nu}_{a,j}}$, so
$Z\in (X_M)^G_{\boldsymbol{\nu}_{a,j}}$. 
Hence 3) holds.

Conversely, suppose that 3) holds. Then
there exists $Z\in (X_M)^G_{\mathcal{O}_{\boldsymbol{\nu}}}$ such 
that $\tilde{\phi}_M$ is not
locally free at $Z$; perhaps after replacing $Z$ in its orbit, we may assume
without loss that $\tilde{\Phi}_M(Z)$ is diagonal, that is, 
$Z\in (X_M)^G_{\boldsymbol{\nu}}=(X_M)^T_{\boldsymbol{\nu}}\cap 
(X_M)^G_{\mathcal{O}}$. If $\tilde{\psi}_M$ was locally free at $Z$, then an argument
in the proof of Theorem \ref{thm:trasnversality1}
(see (\ref{eqn:derivata Phi}) and (\ref{eqn:xieta Phi-derivative}))
would imply that $\tilde{\phi}_M$ is itself locally free at $Z$, absurd.
Hence $\tilde{\psi}$ is not locally free at $Z$, and therefore
$[Z]\in \mathbb{P}_{a,j}$ for some $a,j$, and $\Phi_M([Z])$ is ap positive multiple of $\imath\,D_{\boldsymbol{\nu}_{a,j}}$. Hence 1) and 2) hold.

\end{proof}

\section{$\overline{M}^T_{\boldsymbol{\nu}}$}

We shall assume in this section that
$\mathbf{0}\not\in \Psi (M)$, and that 
both $\Psi$ and $\Phi$ are transverse to $\mathbb{R}_+\cdot \imath\,\boldsymbol{\nu}$,
where $\nu_1>\nu_2$.
Then
$M^T_{\boldsymbol{\nu}}\subset M$ is a
smooth compact connected $T$-invariant hypersurface; 
furthermore, $M^G_{\boldsymbol{\nu}}:=\Phi^{-1}(\mathbb{R}_+\cdot \imath\,\boldsymbol{\nu})\subset M^T_{\boldsymbol{\nu}}$ is a smooth, compact and connected 
$T$-invariant submanifold 
of real codimension two (three in $M$) \cite{gp}. 
In \S \ref{sctn:kahler structure}, $M$ is not assumed to
be projective.

\subsection{The K\"{a}hler structure of $\overline{M}^T_{\boldsymbol{\nu}}$}
\label{sctn:kahler structure}


The $1$-parameter subgroup 
\begin{equation}
\label{eqn:T1nu parametrized}
T^1_{\boldsymbol{\nu}_\perp}:=\left\{\,\kappa_{\boldsymbol{\nu}}\left(
e^{\imath\vartheta}\right)\,:\,e^{\imath\,\vartheta}\in S^1\right\},
\quad
\kappa_{\boldsymbol{\nu}}\left(
e^{\imath\vartheta}\right):=
\begin{pmatrix}
e^{-\imath\,\nu_2\,\vartheta}&0\\
0&e^{\imath\,\nu_1\,\vartheta}
\end{pmatrix}
\end{equation}
acts locally freely on $M^T_{\boldsymbol{\nu}}$; its orbits are the
leaves of the null foliation of $M^T_{\boldsymbol{\nu}}$. 
If $\nu_1$ and $\nu_2$ are coprime, as we may assume
without loss, 
$\kappa_{\boldsymbol{\nu}}:S^1\rightarrow T^1_{\boldsymbol{\nu}_\perp}$ in (\ref{eqn:T1nu parametrized}) 
is a Lie group isomorphism. 

Let us set
\begin{equation}
\label{eqn:MTnu MGnu}
\overline{M}^T_{\boldsymbol{\nu}}:=M^T_{\boldsymbol{\nu}}/T^1_{\boldsymbol{\nu}_\perp},
\qquad\overline{M}^G_{\boldsymbol{\nu}}:=
M^G_{\boldsymbol{\nu}}/T^1_{\boldsymbol{\nu}_\perp}\subset \overline{M}^T_{\boldsymbol{\nu}}.
\end{equation}
Then $\overline{M}^T_{\boldsymbol{\nu}}$
is an orbifold of (real) dimension $2\,(d-1)$, 
and $\overline{M}^G_{\boldsymbol{\nu}}\subset \overline{M}^T_{\boldsymbol{\nu}}$ is a suborbifold of real codimension two,
meaning that the topological embedding $\overline{M}^G_{\boldsymbol{\nu}}\subset \overline{M}^T_{\boldsymbol{\nu}}$ can be lifted to an embedding of local slices.
We shall let $q_{\boldsymbol{\nu}}:M^T_{\boldsymbol{\nu}}\rightarrow\overline{M}^T_{\boldsymbol{\nu}}$
denote the projection.

\begin{defn}
\label{defn:psi nu perp}
$\psi_{\boldsymbol{\nu}_\perp}$ is the action of 
$T^1_{\boldsymbol{\nu}_\perp}$ on $M$ given by restriction of $\psi$.
\end{defn}
 
By means of $\kappa_{\boldsymbol{\nu}}$, we shall view 
$\psi_{\boldsymbol{\nu}_\perp}$ as a Hamiltonian $S^1$-action,
with moment map
$\Psi_{\boldsymbol{\nu}_\perp}:=\langle\Psi,\boldsymbol{\nu}_\perp\rangle$.
The proof of the following is left to the reader:

\begin{lem}
\label{lem:0 regular value nuperp} 
Given that $\Psi$ is transverse to $\mathbb{R}_+\cdot \imath\,\boldsymbol{\nu}$, 
$0$ is a regular value of $\Psi_{\boldsymbol{\nu}_\perp}$, and
$M^T_{\boldsymbol{\nu}}={\Psi_{\boldsymbol{\nu}_\perp}}^{-1}(0)$.
\end{lem}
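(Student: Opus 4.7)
The statement has two parts: the set-theoretic equality $M^T_{\boldsymbol{\nu}}=\Psi_{\boldsymbol{\nu}_\perp}^{-1}(0)$, and the regularity of $0$ for $\Psi_{\boldsymbol{\nu}_\perp}$. The plan is to derive both directly from the definition of $\Psi_{\boldsymbol{\nu}_\perp}$, the transversality hypothesis, and the standing assumption $\mathbf{0}\not\in\Psi(M)$, with one invocation of the moment polytope's convexity.

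First I would establish the set equality. Via the identification $\mathfrak{t}\cong \imath\,\mathbb{R}^2$ with the inner product inherited from $\langle\beta_1,\beta_2\rangle=\mathrm{trace}(\beta_1\overline{\beta}_2^t)$, the vectors $\imath\,\boldsymbol{\nu}$ and $\imath\,\boldsymbol{\nu}_\perp$ form an orthogonal basis of $\mathfrak{t}$. Hence for any $m\in M$, $\Psi_{\boldsymbol{\nu}_\perp}(m)=\langle\Psi(m),\imath\,\boldsymbol{\nu}_\perp\rangle$ vanishes iff $\Psi(m)\in\mathbb{R}\cdot\imath\,\boldsymbol{\nu}$. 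The inclusion $M^T_{\boldsymbol{\nu}}\subseteq\Psi_{\boldsymbol{\nu}_\perp}^{-1}(0)$ is immediate. For the reverse inclusion, if $\Psi_{\boldsymbol{\nu}_\perp}(m)=0$ then $\Psi(m)=\imath\,t\,\boldsymbol{\nu}$ for some $t\in\mathbb{R}$, with $t\neq 0$ since $\mathbf{0}\not\in\Psi(M)$. To rule out $t<0$, I would invoke the Atiyah--Guillemin--Sternberg convexity theorem: $\Psi(M)$ is a convex polytope in $\imath\,\mathbb{R}^2$, so if it met both open rays $\mathbb{R}_\pm\cdot\imath\,\boldsymbol{\nu}$ the segment joining corresponding preimages would contain $\mathbf{0}$, contradicting $\mathbf{0}\not\in\Psi(M)$. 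Therefore $\Psi(M)\cap\mathbb{R}\cdot\imath\,\boldsymbol{\nu}\subseteq\mathbb{R}_+\cdot\imath\,\boldsymbol{\nu}$, yielding $\Psi_{\boldsymbol{\nu}_\perp}^{-1}(0)\subseteq M^T_{\boldsymbol{\nu}}$.

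Next, fix $m\in\Psi_{\boldsymbol{\nu}_\perp}^{-1}(0)=M^T_{\boldsymbol{\nu}}$ and compute $d_m\Psi_{\boldsymbol{\nu}_\perp}(v)=\langle d_m\Psi(v),\imath\,\boldsymbol{\nu}_\perp\rangle$. Since $\dim\mathfrak{t}=2$, the transversality condition $d_m\Psi(T_mM)+\mathbb{R}\cdot\imath\,\boldsymbol{\nu}=\mathfrak{t}$ is equivalent to $d_m\Psi(T_mM)\not\subseteq(\imath\,\boldsymbol{\nu}_\perp)^\perp=\mathbb{R}\cdot\imath\,\boldsymbol{\nu}$, which in turn is equivalent to $d_m\Psi_{\boldsymbol{\nu}_\perp}\neq 0$. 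Thus every point of the level set $\Psi_{\boldsymbol{\nu}_\perp}^{-1}(0)$ is regular.

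The only conceptual step beyond a routine two-dimensional linear-algebra unwrapping is the convexity argument needed to promote $\Psi(m)\in\mathbb{R}\cdot\imath\,\boldsymbol{\nu}$ to $\Psi(m)\in\mathbb{R}_+\cdot\imath\,\boldsymbol{\nu}$; I expect this to be the only mildly nontrivial point, and it is immediate from the Atiyah--Guillemin--Sternberg theorem combined with $\mathbf{0}\not\in\Psi(M)$.
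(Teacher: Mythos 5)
Your proof is correct, and since the paper leaves this lemma to the reader it is precisely the argument the text implicitly expects: orthogonality of $\imath\,\boldsymbol{\nu}$ and $\imath\,\boldsymbol{\nu}_\perp$ in $\mathfrak{t}$, the Atiyah--Guillemin--Sternberg convexity of $\Psi(M)$ combined with $\mathbf{0}\not\in\Psi(M)$ to fix the sign, and the two-dimensional transversality unwrapping showing $\mathrm{d}_m\Psi_{\boldsymbol{\nu}_\perp}\neq 0$ at every point of the zero level. The one point to make explicit is that convexity only excludes $\Psi(M)$ meeting \emph{both} open rays $\mathbb{R}_\pm\cdot\imath\,\boldsymbol{\nu}$, so to pass to $\Psi(M)\cap\mathbb{R}\cdot\imath\,\boldsymbol{\nu}\subseteq\mathbb{R}_+\cdot\imath\,\boldsymbol{\nu}$ you should also invoke the section's standing assumption that $M^T_{\boldsymbol{\nu}}=\Psi^{-1}(\mathbb{R}_+\cdot\imath\,\boldsymbol{\nu})$ is a nonempty hypersurface (if only the negative ray were met, the asserted equality would itself fail), after which your regularity step correctly applies the transversality hypothesis only at points lying over the positive ray.
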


As an orbifold, $\overline{M}^T_{\boldsymbol{\nu}}$ coincides with the symplectic quotient 
(symplectic reduction at $\mathbf{0}$)
$M//T^1_{\boldsymbol{\nu}_\perp}$. Hence it inherits a reduced
K\"{a}hler orbifold structure $\left(\overline{M}^T_{\boldsymbol{\nu}},\,J_{\overline{M}^T_{\boldsymbol{\nu}}},\,\omega_{\overline{M}^T_{\boldsymbol{\nu}}}\right) $.

As mentioned in the introduction, $\overline{M}^G_{\boldsymbol{\nu}}$ may also be viewed as a symplectic quotient, namely $\overline{M}^G_{\boldsymbol{\nu}}=Y//T^1_{\boldsymbol{\nu}_\perp}$, 
where $Y\subset M$ is the \lq symplectic cross section\rq\, discussed in \cite{gs cp1}. 
Hence $\overline{M}^G_{\boldsymbol{\nu}}$ also carries a symplectic orbifold structure $(\overline{M}^G_{\boldsymbol{\nu}},\,\omega_{\overline{M}^G_{\boldsymbol{\nu}}})$.
Since both $\omega_{\overline{M}^G_{\boldsymbol{\nu}}}$
and $\omega_{\overline{M}^T_{\boldsymbol{\nu}}}$ are both induced from $\omega$, 
$(\overline{M}^G_{\boldsymbol{\nu}},\,\omega_{\overline{M}^G_{\boldsymbol{\nu}}})$ is a symplectic suborbifold
of 
$(\overline{M}^T_{\boldsymbol{\nu}},\,\omega_{\overline{M}^T_{\boldsymbol{\nu}}})$. 

The $T$-invariant direct sum decomposition $\mathfrak{g}=\mathfrak{t}\oplus \mathfrak{a}$
determines a splitting $\Phi=\Psi \oplus \Upsilon':M\rightarrow \mathfrak{g}$,
where both $\Psi:M\rightarrow \mathfrak{t}$ and 
$\Upsilon':M\rightarrow \mathfrak{a}$
are $T$-equivariant (notation is as in (\ref{eqn:defn of xieta})).
By restriction we obtain a $T$-equivariant smooth map
\begin{equation}
\label{eqn:Upsilon}
\Upsilon:=\left.\Upsilon'\right|_{•M^T_{\boldsymbol{\nu}}}:M^T_{\boldsymbol{\nu}}\rightarrow \mathfrak{a}.
\end{equation}
Since
\begin{equation}
\label{eqn:action Ta}
\begin{pmatrix}
e^{\imath\,\vartheta_1}&0\\
0&e^{\imath\,\vartheta_2}
\end{pmatrix}  \,\imath\,
\begin{pmatrix}
a&z\\
\overline{z}&b
\end{pmatrix}\,
\begin{pmatrix}
e^{-\imath\,\vartheta_1}&0\\
0&e^{-\imath\,\vartheta_2}
\end{pmatrix}  
=
\imath\,\begin{pmatrix}
a&e^{\imath\,(\vartheta_1-\vartheta_2)}\,z \\
e^{-\imath\,(\vartheta_1-\vartheta_2)}\,\overline{z}&b
\end{pmatrix},
\end{equation}
identifying $\mathfrak{a}\cong \mathbb{C}$ by the parameter
$z$ in (\ref{eqn:action Ta}), we may interpret $\Upsilon$ as a 
map $M^T_{\boldsymbol{\nu}}\rightarrow \mathbb{C}$ with the equivariance property
\begin{equation}
\label{eqn:covariance upsilon}
\Upsilon\circ \psi_{D(\vartheta_1,\vartheta_2)^{-1}}
=e^{-\imath\,(\vartheta_1-\vartheta_2)}\,
\Upsilon,
\end{equation}
where $D(\vartheta_1,\vartheta_2)\in T$ is the diagonal matrix 
with entries $e^{\imath\,\vartheta_j}$.

By Theorem 1.2 of \cite{gp},
$M^T_{\boldsymbol{\nu}}\cap M^G_{\mathcal{O}}=
M^G_{\boldsymbol{\nu}}$, and the intersection is tangential, that is,
$T_mM^T_{\boldsymbol{\nu}}=T_mM^G_{\mathcal{O}}\subset T_mM$ if $m\in M^G_{\boldsymbol{\nu}}$.
Since $M^G_{\mathcal{O}}$ is $G$-invariant, for any $\beta\in \mathfrak{g}$
the vector field $\beta_M\in \mathfrak{X}(M)$ induced by $\beta$ is tangent to
$M^G_{\mathcal{O}}$. 
Hence, if $m\in M^G_{\boldsymbol{\nu}}$ then
$\beta_M(m)\in T_mM^T_{\boldsymbol{\nu}}$.
Therefore,
$\mathfrak{a}_M(m)\subset T_mM^T_{\boldsymbol{\nu}}$ for any $m\in M^G_{\boldsymbol{\nu}}$. 
The argument used for (\ref{eqn:xieta Phi-derivative}), and the remark that 
$M^G_{\boldsymbol{\nu}}=\Upsilon^{-1}(0)$,
imply the following.

\begin{lem}
\label{lem:Upsilon}
Under the previous assumptions, we have:
\begin{enumerate}
\item $\mathrm{d}_m\Upsilon\big(\mathfrak{a}_M(m)\big)=\mathfrak{a}$,
$\forall \,m\in M^T_{\boldsymbol{\nu}}$;
\item $0$ is a regular value of
$\Upsilon$;
\item we have a $T$-equivariant direct sum decomposition
\begin{equation}
\label{eqn:direct sum splitting}
T_mM^T_{\boldsymbol{\nu}}=T_mM^G_{\boldsymbol{\nu}}\oplus \mathfrak{a}_M(m),\quad
\forall\,m\in M^G_{\boldsymbol{\nu}}.
\end{equation}
\end{enumerate}

\end{lem}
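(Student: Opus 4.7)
The strategy is to exploit the $G$-equivariance relation $\mathrm{d}_m \Phi (\beta_M(m)) = [\beta, \Phi(m)]$ (cf.\ (\ref{eqn:derivata Phi})), together with the bracket structure of the $T$-invariant decomposition $\mathfrak{g} = \mathfrak{t} \oplus \mathfrak{a}$. The key algebraic facts are $[\mathfrak{t}, \mathfrak{a}] \subseteq \mathfrak{a}$ (since $\mathfrak{a}$ is $\mathrm{Ad}(T)$-invariant) and, by a direct computation from (\ref{eqn:defn of xieta}), $[\mathfrak{a}, \mathfrak{a}] \subseteq \mathfrak{t}$.

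For (1), since $\Upsilon' = \pi_{\mathfrak{a}} \circ \Phi$, for $\beta \in \mathfrak{a}$ and any $m \in M$ one has $\mathrm{d}_m \Upsilon'(\beta_M(m)) = \pi_{\mathfrak{a}}([\beta, \Phi(m)])$. Decomposing $\Phi(m) = \Psi(m) + \Upsilon'(m)$, the bracket $[\beta, \Upsilon'(m)]$ lies in $[\mathfrak{a}, \mathfrak{a}] \subseteq \mathfrak{t}$, so its $\mathfrak{a}$-component vanishes, yielding
\[
\mathrm{d}_m \Upsilon'(\beta_M(m)) = [\beta, \Psi(m)].
\]
At a point $m \in M^T_{\boldsymbol{\nu}}$ one has $\Psi(m) = \lambda\,\imath\,D_{\boldsymbol{\nu}}$ with $\lambda > 0$, and the very same calculation as in (\ref{eqn:xieta Phi-derivative}) gives $[\eta, \imath D_{\boldsymbol{\nu}}] = (\nu_2 - \nu_1)\,\xi$ and $[\xi, \imath D_{\boldsymbol{\nu}}] = (\nu_1 - \nu_2)\,\eta$. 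Since $\nu_1 > \nu_2$, the map $\beta \mapsto \mathrm{d}_m \Upsilon'(\beta_M(m))$ is a linear isomorphism $\mathfrak{a} \to \mathfrak{a}$, proving (1) (with $\mathrm{d}_m \Upsilon$ interpreted through the ambient extension $\Upsilon'$).

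For (2) and (3), I would first show that $\mathfrak{a}_M(m) \subseteq T_m M^T_{\boldsymbol{\nu}}$ at every $m \in M^G_{\boldsymbol{\nu}}$. Indeed, applying $\pi_{\mathfrak{t}}$ to the same identity gives $\mathrm{d}_m \Psi(\beta_M(m)) = [\beta, \Upsilon(m)]$ (since $[\beta, \Psi(m)] \in \mathfrak{a}$), and at $m \in M^G_{\boldsymbol{\nu}} = \Upsilon^{-1}(0)$ this vanishes; hence $\beta_M(m) \in \ker(\mathrm{d}_m \Psi) \subseteq T_m M^T_{\boldsymbol{\nu}}$. Combined with (1), this implies that $\mathrm{d}_m \Upsilon : T_m M^T_{\boldsymbol{\nu}} \to \mathfrak{a}$ is surjective, so $0$ is a regular value of $\Upsilon$ and $T_m M^G_{\boldsymbol{\nu}} = \ker(\mathrm{d}_m \Upsilon)$. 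Moreover the restriction $\mathrm{d}_m \Upsilon|_{\mathfrak{a}_M(m)}$ is an isomorphism onto $\mathfrak{a}$, which forces $\mathfrak{a}_M(m) \cap T_m M^G_{\boldsymbol{\nu}} = 0$ and, by a dimension count, $T_m M^T_{\boldsymbol{\nu}} = T_m M^G_{\boldsymbol{\nu}} \oplus \mathfrak{a}_M(m)$. $T$-equivariance of the decomposition follows from $T$-invariance of $M^G_{\boldsymbol{\nu}}$ and the standard formula $\mathrm{d}_m \psi_t(\beta_M(m)) = (\mathrm{Ad}_t\beta)_M(\psi_t(m))$, which preserves $\mathfrak{a}_M(\cdot)$ since $\mathrm{Ad}_T \mathfrak{a} = \mathfrak{a}$.

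The main technical subtlety is that for $m \in M^T_{\boldsymbol{\nu}} \setminus M^G_{\boldsymbol{\nu}}$ the vectors $\beta_M(m)$ with $\beta \in \mathfrak{a}$ are typically not tangent to $M^T_{\boldsymbol{\nu}}$, so statement (1) must be interpreted through the ambient extension $\Upsilon'$; the bracket identity $[\mathfrak{a}, \mathfrak{a}] \subseteq \mathfrak{t}$ is precisely what decouples the derivative from the off-diagonal value $\Upsilon(m)$ and makes the argument uniform in $m$.
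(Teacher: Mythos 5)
Your proof is correct, and its computational core --- the equivariance identity $\mathrm{d}_m\Phi\big(\beta_M(m)\big)=[\beta,\Phi(m)]$ combined with $[\eta,\imath D_{\boldsymbol{\nu}}]=(\nu_2-\nu_1)\,\xi$ and $[\xi,\imath D_{\boldsymbol{\nu}}]=(\nu_1-\nu_2)\,\eta$ at points where the relevant moment map value is a positive multiple of $\imath D_{\boldsymbol{\nu}}$ --- is exactly the argument the paper invokes (the one behind (\ref{eqn:xieta Phi-derivative})). Where you genuinely diverge is the tangency statement $\mathfrak{a}_M(m)\subseteq T_mM^T_{\boldsymbol{\nu}}$ for $m\in M^G_{\boldsymbol{\nu}}$: the paper obtains it from Theorem 1.2 of \cite{gp}, i.e.\ the fact that $M^T_{\boldsymbol{\nu}}$ and $M^G_{\mathcal{O}}$ meet tangentially along $M^G_{\boldsymbol{\nu}}$, together with the $G$-invariance of $M^G_{\mathcal{O}}$ (so $\beta_M(m)\in T_mM^G_{\mathcal{O}}=T_mM^T_{\boldsymbol{\nu}}$ there), whereas you derive it directly from $\mathrm{d}_m\Psi\big(\beta_M(m)\big)=\pi_{\mathfrak{t}}\big([\beta,\Phi(m)]\big)=[\beta,\Upsilon'(m)]=0$ on $\Upsilon^{-1}(0)$, using $[\mathfrak{a},\mathfrak{a}]\subseteq\mathfrak{t}$ and the description $T_mM^T_{\boldsymbol{\nu}}=(\mathrm{d}_m\Psi)^{-1}(\mathbb{R}\,\imath\boldsymbol{\nu})\supseteq\ker(\mathrm{d}_m\Psi)$ furnished by the standing transversality of $\Psi$. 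Your route is more self-contained, since it does not lean on the tangential-intersection result but only on the bracket relations $[\mathfrak{t},\mathfrak{a}]\subseteq\mathfrak{a}$, $[\mathfrak{a},\mathfrak{a}]\subseteq\mathfrak{t}$; the paper's is shorter given that Theorem 1.2 of \cite{gp} is already part of its standing setup. Your closing caveat is also a fair reading of item 1: for $m\in M^T_{\boldsymbol{\nu}}\setminus M^G_{\boldsymbol{\nu}}$ the vectors $\beta_M(m)$, $\beta\in\mathfrak{a}$, are in general not tangent to $M^T_{\boldsymbol{\nu}}$ (since $[\beta,\Upsilon'(m)]$ need not lie in $\mathbb{R}\,\imath\boldsymbol{\nu}$), so the differential there must indeed be read through the $T$-equivariant ambient extension $\Upsilon'$; what items 2 and 3 actually require is the statement at points of $M^G_{\boldsymbol{\nu}}$, where tangency does hold, and your surjectivity, trivial-intersection and dimension-count conclusions there, as well as the $T$-equivariance via $\mathrm{Ad}_T\,\mathfrak{a}=\mathfrak{a}$, are all correct.
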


\begin{lem}
\label{lem:symplectic direct sum}
The summands on the right hand side of (\ref{eqn:direct sum splitting}) are symplectically orthogonal.
\end{lem}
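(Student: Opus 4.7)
The plan is to reduce the symplectic orthogonality to a simple linear-algebraic fact about $\mathfrak{g}=\mathfrak{t}\oplus\mathfrak{a}$ being an orthogonal decomposition with respect to the trace inner product used to identify $\mathfrak{g}\cong\mathfrak{g}^\vee$. The key ingredient is the moment map equation together with the fact that $M^G_{\boldsymbol{\nu}}$ is cut out by $\Phi$ from a ray lying entirely in $\mathfrak{t}$.

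More concretely, fix $m\in M^G_{\boldsymbol{\nu}}$, a tangent vector $v\in T_mM^G_{\boldsymbol{\nu}}$, and a vector $\beta\in\mathfrak{a}$, so that $\beta_M(m)\in\mathfrak{a}_M(m)$ is an arbitrary element of the second summand. First I would apply the defining property of the moment map for the $G$-action to the function $\Phi^\beta:=\langle\Phi,\beta\rangle$, which gives
\[
\omega_m\bigl(\beta_M(m),v\bigr)\;=\;\mathrm{d}_m\Phi^\beta(v)\;=\;\bigl\langle\mathrm{d}_m\Phi(v),\,\beta\bigr\rangle.
\]
Next I would observe that, by definition, $M^G_{\boldsymbol{\nu}}=\Phi^{-1}(\mathbb{R}_+\cdot\imath\boldsymbol{\nu})$, so that $v$ being tangent to $M^G_{\boldsymbol{\nu}}$ at $m$ forces
\[
\mathrm{d}_m\Phi(v)\;\in\;T_{\Phi(m)}\bigl(\mathbb{R}_+\cdot\imath\,D_{\boldsymbol{\nu}}\bigr)\;=\;\mathbb{R}\cdot\imath\,D_{\boldsymbol{\nu}}\;\subset\;\mathfrak{t}.
\]
Finally, I would use that $\mathfrak{t}$ and $\mathfrak{a}$ (spanned by $\eta$ and $\xi$ as in (\ref{eqn:defn of xieta})) are mutually orthogonal for the inner product $\langle\beta_1,\beta_2\rangle=\mathrm{trace}(\beta_1\overline{\beta_2}^t)$, whence $\langle\mathrm{d}_m\Phi(v),\beta\rangle=0$ and consequently $\omega_m(\beta_M(m),v)=0$.

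Since $v\in T_mM^G_{\boldsymbol{\nu}}$ and $\beta\in\mathfrak{a}$ were arbitrary, this shows that the two summands of (\ref{eqn:direct sum splitting}) pair trivially under $\omega$, establishing the claim. There is no serious obstacle here: the argument is a two-line application of the moment map identity, and the only thing to notice is that the ray $\mathbb{R}_+\cdot\imath\boldsymbol{\nu}$ defining $M^G_{\boldsymbol{\nu}}$ lies inside the torus Lie algebra $\mathfrak{t}$, which is exactly what makes the $\mathfrak{a}$-directions symplectically orthogonal to $T_mM^G_{\boldsymbol{\nu}}$.
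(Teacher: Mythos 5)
Your proposal is correct and follows essentially the same route as the paper: the paper likewise notes that the $\mathfrak{a}$-component of $\Phi$ (the off-diagonal entry $z$, i.e. the Hamiltonians $\Phi^\eta,\Phi^\xi$) vanishes identically along $M^G_{\boldsymbol{\nu}}$, so its differential annihilates $T_mM^G_{\boldsymbol{\nu}}$, and the moment-map identity then gives $\omega_m(\beta_M(m),v)=0$ for $\beta\in\mathfrak{a}$. Your phrasing via $\mathrm{d}_m\Phi(v)\in\mathfrak{t}$ together with $\mathfrak{t}\perp\mathfrak{a}$ is just a coordinate-free repackaging of the same computation.
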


\begin{proof}
[Proof of Lemma \ref{lem:symplectic direct sum}]
Let us consider the Hamiltonian functions $\Phi^\eta :=\langle\Phi,\eta\rangle$
and $\Phi^\xi:=
\langle \Phi,\xi\rangle$.
Explicitly, if 
$$
\Phi =\imath\,\begin{pmatrix}
a&z\\
\overline{z}&b
\end{pmatrix},
$$
where $a,\,b:M\rightarrow \mathbb{C}$ and $z:M\rightarrow \mathbb{C}$ are 
$\mathcal{C}^\infty$, then 
$
\Phi^\eta=-2\,\Im (z)$, $\Phi^\xi = 2\,\Re (z)$.

By definition of $M^G_{\boldsymbol{\nu}}$, $z$ vanishes identically on $M^G_{\boldsymbol{\nu}}$;
therefore, for any $(m,v)\in M^G_{\boldsymbol{\nu}}$ we have
$$
0=\mathrm{d}_m\Phi^\eta \big(  v  \big)=\omega_m\big(\eta_M(m),v\big),
$$
and similarly for $\xi$.
\end{proof}

\begin{cor}
\label{cor:aM symplectic}
$\mathfrak{a}_M(m)\subseteq T_mM$ is a symplectic vector subspace, 
$\forall\,m\in M^G_{\boldsymbol{\nu}}$.

\end{cor}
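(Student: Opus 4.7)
The plan is to show that the $2$-dimensional subspace $\mathfrak{a}_M(m)$ is symplectic by a direct computation of $\omega_m\big(\eta_M(m),\xi_M(m)\big)$.

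First, I would record that $\mathfrak{a}_M(m)$ is indeed $2$-dimensional: by Lemma \ref{lem:Upsilon}(1), the restriction of $\mathrm{d}_m\Upsilon$ to $\mathfrak{a}_M(m)$ surjects onto $\mathfrak{a}\cong \mathbb{R}^2$, hence the evaluation map $\mathfrak{a}\to \mathfrak{a}_M(m)$, $\beta\mapsto \beta_M(m)$, is injective. Since $\dim \mathfrak{a}_M(m)=2$, the form $\omega_m|_{\mathfrak{a}_M(m)}$ is non-degenerate iff $\omega_m\big(\eta_M(m),\xi_M(m)\big)\neq 0$.

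Next I would invoke the moment map identity $\iota_{\beta_M}\omega = \mathrm{d}\Phi^\beta$ together with equivariance $\mathrm{d}_m\Phi\big(\beta_M(m)\big)=\big[\beta,\Phi(m)\big]$ (cf.\ (\ref{eqn:derivata Phi})). Since $m\in M^G_{\boldsymbol{\nu}}$, we have $\Phi(m)=r\,\imath\,D_{\boldsymbol{\nu}}$ for some $r>0$. A direct computation of the bracket, exactly as in (\ref{eqn:xieta Phi-derivative}), gives
\begin{equation*}
\big[\xi,\imath\,D_{\boldsymbol{\nu}}\big]=(\nu_1-\nu_2)\,\eta,\qquad \big[\eta,\imath\,D_{\boldsymbol{\nu}}\big]=-(\nu_1-\nu_2)\,\xi,
\end{equation*}
so that $\mathrm{d}_m\Phi\big(\xi_M(m)\big)=r\,(\nu_1-\nu_2)\,\eta$. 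Consequently,
\begin{equation*}
\omega_m\big(\eta_M(m),\xi_M(m)\big)=\mathrm{d}_m\Phi^{\eta}\big(\xi_M(m)\big)=r\,(\nu_1-\nu_2)\,\langle \eta,\eta\rangle=2\,r\,(\nu_1-\nu_2),
\end{equation*}
using the identification $\mathfrak{g}\cong\mathfrak{g}^\vee$ and the fact that $\langle\eta,\eta\rangle=\mathrm{trace}(\eta\,\overline{\eta}^t)=2$.

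Since $\nu_1>\nu_2$ and $r>0$, this quantity is non-zero, proving that $\omega_m$ restricts to a non-degenerate (hence symplectic) form on $\mathfrak{a}_M(m)$. There is no real obstacle here: the statement is essentially a direct consequence of the bracket calculation already carried out in (\ref{eqn:xieta Phi-derivative}), combined with the symplectic orthogonality established in Lemma \ref{lem:symplectic direct sum}. In fact, given that decomposition and the symplectic nature of $T_mM^T_{\boldsymbol{\nu}}\big/\mathrm{null\ foliation}$, one could alternatively deduce the corollary by dimension count once one knows that $T_mM^G_{\boldsymbol{\nu}}$ has the expected rank; but the explicit bracket computation is the cleanest route.
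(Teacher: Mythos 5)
Your proof is correct and follows essentially the same route as the paper: the paper's own (explicit) argument likewise reduces the claim to showing $\omega_m\big(\eta_M(m),\xi_M(m)\big)\neq 0$ and evaluates it via equivariance as $\lambda(m)\,(\nu_1-\nu_2)\,\langle\eta,\eta\rangle>0$, exactly your bracket computation with $\Phi(m)=\imath\,\lambda(m)\,D_{\boldsymbol{\nu}}$. Your preliminary dimension remark via Lemma \ref{lem:Upsilon} is harmless but not needed, since the non-vanishing of the pairing already forces $\mathfrak{a}_M(m)$ to be a $2$-dimensional symplectic subspace.
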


\begin{proof}
[Proof of Corollary \ref{cor:aM symplectic}]
This follows immediately from Lemma \ref{lem:symplectic direct sum}.
Alternatively, we  need to show that
$\omega_m\big(\eta_M(m),\xi_M(m)\big)\neq 0$. For
$m\in M^G_{\boldsymbol{\nu}}$, we have
$\Phi (m)=\imath\,\lambda (m)\,D_{\boldsymbol{\nu}}$ where
$\lambda (m)>0$.
Arguing as for (\ref{eqn:xieta Phi-derivative}) we obtain
\begin{equation}
\label{eqn:xieta Phi-derivative1}
\omega_m\big(\eta_M(m),\xi_M(m)\big)=\big\langle\mathrm{d}_m\Phi\big(\xi_{M}(m)\big),
\eta\big\rangle 
=\lambda (m)\,(\nu_1-\nu_2)\,\langle \eta,\eta\rangle >0.
\end{equation}
\end{proof}

\begin{defn}
\label{defn:stabilizzatori}
If $m\in M^T_{\boldsymbol{\nu}}$, 
$F_m \leqslant T^1_{\boldsymbol{\nu}^\perp}$ denotes its stabilizer subgroup
for $\psi_{\boldsymbol{\nu}_\perp}$ (Definition \ref{defn:psi nu perp}).
Furthermore, $F_{\boldsymbol{\nu}}\leqslant T^1_{\boldsymbol{\nu}^\perp}$ 
denotes the stabilizer for $\psi_{\boldsymbol{\nu}_\perp}$ 
of a general $m\in M^T_{\boldsymbol{\nu}^\perp}$.
\end{defn}

Hence, 
$F_{\boldsymbol{\nu}}\leqslant F_m$,
$\forall\,m\in M^T_{\boldsymbol{\nu}}$.

\begin{lem}
\label{lem:generic stabilizer}
If $m\in M^T_{\boldsymbol{\nu}}\setminus M^G_{\boldsymbol{\nu}}$,
then $F_m\leqslant T^1_{\boldsymbol{\nu}^\perp}\cap Z(G)$.
In particular, 
$F_{\boldsymbol{\nu}}\leqslant T^1_{\boldsymbol{\nu}^\perp}\cap Z(G)$.
\end{lem}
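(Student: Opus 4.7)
The plan is to pin down any $g\in F_m$ by combining the $G$-equivariance of $\Phi$ with the explicit $T$-conjugation rule \eqref{eqn:action Ta}, and then to exploit the hypothesis $m\notin M^G_{\boldsymbol{\nu}}$, which guarantees that the off-diagonal component of $\Phi(m)$ is non-zero. Forcing $g$ to preserve that non-zero entry will rigidify the angle parameter of $g$ to a value for which $g$ is a scalar matrix.

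First, I would write any $g\in F_m\leqslant T^1_{\boldsymbol{\nu}_\perp}$ as $g=\kappa_{\boldsymbol{\nu}}(e^{\imath\vartheta})=\mathrm{diag}(e^{-\imath\nu_2\vartheta},e^{\imath\nu_1\vartheta})$ from \eqref{eqn:T1nu parametrized}, and decompose $\Phi(m)=\Psi(m)+\Upsilon'(m)$ along $\mathfrak{g}=\mathfrak{t}\oplus\mathfrak{a}$; under the parametrization in \eqref{eqn:action Ta}, call $z\in\mathbb{C}$ the off-diagonal entry of $-\imath\,\Phi(m)$. Since $m\in M^T_{\boldsymbol{\nu}}$, the diagonal part $\Psi(m)$ is a positive multiple of $\imath\,D_{\boldsymbol{\nu}}$, while $m\notin M^G_{\boldsymbol{\nu}}=\Phi^{-1}(\mathbb{R}_+\cdot\imath\,D_{\boldsymbol{\nu}})$ forces $\Phi(m)$ to be non-diagonal, so $z\neq 0$. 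The equality $\psi_g(m)=m$ and the $T$-equivariance of $\Phi$ give $\mathrm{Ad}_g\Phi(m)=\Phi(m)$.

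Second, I would apply \eqref{eqn:action Ta} with $\vartheta_1=-\nu_2\vartheta$ and $\vartheta_2=\nu_1\vartheta$; the diagonal part is automatically preserved, and the fixed-point condition collapses to the scalar equation $e^{-\imath(\nu_1+\nu_2)\vartheta}\,z=z$. When $\nu_1+\nu_2\neq 0$, since $z\neq 0$, this forces $(\nu_1+\nu_2)\vartheta\in 2\pi\mathbb{Z}$, equivalently $e^{-\imath\nu_2\vartheta}=e^{\imath\nu_1\vartheta}$, i.e., $g$ is a scalar matrix, so $g\in Z(G)$. The case $\nu_1+\nu_2=0$ is vacuous: from \eqref{eqn:T1nu parametrized} the whole one-parameter group $T^1_{\boldsymbol{\nu}_\perp}$ then already consists of scalar matrices, so it is contained in $Z(G)$ and the conclusion is immediate.

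Finally, the statement on $F_{\boldsymbol{\nu}}$ follows from the fact recalled at the start of the present section: $M^G_{\boldsymbol{\nu}}$ is a smooth connected submanifold of real codimension two inside the smooth connected hypersurface $M^T_{\boldsymbol{\nu}}$. Its complement is therefore open and dense, so the generic-point stabilizer $F_{\boldsymbol{\nu}}$ coincides with the stabilizer $F_m$ of some $m\in M^T_{\boldsymbol{\nu}}\setminus M^G_{\boldsymbol{\nu}}$, and the first part applies. There is no substantial obstacle here; the argument is essentially a direct reading of \eqref{eqn:action Ta}, the only minor care being to separate the regime $\nu_1+\nu_2\neq 0$ from the degenerate regime $\nu_1+\nu_2=0$, in which the assertion is forced by \eqref{eqn:T1nu parametrized} itself.
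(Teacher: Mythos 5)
Your proof is correct and follows essentially the same route as the paper: equivariance of $\Phi$ gives $\mathrm{Ad}_g\Phi(m)=\Phi(m)$, the hypothesis $m\notin M^G_{\boldsymbol{\nu}}$ forces the off-diagonal entry of $\Phi(m)$ to be non-zero, and (\ref{eqn:action Ta}) then forces $g$ to be scalar. Your explicit handling of the degenerate case $\nu_1+\nu_2=0$ and the density argument for $F_{\boldsymbol{\nu}}$ are just slightly more detailed versions of what the paper leaves implicit.
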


\begin{proof}
[Proof of Lemma \ref{lem:generic stabilizer}]
By equivariance, if $\phi_g(m)=m$, then 
$\mathrm{Ad}_g\big(\Phi(m)\big)=\Phi(m)\in \mathfrak{g}$ where $\mathrm{Ad}$ is 
the adjoint action. 
If $m\in M^T_{\boldsymbol{\nu}}\setminus M^G_{\boldsymbol{\nu}}$
then $\Phi (m)$ is not diagonal. The claim then follows from 
by (\ref{eqn:action Ta}).

\end{proof}

\begin{rem} 
For a uniform representation
$F_{\boldsymbol{\nu}}= T^1_{\boldsymbol{\nu}^\perp}\cap Z(G)$,
since $Z(G)$ acts trivially on $M$ (Definition \ref{defn:uniform rep}).
\end{rem}

Let us introduce the quotients (isomorphic to $S^1$)
\begin{equation}
\label{eqn:quotient groups}
S^1(\boldsymbol{\nu}):=T^1_{\boldsymbol{\nu}^\perp}/F_{\boldsymbol{\nu}},\qquad
T^1(\boldsymbol{\nu}):=T^1_{\boldsymbol{\nu}^\perp}/
\big(T^1_{\boldsymbol{\nu}^\perp}\cap Z(G)\big).
\end{equation}
The induced action $\overline{\psi}_{\boldsymbol{\nu}_\perp}:S^1(\boldsymbol{\nu})\times 
M^T_{\boldsymbol{\nu}}\rightarrow M^T_{\boldsymbol{\nu}}$ is locally free
and generically free, hence effective.
If $\left(M^T_{\boldsymbol{\nu}}\right)_{sm}\subseteq
M^T_{\boldsymbol{\nu}}$ is the dense open set where 
$F_m=F_{\boldsymbol{\nu}}$, then $\left(M^T_{\boldsymbol{\nu}}\right)_{sm}$
is a principal $S^1(\boldsymbol{\nu})$-bundle over its image
$(\overline{M}^T_{\boldsymbol{\nu}})_{sm}$.

Given a character $\chi:S^1(\boldsymbol{\nu})\rightarrow \mathbb{C}^*$
we obtain an Hermitian orbifold line bundle
$L_\chi$. Given the CR structure on $M^T_{\boldsymbol{\nu}}$, $L_\chi$ is in fact 
an holomorphic orbifold line bundle on $\overline{M}^T_{\boldsymbol{\nu}}$.
A smooth function 
$\Sigma:M^T_{\boldsymbol{\nu}}\rightarrow \mathbb{C}$ such that
$\Sigma\circ (\overline{\psi}_{\boldsymbol{\nu}_\perp})_{g^{-1}}
=\chi (g)\,\Sigma$ for any $g\in S^1(\boldsymbol{\nu})$ determines a 
smooth section $\sigma_\Sigma$ of $L_\chi$.

By Lemma \ref{lem:generic stabilizer}, 
we have a short exact sequence 
$$
0\rightarrow  \left(T^1_{\boldsymbol{\nu}^\perp}\cap Z(G)\right)/F_{\boldsymbol{\nu}}
\rightarrow S^1(\boldsymbol{\nu})\rightarrow T^1(\boldsymbol{\nu})
\rightarrow 0;
$$
therefore, any character of $T^1(\boldsymbol{\nu})$ yields a character 
of $S^1(\boldsymbol{\nu})$. In particular, we obtain a character
of $S^1(\boldsymbol{\nu})$ from any character of $T$ with kernel
$Z(G)$, whence from the character $e^{-\imath\,(\theta_1-\theta_2)}$ appearing
in (\ref{eqn:action Ta}). Explicitly, evaluating the latter on 
$T^1_{\boldsymbol{\nu}}\cong S^1$ we obtain the character 
$e^{\imath\,(\nu_1+\nu_2)\,\vartheta}$. 
We shall denote by $\chi$ the corresponding character of $S^1(\boldsymbol{\nu}_\perp)$.

By (\ref{eqn:covariance upsilon}), $\Upsilon$ determines a section
$\sigma_\Upsilon$ of $L_\chi$.
By Lemma \ref{lem:Upsilon} we conclude the following.

\begin{thm}
\label{thm:orbifold MTnu}
The symplectically embedded orbifold 
$\overline{M}^G_{\boldsymbol{\nu}}\subset \overline{M}^T_{\boldsymbol{\nu}}$ 
is the zero locus of the transverse section 
$\sigma_{\Upsilon}$
of $L_{\chi}$. 
If $\overline{•\iota}_T: 
\overline{M}^G_{\boldsymbol{\nu}}\subset \overline{M}^T_{\boldsymbol{\nu}}$
is the inclusion, there is a direct sum decomposition of orbifold
vector bundles
$$
\overline{•\iota}_T^*\left(T\overline{M•}^T_{\boldsymbol{\nu}}\right)=
T\overline{•M}^G_{\boldsymbol{\nu}}\oplus
\overline{•\iota}^*_T\left( L_{\mathfrak{a}} \right).
$$
\end{thm}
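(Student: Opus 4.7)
The plan is to read off each assertion from the construction of $L_\chi$ as an associated orbifold line bundle and from the properties of $\Upsilon$ already established in Lemma \ref{lem:Upsilon}. By the equivariance (\ref{eqn:covariance upsilon}) and the discussion preceding the theorem, $\Upsilon$ is $\chi$-equivariant on $M^T_{\boldsymbol{\nu}}$, and therefore descends to a smooth section $\sigma_\Upsilon$ of $L_\chi$ on $\overline{M}^T_{\boldsymbol{\nu}}$. The zero set of $\sigma_\Upsilon$ is the image under $q_{\boldsymbol{\nu}}$ of $\Upsilon^{-1}(0)$, which by the very definition of $\Upsilon$ (the off-diagonal part of $\Phi$ along $M^T_{\boldsymbol{\nu}}$) equals $\Phi^{-1}(\mathfrak{t})\cap M^T_{\boldsymbol{\nu}}=M^G_{\boldsymbol{\nu}}$; hence the zero locus of $\sigma_\Upsilon$ is $\overline{M}^G_{\boldsymbol{\nu}}$.

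For transversality of $\sigma_\Upsilon$: passing through the equivariant trivialization $L_\chi=M^T_{\boldsymbol{\nu}}\times_{T^1_{\boldsymbol{\nu}_\perp}}\mathbb{C}_\chi$, the section $\sigma_\Upsilon$ is transverse to the zero section precisely when, at each $m\in \Upsilon^{-1}(0)=M^G_{\boldsymbol{\nu}}$, the vertical derivative $d_m\Upsilon:T_mM^T_{\boldsymbol{\nu}}\to\mathfrak{a}$ is surjective modulo $T_mM^G_{\boldsymbol{\nu}}$. This is precisely the content of Lemma \ref{lem:Upsilon}(1)--(3): the evaluation $\alpha\mapsto \alpha_M(m)$ maps $\mathfrak{a}$ isomorphically onto $\mathfrak{a}_M(m)$, the map $d_m\Upsilon|_{\mathfrak{a}_M(m)}$ is onto $\mathfrak{a}$, and $\mathfrak{a}_M(m)$ is a complement to $T_mM^G_{\boldsymbol{\nu}}$ in $T_mM^T_{\boldsymbol{\nu}}$.

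For the direct-sum decomposition, I would start from the $T$-equivariant splitting of Lemma \ref{lem:Upsilon}(3) upstairs on $M^G_{\boldsymbol{\nu}}$. Since $M^G_{\boldsymbol{\nu}}$ is $T$-invariant, the null direction $\boldsymbol{\nu}_{\perp M}(m)$ is tangent to $M^G_{\boldsymbol{\nu}}$ and thus lies in the first summand; hence the quotient $dq_{\boldsymbol{\nu}}$ preserves the direct-sum decomposition and yields
\[
T_{[m]}\overline{M}^T_{\boldsymbol{\nu}}=T_{[m]}\overline{M}^G_{\boldsymbol{\nu}}\oplus [\mathfrak{a}_M(m)],
\]
where $[\mathfrak{a}_M(m)]$ denotes the image of $\mathfrak{a}_M(m)$ under $dq_{\boldsymbol{\nu}}$. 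To identify $[\mathfrak{a}_M]\to\overline{M}^G_{\boldsymbol{\nu}}$ with $\overline{\iota}_T^*(L_{\mathfrak{a}})$, one uses the $T$-equivariant trivialization $M^G_{\boldsymbol{\nu}}\times \mathfrak{a}\to \mathfrak{a}_M$, $(m,\alpha)\mapsto \alpha_M(m)$; under this trivialization $T^1_{\boldsymbol{\nu}_\perp}$ acts on the fibre $\mathfrak{a}$ by the same character used to define $L_{\mathfrak{a}}=L_\chi$, so passing to the quotient over $\overline{M}^G_{\boldsymbol{\nu}}$ produces the restriction of $L_{\mathfrak{a}}$ to $\overline{M}^G_{\boldsymbol{\nu}}$.

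The main obstacle is the character bookkeeping in the last step: one has to verify that the restriction to $T^1_{\boldsymbol{\nu}_\perp}$ of the adjoint action on $\mathfrak{a}\cong\mathbb{C}$ (read off from (\ref{eqn:action Ta}) via $\kappa_{\boldsymbol{\nu}}$) matches the character $\chi$ chosen to define $L_\chi$, and not its inverse, so that $M^G_{\boldsymbol{\nu}}\times_{T^1_{\boldsymbol{\nu}_\perp}}\mathfrak{a}$ is genuinely $L_\chi|_{\overline{M}^G_{\boldsymbol{\nu}}}$ rather than its dual. Once this convention-check is carried out, the equivariance (\ref{eqn:covariance upsilon}) of $\Upsilon$ and the equivariance of the evaluation map align automatically, because both are inherited from the same adjoint action of $T$ on $\mathfrak{a}$.
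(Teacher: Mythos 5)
Your proposal is correct and follows essentially the same route as the paper: the equivariance (\ref{eqn:covariance upsilon}) makes $\Upsilon$ descend to the section $\sigma_\Upsilon$ of $L_\chi$ whose zero locus is $q_{\boldsymbol{\nu}}\big(\Upsilon^{-1}(0)\big)=\overline{M}^G_{\boldsymbol{\nu}}$, and Lemma \ref{lem:Upsilon} supplies both the regularity of $0$ (hence transversality of $\sigma_\Upsilon$) and the $T$-equivariant splitting $T_mM^T_{\boldsymbol{\nu}}=T_mM^G_{\boldsymbol{\nu}}\oplus\mathfrak{a}_M(m)$, which descends to the orbifold decomposition since the null direction lies in the first summand. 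The only material you add to the paper's (very terse) proof is the explicit identification of the quotient of $\mathfrak{a}_M|_{M^G_{\boldsymbol{\nu}}}$ with $\overline{\iota}_T^*L_{\mathfrak{a}}$ through the adjoint-action character; your convention check is right, since the character $\chi=e^{-\imath(\vartheta_1-\vartheta_2)}$ used to define $L_\chi=L_{\mathfrak{a}}$, the equivariance of $\Upsilon$, and the equivariance of the evaluation map $(m,\alpha)\mapsto\alpha_M(m)$ all come from the same adjoint action of $T$ on $\mathfrak{a}$.
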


\subsection{The case of $\mathbb{P}(W_{\mathbf{L},\mathbf{K}})$}

We aim to classify the D-H reductions 
$(\overline{M}^T_{\boldsymbol{\nu}},J_0',\Omega_0')$
when $M=\mathbb{P}(W_{\mathbf{L},\mathbf{K}})$, assuming that
$W_{\mathbf{L},\mathbf{K}}$ is generic (Definition \ref{defn:restricted class}).
In particular, we shall interpret each such K\"{a}hler orbifold as
a weighted projective variety, related to certain explicit 
combinatorial data associated to
$\mathbf{L},\,\mathbf{K},\,\boldsymbol{\nu}$.

\subsubsection{From Hamiltonian circle actions to orbifolds}
\label{sctn:general construction}

The object of this section is to review and slightly extend 
a general construction from \cite{pao-lower},  providing a K\"{a}hler orbifold
from a Hamiltonian circle action with positive moment map.
This construction generalizes the
one of weighted projective spaces.

Let $R$ be an $r$-dimensional connected projective manifold, with
complex structure $J_R$, and
let $(B,h)$ be a positive holomorphic line bundle on $R$, with $\nabla$
the unique compatible covariant derivative.
Also, let $Y\subset B^\vee$ be the unit circle bundle, with projection $\pi:Y\rightarrow R$; 
let $\alpha\in \Omega^1(Y)$
the connection form corresponding to $\nabla$. Hence (by the positivity of $(B,h)$)
$\mathrm{d}\alpha=2\,\pi^*(\omega_R)$, where $\omega_R$ is a Hodge form on $R$.
Thus $(R,J_R,2\,\omega_R)$ is a K\"{a}hler manifold.

Suppose that there is an holomorphic 
and Hamiltonian circle action $\mu:T^1\times R\rightarrow R$
on $(R,J_R,2\,\omega_R)$, with (normalized) moment map
$\mathcal{M}: R\rightarrow \mathbb{R}$. 
Then there is an infinitesimal \lq action\rq\, 
$\mathrm{d}\mu:\mathfrak{t}^1\rightarrow\mathfrak{X}(R)$
at Lie algebra level.
These Hamiltonian data determine 
an infinitesimal contact CR action of $T^1$ on $Y$, lifting $\mathrm{d}\mu$
\cite{k}: if $\xi=\partial/\partial r\in 
Lie(T^1)\cong \mathbb{R}$
then 
\begin{equation}
\label{eqn:defn of xiY}
\xi_Y:=\xi_R^\sharp-\mathcal{M}\,\partial_\theta\in \mathfrak{X}(Y)
\end{equation}
is a contact vector field.
Here 
$\upsilon^\sharp\in \mathfrak{X}(Y)$ is the horizontal lift of
the vector field
$\upsilon\in \mathfrak{X}(R)$ with respect to $\alpha$, and
$\partial_\theta$ is the generator of the
structure circle action on $Y$ (fiber rotation). Furthermore, we write
$\mathcal{M}$ for $\mathcal{M}\circ \pi:Y\rightarrow \mathbb{R}$.

Let us make the
stronger hypothesis that that there is an actual group action 
$\tilde{\mu}:T^1\times Y\rightarrow Y$ lifting $\mu$ associated to this
infinitesimal lift; that is, $\mathrm{d}\tilde{\mu}(\xi)=\xi_Y$. 
Let us suppose also that $\mathcal{M}>0$. 
Then, in view of (\ref{eqn:defn of xiY}), $\xi_Y(y)\neq 0$ at every $y\in Y$;
thus $\tilde{\mu}$ is locally free.
Perhaps passing to a quotient group if necessary, we may assume 
that $\tilde{\mu}$ is effective, whence generically free.
Therefore the orbit space $R':=Y/\tilde{\mu}$ is naturally an orbifold, and
the projection 
$\pi':Y\rightarrow R'$ is an orbifold circle bundle on $R'$.

On $Y$, we have the following distributions: 
\begin{enumerate}
\item the vertical tangent space for
$\pi$, $V(\pi):=\ker(\mathrm{d}\pi)=\mathrm{span}(\partial_\theta)$;
\item the horizontal tangent space for 
$\alpha$, $H=\ker(\alpha)$;
\item the vertical tangent space for 
$\pi'$, $V(\pi'):=\ker (\mathrm{d}\pi')=\mathrm{span}(\xi_Y)$.
\end{enumerate}
 
For every $y\in Y$, $V(\pi)_y\subset T_yY$ is the tangent space to
the $S^1$-orbit (we denote the circle by $S^1$ when it acts on $Y$ by the structure
rotation action), $V(\pi')_y\subset T_yY$ is the tangent space to
the $T^1$-orbit, and
$H(y)$ is
isomorphic to $T_{\pi(y)}R$ via $\mathrm{d}_y\pi$, and to
the uniformized tangent space $T_{\pi'(y)}R'$ via $\mathrm{d}_y\pi'$.
The tangent bundle of $Y$ splits as
\begin{equation}
\label{eqn:double splitting Y}
TY=V(\pi)\oplus H=V(\pi')\oplus H.
\end{equation}

Let $J_H$ be the complex structure on the vector bundle $H$ given by pull-back of $J$. 
Then $(H,J_H)$ is a $\tilde{\mu}$-invariant 
CR structure on $Y$, and it descends to an orbifold complex 
structure $J_{R'}$ on
$R'$ (the arguments in \cite{pao-lower} were formulated over the smooth
locus, but they can be extended to the orbifold case). 
Thus $(R',J_{R'})$ is a complex orbifold.

Let us set $\beta:=\alpha/\mathcal{M}\in \Omega^1(Y)$; then $H=\ker(\beta)$,
$\beta$ is $\tilde{\mu}$-invariant and 
$\beta(\xi_Y)=-1$. Hence $\beta$ is a
connection form for $q$. Thus there exists 
$\omega_{R'}\in \Omega^2(R')$ such that $\mathrm{d}\beta=2\,(\pi')^*(\omega_{R'})$.
Since 
$$
\mathrm{d}\beta=-\frac{1}{\mathcal{M}^2•}\,\mathrm{d}\mathcal{M}\wedge \alpha
+\frac{2}{\mathcal{M}•}\,\pi^*(\omega_R),
$$
$\mathrm{d}\beta$ restricts on each $H(y)$ to a linear symplectic structure compatible
with $J_H(y)$; therefore $\omega_{R'}$ is an orbifold K\"{a}hler form on 
$(R',J_{R'})$ (see \S 2.2 of \cite{pao-lower}).
%

\begin{rem}
The two orbifold fibrations 
$R \stackrel{\pi}{\leftarrow} Y \stackrel{\pi'}{\rightarrow} R'$
are dual to each other, meaning that $(R')'=R$ as K\"{a}hler orbifolds.
More precisely, the $S^1$-action $r$ on $Y$ given by counterclockise 
fiber rotation
descends to an Hamiltonian action $\mu'$ on $(R',\omega_{R'})$, with moment map
$1/\mathcal{M}$ (interpreted as a function on $R'$), of which it is the contact lift.
Applying the same procedure to $(R',J_{R'},\omega_{R'},\mu')$
we return to $(R,J_R,\omega_R,\mu)$ (see \S 2.3 of \cite{pao-lower}). 
In principle, one would need to phrase the previous discussion assuming that $R$ itself is an orbifold, but this won't be needed in the
following.

\end{rem}

A special case of this construction is given by
weighted projective spaces.
Let $\mathbf{a}
=\begin{pmatrix}
a_0&\cdots&a_k
\end{pmatrix}$ be a string of positive integers, 
and 
consider the action $\mu^{\mathbf{a}}$
of $T^1$ on $\mathbb{P}^k$ given by
\begin{equation}
\label{eqn:muatheta}
\mu^{\mathbf{a}}_{\vartheta}:[z_0:\cdots:z_k]\mapsto \left[e^{-\imath\,a_0\,\vartheta}\,z_0:\cdots:
e^{-\imath\,a_k\,\vartheta}\,z_k\right].
\end{equation}

Then $\mu^{\mathbf{a}}$ is Hamiltonian with respect to 
$2\,\omega_{FS}$, with normalized moment map 
\begin{equation}
\label{eqn:moment map'a}
\Phi^{\mathbf{a}}({Z}):=\frac{1}{\|Z\|^2•}\,\sum_{j=0}^k a_j\,|z_j|^2.
\end{equation}

Let $H_k=\mathcal{O}_{\mathbb{P}^k}(1)$ be the hyperplane line bundle on $\mathbb{P}^k$,
endowed with the standard Hermitian metric; its dual
$H_k^\vee$ is the tautological line bundle, and 
the unit circle bundle in $H_k^\vee$ is the unit sphere 
$S^{2k+1}\subset \mathbb{C}^{k+1}$, with projection the
Hopf map $\pi:S^{2k+1}\rightarrow \mathbb{P}^k$. 
The contact lift of $\mu^{\mathbf{a}}$ 
is the restriction to $S^{2k+1}$ of the
unitary representation
\begin{equation}
\label{eqn:muatheta1}
\tilde{\mu}^{\mathbf{a}}_{\vartheta}:(z_0,\cdots,z_k)\mapsto \left(e^{-\imath\,a_0\,\vartheta}\,z_0,\cdots,
e^{-\imath\,a_k\,\vartheta}\,z_k\right).
\end{equation}
We shall use the same symbol $\tilde{\mu}^{\mathbf{a}}_{\vartheta}$ for both the
unitary representation and its restriction to $S^{2k+1}$.
$\tilde{\mu}^{\mathbf{a}}$ is generically free
if the $a_j$'s are coprime.
The quotient $S^{2k+1}/\mu^{\mathbf{a}}$ is the weighted 
projective space $\mathbb{P}(\mathbf{a})$.
Let $\pi':S^{2k+1}\rightarrow\mathbb{P}(\mathbf{a})$
denote the projection.

The induced orbifold K\"{a}hler structure 
$\eta^{\mathbf{a}}\in \Omega^2\big(\mathbb{P}(\mathbf{a})\big)$
is as follows. The vector field generating 
(\ref{eqn:muatheta1}) is $-\tilde{V_{\mathbf{a}}}$, where 
\begin{equation}
\label{eqn:Vatilde}
\tilde{V_{\mathbf{a}}}=\imath\,\sum_{j=0}^ka_j\,\left(
z_j\,\frac{\partial}{•\partial z_j}
-\overline{z}_j\,\frac{\partial}{•\partial\overline{z}_j}\right),
\end{equation}
viewed as a vector field on $S^{2k+1}$.
$\tilde{V}_{\mathbf{a}}$ is the contact lift of $V_{\mathbf{a}}$, where 
$-V_{\mathbf{a}}$ is the vector field generating 
(\ref{eqn:muatheta}). The moment map (\ref{eqn:moment map'a}) can be obtained by
pairing $\tilde{V}_{\mathbf{a}}$ with the connection form
$$
\alpha=\frac{\imath}{•2}\,\sum_{j=0}^k\,
\left(z_j\,\mathrm{d}\overline{z}_j-\overline{z}_j\,\mathrm{d}z_j\right).
$$
Hence $\beta^{\mathbf{a}}:=\alpha/\Phi^{\mathbf{a}}$ is a connection form for
the action generated by $V_{\mathbf{a}}$ on $S^{2k+1}$
(as usual, we write $\Phi^{\mathbf{a}}$ for $\Phi^{\mathbf{a}}\circ \pi$). 
Then
$\eta^{\mathbf{a}}$ is determined
by the relation $2\,{\pi'}^*(\eta^{\mathbf{a}})=\mathrm{d}\beta^{\mathbf{a}}$.


The K\"{a}hler structures
on $\mathbb{P}^k$ and $\mathbb{P}(\mathbf{a})$
can be changed by modifying the
Hermitian product on $\mathbb{C}^{k+1}$. Let $\mathbf{d}=(d_0,\ldots,d_k)$
be a string of positive integers, and
set
\begin{equation}
\label{eqn:a sympl str}
h_{\mathbf{d}}\left(Z,Z'\right):=\sum_{j=0}^k d_j\,z_j\,\overline{z_j'},
\quad \tilde{\omega}_{\mathbf{d}}:=-\Im (h_{\mathbf{d}})=
\frac{\imath}{•2}\,\sum_{j=0}^k d_j\,\mathrm
dz_j\wedge\mathrm{d}\overline{z}_j.
\end{equation}
The action $r_{-\vartheta}:Z\mapsto e^{-\imath\,\vartheta}\,Z$
of $S^1$ on $(\mathbb{C}^k,2\,\tilde{•\omega}_{\mathbf{d}})$ is Hamiltonian,
with normalized moment map
$$
N_{\mathbf{d}}(Z):=\sum_{j=0}^k d_j\,|z_j|^2.
$$

Let $S^{2k+1}_{\mathbf{d}}:=N_{\mathbf{d}}^{-1}(1)\subset \mathbb{C}^{k+1}$
be the unit sphere for $h_{\mathbf{d}}$. Thus
$S^{2k+1}_{\mathbf{d}}$ is the unit circle bundle
in $H^\vee_k$ with respect to the line bundle metric induced by $h_{\mathbf{d}}$. 
The quotient $S^{2k+1}_{\mathbf{d}}/r$
is again $\mathbb{P}^k$, with a new K\"{a}hler structure $\omega_{\mathbf{d}}$
(the symplectic reduction of $\tilde{\omega}_{\mathbf{d}}$).
More explicitly, let $\pi_{\mathbf{d}}:S^{2k+1}_{\mathbf{d}}
\rightarrow \mathbb{P}^k$ be the projection, 
$\iota_{\mathbf{d}}:S^{2k+1}_{\mathbf{d}}
\rightarrow \mathbb{C}^{k+1}$ the inclusion, 
and set
$$
\alpha_{\mathbf{d}}:=\iota_{\mathbf{d}}^*\left(\frac{\imath}{•2}\,\sum_{j=0}^kd_j\,\left(
z_j\,\mathrm{d}\overline{z}_j-\overline{z}_j\,\mathrm{d}z_j\right)\right).
$$
Then $\alpha_{\mathbf{d}}$ is the connection 1-form on
$S^{2k+1}_{\mathbf{d}}$ for $\pi_{\mathbf{d}}$, and 
$$\mathrm{d}\alpha_{\mathbf{d}}
=2\,\pi_{\mathbf{d}}^*(\omega_{\mathbf{d}})=
2\,\iota_{\mathbf{d}}^*(\tilde{•\omega}_{\mathbf{d}}).$$

The action $\mu^{\mathbf{a}}$ in
(\ref{eqn:muatheta}) is Hamiltonian on 
$\left(\mathbb{P}^k, 2\,\omega_{\mathbf{d}}\right)$,
with normalized moment map
\begin{equation}
\label{eqn:Phi a}
\Phi^{\mathbf{a}}_{\mathbf{d}}([Z]):=
\frac{\sum_{j=0}^k a_j\cdot d_j\,|z_j|^2}{\sum_{j=0}^k d_j\,|z_j|^2}.
\end{equation}
%
 %
%
%
The contact lift of $\mu^{\mathbf{a}}$ to $S^{2k+1}_{\mathbf{d}}$ is 
again functionally given by (\ref{eqn:muatheta1});
we still have $S^{2k+1}_{\mathbf{d}}/\tilde{\mu}^{\mathbf{a}}=\mathbb{P}(\boldsymbol{a})$, 
but with a new K\"{a}hler
form $\eta^{\mathbf{a}}_{\mathbf{d}}$.
Namely, $\beta^{\mathbf{a}}_{\mathbf{d}}:=\alpha_{\mathbf{d}}/\Phi^{\mathbf{a}}_{\mathbf{d}}$
is a connection form for $\tilde{\mu}^{\mathbf{a}}$ on $S^{2k+1}_{\mathbf{d}}$,
and 
$\eta^{\mathbf{a}}_{\mathbf{d}}$ is determined by the condition
\begin{equation}
\label{eqn:eta a d}
\mathrm{d}\beta^{\mathbf{a}}_{\mathbf{d}}=
2\,{q^{\mathbf{a}}_{\mathbf{d}}}^*(\eta^{\mathbf{a}}_{\mathbf{d}}),
\end{equation}
where $q^{\mathbf{a}}_{\mathbf{d}}:S^{2k+1}_{\mathbf{d}}\rightarrow \mathbb{P}(\boldsymbol{a})$
is the projection.
The linear automorphism $\tilde{f}_{\mathbf{d}}:
\mathbb{C}^{k+1}\rightarrow\mathbb{C}^{k+1}$ given by $(z_j)\mapsto (\sqrt{d}_j\,z_j)$ descends to automorphisms
$f_\mathbf{d}:\mathbb{P}^k\rightarrow\mathbb{P}^k$ and 
$f_\mathbf{d}^\mathbf{a}:\mathbb{P}(\mathbf{a})\rightarrow\mathbb{P}(\mathbf{a})$, satisfying $f_\mathbf{d}^*(\omega_{FS})=\omega_\mathbf{d}$ and
${f_\mathbf{d}^\mathbf{a}}^*(\eta'_{\mathbf{a}})
=\eta_\mathbf{d}^\mathbf{a}$.

Let us remark in passing the following homogeneity property.

\begin{lem}
\label{lem:invarianza omega a}
For any string of positive integers $\mathbf{d}=\begin{pmatrix}
d_0&\cdots&d_k
\end{pmatrix}$ and $r=1,2,\ldots$, we have 
$\omega_{r\,\mathbf{d}}=\omega_{\mathbf{d}}\in \Omega^2(\mathbb{P}^k)$.
\end{lem}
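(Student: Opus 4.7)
The plan is to reduce the equality to a computation on the ambient sphere using an explicit $S^1$-equivariant dilation of $\mathbb{C}^{k+1}$. Concretely, for $r\ge 1$ I would introduce
$$
\phi_r:\mathbb{C}^{k+1}\rightarrow\mathbb{C}^{k+1},\qquad Z\mapsto r^{-1/2}\,Z,
$$
and observe that $\phi_r$ commutes with the structure $S^1$-action (scalar rotation) and descends to the identity on $\mathbb{P}^k$. The key point is that $N_{r\mathbf{d}}\circ \phi_r = r^{-1}\,N_{r\mathbf{d}}+\cdots$; more cleanly, since $N_{r\mathbf{d}}=r\,N_{\mathbf{d}}$, the map $\phi_r$ restricts to a diffeomorphism $\phi_r:S^{2k+1}_{\mathbf{d}}\xrightarrow{\sim} S^{2k+1}_{r\mathbf{d}}$, and $\pi_{r\mathbf{d}}\circ \phi_r|_{S^{2k+1}_{\mathbf{d}}}=\pi_{\mathbf{d}}$.

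The next step is a one-line computation with the 2-form in (\ref{eqn:a sympl str}): from $\phi_r^*\,\mathrm{d}z_j=r^{-1/2}\,\mathrm{d}z_j$ one gets
$$
\phi_r^*\big(\tilde{\omega}_{\mathbf{d}}\big)=\frac{1}{r}\,\tilde{\omega}_{\mathbf{d}},
\qquad\text{hence}\qquad
\phi_r^*\big(\tilde{\omega}_{r\mathbf{d}}\big)
=\phi_r^*\big(r\,\tilde{\omega}_{\mathbf{d}}\big)=\tilde{\omega}_{\mathbf{d}}
$$
on all of $\mathbb{C}^{k+1}$. Combining this with the defining relations $\pi_{\mathbf{d}}^*\omega_{\mathbf{d}}=\iota_{\mathbf{d}}^*\tilde{\omega}_{\mathbf{d}}$ and $\pi_{r\mathbf{d}}^*\omega_{r\mathbf{d}}=\iota_{r\mathbf{d}}^*\tilde{\omega}_{r\mathbf{d}}$, together with the intertwining $\iota_{r\mathbf{d}}\circ \phi_r|_{S^{2k+1}_{\mathbf{d}}}=\phi_r\circ \iota_{\mathbf{d}}$, yields
$$
\pi_{\mathbf{d}}^*\omega_{r\mathbf{d}}
=(\pi_{r\mathbf{d}}\circ \phi_r)^*\omega_{r\mathbf{d}}
=\phi_r^*\,\iota_{r\mathbf{d}}^*\,\tilde{\omega}_{r\mathbf{d}}
=\iota_{\mathbf{d}}^*\,\phi_r^*\,\tilde{\omega}_{r\mathbf{d}}
=\iota_{\mathbf{d}}^*\,\tilde{\omega}_{\mathbf{d}}
=\pi_{\mathbf{d}}^*\omega_{\mathbf{d}}.
$$

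Finally, since $\pi_{\mathbf{d}}:S^{2k+1}_{\mathbf{d}}\rightarrow\mathbb{P}^k$ is a surjective submersion, $\pi_{\mathbf{d}}^*$ is injective on 2-forms, so the identity $\omega_{r\mathbf{d}}=\omega_{\mathbf{d}}$ follows. I do not expect any serious obstacle here: this is essentially the standard homogeneity of symplectic reduction under rescaling of the Hamiltonian together with a compensating rescaling of the level set. The only mild bookkeeping is to verify that the rescaling factor $r$ coming from $\tilde{\omega}_{r\mathbf{d}}=r\,\tilde{\omega}_{\mathbf{d}}$ is exactly cancelled by the factor $1/r$ coming from $\phi_r^*\tilde{\omega}_{\mathbf{d}}=r^{-1}\tilde{\omega}_{\mathbf{d}}$, which is what the choice of the square-root dilation is engineered to achieve.
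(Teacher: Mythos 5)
Your proof is correct and follows essentially the same route as the paper: the paper also uses the dilation $\delta_{1/\sqrt{r}}(Z)=Z/\sqrt{r}$ (your $\phi_r$), the identity $\pi_{\mathbf{d}}=\pi_{r\mathbf{d}}\circ\delta_{1/\sqrt{r}}$, and the cancellation $\delta_{1/\sqrt{r}}^*\tilde{\omega}_{r\mathbf{d}}=\tilde{\omega}_{\mathbf{d}}$, concluding by comparing pullbacks under the Hopf map. The only difference is that you make explicit the final injectivity of $\pi_{\mathbf{d}}^*$ on $2$-forms, which the paper leaves implicit.
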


\begin{proof}
[Proof of Lemma \ref{lem:invarianza omega a}]
Let $\pi_{\mathbf{d}}:S^{2k+1}_\mathbf{d}\rightarrow \mathbb{P}^k$,
$\pi_{r\,\mathbf{d}}:S^{2k+1}_{r\,\mathbf{d}}\rightarrow \mathbb{P}^k$
be the the Hopf maps. We have, by definition, 
$h_{r\mathbf{a}}=r\,h_{\mathbf{a}}$; therefore, $S^{2k+1}_{r\,\mathbf{a}}
=\delta_{\frac{1}{\sqrt{r}}}\left(S^{2k+1}_{\mathbf{a}}\right)$, where
$\delta_s (Z)=s\,Z$. Since
$ \pi_{\mathbf{a}}=\pi_{r\mathbf{a}}\circ \delta_{\frac{1}{\sqrt{r}}}$, we have 
$$
\pi_{\mathbf{a}}^*(\omega_{r\,\mathbf{a}})=
\delta_{\frac{1}{\sqrt{r}}}^*\left(\pi_{r\mathbf{a}}^*(\omega_{r\,\mathbf{a}})\right)
=\delta_{\frac{1}{\sqrt{r}}}^*\left(\tilde{\omega}_{r\,\mathbf{a}}\right)
=\tilde{\omega}_{\mathbf{a}}=\pi_{\mathbf{a}}^*(\omega_{\mathbf{a}}).
$$

\end{proof}

\begin{cor}\label{cor:FS}
If $r=1,2,\ldots$ and $\mathbf{r}=\begin{pmatrix}
r&\cdots&r
\end{pmatrix}$, then $\omega_{\mathbf{r}}=\omega_{FS}$
(the standard Fubini-Study form).
\end{cor}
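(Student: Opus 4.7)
The plan is to reduce the statement immediately to Lemma \ref{lem:invarianza omega a} and to the defining construction of $\omega_{\mathbf{d}}$. Write $\mathbf{1}:=\begin{pmatrix}1&\cdots&1\end{pmatrix}$, so that $\mathbf{r}=r\,\mathbf{1}$. By Lemma \ref{lem:invarianza omega a} applied with $\mathbf{d}=\mathbf{1}$, one has $\omega_{\mathbf{r}}=\omega_{r\,\mathbf{1}}=\omega_{\mathbf{1}}$. The problem thus reduces to identifying $\omega_{\mathbf{1}}$ with $\omega_{FS}$.

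For this last step I would simply unravel the construction (\ref{eqn:a sympl str}) and what follows. When $\mathbf{d}=\mathbf{1}$, the Hermitian product $h_{\mathbf{1}}$ is the standard one on $\mathbb{C}^{k+1}$; hence the unit sphere $S^{2k+1}_{\mathbf{1}}$ coincides with the standard unit sphere $S^{2k+1}$, the projection $\pi_{\mathbf{1}}:S^{2k+1}\rightarrow\mathbb{P}^k$ is the ordinary Hopf map, and $\alpha_{\mathbf{1}}$ is the standard connection 1-form on $S^{2k+1}$. Since $\omega_{\mathbf{1}}$ is characterized by $\mathrm{d}\alpha_{\mathbf{1}}=2\,\pi_{\mathbf{1}}^*(\omega_{\mathbf{1}})$, this is precisely the standard presentation of $2\,\omega_{FS}$ as the curvature of the tautological connection, so $\omega_{\mathbf{1}}=\omega_{FS}$.

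There is no real obstacle here: the only thing to check is that our normalization conventions for $\omega_{FS}$ (the one already used in Section 2, compatible with the representations $\mu_k$ acting on $(\mathbb{P}^k,2\,\omega_{FS})$) agree with the convention built into (\ref{eqn:a sympl str}), which they do by construction of $\alpha_{\mathbf{1}}$. Hence combining the two observations yields $\omega_{\mathbf{r}}=\omega_{\mathbf{1}}=\omega_{FS}$, as claimed.
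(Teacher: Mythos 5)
Your argument is correct and is essentially the paper's own: the paper's (very terse) proof likewise observes that $\omega_{FS}$ is the form corresponding to $\mathbf{1}$ and invokes Lemma \ref{lem:invarianza omega a} to conclude $\omega_{\mathbf{r}}=\omega_{r\,\mathbf{1}}=\omega_{\mathbf{1}}=\omega_{FS}$. Your unravelling of the case $\mathbf{d}=\mathbf{1}$ simply makes explicit what the paper leaves implicit.
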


\begin{proof}
$\omega_{FS}$ corresponds to
$\mathbf{1}=\begin{pmatrix}
1&\cdots&1
\end{pmatrix}
$. 

\end{proof}

The following variant yields a class of weighted projective varieties.
Let 
$\mathbf{b}
=\begin{pmatrix}
b_0&\cdots&b_l
\end{pmatrix}$ be another string of positive integers. On
$\mathbb{P}^k\times \mathbb{P}^l$, consider the
K\"{a}hler structure 
$\omega_{\mathbf{a},\mathbf{b}}:=\omega_{\mathbf{a}}+ \omega_{\mathbf{b}}$
(symbols of pull-back are omitted). 
$\omega_{\mathbf{a},\mathbf{b}}$ is the Hodge form associated to 
$H_{k,l}:=\mathcal{O}_{\mathbb{P}^k}(1)\boxtimes 
\mathcal{O}_{\mathbb{P}^l}(1)$ and the tensor product of the 
Hermitian products
$h_{\mathbf{a}}$, $h_{\mathbf{b}}$.
The corresponding unit circle bundle 
$X_{\mathbf{a},\mathbf{b}}\subset H_{k,l}^\vee$
can be identified with the image
$S^{2k+1}_\mathbf{a}\otimes_{k,l}S^{2l+1}_\mathbf{b}\subset \mathbb{C}^{k+1}\times \mathbb{C}^{l+1}$
of the map 
\begin{equation}
\label{eqn:tau a b}
\tau_{\mathbf{a},\mathbf{b}}:(Z,W)\in 
S^{2k+1}_{\mathbf{a}}\times S^{2l+1}_{\mathbf{b}}\mapsto Z\otimes _{k,l}W\in 
\mathbb{C}^{k+1}\otimes \mathbb{C}^{l+1};
\end{equation}
we have denoted by $\otimes_{k,l}:\mathbb{C}^{k+1}\times \mathbb{C}^{l+1}
\rightarrow\mathbb{C}^{k+1}\otimes \mathbb{C}^{l+1}$ the tensor product operation.
Equivalently, $X_{\mathbf{a},\mathbf{b}}$ is the quotient of $S^{2k+1}_{\mathbf{a}}\times S^{2l+1}_{\mathbf{b}}$ by 
the $S^1$-action $(Z,W)\mapsto \left(e^{\imath\,\theta}\,Z,e^{-\imath\,\theta}\,W\right)$.
The $S^1$-action on $X_{\mathbf{a},\mathbf{b}}$ given by scalar multiplication
(clockwise rotation) is 
$r_{e^{\imath\,\vartheta}}(Z\otimes _{k,l}W):=
e^{\imath\,\vartheta}\,Z\otimes _{k,l}W$. The projection 
$\pi_{\mathbf{a},\mathbf{b}}:X_{\mathbf{a},\mathbf{b}}\rightarrow\mathbb{P}^k\times \mathbb{P}^l$
is $\pi_{\mathbf{a},\mathbf{b}}(Z\otimes _{k,l}W):=([Z],[W])$.

Let $\iota_{\mathbf{a},\mathbf{b}}:S^{2k+1}_{\mathbf{a}}\times S^{2l+1}_{\mathbf{b}}
\hookrightarrow \mathbb{C}^{k+1}\times \mathbb{C}^{l+1}$ be the inclusion. 
The connection 1-form $\alpha_{\mathbf{a},\mathbf{b}}$ 
on $X_{\mathbf{a},\mathbf{b}}$ is determined by the relation
\begin{equation}
\label{eqn:connection form ab}
\tau_{\mathbf{a},\mathbf{b}}^*(\alpha_{\mathbf{a},\mathbf{b}})
=\iota_{\mathbf{a},\mathbf{b}}^*\left(\tilde{\alpha}_{\mathbf{a},\mathbf{b}}\right),
\end{equation}
where 
\begin{equation}
\label{eqn:alpha ab}
\tilde{\alpha}_{\mathbf{a},\mathbf{b}}:=
\frac{\imath}{2•}\,\left[\sum_{j=0}^ka_j\,
\left(z_j\,\mathrm{d}\overline{z}_j-\overline{z}_j\,\mathrm{d}z_j\right)+
\sum_{j=0}^l b_j\,
\left(w_j\,\mathrm{d}\overline{w}_j-\overline{w}_j\,\mathrm{d}w_j\right)\right].
\end{equation}
Furthermore,
$\mathrm{d}\alpha_{\mathbf{a},\mathbf{b}}=2\,\pi_{\mathbf{a},\mathbf{b}}^*(
\omega_{\mathbf{a},\mathbf{b}})$.

The product $T^1$-action 
\begin{eqnarray}
\label{eqn:ham flow}
\mu^{\mathbf{a},\mathbf{b}}_{\vartheta}\big([Z],\,[W])&=&
\left(\left[e^{-\imath\,a_0\,\vartheta}\,z_0:\cdots:e^{-\imath\,a_k\,\vartheta}\,z_k\right],
\left[e^{-\imath\,b_0\,\vartheta}\,w_0:\cdots:e^{-\imath\,b_l\,\vartheta}\,w_l\right]\right)\nonumber\\
&=&\left(\mu^{\mathbf{a}}_{\vartheta}\big([Z]),\,\mu^{\mathbf{b}}_{\vartheta}\big([W])\right)
\end{eqnarray}
is clearly Hamiltonian on $\left(\mathbb{P}^k\times \mathbb{P}^l, 2\,\omega_{\mathbf{a},\mathbf{b}}\right)$, with normalized moment map
\begin{equation}
\label{eqn:ham flow Phi ab}
\Phi_{\mathbf{a},\mathbf{b}}([Z],[W]):=\Phi_{\mathbf{a}}^{\mathbf{a}}([Z])
+\Phi_{\mathbf{b}}^{\mathbf{b}}([W]),
\end{equation}
where $\Phi_{\mathbf{a}}^{\mathbf{a}}$ and $\Phi_{\mathbf{b}}^{\mathbf{b}}$ are as in 
(\ref{eqn:Phi a}).
Its contact lift $\tilde{\mu}^{\mathbf{a},\mathbf{b}}$  is the 
restriction 
to $X_{\mathbf{a},\mathbf{b}}=S^{2k+1}_{\mathbf{a}}\otimes _{k,l} S^{2l+1}_{\mathbf{b}}$ of the tensor product representation
$\tilde{\mu}^{\mathbf{a}}\otimes 
\tilde{\mu}^{\mathbf{b}}$
on $\mathbb{C}^{k+1}\otimes \mathbb{C}^{l+1}$.
The latter is the unitary representation $\tilde{\mu}_{\vartheta}^{\mathbf{c}}:(X_{ij})\mapsto \left(e^{-\imath\,c_{ij}\,\vartheta}\,
X_{ij}\right)$ associated to the
string $\mathbf{c}=(c_{ij})$, with $c_{ij}:=a_i+b_j>0$.

We shall set 
$$\mathbb{P}(\mathbf{a},\mathbf{b}):=X_{\mathbf{a},\mathbf{b}}/\tilde{\mu}^{\mathbf{a},\mathbf{b}},$$ 
with projection
$\pi'_{\mathbf{a},\mathbf{b}} :X_{\mathbf{a}, \mathbf{b} }
\rightarrow\mathbb{P}(\mathbf{a},\mathbf{b})$, orbifold complex structure
$K_{\mathbf{a},\mathbf{b}}$, and K\"{a}hler form $\eta_{\mathbf{a},\mathbf{b}}$.
Explicitly,
$\beta_{\mathbf{a}, \mathbf{b} }:=\alpha_{\mathbf{a}, \mathbf{b} }/
\Phi_{\mathbf{a}, \mathbf{b} }$ is a connection form for 
$ \pi'_{\mathbf{a},\mathbf{b}}$, and $\eta_{\mathbf{a}, \mathbf{b} }$ is determined by the
relation
\begin{equation}
\label{eqn:eta ab}
2\,{\pi'}_{\mathbf{a},\mathbf{b}}^*(\eta_{\mathbf{a}, \mathbf{b} })=
\mathrm{d}\beta_{\mathbf{a}, \mathbf{b} }.
\end{equation}

We can interpret $\mathbb{P}(\mathbf{a},\mathbf{b})$
as a weighted projective variety, 
as follows.
Consider the Segre embedding 
$$\sigma_{k,l}:([Z],[W])\in\mathbb{P}^k\times \mathbb{P}^l
\mapsto [Z\otimes_{k,l}W]\in \mathbb{P}\left(
\mathbb{C}^{k+1}\otimes \mathbb{C}^{l+1}\right)\cong \mathbb{P}^{kl+k+l}.
$$
In coordinates, this is given by 
$T_{ij}=Z_i\,W_j$.
Let $\mathcal{C}_{k,l}\subset \mathbb{C}^{k+1}\otimes \mathbb{C}^{l+1}$
be the affine cone over $\sigma_{k,l}(\mathbb{P}^k\times \mathbb{P}^l)$; its ideal 
$I(\mathcal{C}_{k,l})\trianglelefteq
\mathbb{K}[X_{ij}]$ 
is generated by the quadratic
polynomials $T_{ij}\,T_{ab}-T_{ib}\,T_{aj}$ ($0\le i,a\le k$,
$0\le j,b\le l$). 

Let us denote by 
$\tilde{\mu}^{\mathbf{c}}_{\mathbb{C}^*}$ the extension of $\tilde{\mu}^{\mathbf{c}}$ to
$\mathbb{C}^*$, and consider
the weighted projective space
$$
\mathbb{P}(\mathbf{c}):=
\left(\mathbb{C}^{k+1}\otimes \mathbb{C}^{l+1}\setminus\{0\}\right)/
\tilde{\mu}^{\mathbf{c}}_{\mathbb{C}^*}.
$$
The weighted projective subvarieties of $\mathbb{P}(\mathbf{c})$
are in one-to-one correspondence with the prime ideals of
$\mathbb{K}[T_{ij}]$ that are homogeous with respect to the grading 
$\mathrm{deg}_{\mathbf{c}}(T_{ij})=c_{ij}$.
Since $I(\mathcal{C}_{k,l})$ is generated by $\mathrm{deg}_{\mathbf{c}}$-homogenous elements, it determines a
weighted projective subvariety
$$
P(\mathcal{C}_{k,l};\mathbf{c}):=
\mathcal{C}_{k,l}/\tilde{\mu}^{\mathbf{c}}_{\mathbb{C}^*}\subset \mathbb{P}(\mathbf{c}).$$
%

Let $\mathbf{d}=(d_{ij})$ be any positive sequence, 
and let
$S^{2(kl+k+l)+1}_{\mathbf{d}}\subset
\mathbb{C}^{k+1}\otimes \mathbb{C}^{l+1}$
be the unit sphere for the Hermitian product
$h_{\mathbf{d}}$. Then 
$S^{2(kl+k+l)+1}_{\mathbf{d}}$ is 
$\tilde{\mu}^{\mathbf{c}}$-invariant, and 
$\mathbb{P}(\mathbf{c})=S^{2(kl+k+l)+1}_{\mathbf{d}}/\tilde{\mu}^{\mathbf{c}}$.
With this description, 
$\mathbb{P}(\mathbf{c})$
inherits the orbifold K\"{a}hler structure $\eta^{\mathbf{c}}_{\mathbf{d}}$.
Explicitly, let 
$\iota_{\mathbf{d}}:S^{2(kl+k+l)+1}_{\mathbf{d}}\hookrightarrow \mathbb{C}^{k+1}\otimes
\mathbb{C}^{l+1}$ be the inclusion, and
set
\begin{equation}
\label{eqn:alpha d product}
\alpha_{\mathbf{d}}:=\iota_{\mathbf{d}}^*\left(\frac{\imath}{•2}\,
\sum_{i,j}d_{ij}\,\left[T_{ij}\,\mathrm{d}\overline{T}_{ij}-
\overline{T}_{ij}\,\mathrm{d}T_{ij}\right]\right),
\end{equation}
\begin{equation}
\label{eqn:Phi cd product}
\Phi^{\mathbf{c}}_{\mathbf{d}}([T]):=
\frac{\sum_{i,j}c_{ij}\cdot d_{ij}\,|T_{ij}|^2}{•\sum_{i,j}d_{ij}\,|T_{ij}|^2}
\qquad ([T]\in \mathbb{P}\left(\mathbb{C}^{k+1}\otimes
\mathbb{C}^{l+1}\right)),
\end{equation}
\begin{equation}
\label{eqn:connection cd}
\beta^{\mathbf{c}}_{\mathbf{d}}:=\frac{1}{•\Phi^{\mathbf{c}}_{\mathbf{d}}}\,
\alpha_{\mathbf{d}},
\end{equation}
where in the latter relation $\Phi^{\mathbf{c}}_{\mathbf{d}}$ is viewed as a function on
$S^{2(kl+k+l)+1}_{\mathbf{d}}$. 
Then $\beta^{\mathbf{c}}_{\mathbf{d}}$ is a connection 1-form for 
the projection $q^{\mathbf{c}}_{\mathbf{d}}:S^{2(kl+k+l)+1}_{\mathbf{d}}\rightarrow
\mathbb{P}(\mathbf{c})$, and $\eta^{\mathbf{c}}_{\mathbf{d}}$ satisfies
\begin{equation}
\label{eqn:eta dc product}
2\,{q^{\mathbf{c}}_{\mathbf{d}}}^*\left(\eta^{\mathbf{c}}_{\mathbf{d}}\right)=
\mathrm{d}\beta^{\mathbf{c}}_{\mathbf{d}}
\end{equation}
(recall (\ref{eqn:eta a d}) and (\ref{eqn:Phi a})).
Hence,
$\eta^{\mathbf{c}}_{\mathbf{d}}$ restricts to an orbifold K\"{a}hler structure
on the complex suborbifold $P(\mathcal{C}_{k,l};\mathbf{c})\subset 
\mathbb{P}(\mathbf{c})$.

The following is left to the reader:

\begin{lem}
If $d_{ij}=a_i\cdot b_j$, then 
$\mathcal{C}_{k,l}\cap S^{2(kl+k+l)+1}_{\mathbf{d}}=X_{\mathbf{a},\mathbf{b}}$.
Hence $P(\mathcal{C}_{k,l};\mathbf{c})=\mathbb{P}(\mathbf{a},\mathbf{b})$.

\end{lem}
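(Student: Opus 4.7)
The plan is to verify the set-theoretic equality $\mathcal{C}_{k,l}\cap S^{2(kl+k+l)+1}_{\mathbf{d}}=X_{\mathbf{a},\mathbf{b}}$ directly via a short computation, exploiting that with $d_{ij}=a_ib_j$ the Hermitian form $h_{\mathbf{d}}$ \emph{multiplicatively splits} on simple tensors; once this is done, the second assertion follows by passing to $S^1$-quotients after checking that the relevant actions coincide.

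For the forward inclusion $X_{\mathbf{a},\mathbf{b}}\subseteq \mathcal{C}_{k,l}\cap S^{2(kl+k+l)+1}_{\mathbf{d}}$, I would pick an arbitrary point $T=Z\otimes_{k,l}W\in X_{\mathbf{a},\mathbf{b}}$ with $(Z,W)\in S^{2k+1}_{\mathbf{a}}\times S^{2l+1}_{\mathbf{b}}$. Since $T$ is a decomposable tensor, $T\in\mathcal{C}_{k,l}$ by definition of the Segre cone. Writing $T_{ij}=z_i\,w_j$, the hypothesis $d_{ij}=a_i\,b_j$ gives
\begin{equation*}
\sum_{i,j}d_{ij}\,|T_{ij}|^2\;=\;\sum_{i,j}a_ib_j\,|z_i|^2|w_j|^2\;=\;\Bigl(\sum_i a_i|z_i|^2\Bigr)\Bigl(\sum_j b_j|w_j|^2\Bigr)\;=\;1,
\end{equation*}
so $T\in S^{2(kl+k+l)+1}_{\mathbf{d}}$.

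For the reverse inclusion, suppose $T\in\mathcal{C}_{k,l}\cap S^{2(kl+k+l)+1}_{\mathbf{d}}$. Since $T\neq 0$ lies in the affine Segre cone, it is a rank-$1$ tensor, so $T=Z'\otimes_{k,l}W'$ for some nonzero $Z'\in\mathbb{C}^{k+1}$, $W'\in\mathbb{C}^{l+1}$. Rescaling, write $Z'=\lambda Z$, $W'=\mu W$ with $Z\in S^{2k+1}_{\mathbf{a}}$ and $W\in S^{2l+1}_{\mathbf{b}}$. The same factorization of the norm as above yields $|\lambda\mu|^2=1$, so $\lambda\mu=e^{\imath\theta}\in S^1$; absorbing the phase into $Z$ (using that $S^{2k+1}_{\mathbf{a}}$ is $S^1$-invariant) gives $T=(e^{\imath\theta}\lambda^{-1}\mu\cdot\lambda Z)\otimes_{k,l}W\in X_{\mathbf{a},\mathbf{b}}$, establishing the other inclusion.

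Finally, to deduce $P(\mathcal{C}_{k,l};\mathbf{c})=\mathbb{P}(\mathbf{a},\mathbf{b})$, I would observe that on a simple tensor $T_{ij}=z_iw_j$ the action $\tilde{\mu}^{\mathbf{c}}$ with $c_{ij}=a_i+b_j$ acts by
\begin{equation*}
T_{ij}\;\longmapsto\;e^{-\imath(a_i+b_j)\vartheta}z_iw_j\;=\;\bigl(e^{-\imath a_i\vartheta}z_i\bigr)\bigl(e^{-\imath b_j\vartheta}w_j\bigr),
\end{equation*}
so $\tilde{\mu}^{\mathbf{c}}$ preserves $X_{\mathbf{a},\mathbf{b}}$ and its restriction agrees with $\tilde{\mu}^{\mathbf{a},\mathbf{b}}$. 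Since $\mathcal{C}_{k,l}$ is $\tilde{\mu}^{\mathbf{c}}_{\mathbb{C}^*}$-invariant, the standard identification of the $\mathbb{C}^*$-quotient with the $S^1$-quotient of the unit sphere yields $P(\mathcal{C}_{k,l};\mathbf{c})=\mathcal{C}_{k,l}/\tilde{\mu}^{\mathbf{c}}_{\mathbb{C}^*}=\bigl(\mathcal{C}_{k,l}\cap S^{2(kl+k+l)+1}_{\mathbf{d}}\bigr)/\tilde{\mu}^{\mathbf{c}}=X_{\mathbf{a},\mathbf{b}}/\tilde{\mu}^{\mathbf{a},\mathbf{b}}=\mathbb{P}(\mathbf{a},\mathbf{b})$. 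There is no real obstacle here; the only point that requires a little care is the scalar-absorption step in the reverse inclusion, which relies crucially on the product structure $d_{ij}=a_ib_j$.
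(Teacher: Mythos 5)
Your proof is correct and is exactly the direct verification the paper intends by leaving this lemma to the reader: the multiplicative splitting of $h_{\mathbf{d}}$ on simple tensors when $d_{ij}=a_ib_j$ gives both inclusions, and the agreement of $\tilde{\mu}^{\mathbf{c}}$ (with $c_{ij}=a_i+b_j$) with $\tilde{\mu}^{\mathbf{a},\mathbf{b}}$ on $X_{\mathbf{a},\mathbf{b}}$ yields the identification of quotients. The only blemish is the garbled scalar in the reverse inclusion: since $T=(\lambda Z)\otimes_{k,l}(\mu W)=\lambda\mu\,(Z\otimes_{k,l}W)$ with $|\lambda\mu|=1$, you should simply write $T=\bigl(\lambda\mu\,Z\bigr)\otimes_{k,l}W=\bigl(e^{\imath\theta}Z\bigr)\otimes_{k,l}W$, not $\bigl(e^{\imath\theta}\lambda^{-1}\mu\cdot\lambda Z\bigr)\otimes_{k,l}W$.
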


\begin{lem}
\label{lem:symplectic embedding}
Assume $c_{ij}=a_i+b_j$, $d_{ij}=a_i\,b_j$. 
Let $\jmath:\mathbb{P}(\mathbf{a},\mathbf{b})\hookrightarrow \mathbb{P}(\mathbf{c})$
be the inclusion, and let $\eta_{\mathbf{a},\mathbf{b}}$ be as in (\ref{eqn:eta ab}).
Then $\jmath^*(\eta^{\mathbf{c}}_{\mathbf{d}})=\eta_{\mathbf{a},\mathbf{b}}$.

\end{lem}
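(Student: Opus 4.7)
The plan is to reduce the identity of orbifold K\"ahler forms on $\mathbb{P}(\mathbf{a},\mathbf{b})$ to an identity of connection $1$-forms on the total space $X_{\mathbf{a},\mathbf{b}}$, and finally to a direct computation on the product of spheres $S^{2k+1}_{\mathbf{a}}\times S^{2l+1}_{\mathbf{b}}$. Specifically, by the previous lemma we have an inclusion $\iota:X_{\mathbf{a},\mathbf{b}}\hookrightarrow S^{2(kl+k+l)+1}_{\mathbf{d}}$ making the square with $\pi'_{\mathbf{a},\mathbf{b}}$ and $q^{\mathbf{c}}_{\mathbf{d}}$ on the verticals and $\jmath$ on the bottom commutative. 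Since $\pi'_{\mathbf{a},\mathbf{b}}$ is a surjective submersion onto the orbifold $\mathbb{P}(\mathbf{a},\mathbf{b})$, it suffices to verify
\[
(\pi'_{\mathbf{a},\mathbf{b}})^{\ast}\jmath^{\ast}(\eta^{\mathbf{c}}_{\mathbf{d}})=(\pi'_{\mathbf{a},\mathbf{b}})^{\ast}(\eta_{\mathbf{a},\mathbf{b}}),
\]
and in view of (\ref{eqn:eta ab}) and (\ref{eqn:eta dc product}) the latter follows from $\iota^{\ast}\beta^{\mathbf{c}}_{\mathbf{d}}=\beta_{\mathbf{a},\mathbf{b}}$ after applying $\mathrm{d}$.

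In view of (\ref{eqn:connection cd}) and the definition $\beta_{\mathbf{a},\mathbf{b}}=\alpha_{\mathbf{a},\mathbf{b}}/\Phi_{\mathbf{a},\mathbf{b}}$, the identity $\iota^{\ast}\beta^{\mathbf{c}}_{\mathbf{d}}=\beta_{\mathbf{a},\mathbf{b}}$ splits into two separate checks: the identity $\iota^{\ast}\alpha_{\mathbf{d}}=\alpha_{\mathbf{a},\mathbf{b}}$ of connection forms, and the identity $\iota^{\ast}\Phi^{\mathbf{c}}_{\mathbf{d}}=\Phi_{\mathbf{a},\mathbf{b}}\circ\pi_{\mathbf{a},\mathbf{b}}$ of scalar functions. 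For the first, I would pull back along the further submersion $\tau_{\mathbf{a},\mathbf{b}}:S^{2k+1}_{\mathbf{a}}\times S^{2l+1}_{\mathbf{b}}\to X_{\mathbf{a},\mathbf{b}}$ in (\ref{eqn:tau a b}). Substituting $T_{ij}=z_{i}w_{j}$ into (\ref{eqn:alpha d product}), the summand indexed by $(i,j)$ expands into two pieces, one proportional to $z_{i}\,\mathrm{d}\overline{z}_{i}-\overline{z}_{i}\,\mathrm{d}z_{i}$ (weighted by $a_{i}b_{j}|w_{j}|^{2}$) and one proportional to $w_{j}\,\mathrm{d}\overline{w}_{j}-\overline{w}_{j}\,\mathrm{d}w_{j}$ (weighted by $a_{i}b_{j}|z_{i}|^{2}$). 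The defining relations $\sum_{i}a_{i}|z_{i}|^{2}=1$ and $\sum_{j}b_{j}|w_{j}|^{2}=1$ of the two spheres then collapse the double sum to $\iota_{\mathbf{a},\mathbf{b}}^{\ast}\tilde{\alpha}_{\mathbf{a},\mathbf{b}}$, which by (\ref{eqn:connection form ab}) equals $\tau_{\mathbf{a},\mathbf{b}}^{\ast}\alpha_{\mathbf{a},\mathbf{b}}$.

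For the second check, I would compute $\Phi^{\mathbf{c}}_{\mathbf{d}}$ directly on a decomposable tensor $[Z\otimes W]$. With $c_{ij}=a_{i}+b_{j}$ and $d_{ij}=a_{i}b_{j}$, the numerator in (\ref{eqn:Phi cd product}) factors as
\[
\Big(\sum_{i}a_{i}^{2}|z_{i}|^{2}\Big)\Big(\sum_{j}b_{j}|w_{j}|^{2}\Big)+\Big(\sum_{i}a_{i}|z_{i}|^{2}\Big)\Big(\sum_{j}b_{j}^{2}|w_{j}|^{2}\Big),
\]
while the denominator factors as $\big(\sum_{i}a_{i}|z_{i}|^{2}\big)\big(\sum_{j}b_{j}|w_{j}|^{2}\big)$. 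Dividing yields exactly $\Phi^{\mathbf{a}}_{\mathbf{a}}([Z])+\Phi^{\mathbf{b}}_{\mathbf{b}}([W])$, which is $\Phi_{\mathbf{a},\mathbf{b}}\circ\pi_{\mathbf{a},\mathbf{b}}$ by (\ref{eqn:ham flow Phi ab}).

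Combining the two identities gives $\iota^{\ast}\beta^{\mathbf{c}}_{\mathbf{d}}=\beta_{\mathbf{a},\mathbf{b}}$, hence $\iota^{\ast}\mathrm{d}\beta^{\mathbf{c}}_{\mathbf{d}}=\mathrm{d}\beta_{\mathbf{a},\mathbf{b}}$, and descending along $\pi'_{\mathbf{a},\mathbf{b}}$ yields the claim. There is no substantive obstacle here; the only place to be careful is the algebraic identity for $\alpha_{\mathbf{d}}$, where the specific choice $d_{ij}=a_{i}b_{j}$ together with the sphere constraints is precisely what makes the weighted double sum separate into a sum of two independent pieces.
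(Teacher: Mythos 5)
Your proposal is correct and follows essentially the same route as the paper: reduce to showing that $\alpha_{\mathbf{d}}$ and $\Phi^{\mathbf{c}}_{\mathbf{d}}$ pull back on $X_{\mathbf{a},\mathbf{b}}$ to $\alpha_{\mathbf{a},\mathbf{b}}$ and $\Phi_{\mathbf{a},\mathbf{b}}$, by substituting $T_{ij}=Z_iW_j$. The only difference is that you write out the substitution and the factorizations explicitly (correctly), where the paper leaves them as a straightforward computation.
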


\begin{proof}
[Proof of Lemma \ref{lem:symplectic embedding}]
In view of (\ref{eqn:eta ab}), (\ref{eqn:connection cd}) and (\ref{eqn:eta dc product}),
we need only prove that 
$\alpha_{\mathbf{d}}$ and $\Phi_{\mathbf{d}}^{\mathbf{c}}$
pull back on $X_{\mathbf{a},\mathbf{b}}$ to, respectively,
$\alpha_{\mathbf{a},\mathbf{b}}$ in (\ref{eqn:connection form ab}) 
and
$\Phi_{\mathbf{a},\mathbf{b}}$ in (\ref{eqn:ham flow Phi ab}).
This follows from a straighforward computation
by setting $T_{ij}=Z_i\,W_j$ in (\ref{eqn:alpha d product}) and
(\ref{eqn:Phi cd product}).
\end{proof}

Summing up, we have proved the following .

\begin{prop}
\label{prop:weighted proj}

Let $\mathbf{a}=(a_0,\ldots,a_k)$,
$\mathbf{b}=(b_0,\ldots,b_l)$ be sequences of
positive integers, and set $c_{ij}:=a_i+b_j$.
Define a grading on $\mathbb{K}[T_{ij}]$
by setting $\mathrm{deg}_{\mathbf{c}}(T_{ij})=c_{ij}$.
Then the ideal $I\trianglelefteq\mathbb{K}[T_{ij}]$ with generators
$T_{ij}\,T_{ab}-T_{ib}\,T_{aj}$ is 
$\mathrm{deg}_{\mathbf{c}}$-homogenous, and
$\mathbb{P}(\mathbf{a},\mathbf{b})\subset
\mathbb{P}(\mathbf{c})$ is the corresponding
weighted projective variety.
Furthermore, if $d_{ij}:=a_i\,b_j$
then $(\mathbb{P}(\mathbf{a},\mathbf{b}),\eta_{\mathbf{a},\mathbf{b}})$
is a K\"{a}hler suborbifold of $(\mathbb{P}(\mathbf{c}),\eta^{\mathbf{c}}_{\mathbf{d}})$.

\end{prop}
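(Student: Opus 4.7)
The statement is essentially a synthesis of the material developed in the preceding pages, so the proof proposal amounts to assembling three pieces in the right order.

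First, I would verify the $\deg_{\mathbf{c}}$-homogeneity of the ideal $I$. For each generator $T_{ij}T_{ab}-T_{ib}T_{aj}$ the two monomials have equal $\mathbf{c}$-degree, since
\begin{equation*}
c_{ij}+c_{ab}=(a_i+b_j)+(a_a+b_b)=(a_i+b_b)+(a_a+b_j)=c_{ib}+c_{aj}.
\end{equation*}
Hence $I$ is $\deg_{\mathbf{c}}$-homogeneous, so the affine cone $\mathcal{C}_{k,l}\subset\mathbb{C}^{k+1}\otimes\mathbb{C}^{l+1}$ is stable under the $\mathbb{C}^*$-action $\tilde{\mu}^{\mathbf{c}}_{\mathbb{C}^*}$ and defines a weighted projective subvariety $P(\mathcal{C}_{k,l};\mathbf{c})\subset\mathbb{P}(\mathbf{c})$.

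Second, I would identify $P(\mathcal{C}_{k,l};\mathbf{c})$ set-theoretically with $\mathbb{P}(\mathbf{a},\mathbf{b})$. The Segre map $\sigma_{k,l}$ parametrizes $\mathcal{C}_{k,l}\setminus\{0\}$ by decomposable tensors $Z\otimes_{k,l}W$, and with the choice $d_{ij}=a_ib_j$ the auxiliary lemma just stated before \ref{lem:symplectic embedding} gives $\mathcal{C}_{k,l}\cap S^{2(kl+k+l)+1}_{\mathbf{d}}=X_{\mathbf{a},\mathbf{b}}$. Since both $P(\mathcal{C}_{k,l};\mathbf{c})$ and $\mathbb{P}(\mathbf{a},\mathbf{b})$ are defined as quotients of this set by the same $\tilde{\mu}^{\mathbf{c}}$-action (the action restricts from $\mathbb{C}^{k+1}\otimes\mathbb{C}^{l+1}$ to $X_{\mathbf{a},\mathbf{b}}$ as the diagonal weight-$c_{ij}$ action, which is precisely $\tilde{\mu}^{\mathbf{a},\mathbf{b}}=\tilde{\mu}^{\mathbf{a}}\otimes\tilde{\mu}^{\mathbf{b}}$ by (\ref{eqn:ham flow})), the two quotients coincide, and the inclusion $\jmath:\mathbb{P}(\mathbf{a},\mathbf{b})\hookrightarrow\mathbb{P}(\mathbf{c})$ is realized as the projection of the inclusion $X_{\mathbf{a},\mathbf{b}}\hookrightarrow S^{2(kl+k+l)+1}_{\mathbf{d}}$.

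Third, I would invoke Lemma \ref{lem:symplectic embedding} directly: with $c_{ij}=a_i+b_j$ and $d_{ij}=a_ib_j$ one has $\jmath^*(\eta^{\mathbf{c}}_{\mathbf{d}})=\eta_{\mathbf{a},\mathbf{b}}$, which combined with the fact that $\jmath$ is holomorphic for the induced orbifold complex structures (both come from restricting the complex structure on $\mathbb{C}^{k+1}\otimes\mathbb{C}^{l+1}$ to invariant horizontal subspaces, and the Segre embedding is holomorphic) shows that the inclusion is an embedding of K\"{a}hler orbifolds.

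The main subtlety, and the one worth checking carefully rather than waving at, is the second step: that the $\tilde{\mu}^{\mathbf{c}}$-action on $\mathbb{C}^{k+1}\otimes\mathbb{C}^{l+1}$ restricts on the Segre cone to precisely the tensor product of the $\tilde{\mu}^{\mathbf{a}}$- and $\tilde{\mu}^{\mathbf{b}}$-actions described in (\ref{eqn:ham flow}) modulo the antidiagonal $S^1$ that defines $X_{\mathbf{a},\mathbf{b}}$. This is immediate from $c_{ij}=a_i+b_j$, but it is the conceptual content of the statement: the Segre structure transports the \emph{sum} weight system into a \emph{product} weighted projective variety. Everything else is either formal (homogeneity) or already proved (Lemma \ref{lem:symplectic embedding}).
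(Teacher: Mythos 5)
Your proposal is correct and follows essentially the same route as the paper, whose own "proof" is precisely the summing-up of the preceding discussion: the (formal) $\deg_{\mathbf{c}}$-homogeneity of the Segre relations, the identification $\mathcal{C}_{k,l}\cap S^{2(kl+k+l)+1}_{\mathbf{d}}=X_{\mathbf{a},\mathbf{b}}$ giving $P(\mathcal{C}_{k,l};\mathbf{c})=\mathbb{P}(\mathbf{a},\mathbf{b})$ (with the $\tilde{\mu}^{\mathbf{c}}$-action restricting to $\tilde{\mu}^{\mathbf{a},\mathbf{b}}$ exactly because $c_{ij}=a_i+b_j$), and Lemma \ref{lem:symplectic embedding} for $\jmath^*(\eta^{\mathbf{c}}_{\mathbf{d}})=\eta_{\mathbf{a},\mathbf{b}}$. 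Your explicit homogeneity computation and the holomorphicity remark merely spell out steps the paper leaves implicit.
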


The $T^1$-action on $\mathbb{P}^k\times \mathbb{P}^l$
\begin{eqnarray}
\label{eqn:ham flow Phi a b bar}
\mu^{\mathbf{a},-\mathbf{b}}_{\vartheta}\big([Z],\,[W])&:=&
\left(\left[e^{-\imath\,a_0\,\vartheta}\,z_0:\cdots:e^{-\imath\,a_k\,\vartheta}\,z_k\right],
\left[e^{\imath\,b_0\,\vartheta}\,w_0:\cdots:e^{\imath\,b_l\,\vartheta}\,w_l\right]\right)\nonumber\\
&=&\left(\mu^{\mathbf{a}}_{\vartheta}\big([Z]),\,\mu^{\mathbf{b}}_{-\vartheta}\big([W])\right)
\end{eqnarray}
can be interpreted in terms of the previous case
by passing to the opposite K\"{a}hler structure on $\mathbb{P}^l$,
and noting that $e^{\imath\,b_j\,\vartheta}\,e_j=e^{-\imath\,b_j\,\vartheta}\bullet e_j$,
where $(e_j)$ is the standard basis and
 $\bullet$ denotes scalar multiplication in $\overline{•\mathbb{C}^{l+1}}$.
Namely, let us consider $\mathbb{P}^k\times \overline{•\mathbb{P}^l}$,
endowed with the K\"{a}hler form 
$\omega_{\mathbf{a},-\mathbf{b}}:=\omega_{\mathbf{a}}- \omega_{\mathbf{b}}$.
The latter is the Hodge form associated to the holomorphic line bundle
$H_{k,\overline{l}}:=\mathcal{O}_{\mathbb{P}^k}(1)\boxtimes 
\mathcal{O}_{\overline{•\mathbb{P}^l}}(1)$ and the positive metric on it
given by the tensor product of the 
Hermitian metrics induced by
$h_{\mathbf{a}}$ on 
$\mathbb{C}^{k+1}$
and $\overline{h_{\mathbf{b}}}$ on $\overline{•\mathbb{C}^{l+1}}$.
The corresponding unit circle bundle 
$X_{\mathbf{a},-\mathbf{b}}=S^{2k+1}_{\mathbf{a}}\otimes_{k,\overline{l}}S^{2l+1}_{\mathbf{b}}$
is the image of the map 
$$
\tau_{\mathbf{a},-\mathbf{b}}:(Z,W)\in 
S^{2k+1}_{\mathbf{a}}\times S^{2l+1}_{\mathbf{b}}\mapsto 
Z\otimes _{k,\overline{l}}W\in \mathbb{C}^{k+1}\otimes\overline{•\mathbb{C}^{l+1}};$$
we have denoted by 
$\otimes_{k,\overline{l}}:\mathbb{C}^{k+1}\times \mathbb{C}^{l+1}
\rightarrow\mathbb{C}^{k+1}\otimes \overline{\mathbb{C}^{l+1}}$ 
the tensor product operation.
Thus componentwise $(Z_i)\otimes_{k,\overline{l}}(W_j)=(Z_i\,\overline{W}_j)$.
Equivalently, it is the quotient of $S^{2k+1}_{\mathbf{a}}\times S^{2l+1}_{\mathbf{b}}$ by
the $S^1$-action $(Z,W)\mapsto \left(e^{\imath\,\theta}\,Z,e^{\imath\,\theta}\,W\right)$.
The projection 
$\pi_{\mathbf{a},-\mathbf{b} }:X_{ \mathbf{a},  -\mathbf{b}    }
\rightarrow\mathbb{P}^k\times \overline{\mathbb{P}^l}$ is
$Z\otimes _{k,\overline{l}}W\mapsto ([Z],[W])$, and the connection form
$\alpha_{\mathbf{a},-\mathbf{b}}$ is determined by obvious variants 
of (\ref{eqn:connection form ab}) and (\ref{eqn:alpha ab}).
We have $2\,\pi^*_{\mathbf{a},-\mathbf{b} }(\omega_{\mathbf{a},-\mathbf{b} })
=\mathrm{d}\alpha_{\mathbf{a},-\mathbf{b} }$.

Then $\mu^{\mathbf{a},-\mathbf{b}}$ in 
(\ref{eqn:ham flow Phi a b bar}) is Hamiltonian 
with respect to $2\,\omega_{\mathbf{a},-\mathbf{b}}$, with normalized moment map 
$\Phi_{\mathbf{a}, \mathbf{b}}$ 
in (\ref{eqn:ham flow Phi ab}).
Its contact lift $\tilde{•\mu}^{\mathbf{a},-\mathbf{b}}$ to $X_{\mathbf{a},-\mathbf{b}}$ 
is the tensor product (for $\otimes_{k,\overline{l}}$) 
of the flows
$\tilde{\mu}^{\mathbf{a}}_{\vartheta}$ and $\tilde{\mu}^{-\mathbf{b}}_{\vartheta}$.
We shall set $\mathbb{P}(\mathbf{a},-\mathbf{b}):=
X_{\mathbf{a},-\mathbf{b}}/\tilde{•\mu}^{\mathbf{a},-\mathbf{b}}$, 
with projection $q_{\mathbf{a},-\mathbf{b} }:X_{\mathbf{a},-\mathbf{b}}\rightarrow
\mathbb{P}(\mathbf{a},-\mathbf{b})$,
and denote by $\eta_{ \mathbf{a},-\mathbf{b} } $
and $K_{\mathbf{a},-\mathbf{b} }$ its 
(orbifold) symplectic and complex structures, respectively.
Thus 
\begin{equation}
\label{eqn:q a -b}
2\,q_{\mathbf{a},-\mathbf{b} }^*(\eta_{ \mathbf{a},-\mathbf{b} }) 
=\mathrm{d}\beta_{\mathbf{a},-\mathbf{b} },
\quad 
\text{where}\quad 
\beta_{\mathbf{a},-\mathbf{b} }:=\alpha_{\mathbf{a},-\mathbf{b} }/
\Phi_{\mathbf{a},\mathbf{b}}.
\end{equation}

The Segre embedding
$$
\sigma_{k,\overline{l}}:([Z],[W])\in \mathbb{P}^k\times \mathbb{P}^l=\mathbb{P}^k\times 
\overline{\mathbb{P}^l}
\mapsto [Z\otimes_{k,\overline{l}}W]\in 
\mathbb{P}\left(\mathbb{C}^{k+1}\otimes \overline{\mathbb{C}}^{l+1}\right),
$$
given in coordinates by $T_{i,\overline{j}}=Z_i\,\overline{W}_j$, intertwines 
$\mu^{\mathbf{a}}\times \mu^{-\mathbf{b}}$ with 
$\mu^{\mathbf{a}}\otimes_{k,\overline{l}} \mu^{-\mathbf{b}}
=\mu^{\mathbf{c}}$, where $c_{ij}=a_i+b_j$. 
The unitary representation $\tilde{\mu}^{\mathbf{c}}$ on $\mathbb{C}^{k+1}\otimes
\overline{•\mathbb{C}^{l+1}}$ is defined in terms of the identification
$\mathbb{C}^{k+1}\otimes
\overline{•\mathbb{C}^{l+1}}\cong \mathbb{C}^{k\,l+k+l+1}$ given by the basis
$e_{i\overline{j}}:=e^{k}_i\otimes_{k,\overline{l}}e^l_j$,
where $(e^k_i)_{i=0}^k$ and $(e^l_j)_{j=0}^l$ are, respectively, the standard basis
of $\mathbb{C}^{k+1}$ and $\mathbb{C}^{l+1}$.
Coordinatewise, $\mu^{\mathbf{c}}_{\vartheta}([T_{i,\overline{j}}])=
\left[e^{-\imath\,c_{ij}\vartheta}\,T_{i,\overline{j}}\right]$.
The same argument used above realizes $\mathbb{P}(\mathbf{a},-\mathbf{b})$ as
the weighted projective variety associated to the cone 
$\mathcal{C}_{k,\overline{l}}\subset \mathbb{C}^{k+1}\otimes \overline{\mathbb{C}}^{l+1}$ over 
$\sigma_{k,\overline{l}}(\mathbb{P}^k\times \overline{\mathbb{P}^l})$
and the weighting $\mathbf{c}$, with induced orbifold
K\"{a}hler structure $\eta_{\mathbf{a},-\mathbf{b}}$.

The latter case is equivalent to the previous one, once we use the standard basis
to induce a unitary isomorphism 
$\overline{•\mathbb{C}^{l+1}}\cong \mathbb{C}^{l+1}$. The reason for emphasizing the
coexistence of the complex structures on $\mathbb{P}^l$ and $\overline{\mathbb{P}^l}$
is the following. 
Being the quotient of $S^{2k+1}_{\mathbf{a}}\times S^{2l+1}_{\mathbf{b}}$ by
the $S^1$-action $(Z,W)\mapsto \left(e^{\imath\,\theta}\,Z,e^{\imath\,\theta}\,W\right)$,
$X_{ \mathbf{a},  -\mathbf{b}}$ is diffeomorphic to the submanifold
$Y_{ \mathbf{a},  -\mathbf{b}}\subset \mathbb{P}^{k+l+1}$ given by
\begin{equation}
\label{eqn:defn Yab}
Y_{ \mathbf{a},  -\mathbf{b}}:=\left\{[Z:W]\in \mathbb{P}^{k+l+1}\,:\,
\|Z\|_{\mathbf{a}}=\|W\|_{\mathbf{b}}\right\}.
\end{equation}
Explicitly, the diffeomorphism
\begin{equation}
\label{eqn:defn fab}
f_{ \mathbf{a},  -\mathbf{b}}:
[Z:W]\in Y_{ \mathbf{a},-\mathbf{b}}\mapsto
\frac{Z}{•\|Z\|_{\mathbf{a}}} 
\otimes_{k,\overline{l}}\frac{W}{•\|W\|_{\mathbf{b}}}\in 
 X_{ \mathbf{a},-\mathbf{b}}
\end{equation}
intertwines the $S^1$-action 
\begin{equation}
\label{eqn:action r}
r:\left(e^{\imath\,\vartheta},[Z:W]\right)\in S^1\times Y_{ \mathbf{a},-\mathbf{b}}\mapsto 
\left[e^{\imath\vartheta/2}\,Z:e^{-\imath\vartheta/2}\,W\right]\in 
Y_{ \mathbf{a},-\mathbf{b}}
\end{equation}
with the structure bundle action on 
$X_{ \mathbf{a},-\mathbf{b}}$
given by scalar multiplication.

As a hypersurface in $\mathbb{P}^{k+l+1}$, $Y_{ \mathbf{a},-\mathbf{b}}$ 
inherits an alternative CR structure.
To interpret the latter, notice that $Y_{ \mathbf{a},  -\mathbf{b}}$ may be 
identified with the unit circle bundle $Z_{ \mathbf{a},  -\mathbf{b}}\subset 
\mathcal{O}_{\mathbb{P}^k}(-1)
\boxtimes \mathcal{O}_{\mathbb{P}^l}(1)$. To make this explicit,
given a one-dimensional complex vector space $L$ and $\ell\in L$, $\ell \neq 0$,
let $\ell^*\in L^\vee$ be the uniquely determined element 
such that $\ell^*(\ell)=1$. Then 
the diffeomorphism 
\begin{equation}
\label{eqn:defn di gab}
g_{ \mathbf{a},  -\mathbf{b}}:
[Z:W]\in Y_{ \mathbf{a},-\mathbf{b}}
\mapsto \frac{Z}{•\|Z\|_{\mathbf{a}}} 
\otimes_{k,l}\left(\frac{W}{•\|W\|_{\mathbf{b}}}\right)^*
\in Z_{ \mathbf{a},  -\mathbf{b}}
\end{equation}
intertwines the action (\ref{eqn:action r}) with the structure bundle action on
$Z_{ \mathbf{a},  -\mathbf{b}}$ given by scalar multiplication. 
Thus we have two $S^1$-equivariant diffeomorphisms
$X_{ \mathbf{a},  -\mathbf{b}}
\stackrel{f_{ \mathbf{a},  -\mathbf{b}}}{\longleftarrow}
Y_{ \mathbf{a},  -\mathbf{b}}\stackrel{g_{ \mathbf{a},  -\mathbf{b}}}{\longrightarrow}
Z_{ \mathbf{a},  -\mathbf{b}}$ and the composition 
$f_{ \mathbf{a},  -\mathbf{b}}\circ g_{ \mathbf{a},  -\mathbf{b}}^{-1}:
Z_{ \mathbf{a},  -\mathbf{b}}\rightarrow X_{ \mathbf{a},  -\mathbf{b}}$
covers the identity $\mathbb{P}^k\times \mathbb{P}^l\rightarrow \mathbb{P}^k\times 
\overline{\mathbb{P}^l}$.

\subsubsection{Application to symplectic reductions}
\label{sctn:symp reduct appl}

Let be given an Hamiltonian action 
$\beta:S^1\times N\rightarrow N$ on a symplectic manifold
$(N,\Omega)$, with normalized moment map 
$\mathfrak{B}:N\rightarrow \mathbb{R}$, such that
$0$ is a regular value of $\mathfrak{B}$.
Then the quotient $N_0:=\mathfrak{B}^{-1}(0)/\beta$ is an
orbifold. 

Let $\pi:\mathfrak{B}^{-1}(0)\rightarrow N_0$ be the projection, and
$\iota:\mathfrak{B}^{-1}(0)\hookrightarrow N$ be the inclusion. 
The reduced orbifold symplectic structure $\Omega_0$ is determined by
the condition $\iota^*(\Omega)=\pi^*(\Omega_0)$.

One the other hand,
given a connection $1$-form $\alpha$ for the
$S^1$-action on $\mathfrak{B}^{-1}(0)$, a closed form $\Omega_{0}'$ on $N_0$
is determined by the condition $\mathrm{d}\alpha=2\,\pi^*(\Omega_0')$ 
\cite{dh}. $[\Omega_0']\in H^2(N_0,\mathbb{R})$ 
is the Chern class of a principal
$S^1$-bundle naturally associated to $\pi$ (see \cite{dh}, \cite{w} 
for a precise discussion). 

Let $J$ be a complex structure on
$N$ compatible with $\Omega$, so that $(N,J,\Omega)$ is a K\"{a}hler manifold,
and such that $\beta$ is holomorphic (i.e., $\beta_g:M\rightarrow M$ is 
$J$-holomorphic
for every $g\in S^1$);
then $J$ descends to an orbifold complex structure 
$J_{0}$ on $N_0$ compatible with $\Omega_{0}$. Thus
$(N_0,J_0,\Omega_{0})$ is a K\"{a}hler orbifold. 
On the other hand, even if $\Omega_{0}'$ turns out to be symplectic, 
$J_{0}$ needn't be compatible with $\Omega_{0}'$.

We shall apply the considerations in \S \ref{sctn:general construction}
to describe a class of Hamiltonian circle actions for which  
$\Omega_{0}'$ is a symplectic form; furthermore, there is
a natural alternative choice of a complex structure $J'_0$ on $N_0$, compatible
with $\Omega_{0}'$. Therefore, in this situation the triple
$(N_0, J_0',\Omega_{0}')$ is a K\"{a}hler orbifold, generally different from
$(N_0,J_0,\Omega_{0})$.
Since $[\Omega_0']\in H^2(N_0,\mathbb{R})$ is the class appearing in the
Duistermaat-Heckman Theorem on the variation of cohomology in symplectic reduction
\cite{dh}, we shall call $(N_0,J_0',\Omega_{0}')$ the \textit{DH-reduction} of $(N,J,\Omega)$ 
under $\beta$.

Given integers $k,\,l\ge 1$, let $\mathbf{a}=\begin{pmatrix}
a_0&\cdots&a_k
\end{pmatrix} ,\,\mathbf{b}= 
\begin{pmatrix}
b_0&\cdots&b_l
\end{pmatrix}
$ be strings of positive integers, and consider the holomorphic
action of $T^1$ on $\mathbb{P}^{k+l+1}$ given by
\begin{eqnarray}
\label{eqn:actiongamma}
\lefteqn{
\gamma^{\mathbf{a},-\mathbf{b}}_{e^{\imath\,\vartheta}}
\big([z_0:\cdots:z_k:w_0:\cdots:w_l]\big)}\\
&=&\left[e^{-\imath\,a_0\,\vartheta}\,z_0:\cdots:e^{-\imath\,a_k\,\vartheta}\,z_k:
e^{\imath\,b_0\,\vartheta}\,w_0:\cdots:e^{\imath\,b_k\,\vartheta}\,w_l\right].
\nonumber
\end{eqnarray}
Then $\gamma^{\mathbf{a},-\mathbf{b}}$ is Hamiltonian
with respect to $\Omega=2\,\omega_{FS}$, with normalized moment map
\begin{equation}
\label{eqn:moment map}
\Gamma_{\mathbf{a},-\mathbf{b}} \big([Z:W]\big):=
\frac{1}{\|Z\|^2+\|W\|^2•}\,\left(\sum_{j=0}^k\,a_j\,|z_j|^2
-\sum_{j=0}^l\,b_j\,|w|_j^2\right).
\end{equation}
Hence 
$\Gamma_{\mathbf{a},-\mathbf{b}}^{-1}(0)=
Y_{\mathbf{a},-\mathbf{b}}$ (see (\ref{eqn:defn Yab})), and 
$0$ is a regular value of 
$\Gamma_{\mathbf{a},-\mathbf{b}}$ \cite{gs cp1}. In fact, the diffeomorphism
$f_{ \mathbf{a},  -\mathbf{b}}$ in (\ref{eqn:defn fab}) intertwines 
$\gamma^{\mathbf{a},-\mathbf{b}}$ and $\tilde{•\mu}^{\mathbf{a},-\mathbf{b}}$.
Therefore, the K\"{a}hler orbifold 
$(N_0,\Omega_{0}', J_0')$ is in this case isomorphic to
$\big(\mathbb{P}(\mathbf{a},-\mathbf{b}),\eta_{\mathbf{a},-\mathbf{b}}\big)$
(hence abstractly to $\big(\mathbb{P}(\mathbf{a},\mathbf{b}),\eta_{\mathbf{a},\mathbf{b}}\big)$).

We can relate the complex structures $J_0$ and $J_0'$ pointwise,
as follows. Let 
$\pi':=q_{\mathbf{a},-\mathbf{b} }\circ f_{ \mathbf{a},  -\mathbf{b}}:
Y_{\mathbf{a},-\mathbf{b}}\rightarrow \mathbb{P}(\mathbf{a},-\mathbf{b})$
be the projection, and consider 
$[Z:W]\in Y_{\mathbf{a},-\mathbf{b}}$. We may assume
$\|Z\|_{\mathbf{a}}=\|W\|_{\mathbf{b}}=1$, i.e.
$Z\in S^{2k+1}_{\mathbf{a}}$, $W\in S^{2l+1}_{\mathbf{b}}$.
Let $H_Z(S^{2k+1}_{\mathbf{a}})\subset T_ZS^{2k+1}_{\mathbf{a}}$
and $H_W(S^{2l+1}_{\mathbf{b}})\subset T_W S^{2l+1}_{\mathbf{b}}$ be
the maximal complex subspaces (with respect to the complex structures 
of $\mathbb{C}^{k+1}$ and $\mathbb{C}^{l+1}$, respectively), with respective
complex structures $K_Z$ and $L_W$.
Then the uniformized tangent space of  $\mathbb{P}(\mathbf{a},-\mathbf{b})$
at $\pi'([Z:W])$ is canonically isomorphic to 
$H_Z(S^{2k+1}_{\mathbf{a}})\times H_W(S^{2l+1}_{\mathbf{b}})$ as a real vector space.
The complex structures $J_0$ and $J_0'$ at $\pi'([Z:W])$ 
correspond to $K_Z\times L_W$ and $K_Z\times (-L_W)$, respectively.

The previous considerations extend to the cases $k=0$, $l>0$, and
$k>0$, $l=0$.
Consider an action $\gamma$ of $T^1$ on $\mathbb{P}^{l+1}$ of the
form
$$
\gamma_{e^{\imath\vartheta}}\big([z_0:\cdots:z_k:w_0]\big)
:=\left[e^{-\imath\,a_0\,\vartheta}\,z_0:\cdots:
e^{-\imath\,a_k\,\vartheta}\,z_k:
e^{\imath\,b_0\vartheta}\,w_0\right],
$$
with moment map
$$
\Gamma:[z_0:\cdots:z_k:w_0]\mapsto 
\frac{1}{•\|Z\|^2+|w_0|^2}\,\left[\sum_{j=0}^k a_j\,|z_j|^2-b_0\,|w_0|^2\right].
$$
Hence $Y:=\Gamma^{-1}(0)$ is entirely contained in the affine open set
where $w_0\neq 0$; explicitly, 
$$
Y=\left\{\left[z_0:\cdots:z_k:\frac{1}{\sqrt{b_0}•}\right]\,:\,\sum_{j=0}^k a_j\,|z_j|^2=1\right\}\cong
S^{2k+1}_{\mathbf{a}}.
$$
The diffeomorphism $[\mathbf{z}:1/\sqrt{b}_0]\in Y\mapsto\mathbf{z}\in S^{2k+1}_{\mathbf{a}}$
intertwines $\gamma$ with the action $\hat{\gamma}_{e^{\imath\,\vartheta}}:
(z_j)\mapsto \left(e^{-\imath\,(a_j+b_0)\,\vartheta}\,z_j\right)$. Assuming, say, that
the integers $a_j+b_0$ are coprime,
$Y/\tilde{•\gamma}$ may be identified with the weighted projective space $\mathbb{P}(b_0+a_0,
\ldots,b_0+a_k)$, and $\Omega_0'$ with the K\"{a}hler form $\eta_{(b_0+a_j)}$.
In this case, $J_0=J_0'$.

\subsection{The DH-reduction of $\mathbb{P}(W_{\mathbf{L},\mathbf{K}})$}

We aim to describe the DH-reductions of a general $\mathbb{P}(W_{\mathbf{L},\mathbf{K}})$ with respect to $T^1_{\boldsymbol{\nu}_\perp}$, when 
$\boldsymbol{\nu}$ varies in $\mathbb{Z}^2$.
We shall call this as the \textit{$\boldsymbol{\nu}$-th DH-reduction }of 
$\mathbb{P}(W_{\mathbf{L},\mathbf{K}})$. 
Recall that this is the triple $(N_0,J_0',\Omega_0')$ (in the notation
in the preample of \S \ref{sctn:symp reduct appl}) when $N=\mathbb{P}(W_{\mathbf{L},\mathbf{K}})$ and $\beta=\psi_{\boldsymbol{\nu}}$ (the restriction of
$\Phi_{\mathbf{L},\mathbf{K}}$ to $T^1_{•\boldsymbol{\nu}_\perp}\cong S^1$
- see (\ref{eqn:T1nu parametrized})).

By way of example, let us start with two special cases.
%
\begin{exmp}
\label{exmp:k=1case}
Consider the
representation $\mu_1^{\oplus r}$ of $G$ on $W_1^{\oplus r}$, for some $r\ge 1$,
as usual composed with 
the Lie group automorphism $B\mapsto (B^t)^{-1}$.  
This corresponds to (\ref{eqn:general representation}) with
$\mathbf{K}=\mathbf{1}:=\begin{pmatrix}
1&\cdots&1
\end{pmatrix}$, $\mathbf{L•}=\mathbf{0}$. 
Let us assume $\nu_1,\nu_2>0$.

By (\ref{eqn:F1}) and
(\ref{eqn:F2}), $F_{1,j}:\mathbb{C}^2\rightarrow \mathbb{C}$ for $j=1,2$
are given by 
$F_{1,1}(Z)=z_0$ and $F_{1,2}(Z)=z_1$, where $Z=\begin{pmatrix}
z_0&z_1
\end{pmatrix}$. Hence by (\ref{eqn:moment map general LK}) 
the moment map 
$\Phi_{\mathbf{1},\mathbf{0}}:\mathbb{P}\left(W_1^{\oplus r}\right)\rightarrow \mathfrak{g}$
is 
\begin{equation}
\label{eqn:K0L0}
\Phi_{\mathbf{1},\mathbf{0}}([Z])=\frac{\imath}{\|Z\|^2}\,
\begin{pmatrix}
\sum_{a=1}^r|z_{a,0}|^2 & \sum_{a=1}^r z_{a,1}\,\overline{z_{a,0}}\\
 \sum_{a=1}^r z_{a,0}\,\overline{z_{a,1}} & \sum_{a=1}^r|z_{a,1}|^2
\end{pmatrix}.
\end{equation}
Here $Z=(Z_1,\ldots,Z_r)\in (\mathbb{C}^2)^r\cong \mathbb{C}^{2r}$, and for each $a$
$Z_a=\begin{pmatrix}
z_{a,0}&z_{a,1}
\end{pmatrix}$.
Therefore, with $M=\mathbb{P}\left(W_1^{\oplus r}\right)$,
$$
M^T_{ \boldsymbol{\nu}}
=\left\{[Z]\,:\,\nu_2\,\sum_{a=1}^r|z_{a,0}|^2=\nu_1\,\sum_{a=1}^r|z_{a,1}|^2\right\}.
$$
Let us define $S_j:(\mathbb{C}^2)^r\rightarrow \mathbb{C}^r$ by setting
$S_j(Z):=
\begin{pmatrix}
z_{1j}&\cdots&z_{rj}
\end{pmatrix}$ for $j=0,1$.
With the unitary change of coordinates
$Z\in \mathbb{C}^{2r}\mapsto \big(S_1(Z),S_0(Z)\big)\in \mathbb{C}^{2r}$, 
we can identify $M^T_{\boldsymbol{\nu}}$ with
$$
{M'}^T_{ \boldsymbol{\nu}}
=\left\{[S_1:S_0]\in \mathbb{P}^{2r-1}\,:\,\nu_1\,\|S_1\|^2=\nu_2\,\|S_0\|^2\right\}.
$$
Let us identify $T^1_{\boldsymbol{\nu}^\perp}$ with 
$S^1$ as in (\ref{eqn:T1nu parametrized}). Then the action
$\psi_{\boldsymbol{\nu}_\perp}$ of
$T^1_{\boldsymbol{\nu}^\perp}$ on $\mathbb{P}^{2r-1}$
corresponds to the circle action given by
\begin{equation}
\label{eqn:gamma on M'}
\gamma_{e^{\imath\,\vartheta}}:[S_1:S_0]\mapsto 
\left[e^{-\imath\,\nu_1\,\vartheta}\,S_1:e^{\imath\,\nu_2\,\vartheta}\,S_0
\right].
\end{equation}
Hence if we set $\boldsymbol{\nu}_2:=\begin{pmatrix}
\nu_2&\cdots&\nu_2
\end{pmatrix}, \,\boldsymbol{\nu}_1:=\begin{pmatrix}
\nu_1&\cdots&\nu_1
\end{pmatrix}\in \mathbb{Z}^r$ then 
$\gamma=\gamma^{\boldsymbol{\nu}_1,-\boldsymbol{\nu}_2}$, 
where notation is as in
(\ref{eqn:actiongamma}).

We can use $f_{\boldsymbol{\nu}_1,-\boldsymbol{\nu}_2}$
in (\ref{eqn:defn fab}) to identify ${M'}^T_{ \boldsymbol{\nu}}\cong 
{M}^T_{ \boldsymbol{\nu}}$ with the unit circle bundle $X_{\boldsymbol{\nu}_1,-\boldsymbol{\nu}_2}$ over $\mathbb{P}^{r-1}\times\overline{\mathbb{P}^{r-1}}$,
with projection 
$\pi_{\boldsymbol{\nu}_1,-\boldsymbol{\nu}_2}:[S_1:S_0]\mapsto ([S_1],[S_0])$.
Since $\gamma$ covers the trivial action on $\mathbb{P}^{r-1}\times\overline{\mathbb{P}^{r-1}}$,  
$\mathbb{P}(\boldsymbol{\nu}_1,-\boldsymbol{\nu}_2)=\mathbb{P}^{r-1}\times\overline{\mathbb{P}^{r-1}}$. 

The connection form $\alpha_{\boldsymbol{\nu}_1,-\boldsymbol{\nu}_2}$
on ${M'}^T_{ \boldsymbol{\nu}}\cong X_{\boldsymbol{\nu}_1,-\boldsymbol{\nu}_2}$,
as unit circle bundle in 
$\mathcal{O}_{\mathbb{P}^{r-1}}(-1)\boxtimes 
\mathcal{O}_{\overline{•\mathbb{P}^{r-1}}}(-1)$, is as follows. Let 
$$
\Xi:(Z,W)\in S^{2r-1}_{\boldsymbol{\nu}_1}\times
S^{2r-1}_{\boldsymbol{\nu}_2}\mapsto [Z:W]\in {M'}^T_{ \boldsymbol{\nu}}
$$ 
and let $\jmath:S^{2r-1}_{\boldsymbol{\nu}_1}\times
S^{2r-1}_{\boldsymbol{\nu}_2}\hookrightarrow \mathbb{C}^r\times \mathbb{C}^r$
be the inclusion; clearly, $S^{2r-1}_{\boldsymbol{\nu}_1}=S^{2r-1}(1/\sqrt{\nu_1})$
and $S^{2r-1}_{\boldsymbol{\nu}_2}=S^{2r-1}(1/\sqrt{\nu_2})$ where 
$S^{2r-1}(r)$ is the sphere centered at the origin of radius $r>0$. Then 
$
\Xi^*(\alpha_{\boldsymbol{\nu}_1,-\boldsymbol{\nu}_2})=
\jmath^*\left(\tilde{\alpha}_{\boldsymbol{\nu}_1,-\boldsymbol{\nu}_2}\right)$,
where
$$
\tilde{\alpha}_{\boldsymbol{\nu}_1,-\boldsymbol{\nu}_2}:=
\frac{\imath}{•2}\,
\left[\nu_1\,
\sum_{j=1}^r\left(z_{j1}\,\mathrm{d}\overline{z}_{j1}-\overline{z}_{j1}\,\mathrm{d}z_{j1}\right)-
\nu_2\,\sum_{j=1}^r \left(z_{j0}\,\mathrm{d}\overline{z}_{j0}
-\overline{z}_{j0}\,\mathrm{d}z_{j0}\right)
 \right].
$$
The corresponding K\"{a}hler structure $\omega$ on $\mathbb{P}^{r-1}\times\overline{\mathbb{P}^{r-1}}$ is then uniquely determined by the condition that
$$
2\,\Xi^*\left(\pi_{\boldsymbol{\nu}_1,-\boldsymbol{\nu}_2}^*(\omega)\right)
=2\,\jmath^*\left(\mathrm{d}\tilde{\alpha}_{\boldsymbol{\nu}_1,-\boldsymbol{\nu}_2}\right).
$$
Either by direct inspection, or by appealing to Corollary
\ref{cor:FS}, one can verify that $\omega=\pi_1^*(\omega_{FS})-\pi_2^*(\omega_{FS})$
($\pi_j$ is the projection of $\mathbb{P}^{r-1}\times\overline{\mathbb{P}^{r-1}}$
onto the $j$-th factor). Furthermore, by (\ref{eqn:ham flow Phi ab}) we have
$\Phi_{\boldsymbol{\nu}_1,\boldsymbol{\nu}_2}=\nu_1+\nu_2$
(constant) and so by 
(\ref{eqn:q a -b}) we conclude that 
$\eta_{\boldsymbol{\nu}_1,-\boldsymbol{\nu}_2}=(\nu_1+\nu_2)^{-1}\,\omega$.

It is evident
from (\ref{eqn:K0L0}) that $\sigma_\Upsilon$ 
(see Theorem \ref{thm:orbifold MTnu})
is the section of 
$\mathcal{O}_{\mathbb{P}^r}(1)\boxtimes \mathcal{O}_{\overline{\mathbb{P}^r}}(1)$
given by the bi-homogeneous polynomial $S_1\cdot \overline{S}_0$. 
Hence $\overline{M}^G_{\boldsymbol{\nu}}\subset \mathbb{P}^r \times\overline{\mathbb{P}^r}$ 
is a (holomorphic) $(1,1)$-divisor.

\end{exmp}

\begin{exmp}
Let us consider the representation $\mu_2^{\oplus r}$ on $W_2^{\oplus r}$; thus 
$\mathbf{K}=\mathbf{2}:=\begin{pmatrix}
2&\cdots&2
\end{pmatrix}$, $\mathbf{L}=\mathbf{0}$
in (\ref{eqn:general representation}).
The functions $F_{2,j}:\mathbb{C}^3\rightarrow \mathbb{C}^2$ in (\ref{eqn:F1}) and
(\ref{eqn:F2}) are given by
$$F_{2,1}:\begin{pmatrix}
z_0&z_1&z_2
\end{pmatrix}
\mapsto \begin{pmatrix}
\sqrt{2}\,z_0&z_1
\end{pmatrix},\quad F_{2,2}:\begin{pmatrix}
z_0&z_1&z_2
\end{pmatrix}
\mapsto \begin{pmatrix}
z_1&\sqrt{2}\,z_2
\end{pmatrix}.$$
For $j=0,1,2$ let us define $S_j:\left(\mathbb{C}^3\right)^r
\rightarrow \mathbb{C}^r$ by setting
$$
S_j(Z_1,\ldots,Z_r):=\begin{pmatrix}
z_{1,j}&\cdots&z_{r,j}
\end{pmatrix};
$$
then
by (\ref{eqn:moment map general LK})
\begin{eqnarray}
\label{eqn:moment map 20}
\lefteqn{\Phi_{\mathbf{2},\mathbf{0}}([Z])}\\
&=&\frac{\imath}{\|Z\|^2•}\,
\begin{pmatrix}
2\,\|S_0(Z)\|^2+\|S_1(Z)\|^2& 
\sqrt{2}\,\left(S_1(Z)^t\,\overline{S_0(Z)}+S_2(Z)^t\,\overline{S_1(Z)}\right)\\
\sqrt{2}\,\left(S_0(Z)^t\,\overline{S_1(Z)}+S_1(Z)^t\,\overline{S_2(Z)}\right)&\|S_1(Z)\|^2+2\,\|S_2(Z)\|^2
\end{pmatrix}.\nonumber
\end{eqnarray}
Assume $\nu_1>\nu_2>0$.
With the unitary change of coordinates
$$Z\in \left(\mathbb{C}^3\right)^r\mapsto 
\begin{pmatrix}
S_1(Z)&S_2(Z)&S_0(Z)
\end{pmatrix}\in \left(\mathbb{C}^r\right)^3,$$ 
$M^T_{\boldsymbol{\nu}}$ may be identified with
\begin{eqnarray*}
{M'}^T_{\boldsymbol{\nu}}&:=&
\left\{[S_1:S_2:S_0]\in \mathbb{P}^{3r-1}=\mathbb{P}\left(\mathbb{C}^r\oplus\mathbb{C}^r
\oplus \mathbb{C}^r\right)\right.\\
&&:\left.(\nu_1-\nu_2)\,\|S_1\|^2+2\,\nu_1\,\|S_2\|^2
=2\,\nu_2\,\|S_0\|^2\right\}.
\end{eqnarray*}

Furthermore, if we identify $T^1_{\boldsymbol{\nu}^\perp}$
with $S^1$ as in (\ref{eqn:T1nu parametrized}), its action
on ${M'}^T_{\boldsymbol{\nu}}$ corresponds to
\begin{eqnarray}
\label{eqn:gamma mu2 r}
\gamma_{e^{\imath\,\vartheta}}\big(
   [S_0:S_1:S_2]\big):= \left[
e^{-\imath\,(\nu_1-\nu_2)\,\vartheta}\,S_1:e^{-2\,\imath\,\nu_1\,\vartheta}\,S_2:e^{2\imath\,\nu_2\,\vartheta}\,S_0\right].
\end{eqnarray}
Let us define $\mathbf{a}_{\boldsymbol{\nu}}\in \mathbb{N}^{2r}$ and
$\mathbf{b}_{\boldsymbol{\nu}}\in \mathbb{N}^{r}$ by setting
$$
\mathbf{a}_{\boldsymbol{\nu}}:=\begin{pmatrix}
\nu_1-\nu_2&\cdots&\nu_1-\nu_2&2\,\nu_1&\cdots&2\,\nu_1
\end{pmatrix},
\qquad\mathbf{b}_{\boldsymbol{\nu}}:=\begin{pmatrix}
2\,\nu_2&\cdots 2\,\nu_2
\end{pmatrix},
$$
where $\nu_1-\nu_2$ and $2\,\nu_1$ are repeated $r$ times.
Then by (\ref{eqn:gamma mu2 r}) we have
$\gamma=\gamma^{\mathbf{a}_{\boldsymbol{\nu}},-\mathbf{b}_{\boldsymbol{\nu}}}$
(see (\ref{eqn:actiongamma})).
By means of $f_{\mathbf{a}_{\boldsymbol{\nu}},-\mathbf{b}_{\boldsymbol{\nu}}}$,
we can identify ${M'}^T_{\boldsymbol{\nu}}$ with the unit circle bundle
$$
X_{\mathbf{a}_{\boldsymbol{\nu}},-\mathbf{b}_{\boldsymbol{\nu}}}
\subset  
\mathcal{O}_{\mathbb{P}^{2r-1}}(-1)\boxtimes
\mathcal{O}_{\overline{•\mathbb{P}^{r-1}}}(-1),
$$
with respect to the Hermitian metric
induced by $h_{\mathbf{a}_{\boldsymbol{\nu}}}$ 
and $h_{\mathbf{b}_{\boldsymbol{\nu}}}$, with projection
$\pi_{\mathbf{a}_{\boldsymbol{\nu}},-\mathbf{b}_{\boldsymbol{\nu}}}:
[S_1:S_2:S_0]\mapsto ([S_1:S_2],[S_0])$.
The structure $S^1$-action given by clockwise fibre rotation is
$$
r_{e^{-{\imath \,\vartheta}}}: [S_1:S_2:S_0]\mapsto 
\left[e^{-\imath\,\vartheta/2}\,S_1:
e^{-\imath\,\vartheta/2}\,S_2:e^{\imath\,\vartheta/2}\,S_0\right].
$$

Thus $\gamma$ may be identified with the contact lift $\tilde{\mu}^{\mathbf{a}_{\boldsymbol{\nu}},-\mathbf{b}_{\boldsymbol{\nu}}}$ to
$X_{\mathbf{a}_{\boldsymbol{\nu}},-\mathbf{b}_{\boldsymbol{\nu}}}$
of the Hamiltonian $S^1$-action
$\mu^{\mathbf{a}_{\boldsymbol{\nu}},-\mathbf{b}_{\boldsymbol{\nu}}}$ on $(\mathbb{P}^{2r-1}\times 
\overline{•\mathbb{P}^{r-1}},2\,\omega_{\mathbf{a}_{\boldsymbol{\nu}},-\mathbf{b}_{\boldsymbol{\nu}}})$ having moment map 
$\Phi_{\mathbf{a}_{\boldsymbol{\nu}},\mathbf{b}_{\boldsymbol{\nu}}}$
(see the discussion following (\ref{eqn:ham flow Phi a b bar}).
Hence $(N_0,J_0',\Omega_0')$ in \S \ref{sctn:symp reduct appl}
with $N=M$ and $S^1\cong T^1_{\boldsymbol{\nu}_\perp}$ is in this case
$\big(\mathbb{P}(\mathbf{a}_{\boldsymbol{\nu}},-\mathbf{b}_{\boldsymbol{\nu}}),\eta_{\mathbf{a}_{\boldsymbol{\nu}},-\mathbf{b}_{\boldsymbol{\nu}}}\big)$.

We can rewrite (\ref{eqn:gamma mu2 r}) as
\begin{eqnarray}
\label{eqn:gamma mu2 r1}
\gamma_{e^{\imath\,\vartheta}}\big(
   [S_0:S_1:S_2]\big):= \left[
e^{-\imath\,(\nu_1+\nu_2)\,\vartheta}\,S_1:e^{-2\,\imath\,(\nu_1+\nu_2)\,\vartheta}\,S_2:S_0\right].
\end{eqnarray}
Passing to the quotient
group $T^1(\boldsymbol{\nu})$ in (\ref{eqn:quotient groups}), this is the 
action
$\overline{\gamma}_{e^{\imath\,\vartheta}}:
[S_1:S_2:S_0]\in {M'}^T_{\boldsymbol{\nu}}\mapsto \left[
e^{-\imath\,\vartheta}\,S_1:e^{-2\,\imath\,\vartheta}\,S_2:S_0\right]\in {M'}^T_{\boldsymbol{\nu}}$.
The latter is functionally independent of $\boldsymbol{\nu}_\perp$, and it
follows that the quotients $\mathbb{P}(\mathbf{a}_{\boldsymbol{\nu}},-\mathbf{b}_{\boldsymbol{\nu}})$ are all isomorphic as complex orbifolds when
$\nu_1>\nu_2>0$.

\end{exmp}

Let us come to a general 
representation $W_{\mathbf{L},\mathbf{K}}$. Let us introduce some terminology.

\begin{defn}
If $W_{\mathbf{K},\mathbf{L}}$ is a representation
fullfilling the equivalent conditions of Proposition 
\ref{prop:equivalent non zero phi}, let 
$$
\mathcal{I}(\mathbf{L},\mathbf{K}):=\big\{(a,j)\,:\,a\in \{1,\ldots,r\},\,
j\in \{0,\ldots,k_a\}\}.
$$
Given $\boldsymbol{\nu}=\begin{pmatrix}
\nu_1&\nu_2
\end{pmatrix} \in \mathbb{Z}^2$, let us define
$n_{\boldsymbol{\nu}}:\mathcal{I}(\mathbf{L},\mathbf{K})
\rightarrow \mathbb{Z}$ by setting
\begin{equation}
\label{eqn:n-aj-a}
n_{\boldsymbol{\nu}}(a,j):=-\nu_2\,(k_a-j+l_a)+\nu_1\,(l_a+j).
\end{equation}
\end{defn}

Let us assume that $\Phi_{\mathbf{L},\mathbf{K}}\big(\mathbb{P}(W_{\mathbf{L},\mathbf{K}})\big)\cap \mathbb{R}_+\cdot \imath\,\boldsymbol{\nu}\neq \emptyset$,
and that $\Phi_{\mathbf{L},\mathbf{K}}$ 
is transverse to $\mathbb{R}_+\cdot \imath\,\boldsymbol{\nu}$.
Then, by Proposition \ref{prop:Phi inviluppo convesso} and Theorem \ref{thm:transversality general}, $\boldsymbol{\nu}$ lies in the interior of one of the wedges cut out by the rays through the 
integral vectors $\boldsymbol{\nu}_{k_a,j_a,l_a}$ 
defined in (\ref{eqn:defn di nuaj}). It follows that:
\begin{enumerate}
\item $n_{\boldsymbol{\nu}}(a,j)\neq 0$ for every $(a,j)\in \mathcal{I}(\mathbf{L},\mathbf{K})$;
\item there exist $(a,j),\,(b,h)\in \mathcal{I}(\mathbf{L},\mathbf{K})$ 
such that $n_{\boldsymbol{\nu}}(a,j)\cdot n_{\boldsymbol{\nu}}(b,h)<0$.
\end{enumerate}

\begin{defn}
Under the previous assumptions, let us define
\begin{equation}
\label{eqn:mathcalP}
\mathcal{P}_{\boldsymbol{\nu}}(\mathbf{L},\mathbf{K}):=\big\{
(a,j)\in \mathcal{I}(\mathbf{L},\mathbf{K})\,:\,
n_{\boldsymbol{\nu}}(a,j)>0\big\},
\end{equation}
\begin{equation}
\label{eqn:mathcalN}
\mathcal{N}_{\boldsymbol{\nu}}(\mathbf{L},\mathbf{K}):=\big\{
(a,j)\in \mathcal{I}(\mathbf{L},\mathbf{K})\,:\,
n_{\boldsymbol{\nu}}(a,j)<0\big\}.
\end{equation}
Then $\mathcal{I}(\mathbf{L},\mathbf{K})$ is the disjoint union of
$\mathcal{P}_{\boldsymbol{\nu}}(\mathbf{L},\mathbf{K})$ and 
$\mathcal{N}_{\boldsymbol{\nu}}(\mathbf{L},\mathbf{K})$, both of which are non-empty.
Furthermore, let us define 
$$
\mathbf{a}_{\boldsymbol{\nu}}(\mathbf{L},\mathbf{K}):=
\big( |n_{\boldsymbol{\nu}}(a,j)| \big)_{(a,j)\in \mathcal{P}_{\boldsymbol{\nu}}(\mathbf{L},\mathbf{K})}\in \mathbb{N}^{|\mathcal{P}_{\boldsymbol{\nu}}(\mathbf{L},\mathbf{K})|},
$$
$$
\mathbf{b}_{\boldsymbol{\nu}}(\mathbf{L},\mathbf{K}):=
\big( |n_{\boldsymbol{\nu}}(a,j)| \big)_{(a,j)\in \mathcal{N}_{\boldsymbol{\nu}}(\mathbf{L},\mathbf{K})}\in \mathbb{N}^{|\mathcal{N}_{\boldsymbol{\nu}}(\mathbf{L},\mathbf{K})|}.
$$
\end{defn}

\begin{thm}
\label{thm:general MTnu}
Let $W_{\mathbf{L},\mathbf{K}}$ be a representation
fullfilling the equivalent conditions of Proposition \ref{prop:equivalent non zero phi}. Suppose that $\boldsymbol{\nu}=\begin{pmatrix}
\nu_1&\nu_2
\end{pmatrix}$, $\nu_1\neq \nu_2$, and that 
\begin{enumerate}
\item $\Phi_{\mathbf{L},\mathbf{K}}\big(\mathbb{P}(W_{\mathbf{L},\mathbf{K}})\big)\cap \mathbb{R}_+\cdot \imath\,\boldsymbol{\nu}\neq \emptyset$;
\item $\Phi_{\mathbf{L},\mathbf{K}}$ 
is transverse to $\mathbb{R}_+\cdot \imath\,\boldsymbol{\nu}$.

\end{enumerate}
Then the $\boldsymbol{\nu}$-th DH-reduction of 
$\mathbb{P}(W_{\mathbf{L},\mathbf{K}})$ is 
\begin{equation}
\label{eqn:nuth orbifold0}
\left(
\mathbb{P}\left(\mathbf{a}_{\boldsymbol{\nu}}(\mathbf{L},\mathbf{K}),
-\mathbf{b}_{\boldsymbol{\nu}}(\mathbf{L},\mathbf{K})
\right)
,
\eta_{
•\mathbf{a}_{\boldsymbol{\nu}}(\mathbf{L},\mathbf{K}),
-•\mathbf{b}_{\boldsymbol{\nu}}(\mathbf{L},\mathbf{K})}
\right).
\end{equation}
Furthermore, if $W_{\mathbf{L},\mathbf{K}}$ is a uniform representation
(Definition \ref{defn:uniform rep}) then the complex orbifold  $\mathbb{P}\left(\mathbf{a}_{\boldsymbol{\nu}}(\mathbf{L},\mathbf{K}),
-\mathbf{b}_{\boldsymbol{\nu}}(\mathbf{L},\mathbf{K})
\right)$ remains constant as $\boldsymbol{\nu}$ 
ranges in the interior of one of the wedges cut out by the rays through the 
$\boldsymbol{\nu}_{k_a,j_a,l_a}$'s. 

\end{thm}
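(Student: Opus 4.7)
The plan is to generalize the explicit computations of Example \ref{exmp:k=1case} and of the subsequent example, by identifying the lifted action $\tilde{\psi}_{\boldsymbol{\nu}_\perp}$ of $T^1_{\boldsymbol{\nu}_\perp}$ on $X_{\mathbf{K}}\subset \mathbb{C}^{|\mathbf{K}|+r}$ as a weighted $S^1$-action, and then applying the construction of \S \ref{sctn:symp reduct appl}. By (\ref{eqn:composition mukl}) together with a direct computation on basis vectors, for any $D=D(\theta_1,\theta_2)\in T$ the operator $\mu_{k_a,l_a}(D)$ acts on $E_{k_a,j}$ by the scalar $e^{-i[(l_a+k_a-j)\theta_1+(l_a+j)\theta_2]}$. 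Specializing to $D=\kappa_{\boldsymbol{\nu}}(e^{i\vartheta})$ (so that $\theta_1=-\nu_2\vartheta$, $\theta_2=\nu_1\vartheta$) gives exactly the weight $n_{\boldsymbol{\nu}}(a,j)$ of (\ref{eqn:n-aj-a}). Hence $\tilde{\psi}_{\boldsymbol{\nu}_\perp}$ reads $(z_{a,j})\mapsto (e^{-i\vartheta\,n_{\boldsymbol{\nu}}(a,j)}\,z_{a,j})$.

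Hypothesis (2) combined with Theorem \ref{thm:transversality general} forces every $n_{\boldsymbol{\nu}}(a,j)$ to be nonzero, while hypothesis (1) and the convex-hull description of Proposition \ref{prop:Phi inviluppo convesso} imply that both signs must occur, so $\mathcal{P}_{\boldsymbol{\nu}}$ and $\mathcal{N}_{\boldsymbol{\nu}}$ are both nonempty. A unitary reordering of coordinates placing the indices in $\mathcal{P}_{\boldsymbol{\nu}}$ before those in $\mathcal{N}_{\boldsymbol{\nu}}$ then identifies $\tilde{\psi}_{\boldsymbol{\nu}_\perp}$ with the action $\gamma^{\mathbf{a}_{\boldsymbol{\nu}},-\mathbf{b}_{\boldsymbol{\nu}}}$ of (\ref{eqn:actiongamma}) on $\mathbb{P}^{|\mathbf{K}|+r-1}$, and $\Psi_{\boldsymbol{\nu}_\perp}$ with the moment map $\Gamma_{\mathbf{a}_{\boldsymbol{\nu}},-\mathbf{b}_{\boldsymbol{\nu}}}$ of (\ref{eqn:moment map}). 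The discussion of \S \ref{sctn:symp reduct appl} then recovers the DH-reduction as the K\"ahler orbifold (\ref{eqn:nuth orbifold0}), establishing the first assertion.

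For the uniform case, the hypothesis $k_a+2l_a\equiv K$ yields after an elementary rearrangement the identity
\begin{equation*}
n_{\boldsymbol{\nu}}(a,j)=(\nu_1+\nu_2)\,(j+l_a)-\nu_2\,K.
\end{equation*}
The shift $-\nu_2 K$ corresponds to the central character $\zeta\mapsto \zeta^{-K}$ of $Z(G)$: concretely, the element $e^{-i\vartheta\nu_2}\cdot I_2\in Z(G)$ acts on every $E_{k_a,j}$ by multiplication by $e^{i\vartheta\nu_2 K}$, and by Lemma \ref{lem:uniform rep} this acts trivially on $\mathbb{P}(W_{\mathbf{L},\mathbf{K}})$. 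Consequently, after the reparametrization $\vartheta\mapsto \vartheta/(\nu_1+\nu_2)$, the induced action $\overline{\psi}_{\boldsymbol{\nu}_\perp}$ on projective space coincides with the $\boldsymbol{\nu}$-independent $S^1$-action having weights $(j+l_a)_{(a,j)\in\mathcal{I}(\mathbf{L},\mathbf{K})}$. A direct examination of the stabilizer condition for a point $[Z]\in\mathbb{P}(W_{\mathbf{L},\mathbf{K}})$ shows that both the pointwise isotropy and the global kernel $F_{\boldsymbol{\nu}}$ of $T^1_{\boldsymbol{\nu}_\perp}$ get rescaled by the same overall factor $\nu_1+\nu_2$, so that the effective orbifold isotropy depends only on the combinatorial set $\{j+l_a:z_{a,j}\ne 0\}$.

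Finally, the partition $(\mathcal{P}_{\boldsymbol{\nu}},\mathcal{N}_{\boldsymbol{\nu}})$ is constant on each open wedge: its jumping locus is exactly the set of $\boldsymbol{\nu}$ for which some $n_{\boldsymbol{\nu}}(a,j)$ vanishes, which by definition is the critical ray $\mathbb{R}_+\cdot \boldsymbol{\nu}_{k_a,j,l_a}$ and hence lies on the boundary of the wedge. Combined with the previous paragraph, this yields that the complex orbifold $\mathbb{P}\bigl(\mathbf{a}_{\boldsymbol{\nu}}(\mathbf{L},\mathbf{K}),-\mathbf{b}_{\boldsymbol{\nu}}(\mathbf{L},\mathbf{K})\bigr)$ is constant throughout the wedge, as claimed. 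The main technical obstacle is precisely this last verification: one must check carefully that the rescaling by $\nu_1+\nu_2$ preserves the orbifold structure and not merely the underlying variety; by contrast, the K\"ahler form $\eta_{\mathbf{a}_{\boldsymbol{\nu}},-\mathbf{b}_{\boldsymbol{\nu}}}$, being tied to the specific normalized moment map, does genuinely vary with $\boldsymbol{\nu}$ within the wedge.
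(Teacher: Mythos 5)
Your proposal is correct and follows essentially the same route as the paper: you compute the weights $n_{\boldsymbol{\nu}}(a,j)$ of the lifted $T^1_{\boldsymbol{\nu}_\perp}$-action, identify it (after reordering coordinates by the sign partition $\mathcal{P}_{\boldsymbol{\nu}},\mathcal{N}_{\boldsymbol{\nu}}$) with $\gamma^{\mathbf{a}_{\boldsymbol{\nu}},-\mathbf{b}_{\boldsymbol{\nu}}}$ and its moment map, and then invoke the machinery of \S\ref{sctn:symp reduct appl}, which is exactly how the paper obtains (\ref{eqn:nuth orbifold0}). For the uniform case your factoring out of the central scalar $-\nu_2\,s$ and rescaling by $\nu_1+\nu_2$ is the paper's passage to the quotient group $T^1(\boldsymbol{\nu})$, with your observation that the partition is constant on each open wedge being the (implicit) remaining ingredient in both arguments.
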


\begin{rem}
As discussed in \S \ref{sctn:general construction}, (\ref{eqn:nuth orbifold0}) is 
a weighted projective subvariety and a K\"{a}hler suborbifold of the weighted projective space
$$
\left(
\mathbb{P}\left(\mathbf{c}_{\boldsymbol{\nu}}(\mathbf{L},\mathbf{K})
\right),\eta^{
\mathbf{c}_{\boldsymbol{\nu}}(\mathbf{L},\mathbf{K})
}_{
\mathbf{d}_{\boldsymbol{\nu}}(\mathbf{L},\mathbf{K})
}\right),
$$
where 
$$
\mathbf{c}_{\boldsymbol{\nu}}(\mathbf{L},\mathbf{K})_{ij}:=
\mathbf{a}_{\boldsymbol{\nu}}(\mathbf{L},\mathbf{K})_i+
\mathbf{b}_{\boldsymbol{\nu}}(\mathbf{L},\mathbf{K})_j,
\quad \mathbf{d}_{\boldsymbol{\nu}}(\mathbf{L},\mathbf{K})_{ij}:=
\mathbf{a}_{\boldsymbol{\nu}}(\mathbf{L},\mathbf{K})_i\cdot
\mathbf{b}_{\boldsymbol{\nu}}(\mathbf{L},\mathbf{K})_j.
$$

\end{rem}
\begin{proof}
[Proof of Theorem \ref{thm:general MTnu}]
By (\ref{eqn:moment map general LK})
we have with $M=\mathbb{P}(W_{\mathbf{L},\mathbf{K}})$
\begin{eqnarray}
\label{eqn:MTnu KL}
M^T_{\boldsymbol{\nu}}&=&
\left\{[Z]:\nu_2\,\sum_{a=1}^r\left(\|F_{k_a,1}(Z_a)\|^2+l_a\,\|Z_a\|^2\right)\right.
\nonumber\\
&&\left.
=\nu_1\,\sum_{a=1}^r\left(\|F_{k_a,2}(Z_a)\|^2+l_a\,\|Z_a\|^2\right)
\right\}.
\end{eqnarray}

In view of (\ref{eqn:F1}) and (\ref{eqn:F2}), 
the relation in (\ref{eqn:MTnu KL}) may be rewritten 
\begin{eqnarray}
\label{eqn:explicit KL}
0&=&\sum_{(a,j)\in \mathcal{I}(\mathbf{L},\mathbf{K})}n_{\boldsymbol{\nu}}(a,j)\,|z_{a,j_a}|^2\\
&=&
\sum_{(a,j)\in \mathcal{P}_{\boldsymbol{\nu}}(\mathbf{L},\mathbf{K})}|n_{\boldsymbol{\nu}}(a,j)|\,|z_{a,j_a}|^2
-\sum_{(a,j)\in \mathcal{N}_{\boldsymbol{\nu}}(\mathbf{L},\mathbf{K})}|n_{\boldsymbol{\nu}}(a,j)|\,|z_{a,j_a}|^2.
\nonumber
\end{eqnarray}
This can be reformulated as follows. Let us 
consider $\mathbb{C}^{|\mathcal{P}_{\boldsymbol{\nu}}(\mathbf{L},\mathbf{K})|}$  and 
$\mathbb{C}^{|\mathcal{N}_{\boldsymbol{\nu}}(\mathbf{L},\mathbf{K})|}$,
with coordinates
$Z=(z_{a,j})_{(a,j)\in \mathcal{P}_{\boldsymbol{\nu}}(\mathbf{L},\mathbf{K})}$,
$W=(w_{a,j})_{(a,j)\in \mathcal{N}_{\boldsymbol{\nu}}(\mathbf{L},\mathbf{K})}$,
respectively. 
On 
$\mathbb{C}^{|\mathcal{P}_{\boldsymbol{\nu}}(\mathbf{L},\mathbf{K})|}$ and 
$\mathbb{C}^{|\mathcal{N}_{\boldsymbol{\nu}}(\mathbf{L},\mathbf{K})|}$ 
we have the positive definite Hermitian products given by
$$
h_{\mathbf{a}_{\boldsymbol{\nu}}(\mathbf{L},\mathbf{K})}(Z,Z')=
\sum_{(a,j)\in \mathcal{P}_{\boldsymbol{\nu}}(\mathbf{L},\mathbf{K})}
|n_{\boldsymbol{\nu}}(a,j)|
\,z_{a,j}\,\overline{z_{a,j}'},
$$
$$
h_{\mathbf{b}_{\boldsymbol{\nu}}(\mathbf{L},\mathbf{K})}(W,W')
=\sum_{(a,j)\in \mathcal{N}_{\boldsymbol{\nu}}(\mathbf{L},\mathbf{K})}
|n_{\boldsymbol{\nu}}(a,j)|
\,w_{a,j}\,\overline{w_{a,j}'},
$$
and so by (\ref{eqn:MTnu KL}) 
\begin{eqnarray}
\label{eqn:MTnu compact}
M^T_{\boldsymbol{\nu}}\cong {M'}^T_{\boldsymbol{\nu}}&:=&
\left\{[Z:W]\in \mathbb{P}
\left(\mathbb{C}^{|\mathcal{P}_{\boldsymbol{\nu}}(\mathbf{L},\mathbf{K})|}
\oplus \mathbb{C}^{|\mathcal{N}_{\boldsymbol{\nu}}(\mathbf{L},\mathbf{K})|}\right)\,:\right.\nonumber\\
&&\left.
h_{\mathbf{a}_{\boldsymbol{\nu}}(\mathbf{L},\mathbf{K})}(Z,Z)=h_{\mathbf{b}_{\boldsymbol{\nu}}(\mathbf{L},\mathbf{K})}(W,W)
\right\}.
\end{eqnarray}
Therefore
$M^T_{\boldsymbol{\nu}}$ may be identified
by $f_{\mathbf{a}_{\boldsymbol{\nu}}(\mathbf{L},\mathbf{K}),-\mathbf{b}_{\boldsymbol{\nu}}(\mathbf{L},\mathbf{K})•}$ 
in (\ref{eqn:defn fab})
with the unit circle bundle in  
$$
X_{\mathbf{a}_{\boldsymbol{\nu}}(\mathbf{L},\mathbf{K}),-\mathbf{b}_{\boldsymbol{\nu}}(\mathbf{L},\mathbf{K})•}\subset \mathcal{O}_{\mathbb{P}^{|\mathcal{P}_{\boldsymbol{\nu}}(\mathbf{L},\mathbf{K})|-1}}(-1)
\boxtimes \mathcal{O}_{\overline{
\mathbb{P}^{
|\mathcal{N}_{\boldsymbol{\nu}}(\mathbf{L},\mathbf{K})|-1}}}(-1),
$$
relative to the Hermitian metric induced by $h_{\mathbf{a}_{\boldsymbol{\nu}}(\mathbf{L},\mathbf{K})}$
and $h_{\mathbf{b}_{\boldsymbol{\nu}}(\mathbf{L},\mathbf{K})}$; the bundle projection is
$\pi:[Z:W]\mapsto ([Z],[W])$.

In the notation 
(\ref{eqn:actiongamma}),
the action of $T^1_{\boldsymbol{\nu}^\perp}$ on ${M'}^T_{\boldsymbol{\nu}}$
given by restriction of $\phi_{\mathbf{L},\mathbf{K}}$ is 
\begin{eqnarray}
\label{eqn:actionKLnu}
\lefteqn{
\gamma^{\mathbf{a}_{\boldsymbol{\nu}}(\mathbf{L},\mathbf{K}),
-\mathbf{b}_{\boldsymbol{\nu}}(\mathbf{L},\mathbf{K})}_{e^{\imath\vartheta}}\big(
\big[(z_{a,j}): (w_{a,j})   \big])}\\
&=&\left[\left( e^{-\imath\,n_{\boldsymbol{\nu}}(a,j)\,\vartheta} \,z_{a,j} \right)   
: \left( e^{-\imath\,n_{\boldsymbol{\nu}}(a,j)\,\vartheta} \,w_{a,j} \right)\right]\nonumber\\
&=&\left[\left( e^{-\imath\,|n_{\boldsymbol{\nu}}(a,j)|\,\vartheta} \,z_{a,j} \right)   
: \left( e^{\imath\,|n_{\boldsymbol{\nu}}(a,j)|\,\vartheta} \,w_{a,j} 
\right)\right].\nonumber
\end{eqnarray}
 $\gamma^{\mathbf{a}_{\boldsymbol{\nu}}(\mathbf{L},\mathbf{K}),
-\mathbf{b}_{\boldsymbol{\nu}}(\mathbf{L},\mathbf{K})}$ corresponds, under the previous identification, to the contact lift 
$\tilde{\mu}^{\mathbf{a}_{\boldsymbol{\nu}}(\mathbf{L},\mathbf{K}),
-\mathbf{b}_{\boldsymbol{\nu}}(\mathbf{L},\mathbf{K})}$
of the Hamiltonian action $\mu^{\mathbf{a}_{\boldsymbol{\nu}}(\mathbf{L},\mathbf{K}),
-\mathbf{b}_{\boldsymbol{\nu}}(\mathbf{L},\mathbf{K})}$ 
(see (\ref{eqn:ham flow Phi a b bar}))
on 
 $$\left(\mathbb{P}^{|\mathcal{P}_{\boldsymbol{\nu}}(\mathbf{L},\mathbf{K})|-1}\times 
 \overline{\mathbb{P}^{|\mathcal{N}_{\boldsymbol{\nu}}(\mathbf{L},\mathbf{K})|-1}•}, 
 2\,\omega_{\mathbf{a}_{\boldsymbol{\nu}}(\mathbf{L},\mathbf{K}),
-\mathbf{b}_{\boldsymbol{\nu}}(\mathbf{L},\mathbf{K})}  \right),$$ 
with moment map 
$\Phi_{\mathbf{a}_{\boldsymbol{\nu}}(\mathbf{L},\mathbf{K}),\mathbf{b}_{\boldsymbol{\nu}}(\mathbf{L},\mathbf{K})}$
(recall (\ref{eqn:ham flow Phi ab})).
Thte first statement of the Theorem follows from this.

Let us assume that $W_{\mathbf{L},\mathbf{K}}$ is a uniform representation.
By definition, there is $s\in \mathbb{Z}$ (independent of $a$) 
such that $k_a+2\,l_a=s$ for
$a=1,\ldots,r$. Then (\ref{eqn:n-aj-a}) may be rewritten
\begin{equation}
\label{eqn:n-aj-a uniform}
n_{\boldsymbol{\nu}}(a,j)
=-\nu_2\,s+(\nu_1+\nu_2)\,(l_a+j).
\end{equation}
Therefore, (\ref{eqn:actionKLnu}) may be rewritten
\begin{eqnarray}
\label{eqn:actionKLnuuniform}
\lefteqn{
\gamma^{\mathbf{a}_{\boldsymbol{\nu}}(\mathbf{L},\mathbf{K}),
-\mathbf{b}_{\boldsymbol{\nu}}(\mathbf{L},\mathbf{K})}_{e^{\imath\vartheta}}\big(
\big[(z_{a,j}): (w_{a,j})   \big])}\\
&=&\left[\left( e^{\imath\,[\nu_2\,s-(l_a+j)\,(\nu_1+\nu_2)\,\vartheta]} \,z_{a,j} \right)   
: \left( e^{\imath\,[\nu_2\,s-(\nu_1+\nu_2)\,(l_a+j)\,\vartheta]} \,w_{a,j} \right)\right]
\nonumber\\
&=&\left[\left( e^{-\imath\,(\nu_1+\nu_2)\,(l_a+j)\,\vartheta]} \,z_{a,j} \right)   
: \left( e^{-\imath\,(\nu_1+\nu_2)\,(l_a+j)\,\vartheta} \,w_{a,j} \right)\right].
\nonumber
\end{eqnarray}
After passing to the quotient group
$T^1(\boldsymbol{\nu})$ in (\ref{eqn:quotient groups}), 
we obtain the action
$\big[(z_{a,j}): (w_{a,j})   \big]\mapsto \left[\left( e^{-\imath\,(l_a+j)\,\vartheta]} \,z_{a,j} \right)   
: \left( e^{-\imath\,\,(l_a+j)\,\vartheta]} \,w_{a,j} \right)\right]$, which is functionally independent
of $\boldsymbol{\nu}$, and the claim can be readily deduced from this.

\end{proof}

\subsection{The case of $\mu_k$ and $\nu_1\gg \nu_2$}

Let us focus on the special case of $\mu_k$, for $k\ge 2$ and 
$\boldsymbol{\nu}$ in the in the range $\nu_1\gg \nu_2$.
For any positive sequence $\boldsymbol{a}=
\begin{pmatrix}
a_1&\cdots&a_k
\end{pmatrix}$, the quotient of the sphere 
$S^{2k-1}_{\boldsymbol{a}}\subset \mathbb{C}^k$ by the
circle action
with weights $\begin{pmatrix}
1&2&\cdots&k
\end{pmatrix}$ is $\mathbb{P}(1,2,\ldots,k)$; the image in 
$\mathbb{P}(1,2,\ldots,k)$ of $S^{2k-1}_{\boldsymbol{a}}\cap (z_1=0)$
is a canonically embedded copy of $\mathbb{P}(2,\ldots,k)$,
independent of $\mathbf{a}$. We shall denote by
$\jmath:\mathbb{P}(2,\ldots,k)\hookrightarrow \mathbb{P}(1,2,\ldots,k)$
the inclusion, which is a holomorphic orbifold embedding.

\begin{thm}
\label{thm:isotopy}
Under the previous assumptions, suppose $\nu_1\gg \nu_2$.
Then $\overline{M}^T_{\boldsymbol{\nu}}\cong \mathbb{P}(1,2,\ldots,k)$.
Furthermore,
there is a smooth isotopy of orbifold embeddings
$$
J:[0,1]\times \mathbb{P}(2,\ldots,k)\rightarrow \mathbb{P}(1,2,\ldots,k)
$$
such that:
\begin{enumerate}
\item $J_0=\jmath$;
\item $J_1\big(\mathbb{P}(2,\ldots,k)\big)=\overline{M}^G_{\boldsymbol{\nu}}$;
\item $J_t\big(\mathbb{P}(2,\ldots,k)\big)$ is a symplectically embedded orbifold in
$(\overline{M}^T_{\boldsymbol{\nu}},\,\Omega_0)$ for every $t\in [0,1]$;
\end{enumerate}
In particular, $\overline{M}^G_{\boldsymbol{\nu}}$ is diffeomorphic to
$\mathbb{P}(2,\ldots,k)$. 
\end{thm}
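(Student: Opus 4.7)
The plan is to identify the ambient orbifold $\overline{M}^T_{\boldsymbol{\nu}}$ explicitly via Theorem \ref{thm:general MTnu}, to realise both $\jmath(\mathbb{P}(2,\ldots,k))$ and $\overline{M}^G_{\boldsymbol{\nu}}$ as zero loci of two sections of a common orbifold line bundle on $\mathbb{P}(1,2,\ldots,k)$, and then to interpolate linearly between those sections in the regime $\nu_1/\nu_2\gg 1$.

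To begin, I would specialize Theorem \ref{thm:general MTnu} to $r=1$, $\mathbf{K}=(k)$, $\mathbf{L}=(0)$. Then $\mathcal{I}(\mathbf{L},\mathbf{K})=\{(1,j):0\le j\le k\}$ and (\ref{eqn:n-aj-a}) gives $n_{\boldsymbol{\nu}}(1,j)=j(\nu_1+\nu_2)-k\nu_2$. Under $\nu_1>(k-1)\nu_2>0$ one has $n_{\boldsymbol{\nu}}(1,0)=-k\nu_2<0$ and $n_{\boldsymbol{\nu}}(1,j)>0$ for $j\ge 1$; thus $\mathcal{N}_{\boldsymbol{\nu}}=\{(1,0)\}$, $\mathcal{P}_{\boldsymbol{\nu}}=\{(1,1),\ldots,(1,k)\}$, and since $|\mathcal{N}_{\boldsymbol{\nu}}|=1$ one is in the $l=0$ situation at the end of \S\ref{sctn:symp reduct appl}. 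The $\boldsymbol{\nu}$-th DH-reduction is then the weighted projective space with weights $n_{\boldsymbol{\nu}}(1,j)+k\nu_2=j(\nu_1+\nu_2)$, $j=1,\ldots,k$; passage to the effective $S^1$-action (whose kernel is $F_{\boldsymbol{\nu}}=T^1_{\boldsymbol{\nu}_\perp}\cap Z(G)\cong \mathbb{Z}/(\nu_1+\nu_2)$, acting trivially on $\mathbb{P}(W_k)$) divides out the common factor $\nu_1+\nu_2$ and yields $\overline{M}^T_{\boldsymbol{\nu}}\cong\mathbb{P}(1,2,\ldots,k)$.

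Next, Theorem \ref{thm:orbifold MTnu} realises $\overline{M}^G_{\boldsymbol{\nu}}$ as the transverse zero locus of a section $\sigma_\Upsilon$ of an orbifold line bundle $L_\chi$; the character formula $\chi\bigl(\kappa_{\boldsymbol{\nu}}(e^{\imath\vartheta})\bigr)=e^{\imath(\nu_1+\nu_2)\vartheta}$ from \S\ref{sctn:kahler structure} shows that $L_\chi$ becomes $\mathcal{O}_{\mathbb{P}(1,2,\ldots,k)}(1)$ under the above identification. Representing $M^T_{\boldsymbol{\nu}}\cong S^{2k-1}_{\mathbf{a}}$ via the gauge slice $\{z_0=1/\sqrt{k\nu_2}\}$ as in \S\ref{sctn:symp reduct appl}, the formula (\ref{eqn:Phikji}) for the off-diagonal entry of $\Phi_k$ gives
\begin{equation*}
\sigma_\Upsilon=\frac{1}{\sqrt{\nu_2}}\,z_1+\sum_{j=1}^{k-1}\sqrt{(k-j)(j+1)}\,z_{j+1}\,\overline{z}_j.
\end{equation*}
Since $\jmath(\mathbb{P}(2,\ldots,k))=\{z_1=0\}$ is the zero locus of the holomorphic section $z_1\in\Gamma(\mathcal{O}(1))$, one can set
\begin{equation*}
s_t:=z_1+t\sqrt{\nu_2}\sum_{j=1}^{k-1}\sqrt{(k-j)(j+1)}\,z_{j+1}\,\overline{z}_j,\quad t\in[0,1],
\end{equation*}
a smooth path in $\Gamma(\mathcal{O}(1))$ with $s_0=z_1$ and with $s_1=\sqrt{\nu_2}\,\sigma_\Upsilon$ having zero locus $\overline{M}^G_{\boldsymbol{\nu}}$.

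To conclude, I would observe that on the sphere $\sum_{j=1}^k n_{\boldsymbol{\nu}}(1,j)|z_j|^2=1$ the bounds $|z_j|\le\bigl[j(\nu_1+\nu_2)-k\nu_2\bigr]^{-1/2}$ force the antiholomorphic correction in $s_t$, together with its first derivatives, to be $O(\sqrt{\nu_2/\nu_1})$-small relative to the holomorphic part $z_1$ as $\nu_1/\nu_2\to\infty$. Hence for an effective threshold $\nu_1/\nu_2\ge C(k)$ every $s_t$ is transverse, and applying the orbifold Ehresmann theorem to the smooth codimension-two submanifold $\bigsqcup_{t\in[0,1]}\{t\}\times s_t^{-1}(0)\subset[0,1]\times\mathbb{P}(1,2,\ldots,k)$ produces the required smooth isotopy $J$ of orbifold embeddings with $J_0=\jmath$ and $J_1\bigl(\mathbb{P}(2,\ldots,k)\bigr)=\overline{M}^G_{\boldsymbol{\nu}}$. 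Symplecticness of each $J_t\bigl(\mathbb{P}(2,\ldots,k)\bigr)$ relative to $\Omega_0$ then follows because $J_0\bigl(\mathbb{P}(2,\ldots,k)\bigr)$ is a complex suborbifold and hence $\Omega_0$-symplectic, while non-degeneracy of $\Omega_0$ on a real codimension-two suborbifold is $C^1$-open, an openness propagated along the whole interpolation by the $C^1$-smallness estimate above. The main obstacle is precisely in making that $C^1$-estimate uniform in $t\in[0,1]$, controlling transversality of $s_t$ and symplecticness of $s_t^{-1}(0)$ simultaneously, which is where the effective bound $C(k)$ on $\nu_1/\nu_2$ enters.
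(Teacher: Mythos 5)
Your proposal follows essentially the same route as the paper: identify $\overline{M}^T_{\boldsymbol{\nu}}\cong\mathbb{P}(1,2,\ldots,k)$ (your derivation via Theorem \ref{thm:general MTnu} and the effective quotient is a fine alternative to the paper's direct computation on the affine slice $z_0=1/\sqrt{k\nu_2}$), realize $\jmath(\mathbb{P}(2,\ldots,k))$ and $\overline{M}^G_{\boldsymbol{\nu}}$ as zero loci of the sections $z_1$ and $\sigma_\Upsilon$ of the same orbifold line bundle $L_1$, interpolate linearly (your $s_t$ has the same zero loci as the paper's $\Lambda_t$), and finish with an equivariant Ehresmann argument. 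The two places where your justification is too quick are exactly where the paper does its real work, and the first conceals a genuine logical gap.

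Transversality: the inference \lq\lq the correction and its first derivatives are $O(\sqrt{\nu_2/\nu_1})$-small relative to the holomorphic part $z_1$, hence every $s_t$ is transverse\rq\rq\ is not valid as stated. On $S^{2k-1}_{\mathbf{a}_{\boldsymbol{\nu}}(k)}$ the function $z_1$ is itself everywhere $O(\nu_1^{-1/2})$ and vanishes on the locus of interest, so pointwise relative smallness is not the operative mechanism; and $C^1$-closeness of $s_t$ to $s_0=z_1$ does not by itself give transversality of the restriction to the sphere, because $\mathrm{d}z_1$ restricted to $T_VS^{2k-1}_{\mathbf{a}_{\boldsymbol{\nu}}(k)}$ is not uniformly submersive: along the circle $\{v_2=\cdots=v_k=0\}$ its image is only the real line $\imath\,\mathbb{R}\,v_1$, and the surjectivity margin degenerates nearby, while $|z_1|$ there is only $\mathbf{a}_{\boldsymbol{\nu}}(k)_1^{-1/2}$, which also tends to $0$. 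What is needed first is a localization of the zero set: every zero of $s_t$ has $|v_1|\lesssim k\sqrt{\nu_2}/\nu_1\ll \mathbf{a}_{\boldsymbol{\nu}}(k)_1^{-1/2}$, hence (after weight normalization) some coordinate $|r_j|\ge 1/\sqrt{2k}$ with $j\ge 2$; only then can one exhibit a complex direction tangent to the sphere on which $\mathrm{d}s_t$ is bounded below by $\sim 1/\sqrt{2k}$ up to $O(k\sqrt{\nu_2/\nu_1})$ errors and close the argument with an effective threshold $C(k)$. This is precisely the content of Lemmas \ref{lem:stima su v1} and \ref{lem:bound on r2} and of the tangent vector $N_V$ in (\ref{eqn:normal vector}); your estimates supply the raw ingredients but not this step, which is not a routine openness remark.

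Symplecticness: \lq\lq non-degeneracy of $\Omega_0$ on a codimension-two suborbifold is $C^1$-open, propagated along the interpolation\rq\rq\ again needs uniform margins (both the margin at $t=0$ and the distance of $J_t$ to $J_0$ scale with negative powers of $\nu_1$, so they must be compared at the same scale), and it tacitly uses that the complex structure making $\{z_1=0\}$ complex is compatible with $\Omega_0$, i.e.\ that $J_0=J_0'$ in this $l=0$ situation. The paper sidesteps all of this by working upstairs: $\mathrm{d}_V\tilde{\Lambda}_t=\epsilon_1^*+\imath\,\epsilon_2^*+O(\sqrt{\nu_2/\nu_1})$ and $\omega_{FS}=\omega_0+O(\nu_2/\nu_1)$ along the sphere show that each affine zero locus $\tilde{Z}_t$ is an $\omega_{FS}$-symplectic, $T^1_{\boldsymbol{\nu}_\perp}$-invariant submanifold (Lemma \ref{lem:symplectic locus ZV}, Corollary \ref{cor:symplectic locus}), and $\overline{Z}_t$ is then its symplectic reduction, hence a symplectic suborbifold of $(\overline{M}^T_{\boldsymbol{\nu}},\Omega_0)$, giving statement (3) directly and uniformly in $t$. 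I would replace your openness heuristic by this reduction argument, or by an explicit uniform estimate on the horizontal tangent spaces.
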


The following argument will produce $J_t\big(\mathbb{P}(2,\ldots,k)\big)$
as the zero locus of a smoothly varying family of transverse sections of the
orbifold line bundle in Theorem \ref{thm:orbifold MTnu}.

\begin{proof}
[Proof of Theorem \ref{thm:isotopy}]
We have $M=\mathbb{P}^k=\mathbb{P}(W_k)$. By (\ref{eqn:F1}),
(\ref{eqn:F2}) and 
(\ref{eqn:PhiFj}), $M^T_{\boldsymbol{\nu}}$ is contained in the affine open set
$\mathbb{A}^k_0=(z_0\neq 0)$. More explicitly, let us define
$\mathbf{a}_{\boldsymbol{\nu}}
(k)\in \mathbb{N}^k$ by setting $$\mathbf{a}_{\boldsymbol{\nu}}
(k)_j:=\nu_1\,j-\nu_2\,(k-j);$$ thus $\mathbf{a}_{\boldsymbol{\nu}}
(k)_j>0$ for $j=1,\ldots,k$ if $\nu_1>(k-1)\,\nu_2$. Then
\begin{equation}
\label{eqn:MTnu S}
M^T_{\boldsymbol{\nu}}=\left\{
\left[\frac{1}{•\sqrt{•k\,\nu_2} } :v_1:\cdots:v_k\right]\in \mathbb{P}^k\,:\,\sum_{j=1}^k 
\mathbf{a}_{\boldsymbol{\nu}}
(k)_j\,|v_j|^2=1\right\}\cong S^{2k-1}_{•\mathbf{a}_{\boldsymbol{\nu}}
(k)}.
\end{equation}
The following is left to the reader:

\begin{lem}
\label{lem:stima V norma}
Let $\|\cdot\|:\mathbb{C}^k\rightarrow \mathbb{R}$ be the standard Euclidean norm.
If $\nu_1\ge 2\,(k-1)\,\nu_2$, then
$\|V\|\le \sqrt{•2/\nu_1}$ for all $V\in S^{2k-1}_{\mathbf{a}_{\boldsymbol{\nu}}
(k)}$.
\end{lem}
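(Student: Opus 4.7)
The plan is to bound $\|V\|^2$ by comparing the standard norm with the weighted defining equation of $S^{2k-1}_{\mathbf{a}_{\boldsymbol{\nu}}(k)}$, and then use the hypothesis on $\boldsymbol{\nu}$ to control the smallest weight.

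More precisely, rewrite the weights as $\mathbf{a}_{\boldsymbol{\nu}}(k)_j = (\nu_1+\nu_2)\,j - k\,\nu_2$. Since $\nu_1,\nu_2>0$, this is strictly increasing in $j$, so
$$
\min_{1\le j\le k}\mathbf{a}_{\boldsymbol{\nu}}(k)_j = \mathbf{a}_{\boldsymbol{\nu}}(k)_1 = \nu_1-(k-1)\,\nu_2.
$$
For any $V=(v_1,\dots,v_k)\in S^{2k-1}_{\mathbf{a}_{\boldsymbol{\nu}}(k)}$ we then have
$$
\|V\|^2 = \sum_{j=1}^k |v_j|^2 \;\le\; \frac{1}{\nu_1-(k-1)\,\nu_2}\,\sum_{j=1}^k \mathbf{a}_{\boldsymbol{\nu}}(k)_j\,|v_j|^2 \;=\; \frac{1}{\nu_1-(k-1)\,\nu_2}.
$$

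Finally, the hypothesis $\nu_1\ge 2\,(k-1)\,\nu_2$ is exactly what is needed to ensure $\nu_1-(k-1)\,\nu_2\ge \nu_1/2>0$; in particular the weights are all positive, which also guarantees that $S^{2k-1}_{\mathbf{a}_{\boldsymbol{\nu}}(k)}$ is a genuine ellipsoid. Substituting yields $\|V\|^2\le 2/\nu_1$, and taking square roots gives the stated inequality. There is no real obstacle here: the only point to notice is that monotonicity of $j\mapsto \mathbf{a}_{\boldsymbol{\nu}}(k)_j$ locates the minimum at $j=1$, after which the hypothesis does all the work.
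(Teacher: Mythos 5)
Your proof is correct: since the weights $\mathbf{a}_{\boldsymbol{\nu}}(k)_j=(\nu_1+\nu_2)j-k\nu_2$ are increasing in $j$, the minimum is $\mathbf{a}_{\boldsymbol{\nu}}(k)_1=\nu_1-(k-1)\nu_2\ge \nu_1/2$ under the hypothesis, and comparing $\|V\|^2$ with the defining equation of the ellipsoid gives $\|V\|^2\le 2/\nu_1$. The paper leaves this lemma to the reader, and your argument is precisely the natural one intended, so there is nothing further to compare.
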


Being irreducible, $\mu_k$ is uniform, 
hence $T^1(\boldsymbol{\nu})=S^1(\boldsymbol{\nu})$ in
(\ref{eqn:quotient groups}). 
Under the isomorphism $\kappa_{\boldsymbol{\nu}}:
S^1\cong T^1_{\boldsymbol{\nu}_\perp}$
in (\ref{eqn:T1nu parametrized}), 
$T^1_{\boldsymbol{\nu}}\cap Z(G)$ corresponds to 
the subgroup of $S^1$ of $(\nu_1+\nu_2)$-th roots of unity; thus the quotient map 
$T^1_{\boldsymbol{\nu}_\perp}\rightarrow T^1(\boldsymbol{\nu})$
corresponds to the Lie group epimorphism 
$e^{\imath\,\vartheta}\in S^1\mapsto e^{\imath\,(\nu_1+\nu_2)\,\vartheta}\in S^1$.

Identified with $S^1$ as in (\ref{eqn:T1nu parametrized}),
$T^1_{\boldsymbol{\nu}^\perp}$
acts on $M^T_{\boldsymbol{\nu}}$ as
\begin{eqnarray}
\label{eqn:T1action muk case}
\lefteqn{\gamma_{e^{\imath\vartheta}}\left(  \left[\frac{1}{•\sqrt{•k\,\nu_2}} :v_1:\cdots:v_k\right]\right)}\\
&=& 
\left[\frac{1}{•\sqrt{•k\,\nu_2}}:e^{-\imath\,\vartheta\,(\nu_1+\nu_2)}\,v_1:\cdots:
e^{-\imath\,j\,\vartheta\,(\nu_1+\nu_2)}\,v_j:
\cdots:e^{-\imath\,k\,\vartheta\,(\nu_1+\nu_2)}\,v_k\right].\nonumber
\end{eqnarray}
Passing to the action $\overline{\gamma}$ 
of the quotient group $T^1(\boldsymbol{\nu})\cong S^1$, 
we conclude that $J_0=J_0'$,
and $\overline{M}^T_{\boldsymbol{\nu}}\cong \mathbb{P}(1,2,\ldots,k)$.
Furthermore, the intersection $S^{2k-1}_{\mathbf{a}_{\boldsymbol{\nu}}
(k)}\cap (v_1=0)$ is clearly $\overline{\gamma}$-invariant, and it
projects down to $\mathbb{P}(2,\ldots,k)\subset \mathbb{P}(1,2,\ldots,k)$.

As $\overline{\gamma}$ 
is effective, any character $\chi$ of $T^1(\boldsymbol{\nu})$
defines an orbifold line bundle $L_\chi$ on $\overline{M}^T_{\boldsymbol{\nu}}$.
We shall
write $L=L_{1}$ if $\chi=\chi_1$ corresponds to the identity of $S^1$.
Any function $f:S^{2k-1}_{•\mathbf{a}_{\boldsymbol{\nu}}
(k)}\rightarrow \mathbb{C}$ which is the restriction of 
a $\mathcal{C}^\infty$ (respectively, holomorphic) 
function on $\mathbb{C}^k$
and satisfies $f\circ \overline{\gamma}_{e^{-\imath\vartheta}}=
e^{\imath\vartheta}\,f$ determines a $\mathcal{C}^\infty$ 
(respectively, holomorphic) section $\sigma_f$ of $L_{\mathfrak{a}}$. 

With abuse of notation, 
in view of (\ref{eqn:MTnu S})
let us regard $\Phi_{12}$ as defined on 
$S^{2k-1}_{•\mathbf{a}_{\boldsymbol{\nu}}
(k)}$; by (\ref{eqn:Phikji}),
\begin{equation}
\label{eqn:Phi12 V}
\Phi_{12}(V)=
\frac{\imath}{(k\,\nu_2)^{-1}+\|V\|^2•}\,\left[\frac{1}{•\sqrt{•\nu_2}}\, v_1+
\sum_{j=1}^{k-1}\sqrt{(k-j)\,(j+1)}\,v_{j+1}\,\overline{v_j}\right].
\end{equation}

Let us consider the continuous function $\Lambda:[0,1]\times 
S^{2k-1}_{•\mathbf{a}_{\boldsymbol{\nu}}
(k)}
\rightarrow \mathbb{C}$ given by 
\begin{eqnarray}
\label{eqn:Upsilon t}
\lefteqn{\Lambda: (t,V)}\\
&\mapsto& \frac{\imath}{(k\,\nu_2)^{-1}+t\,\|V\|^2}\,
\left[\frac{1}{•\sqrt{•\nu_2}}\,\,v_1+t\,
\sum_{j=1}^{k-1}\sqrt{(k-j)\,(j+1)}\,v_{j+1}\,\overline{v_j}\right];\nonumber
\end{eqnarray}
we shall write $\Lambda_t(V):=\Lambda (t,V)$.

Let $(e_1,\ldots,e_k)$ denote the standard basis of $\mathbb{C}^k$, and let 
$(e_1^*,\ldots,e_k^*)$ be the dual basis. 
Then 
\begin{equation}
\label{eqn:equivariance property}
\Lambda_0=
\left.\imath\,k\,\sqrt{\nu_2}\,e_1^*\right|_{S^{2k-1}_{•\mathbf{a}_{\boldsymbol{\nu}}
(k)}},\quad 
\Lambda_1=\Phi_{12},\quad 
\Lambda_t\circ \overline{\gamma}_{e^{-\imath\vartheta}}=e^{\imath\,\vartheta}\,
\Lambda_t,\,\forall\,t\in [0,1];
\end{equation}
in particular,
$\Lambda_t$ corresponds to a $\mathcal{C}^\infty$ section 
$\sigma_{{\Lambda}_t}$
of $L_{1}$.

Using
(\ref{eqn:Upsilon t}) and Lemma \ref{lem:stima V norma}, 
one can prove the following two Lemmas.

\begin{lem}
\label{lem:dtildeUpsilon}
Let us set $\tilde{\Lambda}_t:=-\imath\,(k\,\sqrt{\nu}_2)^{-1}\,\Lambda_t$,
and let us view $\tilde{\Lambda}_t$ as defined on $\mathbb{C}^k$ (by the same functional
equation). Then, uniformly in $V\in S^{2k-1}_{•\mathbf{a}_{\boldsymbol{\nu}}
(k)}$ we have 
$$
\mathrm{d}_V\tilde{\Lambda}_t=e_1^*+O\left(\sqrt{\frac{\nu_2}{•\nu_1}}\right).
$$
\end{lem}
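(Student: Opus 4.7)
My plan is to verify the estimate by a direct computation, treating $\tilde{\Lambda}_t$ as a rational function of $V$ and tracking the corrections via the sphere–radius bound supplied by Lemma \ref{lem:stima V norma}.

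First I would write $\tilde{\Lambda}_t(V)=\dfrac{B_t(V)}{k\sqrt{\nu_2}\,A_t(V)}$, where
\[
A_t(V):=(k\nu_2)^{-1}+t\,\|V\|^2,\qquad
B_t(V):=\frac{v_1}{\sqrt{\nu_2}}+t\,\sum_{j=1}^{k-1}\sqrt{(k-j)(j+1)}\,v_{j+1}\,\overline{v_j},
\]
and decompose each into a \textit{main} part $A^{(0)}:=(k\nu_2)^{-1}$, $B^{(0)}:=v_1/\sqrt{\nu_2}$ and a \textit{perturbation}. A straightforward check gives $\tilde{\Lambda}_t^{(0)}:=\frac{B^{(0)}}{k\sqrt{\nu_2}\,A^{(0)}}=v_1$, which differentiates to $e_1^*$. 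The whole task is therefore to show that the perturbative contribution to $d_V\tilde{\Lambda}_t$ has operator norm $O(\sqrt{\nu_2/\nu_1})$ uniformly on $S^{2k-1}_{\mathbf{a}_{\boldsymbol{\nu}}(k)}$ and in $t\in[0,1]$. By Lemma \ref{lem:stima V norma}, on this sphere $\|V\|^2\le 2/\nu_1$ as soon as $\nu_1\ge 2(k-1)\nu_2$, so the relative size of the denominator perturbation is $k\nu_2\,t\|V\|^2\le 2k\nu_2/\nu_1$, and in particular $A_t^{-1}=k\nu_2\bigl(1+O(\nu_2/\nu_1)\bigr)$ with constant depending only on $k$.

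Next I would apply the quotient rule to write
\[
d_V\tilde{\Lambda}_t=\frac{1}{k\sqrt{\nu_2}\,A_t}\left(dB_t-\frac{B_t}{A_t}\,dA_t\right)
\]
and bound each piece separately. The principal piece $\frac{1}{k\sqrt{\nu_2}\,A_t}\,dB^{(0)}$ equals $\frac{k\nu_2}{k\sqrt{\nu_2}}\cdot\frac{e_1^*}{\sqrt{\nu_2}}\cdot\bigl(1+O(\nu_2/\nu_1)\bigr)=e_1^*+O(\nu_2/\nu_1)$. The bilinear correction to $dB_t$ is a sum of terms of the form $t\sqrt{(k-j)(j+1)}\bigl(\overline{v_j}\,dv_{j+1}+v_{j+1}\,d\overline{v_j}\bigr)$, each of operator norm $\le C_k\,\|V\|=O(1/\sqrt{\nu_1})$; multiplication by the prefactor $(k\sqrt{\nu_2}A_t)^{-1}=O(\sqrt{\nu_2})$ gives a contribution of order $O(\sqrt{\nu_2/\nu_1})$, which is the dominant error. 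Finally, for the $B_t\,dA_t/A_t$ summand one estimates $|B_t|\le |v_1|/\sqrt{\nu_2}+O(\|V\|^2)=O(1/\sqrt{\nu_1\nu_2})$ and $|dA_t|_{\mathrm{op}}=2t\|V\|=O(1/\sqrt{\nu_1})$; combined with the overall prefactor $O(\sqrt{\nu_2})\cdot(k\nu_2)=O(\nu_2^{3/2})$ coming from $(k\sqrt{\nu_2}A_t^2)^{-1}$, this yields a term of size $O(\nu_2/\nu_1)$, which is absorbed.

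The main obstacle is purely a bookkeeping one: one must ensure that all implicit constants depend only on $k$ and that the various error contributions combine to exactly $O(\sqrt{\nu_2/\nu_1})$ rather than something smaller. The rate is dictated by the bilinear correction in $dB_t$, whose $O(1/\sqrt{\nu_1})$ bound is only dampened by the $\sqrt{\nu_2}$ prefactor; every other contribution is $O(\nu_2/\nu_1)$ or better and is therefore subsumed.
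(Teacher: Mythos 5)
Your computation is correct and is exactly the argument the paper intends: it leaves the lemma as a direct consequence of the explicit formula (\ref{eqn:Upsilon t}) and the radius bound of Lemma \ref{lem:stima V norma}, which is precisely the quotient-rule estimate you carry out, with the bilinear term in $B_t$ giving the dominant $O(\sqrt{\nu_2/\nu_1})$ error and all other contributions of size $O(\nu_2/\nu_1)$.
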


\begin{lem}
\label{lem:stima su v1}
There exists $C>0$ (independent of $k$, $t$ and
$\boldsymbol{\nu}$)
such that if $V\in S^{2k-1}_{•\mathbf{a}_{\boldsymbol{\nu}}
(k)}$ and
$\Lambda_t(V)=0$ for some $t\in [0,1]$, then
$|v_1|\le C\,k\,(\sqrt{•\nu_2}/\nu_1)$.
\end{lem}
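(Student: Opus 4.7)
The plan is to exploit the explicit form of $\Lambda_t$ in (\ref{eqn:Upsilon t}) and reduce the statement to a two-line Cauchy--Schwarz estimate. First, I would observe that the scalar prefactor $[(k\,\nu_2)^{-1}+t\,\|V\|^2]^{-1}$ is strictly positive for every $t\in[0,1]$ and every $V\in S^{2k-1}_{\mathbf{a}_{\boldsymbol{\nu}}(k)}$. Hence the condition $\Lambda_t(V)=0$ is equivalent to the vanishing of the bracket, that is
\begin{equation*}
\frac{1}{\sqrt{\nu_2}}\,v_1 \;=\; -\,t\,\sum_{j=1}^{k-1}\sqrt{(k-j)(j+1)}\;v_{j+1}\,\overline{v_j}.
\end{equation*}

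Next I would bound the right-hand side. The elementary inequality $(k-j)(j+1)\le \bigl((k+1)/2\bigr)^2$ (AM--GM on a sum equal to $k+1$) gives $\sqrt{(k-j)(j+1)}\le (k+1)/2\le k$ for all $j=1,\ldots,k-1$. A single application of Cauchy--Schwarz then yields
\begin{equation*}
\left|\sum_{j=1}^{k-1}\sqrt{(k-j)(j+1)}\;v_{j+1}\,\overline{v_j}\right|
\;\le\; k\,\sum_{j=1}^{k-1}|v_{j+1}|\,|v_j|
\;\le\; k\,\|V\|^2.
\end{equation*}
At this point the assumption $\nu_1\gg \nu_2$ lets me invoke Lemma \ref{lem:stima V norma}, which gives $\|V\|^2\le 2/\nu_1$. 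Substituting back and using $t\le 1$ delivers
\begin{equation*}
|v_1|\;\le\; \sqrt{\nu_2}\cdot k\cdot \frac{2}{\nu_1}\;=\;2\,k\,\frac{\sqrt{\nu_2}}{\nu_1},
\end{equation*}
so that one may take $C=2$.

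There is no serious obstacle here: the argument is an immediate consequence of the explicit formula (\ref{eqn:Upsilon t}) for $\Lambda_t$, together with the previously established norm estimate on $S^{2k-1}_{\mathbf{a}_{\boldsymbol{\nu}}(k)}$. The only point that requires a moment's thought is the choice of a $k$-independent bound for $\sqrt{(k-j)(j+1)}$, for which AM--GM is sharp enough; any cruder bound (e.g.\ $\le k$) would work just as well and still produce a constant independent of $k$, $t$ and $\boldsymbol{\nu}$, as required.
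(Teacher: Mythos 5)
Your proof is correct and follows exactly the route the paper intends: the paper leaves this lemma to the reader, stating only that it follows from the explicit formula (\ref{eqn:Upsilon t}) together with Lemma \ref{lem:stima V norma}, and your argument (nonvanishing of the scalar prefactor, the bound $\sqrt{(k-j)(j+1)}\le (k+1)/2\le k$, Cauchy--Schwarz, then $\|V\|^2\le 2/\nu_1$) is precisely that, yielding the admissible constant $C=2$ under the standing hypothesis $\nu_1\gg\nu_2$.
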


The general $V\in S^{2k-1}_{•\mathbf{a}_{\boldsymbol{\nu}}
(k)}$ has the form
\begin{equation}
\label{eqn:Vbetar}
V=\sum_{j=1}^k\,\frac{r_j}{\sqrt{\mathbf{a}_{\boldsymbol{\nu}}
(k)_j}•}\,e_j,\qquad\text{where}\qquad
r_j\in \mathbb{C},\quad \sum_{j=1}^k|r_j|^2=1.
\end{equation}
Lemma \ref{lem:stima su v1} and (\ref{eqn:Vbetar}) imply
that 
 if $V\in S^{2k-1}_{\mathbf{a}_{\boldsymbol{\nu}}
(k)}$ and
$\Lambda_t(V)=0$ for some $t\in [0,1]$, then
$v_1=r_1/\sqrt{\mathbf{a}_{\boldsymbol{\nu}}
(k)_1}$ where $r_1\in \mathbb{C}$ satisfies
\begin{equation}
\label{eqn:stime r1}
|r_1|\le C\,k\,\frac{\sqrt{•\nu_2}}{\nu_1•}\,\sqrt{\mathbf{a}_{\boldsymbol{\nu}}
(k)_1}\le C\,k\,\sqrt{\frac{\nu_2}{•\nu_1}}.
\end{equation}

Hence, if $R'=R'(V):=\sum_{j=2}^k r_j\,e_j$ then 
$$
\nu_1/\nu_2>2\,C^2\,k^2\quad\Rightarrow\quad\|R'\|^2=1-|r_1|^2\ge 1-C^2\,k^2\,(\nu_2/\nu_1)\ge 1/2.
$$ 
Hence there exists 
$j\in \{2,\ldots,k\}$ such that $|r_j|\ge 1/\sqrt{•2\,k} $. Perhaps after renumbering,
we may assume that
$j=2$. 

Therefore, we can draw the following conclusion.

\begin{lem}
\label{lem:bound on r2}
Suppose $\nu_1/\nu_2\gg 0$. If 
$V\in S^{2k-1}_{\mathbf{a}_{\boldsymbol{\nu}}
(k)}$ and
$\Lambda_t(V)=0$ for some $t\in [0,1]$ then,
perhaps after a 
renumbering of $(2,\ldots,k)$ we have
\begin{equation}
\label{eqn:form of V zero locus}
V=\frac{r_1}{\sqrt{\mathbf{a}_{\boldsymbol{\nu}}
(k)_1}}\,e_1	+	\frac{r_2}{\sqrt{\mathbf{a}_{\boldsymbol{\nu}}
(k)_2}}\,e_2 +S(V),
\end{equation}
where $S(V)\in \mathrm{span}_{\mathbb{C}}(e_3,\ldots,e_k)$, $r_1$
satisfies (\ref{eqn:stime r1}) and $|r_2|\ge 1/\sqrt{•2\,k} $.
\end{lem}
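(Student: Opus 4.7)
The plan is to read off the conclusion directly from Lemma \ref{lem:stima su v1} together with the standard parametrization (\ref{eqn:Vbetar}) of $S^{2k-1}_{\mathbf{a}_{\boldsymbol{\nu}}(k)}$, followed by a simple pigeonhole argument. The statement is essentially a repackaging of bounds already established; no new analytic input is needed.

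Given $V\in S^{2k-1}_{\mathbf{a}_{\boldsymbol{\nu}}(k)}$ with $\Lambda_t(V)=0$, I first expand $V$ as in (\ref{eqn:Vbetar}), so that $v_j=r_j/\sqrt{\mathbf{a}_{\boldsymbol{\nu}}(k)_j}$ and $\sum_{j=1}^{k}|r_j|^2=1$. This produces the decomposition (\ref{eqn:form of V zero locus}) once we identify $S(V)=\sum_{j=3}^{k}v_j\,e_j$. The bound (\ref{eqn:stime r1}) on $|r_1|$ is just the translation of Lemma \ref{lem:stima su v1}: indeed $|r_1|=|v_1|\sqrt{\mathbf{a}_{\boldsymbol{\nu}}(k)_1}$, and $\mathbf{a}_{\boldsymbol{\nu}}(k)_1=\nu_1-(k-1)\nu_2\le \nu_1$, so
\[
|r_1|\le C\,k\,\frac{\sqrt{\nu_2}}{\nu_1}\,\sqrt{\mathbf{a}_{\boldsymbol{\nu}}(k)_1}\le C\,k\,\sqrt{\frac{\nu_2}{\nu_1}}.
\]

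For the lower bound on $|r_2|$, I use the hypothesis $\nu_1/\nu_2\gg 0$, which we now make quantitative by requiring $\nu_1/\nu_2 > 2C^2 k^2$. Under this assumption (\ref{eqn:stime r1}) gives $|r_1|^2 \le 1/2$, hence
\[
\sum_{j=2}^{k}|r_j|^2 \;=\; 1-|r_1|^2 \;\ge\; \tfrac{1}{2}.
\]
By the pigeonhole principle, some index $j_0\in\{2,\ldots,k\}$ must satisfy $|r_{j_0}|^2 \ge 1/(2(k-1)) \ge 1/(2k)$, so $|r_{j_0}|\ge 1/\sqrt{2k}$. A permutation of the indices $\{2,\ldots,k\}$ sending $j_0$ to $2$ puts $V$ in the form (\ref{eqn:form of V zero locus}) with $|r_2|\ge 1/\sqrt{2k}$, as desired.

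There is essentially no obstacle here: the only subtlety is making the informal $\nu_1\gg\nu_2$ precise (the explicit threshold $\nu_1 > 2C^2 k^2\,\nu_2$ suffices), and keeping track of the fact that the renumbering affects only the basis vectors $e_3,\ldots,e_k$, so that the remaining sum $S(V)$ still lies in $\mathrm{span}_{\mathbb{C}}(e_3,\ldots,e_k)$.
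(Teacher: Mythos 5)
Your proof is correct and follows essentially the same route as the paper: expand $V$ via (\ref{eqn:Vbetar}), translate the bound of Lemma \ref{lem:stima su v1} into the estimate (\ref{eqn:stime r1}) on $|r_1|$ using $\mathbf{a}_{\boldsymbol{\nu}}(k)_1\le\nu_1$, and then apply the pigeonhole principle to $\sum_{j\ge 2}|r_j|^2\ge 1/2$ under the threshold $\nu_1/\nu_2>2C^2k^2$. The paper's own argument is identical, down to the same explicit threshold, so there is nothing to add.
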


Let us set
\begin{eqnarray}
\label{eqn:normal vector}
N_V&:=&\sqrt{\nu_1}\,
\left[-\frac{1}{•\sqrt{•\mathbf{a}_{\boldsymbol{\nu}}
(k)_1}}\,\overline{r}_2\,e_1+\frac{1}{•\sqrt{•\mathbf{a}_{\boldsymbol{\nu}}
(k)_2}}\,\overline{r}_1\,e_2\right]\nonumber\\
&=&-\frac{\overline{r_2}}{\sqrt{1-(k-1)\frac{\nu_2}{\nu_1•}}•}\,e_1+
\frac{\overline{r_1}}{\sqrt{2-(k-2)\frac{\nu_2}{\nu_1•}}•}\,e_2.
\end{eqnarray}
Then 
$\mathrm{span}_{\mathbb{C}}( N_V)\subseteq T_VS^{2k-1}_{\mathbf{a}_{\boldsymbol{\nu}}
(k)}$ and 
$\|N\|> 1/(2k)$ by Lemma \ref{lem:bound on r2}. In view of Lemma \ref{lem:dtildeUpsilon},
we obtain for every $e^{\imath\theta}\in S^1$
\begin{equation}
\label{eqn:diff on V'}
\mathrm{d}_V\tilde{\Lambda}_t\left(e^{\imath\theta}\,N_V\right)=-\frac{e^{\imath\theta}\,
\overline{r_2}}{\sqrt{1-(k-1)\frac{\nu_2}{\nu_1•}}•}+
O\left(\sqrt{\frac{\nu_2}{•\nu_1}}\right).
\end{equation}
It follows that 
$\mathrm{d}_V\tilde{\Lambda}_t$ restricts to a surjective $\mathbb{R}$-linear
map $\mathrm{span}_{\mathbb{C}}(N_V)\rightarrow \mathbb{C}$; therefore 
the same is true
\textit{a fortiori} of the restriction of $\mathrm{d}_V\Lambda_t$ to $T_VS^{2k-1}_{\mathbf{a}_{\boldsymbol{\nu}}
(k)}$.

Thus we conclude the following:

\begin{lem}
\label{lem:zero locus smooth}
Suppose $\nu_1/\nu_2\gg 0$, $V\in S^{2k-1}_{\mathbf{a}_{\boldsymbol{\nu}}
(k)}$, $t\in [0,1]$, and $\Upsilon_t(V)=0$.
Then 
$\left.\mathrm{d}_V\Lambda_t\right|_{T_VS^{2k-1}_{\mathbf{a}_{\boldsymbol{\nu}}
(k)}}\rightarrow \mathbb{C}$ is a surjective
$\mathbb{R}$-linear map.
\end{lem}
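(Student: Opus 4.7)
The plan is to produce an explicit two-dimensional real subspace of $T_V S^{2k-1}_{\mathbf{a}_{\boldsymbol{\nu}}(k)}$ on which the restriction of $\mathrm{d}_V\tilde{\Lambda}_t$ (and hence of $\mathrm{d}_V\Lambda_t$, since these differ only by a nonzero complex scalar) is already $\mathbb{R}$-linearly surjective onto $\mathbb{C}$; this will certainly force surjectivity on the full tangent space. The natural candidate is $\mathrm{span}_{\mathbb{R}}(N_V,\imath\,N_V)=\mathrm{span}_{\mathbb{C}}(N_V)$, with $N_V$ as in (\ref{eqn:normal vector}).

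First, I would verify that $\mathrm{span}_{\mathbb{C}}(N_V)\subseteq T_VS^{2k-1}_{\mathbf{a}_{\boldsymbol{\nu}}(k)}$, i.e.\ that $N_V$ is Hermitian-orthogonal to $V$ with respect to $h_{\mathbf{a}_{\boldsymbol{\nu}}(k)}$. Using the parametrization (\ref{eqn:Vbetar}), only the $e_1$ and $e_2$ components contribute to $h_{\mathbf{a}_{\boldsymbol{\nu}}(k)}(V,N_V)$, and a direct computation cancels them: one finds $h_{\mathbf{a}_{\boldsymbol{\nu}}(k)}(V,N_V)=-\sqrt{\nu_1}\,r_1\,r_2+\sqrt{\nu_1}\,r_2\,r_1=0$.

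Next, I would estimate $\mathrm{d}_V\tilde{\Lambda}_t$ on $e^{\imath\theta}N_V$ via Lemma \ref{lem:dtildeUpsilon}, exactly as in (\ref{eqn:diff on V'}): the leading term is $-e^{\imath\theta}\,\overline{r_2}/\sqrt{1-(k-1)\,\nu_2/\nu_1}$, with an error bounded by a constant times $\sqrt{\nu_2/\nu_1}\cdot\|N_V\|$, uniformly in $V$ and $t$. One checks that $\|N_V\|$ is bounded (since $\nu_1/a_j=1/\bigl(j-(k-j)\nu_2/\nu_1\bigr)$ stays bounded for $\nu_1/\nu_2\gg 0$), while $|r_2|\ge 1/\sqrt{2k}$ by Lemma \ref{lem:bound on r2}. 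Hence the leading term has magnitude bounded below by, say, $1/(2\sqrt{k})$ for $\nu_1/\nu_2$ sufficiently large.

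Evaluating at $\theta=0$ and $\theta=\pi/2$, the images $\mathrm{d}_V\tilde{\Lambda}_t(N_V)$ and $\mathrm{d}_V\tilde{\Lambda}_t(\imath\,N_V)$ are respectively $-\overline{r_2}\,(1+o(1))$ and $-\imath\,\overline{r_2}\,(1+o(1))$, where $o(1)\to 0$ as $\nu_1/\nu_2\to\infty$. Since $-\overline{r_2}$ and $-\imath\,\overline{r_2}$ are $\mathbb{R}$-linearly independent in $\mathbb{C}$ with $|r_2|$ bounded away from $0$, a small perturbation preserves their $\mathbb{R}$-linear independence. Thus $\mathrm{d}_V\tilde{\Lambda}_t$ restricted to $\mathrm{span}_{\mathbb{R}}(N_V,\imath\,N_V)$ is already surjective onto $\mathbb{C}$, and the conclusion follows. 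The only delicate point of this argument is the uniformity: we must control the error in Lemma \ref{lem:dtildeUpsilon} against the lower bound on $|r_2|$ coming from Lemma \ref{lem:bound on r2}, and check that both hold with constants that behave well as $\nu_1/\nu_2\to\infty$; this is exactly where the hypothesis $\nu_1/\nu_2\gg 0$ is used.
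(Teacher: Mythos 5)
Your proposal is correct and follows essentially the same route as the paper: the same normal vector $N_V$ from (\ref{eqn:normal vector}), the same estimate (\ref{eqn:diff on V'}) via Lemma \ref{lem:dtildeUpsilon}, the same lower bound $|r_2|\ge 1/\sqrt{2k}$ from Lemma \ref{lem:bound on r2}, and the same conclusion that surjectivity on $\mathrm{span}_{\mathbb{C}}(N_V)$ forces surjectivity on all of $T_VS^{2k-1}_{\mathbf{a}_{\boldsymbol{\nu}}(k)}$. Your explicit check that $h_{\mathbf{a}_{\boldsymbol{\nu}}(k)}(N_V,V)=0$ and the evaluation at $\theta=0,\pi/2$ merely make explicit steps the paper leaves implicit.
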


Lemma \ref{lem:zero locus smooth} has the following consequences:

\begin{cor}
\label{cor:zero locus smooth}
In the situation of Lemma \ref{lem:zero locus smooth}, 
$Z_t:=\Lambda_t^{-1}(0)\subset S^{2k-1}_{\mathbf{a}_{\boldsymbol{\nu}}
(k)}$ is a smooth
$\gamma$-invariant submanifold of $S^{2k-1}_{\mathbf{a}_{\boldsymbol{\nu}}
(k)}$, 
of (real) codimension $2$. 
\end{cor}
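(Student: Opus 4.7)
The corollary is essentially an immediate consequence of Lemma \ref{lem:zero locus smooth} combined with the equivariance property recorded in (\ref{eqn:equivariance property}); the plan is to invoke the regular value theorem for the smoothness/codimension statement, and the $S^1$-equivariance for the invariance statement.

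First, for smoothness and codimension: Lemma \ref{lem:zero locus smooth} gives, under the hypothesis $\nu_1/\nu_2 \gg 0$, that at every point $V \in Z_t = \Lambda_t^{-1}(0)$ the differential
\[
\left. \mathrm{d}_V \Lambda_t \right|_{T_V S^{2k-1}_{\mathbf{a}_{\boldsymbol{\nu}}(k)}} : T_V S^{2k-1}_{\mathbf{a}_{\boldsymbol{\nu}}(k)} \longrightarrow \mathbb{C}
\]
is a surjective $\mathbb{R}$-linear map. Thus $0 \in \mathbb{C}$ is a regular value of the smooth map $\Lambda_t : S^{2k-1}_{\mathbf{a}_{\boldsymbol{\nu}}(k)} \to \mathbb{C}$, and the regular value theorem yields that $Z_t$ is a smooth embedded submanifold of $S^{2k-1}_{\mathbf{a}_{\boldsymbol{\nu}}(k)}$ of real codimension $\dim_{\mathbb{R}} \mathbb{C} = 2$.

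Next, for $\gamma$-invariance: by the third relation in (\ref{eqn:equivariance property}), for every $t \in [0,1]$ and every $e^{\imath \vartheta} \in S^1$ we have
\[
\Lambda_t \circ \overline{\gamma}_{e^{-\imath \vartheta}} = e^{\imath \vartheta}\, \Lambda_t,
\]
so $\Lambda_t(V) = 0$ if and only if $\Lambda_t(\overline{\gamma}_{e^{\imath \vartheta}}(V)) = 0$; hence $Z_t$ is $\overline{\gamma}$-invariant. Since $\overline{\gamma}$ is the action of the quotient group $T^1(\boldsymbol{\nu})$ induced by $\gamma$, and in particular factors through it, invariance under $\overline{\gamma}$ is equivalent to invariance under $\gamma$, giving the claim.

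There is no genuine obstacle: the work has already been done in Lemma \ref{lem:zero locus smooth} (which itself relied on Lemmas \ref{lem:stima V norma}--\ref{lem:bound on r2} to produce the transverse direction $N_V$) and in (\ref{eqn:equivariance property}). The only minor point worth being explicit about is that the codimension is $2$ because the target of $\Lambda_t$ is $\mathbb{C}$, viewed as a real $2$-dimensional vector space, and that the surjectivity of the $\mathbb{R}$-linear map in Lemma \ref{lem:zero locus smooth} is precisely what is required by the regular value theorem.
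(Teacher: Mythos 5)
Your proposal is correct and follows exactly the route the paper intends: the paper states the corollary as an immediate consequence of Lemma \ref{lem:zero locus smooth}, i.e.\ surjectivity of $\left.\mathrm{d}_V\Lambda_t\right|_{T_VS^{2k-1}_{\mathbf{a}_{\boldsymbol{\nu}}(k)}}$ makes $0$ a regular value (hence a smooth codimension-two submanifold), while the equivariance relation in (\ref{eqn:equivariance property}) gives invariance of the zero locus. Your remark that invariance under $\overline{\gamma}$ and under $\gamma$ amount to the same thing, since the orbits coincide, is a correct and welcome clarification.
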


\begin{cor}
$\overline{Z}_t:=Z_t/\gamma\subset
\overline{M}^T_{\boldsymbol{\nu}}$ is a smoothly embedded orbifold
of real codimension $2$.
\end{cor}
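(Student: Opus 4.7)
The plan is to combine the $\gamma$-invariance of $Z_t$ (from Corollary \ref{cor:zero locus smooth}) with the standard slice theorem for the locally free $\gamma$-action on $S^{2k-1}_{\mathbf{a}_{\boldsymbol{\nu}}(k)}$ to produce orbifold uniformizing charts of $\overline{Z}_t$ compatible with those of $\overline{M}^T_{\boldsymbol{\nu}}$.

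First, I would recall that after passing to the effective quotient $T^1(\boldsymbol{\nu})\cong S^1$ of $T^1_{\boldsymbol{\nu}^\perp}$ by the finite subgroup $F_{\boldsymbol{\nu}} = T^1_{\boldsymbol{\nu}^\perp}\cap Z(G)$, the induced action $\overline{\gamma}$ on $S^{2k-1}_{\mathbf{a}_{\boldsymbol{\nu}}(k)}$ is effective and locally free, with orbit space the orbifold $\overline{M}^T_{\boldsymbol{\nu}}\cong \mathbb{P}(1,2,\ldots,k)$. For any $V\in S^{2k-1}_{\mathbf{a}_{\boldsymbol{\nu}}(k)}$ with finite stabilizer $\Gamma_V\leqslant T^1(\boldsymbol{\nu})$, the slice theorem for locally free $S^1$-actions provides a $\Gamma_V$-invariant smooth slice $\Sigma_V$ through $V$ of real dimension $2k-2$, transverse to the $\overline{\gamma}$-orbit, such that the quotient $\Sigma_V/\Gamma_V$ is a uniformizing chart of $\overline{M}^T_{\boldsymbol{\nu}}$ centered at $q(V)$.

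Next, I would exploit the $\overline{\gamma}$-invariance of $Z_t$ to show that $\Sigma_V\cap Z_t$ is a smooth, $\Gamma_V$-invariant submanifold of $\Sigma_V$ of real codimension $2$. For $V\in Z_t$, the whole $\overline{\gamma}$-orbit through $V$ lies in $Z_t$, so the tangent space to that orbit sits inside $T_VZ_t$; combined with the transversality $T_V\Sigma_V\oplus T_V(\overline{\gamma}\cdot V)=T_VS^{2k-1}_{\mathbf{a}_{\boldsymbol{\nu}}(k)}$ and the codimension-$2$ smoothness of $Z_t$, this gives
\[
T_V\Sigma_V\cap T_VZ_t \subset T_V\Sigma_V,\qquad \dim_{\mathbb{R}}(\Sigma_V\cap Z_t)=2k-4,
\]
with $\Sigma_V\cap Z_t$ cut out transversely. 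Shrinking $\Sigma_V$ if necessary, this produces a smooth embedded codimension-$2$ submanifold, preserved by $\Gamma_V\subset\gamma$ since $Z_t$ is.

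Finally, passing to quotients, the inclusions $(\Sigma_V\cap Z_t)/\Gamma_V\hookrightarrow \Sigma_V/\Gamma_V$ provide orbifold charts that realize $\overline{Z}_t\cap q(\Sigma_V)\hookrightarrow q(\Sigma_V)$ as a smoothly embedded suborbifold of real codimension $2$. Compatibility of transition functions between two such charts is automatic, because the transitions for $\overline{M}^T_{\boldsymbol{\nu}}$ inherited from the slice construction are $\Gamma_V$-equivariant and preserve the $\overline{\gamma}$-invariant locus $Z_t$. The only mildly delicate point, which I expect to be the main (though modest) obstacle, is the verification at isolated orbit types with non-trivial $\Gamma_V$ that the slice can be chosen $\Gamma_V$-equivariantly compatible with the restriction to $Z_t$; this follows from the standard equivariant slice theorem for proper actions, since the $\Gamma_V$-representation on the normal bundle of the $\overline{\gamma}$-orbit at $V$ restricts to a $\Gamma_V$-representation on the normal bundle of the orbit inside $Z_t$, so an equivariant exponential map produces the desired compatible slices simultaneously.
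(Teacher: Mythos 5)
Your argument is correct, and it is essentially the justification the paper has in mind: the paper states this corollary without a separate proof, treating it as an immediate consequence of Corollary \ref{cor:zero locus smooth} (smoothness, $\gamma$-invariance and codimension $2$ of $Z_t$) together with the standard slice-theorem fact that an invariant submanifold of a locally free circle action descends to an embedded suborbifold of the quotient. Your transversality computation in the slices ($T_V\Sigma_V + T_VZ_t = T_VS^{2k-1}_{\mathbf{a}_{\boldsymbol{\nu}}(k)}$ because the orbit direction lies in $T_VZ_t$) is exactly the point that makes this routine, so nothing further is needed.
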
  

\begin{cor}
\label{cor:manifold with boundary}
Let $\mathcal{Z}:=
\Lambda^{-1}(0)
\subset [0,1]\times S^{2k-1}_{\mathbf{a}_{\boldsymbol{\nu}}
(k)}$. 
Then:
\begin{enumerate}
\item $\mathcal{Z}$ is a
submanifold (with boundary) of codimension $2$
of $[0,1]\times S^{2k-1}_{\mathbf{a}_{\boldsymbol{\nu}}
(k)}$;
\item the projection $p:\mathcal{Z}\rightarrow 
[0,1]$
is a submersion;
\item $Z_t=p^{-1}(t)$ for every $t$.
\end{enumerate}

\end{cor}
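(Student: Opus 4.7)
The statement is essentially a parametrized upgrade of Corollary \ref{cor:zero locus smooth}, and the plan is to derive it from Lemma \ref{lem:zero locus smooth} by a direct application of the implicit function theorem, with a small amount of care to handle the boundary points $t\in\{0,1\}$.

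First I would extend the parameter range. Inspection of formula (\ref{eqn:Upsilon t}) shows that the same functional expression defines a smooth map $\widetilde{\Lambda}\colon(-\epsilon,1+\epsilon)\times S^{2k-1}_{\mathbf{a}_{\boldsymbol{\nu}}(k)}\to\mathbb{C}$ for any sufficiently small $\epsilon>0$: the denominator $(k\nu_2)^{-1}+t\|V\|^2$ stays positive thanks to Lemma \ref{lem:stima V norma}, and all the estimates entering Lemmas \ref{lem:dtildeUpsilon}, \ref{lem:stima su v1} and \ref{lem:bound on r2} depend continuously on $t$ and so persist, uniformly, on this slightly larger interval provided $\nu_1/\nu_2$ is large enough. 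Consequently, the conclusion of Lemma \ref{lem:zero locus smooth} holds at every point of $\widetilde{\Lambda}^{-1}(0)$ with $t\in(-\epsilon,1+\epsilon)$, not merely $t\in[0,1]$.

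Next, at any $(t_0,V_0)\in\widetilde{\Lambda}^{-1}(0)$ the restriction $d_{V_0}\Lambda_{t_0}\colon T_{V_0}S^{2k-1}_{\mathbf{a}_{\boldsymbol{\nu}}(k)}\to\mathbb{C}$ is $\mathbb{R}$-linear surjective, hence \emph{a fortiori} the full differential $d_{(t_0,V_0)}\widetilde{\Lambda}$ is surjective, and surjectivity is already achieved by the $V$-derivative alone. The implicit function theorem then realizes $\widetilde{\mathcal{Z}}:=\widetilde{\Lambda}^{-1}(0)$ locally near $(t_0,V_0)$ as the graph of a smooth map from an open subset of $(-\epsilon,1+\epsilon)\times Z_{t_0}$ into a two-dimensional transversal slice of $S^{2k-1}_{\mathbf{a}_{\boldsymbol{\nu}}(k)}$. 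Thus $\widetilde{\mathcal{Z}}$ is a smooth submanifold of real codimension $2$, the projection $\tilde{p}\colon\widetilde{\mathcal{Z}}\to(-\epsilon,1+\epsilon)$ is a submersion, and $\tilde{p}^{\,-1}(t)=Z_t$ for each $t$.

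Finally, since $\tilde p$ is a submersion it is transverse to the level hypersurfaces $\{0\}$ and $\{1\}$; therefore $\mathcal{Z}=\tilde p^{\,-1}([0,1])$ is a smooth submanifold with boundary of codimension $2$ in $[0,1]\times S^{2k-1}_{\mathbf{a}_{\boldsymbol{\nu}}(k)}$, with boundary $Z_0\sqcup Z_1$, and the restriction $p=\tilde p|_{\mathcal{Z}}$ is a submersion onto $[0,1]$ with $p^{-1}(t)=Z_t$. The only non-cosmetic point in the argument is checking that the uniform estimates of Lemmas \ref{lem:stima su v1}--\ref{lem:bound on r2} survive on the slightly enlarged parameter interval; this is the expected main obstacle, but is routine given the explicit form of $\Lambda$.
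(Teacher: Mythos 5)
Your argument is correct and is essentially the proof the paper intends: Corollary \ref{cor:manifold with boundary} is meant to follow directly from Lemma \ref{lem:zero locus smooth} via the regular value (implicit function) theorem, exactly as you do. Your extension of the parameter interval to $(-\epsilon,1+\epsilon)$ is a harmless but unnecessary device: since the surjectivity in Lemma \ref{lem:zero locus smooth} is achieved by the $V$-derivatives alone, $\Lambda$ is transverse to $0\in\mathbb{C}$ both on $[0,1]\times S^{2k-1}_{\mathbf{a}_{\boldsymbol{\nu}}(k)}$ and on its boundary faces $\{0,1\}\times S^{2k-1}_{\mathbf{a}_{\boldsymbol{\nu}}(k)}$, so the preimage theorem for manifolds with boundary applies directly and yields all three assertions without re-verifying the estimates on a larger interval.
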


$T^1(\boldsymbol{\nu})$ acts on
$[0,1]\times S^{2k-1}_{\mathbf{a}_{\boldsymbol{\nu}}
(k)}$
trivially on the first factor and via $\overline{\gamma}$ on the second, and this action preserves $\mathcal{Z}$ in view of (\ref{eqn:equivariance property}).
The product metric on $[0,1]\times S^{2k-1}_{\mathbf{a}_{\boldsymbol{\nu}}
(k)}$
restricts to an invariant Riemannian metric $g_{\mathcal{Z}}$ on $\mathcal{Z}$.
By $g_{\mathcal{Z}}$, we can define an invariant horizontal distribution for $p$, 
whence an invariant horizontal vector field, whose integral curves are the horizontal lifts of  
$[0,1]$ for $g_{\mathcal{Z}}$. These horizontal lifts 
define an invariant family 
$\psi_p$ of paths, one for each $p\in Z_0$;
for each $t$, the assignment $\psi^t:p\in Z_0\mapsto \psi_p(t)\in Z_t$ is a $\overline{\gamma}$-equivariant diffeomorphism. 
Therefore, $\psi^t$ descends to a smoothly varying family of orbifold diffeomorphisms 
$\overline{\psi}^t:
\overline{Z}_0\rightarrow
\overline{Z}_t$. 
In particular, $\overline{Z}_0$ is diffeomorphic to $\overline{Z}_1$.

Let 
$\mathbf{a}_{\boldsymbol{\nu}}
(k)':=\big(\mathbf{a}_{\boldsymbol{\nu}}
(k)_2,\ldots,\mathbf{a}_{\boldsymbol{\nu}}
(k)_k\big)$.
Then in view of (\ref{eqn:equivariance property})
\begin{equation}
\label{eqn:S0}
Z_0=\{v_1=0\}\cap S^{2k-1}_{\mathbf{a}_{\boldsymbol{\nu}}
(k)'}=\{0\}\times S^{2k-3}_{\mathbf{a}_{\boldsymbol{\nu}}
(k)'};
\end{equation}
by (\ref{eqn:T1action muk case}), $\overline{Z}_0\cong \mathbb{P}(2,3,\ldots,k)$.
Thus every 
$\overline{Z}_t\subset\overline{M}^T_{\boldsymbol{\nu}}$ is diffeomorphic to
$\mathbb{P}(2,3,\ldots,k)$.

Let us show that every $\overline{Z}_t$ is symplectically embedded 
in $(\overline{M}^T_{\boldsymbol{\nu}}, \Omega_0 )$.
By construction, 
$S^{2k-1}_{\mathbf{a}_{\boldsymbol{\nu}}
(k)}\cong M^T_{\boldsymbol{\nu}}=\Psi_{\boldsymbol{\nu}_\perp}^{-1}(0)$
($\Psi_{\boldsymbol{\nu}_\perp}$ 
is as in Lemma \ref{lem:0 regular value nuperp}).
Let $q:S^{2k-1}_{\mathbf{a}_{\boldsymbol{\nu}}
(k)}\rightarrow \overline{M}^T_{\boldsymbol{\nu}}$
be the projection,
and let $\iota:S^{2k-1}_{\mathbf{a}_{\boldsymbol{\nu}}
(k)}\hookrightarrow \mathbb{C}^k\cong\mathbb{A}^k_0\subset \mathbb{P}^k$
be the inclusion; then $q^*(\Omega_0)=\iota^*(\omega_{FS})$.

Let $\omega_0:=(\imath/2)\,\sum_{j=1}^k\mathrm{d}v_j\wedge \mathrm{d}\overline{v}_j$
be the standard symplectic structure on $\mathbb{C}^k$.
Expressing $\omega_{FS}$ in affine coordinates,
by a standard computation we obtain on $\mathbb{A}_0^k$
\begin{equation}
\label{eqn:approximate fs0}
\omega_{FS}=\omega_0+R_2(V),
\end{equation}
where $R_2$ is a differential form vanishing to second order at the origin.
By Lemma \ref{lem:stima V norma},
along $S^{2k-1}_{\mathbf{a}_{\boldsymbol{\nu}}
(k)}$ we have
$\|V\|^2\le 2/\nu_1\le 2\,\nu_2/\nu_1$; hence (\ref{eqn:approximate fs0}) implies
that $\omega_{FS}=\omega_0+O(\nu_2/\nu_1)$ 
on $S^{2k-1}_{\mathbf{a}_{\boldsymbol{\nu}}(k)}$. 
Therefore,
\begin{equation}
\label{eqn:qstar eta red}
q^*(\Omega_0)=\iota^*(\omega_{FS})=
\iota^*(\omega_0)+O\left(\frac{\nu_2}{•\nu_1}\right).
\end{equation}
With $\tilde{•\Lambda}_t:\mathbb{A}^n_0\cong\mathbb{C}^k\rightarrow\mathbb{C}$ 
be as in Lemma \ref{lem:dtildeUpsilon}, let us set 
$\tilde{Z}_t:=\tilde{•\Lambda}_t^{-1}(0)$; thus
$Z_t=\tilde{Z}_t\cap S^{2k-1}_{\mathbf{a}_{\boldsymbol{\nu}}(k)}$.

Let $(\epsilon_1,\epsilon_2,\ldots,\epsilon_{2k-1},\epsilon_{2k})$
be the real basis $(e_1,\imath\,e_1,\ldots,e_k,\imath\,e_k)$
of $\mathbb{C}^k\cong \mathbb{R}^{2k}$. 
Then by Lemma \ref{lem:dtildeUpsilon}
\begin{eqnarray}
\label{eqn:dPhi12 V}
\mathrm{d}_V\tilde{•\Lambda}_t&=&\epsilon_1^*+\imath\,\epsilon_2^*+O\left(\sqrt{\frac{\nu_2}{\nu_1}}\right)\qquad
(V\in S^{2k-1}_{\mathbf{a}_{\boldsymbol{\nu}}(k)}),
\end{eqnarray}
and this implies that if $\nu_1/\nu_2\gg 0$ then 
$\ker(\mathrm{d}_V\tilde{•\Lambda}_t)$ is a (real) 
symplectic
vector subspace of $(\mathbb{C}^k,\omega_0)$ 
whenever $V\in S^{2k-1}_{\mathbf{a}_{\boldsymbol{\nu}}(k)}$
and $t\in [0,1]$.
Given this and (\ref{eqn:qstar eta red}),
we conclude the following:

\begin{lem}
\label{lem:symplectic locus ZV}
If $\nu_1/\nu_2\gg 0$, then the following holds. For every $t\in [0,1]$ and 
$V\in S^{2k-1}_{\mathbf{a}_{\boldsymbol{\nu}}(k)}$ 
such that $\Lambda_t(V)=0$, the tangent space 
$T_V\tilde{Z}_t$ \`{e} un sottospazio vettoriale
simplettico di $(\mathbb{C}^k,\omega_{FS})$.

\end{lem}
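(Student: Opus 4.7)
The plan is to reduce the claim to a perturbative statement: for $\nu_1/\nu_2$ sufficiently large, the differential $\mathrm{d}_V\tilde{\Lambda}_t$ and the two-form $\omega_{FS}|_V$ are simultaneously close (uniformly in $(t,V)$) to model objects for which symplecticity of the kernel is manifest.

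First I would fix the model. By (\ref{eqn:dPhi12 V}) we have $\mathrm{d}_V\tilde{\Lambda}_t = \epsilon_1^* + \imath\,\epsilon_2^* + O\!\left(\sqrt{\nu_2/\nu_1}\right)$ in operator norm, uniformly in $V\in S^{2k-1}_{\mathbf{a}_{\boldsymbol{\nu}}(k)}$ and $t\in[0,1]$. The kernel of the model real-linear surjection $\epsilon_1^*+\imath\,\epsilon_2^*\colon\mathbb{C}^k\to\mathbb{C}$ is $K_0:=\mathrm{span}_{\mathbb{R}}(\epsilon_3,\ldots,\epsilon_{2k})$. Writing $\omega_0=\sum_{j=1}^k \epsilon_{2j-1}^*\wedge \epsilon_{2j}^*$ in the real basis, we see that $\omega_0|_{K_0}=\sum_{j=2}^k \epsilon_{2j-1}^*\wedge \epsilon_{2j}^*$ is nondegenerate, so $K_0$ is a symplectic subspace of $(\mathbb{C}^k,\omega_0)$.

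Second, I would invoke the general fact that the set of pairs $(K,\Omega)$, where $K\subset \mathbb{C}^k$ is a real codimension-two subspace and $\Omega$ is a real two-form whose restriction to $K$ is nondegenerate, is open in the natural $C^0$-topology on the Grassmannian of codimension-two subspaces times the space of two-forms. Concretely, nondegeneracy of $\Omega|_K$ is equivalent to non-vanishing of the $(2k-2)$-fold wedge $(\Omega|_K)^{\wedge (k-1)}$, a continuous invariant of the pair. Moreover, since the model differential is surjective, the kernel map $\alpha\mapsto \ker\alpha$ is continuous on a neighborhood of $\epsilon_1^*+\imath\,\epsilon_2^*$, so an $O\!\left(\sqrt{\nu_2/\nu_1}\right)$ perturbation of $\mathrm{d}_V\tilde{\Lambda}_t$ induces a perturbation of the same size of $\ker \mathrm{d}_V\tilde{\Lambda}_t$ in the Grassmannian.

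Third, combining (\ref{eqn:approximate fs0}) with Lemma \ref{lem:stima V norma} yields $\omega_{FS}=\omega_0+O(\nu_2/\nu_1)$ along $S^{2k-1}_{\mathbf{a}_{\boldsymbol{\nu}}(k)}$. Thus for $\nu_1/\nu_2\gg 0$ the pair $\bigl(\ker \mathrm{d}_V\tilde{\Lambda}_t,\,\omega_{FS}|_V\bigr)$ lies arbitrarily close to the model pair $(K_0,\omega_0)$, uniformly in the compact parameter set $[0,1]\times S^{2k-1}_{\mathbf{a}_{\boldsymbol{\nu}}(k)}$. By the openness just recalled, it is then symplectic at every zero of $\Lambda_t$, which is the assertion of the lemma.

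The main subtlety I anticipate is justifying the uniformity: one needs the error estimate in Lemma \ref{lem:dtildeUpsilon} to be genuinely uniform in $V$ and $t$ (not merely pointwise), and one needs the model differential to stay uniformly bounded away from non-surjective maps so that the continuity of the kernel map is quantitative. Both points follow from the explicit form (\ref{eqn:Upsilon t}) together with Lemma \ref{lem:stima V norma}, but spelling this out carefully is the only non-routine part of the argument.
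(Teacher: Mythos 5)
Your proposal follows the same route as the paper: the displayed estimate $\mathrm{d}_V\tilde{\Lambda}_t=\epsilon_1^*+\imath\,\epsilon_2^*+O\bigl(\sqrt{\nu_2/\nu_1}\bigr)$ (from Lemma \ref{lem:dtildeUpsilon}) shows $\ker(\mathrm{d}_V\tilde{\Lambda}_t)=T_V\tilde{Z}_t$ is a small perturbation of the $\omega_0$-symplectic model subspace, and then $\omega_{FS}=\omega_0+O(\nu_2/\nu_1)$ along $S^{2k-1}_{\mathbf{a}_{\boldsymbol{\nu}}(k)}$ (via Lemma \ref{lem:stima V norma} and (\ref{eqn:approximate fs0})) transfers symplecticity to $\omega_{FS}$. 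Your explicit appeal to openness of nondegeneracy and continuity of the kernel map simply spells out what the paper leaves implicit, so the argument is correct and essentially identical.
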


\begin{cor}
\label{cor:symplectic locus}
If $\nu_1/\nu_2\gg 0$, 
there exists a $\overline{\gamma}$-invariant
open neighborhood $U\subseteq \mathbb{C}^k$ of 
$S^{2k-1}_{\mathbf{a}_{\boldsymbol{\nu}}(k)}$, 
such that $\tilde{Z}_t':=\tilde{Z}_t\cap U$
is a symplectic submanifold of real codimension $2$
of $(\mathbb{C}^k,\omega_{FS})$,
for every $t\in [0,1]$.
\end{cor}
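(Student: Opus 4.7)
The plan is to upgrade the pointwise statement of Lemma \ref{lem:symplectic locus ZV} to an open neighborhood statement, using the openness of submersivity and of the symplectic condition, together with uniformity in the compact parameter $t\in[0,1]$. The key input is that all the relevant estimates in Lemma \ref{lem:dtildeUpsilon} and in (\ref{eqn:dPhi12 V}), valid on the compact sphere, depend continuously on $(t,V)\in[0,1]\times \mathbb{C}^k$, so they extend uniformly to a neighborhood.

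More precisely, my first step would be to extend the estimate $\mathrm{d}_V\tilde{\Lambda}_t=\epsilon_1^*+\imath\,\epsilon_2^*+O\!\left(\sqrt{\nu_2/\nu_1}\right)$ from $S^{2k-1}_{\mathbf{a}_{\boldsymbol{\nu}}(k)}$ to some open neighborhood $U\subset \mathbb{C}^k$ of it. Since $(t,V)\mapsto \mathrm{d}_V\tilde{\Lambda}_t$ is jointly continuous, the sphere is compact, and $[0,1]$ is compact, a standard tubular argument provides such a $U$ on which the estimate holds uniformly in $t$. To guarantee $\overline{\gamma}$-invariance of $U$, I would choose it of the form $\{V\in \mathbb{C}^k:\bigl|\sum_j\mathbf{a}_{\boldsymbol{\nu}}(k)_j\,|v_j|^2-1\bigr|<\delta\}$, since by (\ref{eqn:T1action muk case}) the action $\overline{\gamma}$ multiplies each coordinate $v_j$ by a phase and thus preserves every function of the moduli $|v_j|$. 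For $\nu_1/\nu_2$ large enough, the extended estimate forces $\mathrm{d}_V\tilde{\Lambda}_t:\mathbb{C}^k\to \mathbb{C}$ to be $\mathbb{R}$-surjective at every $(t,V)\in [0,1]\times U$, so $\tilde{Z}_t\cap U$ is a smooth submanifold of real codimension $2$ by the implicit function theorem.

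For the symplectic property, note that the approximation above shows $\ker(\mathrm{d}_V\tilde{\Lambda}_t)$ is a small real perturbation of $\mathrm{span}_{\mathbb{R}}(\epsilon_3,\ldots,\epsilon_{2k})$, which is an $\omega_0$-symplectic subspace of $\mathbb{C}^k$. Non-degeneracy of the restriction of a $2$-form to a subspace is an open condition, so for $\nu_1/\nu_2\gg 0$ this kernel remains $\omega_0$-symplectic. Finally, combining Lemma \ref{lem:stima V norma} (after shrinking $U$ if needed so that $\|V\|\lesssim \sqrt{1/\nu_1}$ on $U$) with (\ref{eqn:approximate fs0}) gives $\omega_{FS}=\omega_0+O(\nu_2/\nu_1)$ on $U$, so non-degeneracy transfers from $\omega_0$ to $\omega_{FS}$ for sufficiently large $\nu_1/\nu_2$, yielding the symplectic property with respect to $\omega_{FS}$. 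The main obstacle is bookkeeping the uniformity of the error terms in both $t\in [0,1]$ and $V\in U$ simultaneously, but compactness of $[0,1]$ and of the sphere makes this routine, and no new estimate beyond those already established is needed.
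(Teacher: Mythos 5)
Your proposal is correct and follows essentially the same route as the paper, which deduces the corollary from the uniform derivative estimate of Lemma \ref{lem:dtildeUpsilon} and the approximation $\omega_{FS}=\omega_0+O(\nu_2/\nu_1)$, the passage from the sphere to an invariant neighborhood being exactly the routine openness argument you spell out. Your choice of a shell $\bigl\{\bigl|\sum_j\mathbf{a}_{\boldsymbol{\nu}}(k)_j\,|v_j|^2-1\bigr|<\delta\bigr\}$, which is $\overline{\gamma}$-invariant and keeps $\|V\|=O(1/\sqrt{\nu_1})$ so that all estimates persist uniformly in $t\in[0,1]$, is precisely the point left implicit in the text.
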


Let $\jmath_t:\tilde{Z}_t'\hookrightarrow \mathbb{C}^k$ be the inclusion, and set $\omega_t:=\jmath_t^*(\omega_{FS})$.
The restriction $\psi_t:=\Psi_{\boldsymbol{\nu}_\perp}\circ \jmath_t$
is the moment map for the action of $T^1_{\boldsymbol{\nu}^\perp}$ on 
$(\tilde{Z}_t',\omega_t)$, and $Z_t=\psi_t^{-1}(0)$. 
Hence $\overline{Z}_t:=\tilde{Z}_t'/\gamma$, with the reduced symplectic structure
$\overline{\omega}_t$, is the symplectic 
reduction of $(\tilde{Z}_t',\omega_t)$, and
as such it is a symplectic suborbifold of 
$(\overline{M}^T_{\boldsymbol{\nu}},\Omega_0)$.

\end{proof}

\section{$M^G_{\mathcal{O}}$}
\label{sctn:MGO case I}

We shall assume throughout that $\mathbf{0}\not\in \Phi(M)$ and that 
$\Phi$ is transverse to $\mathcal{C}(\mathcal{O})$, and focus on 
$\overline{M}^G_{\mathcal{O}}$ and its relation to
$\overline{M}^G_{\boldsymbol{\nu}}$.
We do not assume that $M$ be projective. 

Given that 
$\Phi$ is transverse to $\mathcal{C}(\mathcal{O})$, $\phi$
has rank $\ge 3$ along $M^G_{\mathcal{O}}$, meaning that for every 
$m\in M_{\mathcal{O}}^{G}$ the evaluation map 
$\mathrm{val}_m:\xi\in \mathfrak{g}\mapsto \xi_M(m)\in T_mM_{\mathcal{O}}^{G}$ has rank $\ge 3$ 
\cite{pao-tori}, \cite{gp}.
Let us give a direct proof for the reader's convenience.

\begin{prop}
Given that $\Phi$ is transverse to $\mathcal{C}(\mathcal{O})$,
for any $m\in M^G_{\mathcal{O}}$ the evaluation map
$\mathrm{val}_m:\mathfrak{g}\rightarrow T_mM$ is injective
on $\ker\big(\Phi(m)\big)$.

\end{prop}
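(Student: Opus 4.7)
My plan is to interpret $\ker\Phi(m)$ via the $\mathrm{Ad}$-invariant identification $\mathfrak{g}\cong\mathfrak{g}^\vee$, so that $\ker\Phi(m)$ reads as the hyperplane $\Phi(m)^\perp\subset\mathfrak{g}$. I shall fix $\beta\in\Phi(m)^\perp$ with $\beta_M(m)=0$ and deduce $\langle\alpha,\beta\rangle=0$ for every $\alpha\in\mathfrak{g}$, whence $\beta=0$ by non-degeneracy of the inner product.

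The first reduction comes from equivariance: applying (\ref{eqn:derivata Phi}) to $\beta_M(m)=0$ yields $[\beta,\Phi(m)]=\mathrm{d}_m\tilde\Phi\bigl(\beta_M(m)\bigr)=0$, so $\beta$ lies in the centralizer $\mathfrak{g}_{\Phi(m)}$. Next, I would exploit the transversality of $\Phi$ to $\mathcal{C}(\mathcal{O})$. Since $T_{\Phi(m)}\mathcal{C}(\mathcal{O})=\mathbb{R}\cdot\Phi(m)+[\mathfrak{g},\Phi(m)]$, transversality yields, for an arbitrary $\alpha\in\mathfrak{g}$, a decomposition
\begin{equation*}
\alpha=\mathrm{d}_m\Phi(X)+\lambda\,\Phi(m)+[\gamma,\Phi(m)],\qquad X\in T_mM,\ \lambda\in\mathbb{R},\ \gamma\in\mathfrak{g}.
\end{equation*}
Pairing with $\beta$ and handling the three summands separately gives three orthogonality checks: $\langle\mathrm{d}_m\Phi(X),\beta\rangle=\mathrm{d}_m\Phi^\beta(X)=\omega_m\bigl(\beta_M(m),X\bigr)=0$ by the defining property of the moment map (applied to the function $\Phi^\beta:=\langle\Phi,\beta\rangle$); $\lambda\,\langle\Phi(m),\beta\rangle=0$ by the standing hypothesis $\beta\in\Phi(m)^\perp$; and finally $\langle[\gamma,\Phi(m)],\beta\rangle=0$ by $\mathrm{Ad}$-invariance of $\langle\cdot,\cdot\rangle$ together with $[\Phi(m),\beta]=0$---this is the statement that, for an $\mathrm{Ad}$-invariant non-degenerate form, $\mathfrak{g}_{\Phi(m)}$ and $[\mathfrak{g},\Phi(m)]$ are mutually orthogonal. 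Summing, $\langle\alpha,\beta\rangle=0$ for every $\alpha$, hence $\beta=0$.

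I do not expect a real obstacle here; the whole argument amounts to a three-line orthogonality computation once one notices that the three summands dictated by transversality are each annihilated by $\beta$ for a \emph{different} reason---the moment map identity, the defining hypothesis, and the $\mathrm{Ad}$-invariant orthogonal splitting $\mathfrak{g}=\mathfrak{g}_{\Phi(m)}\oplus[\mathfrak{g},\Phi(m)]$. The conceptual content is simply that the potentially troublesome direction in $\mathfrak{g}_{\Phi(m)}\cap\Phi(m)^\perp$ is precisely the one covered by the $\mathrm{d}_m\Phi(T_mM)$ summand that transversality forces into play, and the moment map identity kills it there.
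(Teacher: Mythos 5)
Your argument is correct. It follows the same overall skeleton as the paper's proof (decompose an arbitrary $\alpha\in\mathfrak{g}^\vee$ using transversality, pair with the vector $\beta$ killed by $\mathrm{val}_m$, and conclude $\beta=0$ by non-degeneracy), but it handles the orbit directions of the cone differently. The paper first uses equivariance to observe that transversality to $\mathcal{C}(\mathcal{O})$ at $m$ is equivalent to transversality to the ray $\mathbb{R}_+\,\Phi(m)$ -- the point being that the directions $[\mathfrak{g},\Phi(m)]$ tangent to the orbit already lie in $\mathrm{d}_m\Phi(T_mM)$ -- so that only the two-term decomposition $\alpha=\mathrm{d}_m\Phi(v)+\lambda\,\Phi(m)$ is needed, and both terms are annihilated by $\beta$ exactly as in your first two checks. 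You instead keep the full three-term decomposition $\alpha=\mathrm{d}_m\Phi(X)+\lambda\,\Phi(m)+[\gamma,\Phi(m)]$ and dispose of the bracket term by noting that $\beta_M(m)=0$ forces $[\beta,\Phi(m)]=0$ (equivariance of $\Phi$; the identity you cite from (\ref{eqn:derivata Phi}) is stated for the lifted map on $X_k$, but the same equivariance identity holds for $\Phi$ on any $M$, so this is only a cosmetic looseness) and then invoking the $\mathrm{Ad}$-invariance of the pairing, i.e.\ the orthogonality of the centralizer $\mathfrak{g}_{\Phi(m)}$ and $[\mathfrak{g},\Phi(m)]$. So the two proofs use equivariance at different stages: the paper uses it to shrink the target of transversality to the ray, you use it to place $\beta$ in the centralizer and kill the extra summand by an algebraic orthogonality; the content is equivalent, and the constant $2$ you drop in $\mathrm{d}_m\Phi^\beta=2\,\omega(\beta_M,\cdot)$ is immaterial since $\beta_M(m)=0$.
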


\begin{proof}
If $m\in M^G_{\mathcal{O}}$, then
by equivariance $\Phi$ is transverse to $\mathcal{C}(\mathcal{O})$ at $m$ if and
only if it is transverse to the ray $\mathbb{R}_+\,\Phi(m)$ at $m$. 
Hence, $\mathrm{d}_m\Phi (T_mM)+\mathbb{•R}\,\Phi (m)=\mathfrak{g}^\vee$.
Suppose that $\xi\in \ker\big(\Phi(m)\big)$, and that $\xi_M(m)=0$.
Pick $\alpha\in \mathfrak{g}^\vee$. Then there exists 
$v\in T_mM$ and $\lambda\in \mathbb{R}$ such that
$\alpha=\mathrm{d}_m\Phi(v)+\lambda\,\Phi(m)$. 
Thus 
\begin{eqnarray*}
\alpha (\xi)&=&\mathrm{d}_m\Phi(v)(\xi)+\lambda\,\Phi(m)(\xi)\\
&=&\mathrm{d}_m\Phi(v)(\xi)=\mathrm{d}_m\Phi^\xi(v)=2\,\omega \big(\xi_M(m),v\big)=0.
\end{eqnarray*}
Thus $\alpha  (\xi)=0$ $\forall\,\alpha\,\in \,\mathfrak{g}^\vee$, whence $\xi=0$.
\end{proof}

For example, when 
$\phi=\phi_{\mathbf{L},\mathbf{K}}$ for a uniform representation
(Definition \ref{defn:uniform rep}), 
$\phi$ is bound to have constant rank $3$ along $M^G_{\mathcal{O}}$.

We shall accordingly distinguish two cases: 
1): $\phi$ has constant rank $3$ along $M^G_{\mathcal{O}}$;
2): $\phi$ is generically locally free along $M^G_{\mathcal{O}}$.
Before, however, it is in order to sum up some general facts.

If $m\in M^G_{\mathcal{O}}$, then by definition there exist
unique $\lambda_{\boldsymbol{\nu}} (m)>0$ and 
$h_m\,T\in G/T$ such that
\begin{equation}
\label{eqn:PhiGhm}
\Phi(m)=\imath\,\lambda_{\boldsymbol{\nu}} (m)\,h_m\,
\begin{pmatrix}
\nu_1&0\\
0&\nu_2
\end{pmatrix}\,h_m^{-1}.
\end{equation}
The applications $\lambda_{\boldsymbol{\nu}}$
and $m\in h_m\,T$ are $\mathcal{C}^\infty$.
Furthermore,
$h_{\mu_g(m)}\,T=
g\,h_m\,T$ and $\lambda_{\boldsymbol{\nu}}=\lambda_{\boldsymbol{\nu}}\circ \mu_g$
by the equivariance of $\Phi$.

Let us define 
\begin{equation}
\label{eqn:Tnum}
T^1_{\boldsymbol{\nu}_\perp ,m}:=
h_m\,T^1_{\boldsymbol{\nu}_\perp}\,h_m^{-1},\quad
T_m:=h_m\,T\,h_m^{-1}\qquad 
(m\in M^G_{\mathcal{O}}).
\end{equation}
Then $T^1_{\boldsymbol{\nu}_\perp ,m}\leqslant T_m\leqslant G$ 
are well-defined, and 
\begin{equation}
\label{eqn:Tnumg}
T^1_{\boldsymbol{\nu}_\perp,\mu_g(m)}=g\,T_{\boldsymbol{\nu}_\perp, m}\,g^{-1}\leqslant
T_{\mu_g(m)}=g\,T_m\,g^{-1}\qquad
(g\in G,\, m\in M^G_{\mathcal{O}}).
\end{equation}
In particular, if $g\in T_m$ then $T_{\mu_g(m)}=T_m$;
hence $T_{m'}=T_m$ for every $m'\in T_m\cdot m$; similarly for
$T_{\boldsymbol{\nu}_\perp, m}$.

\begin{defn}
\label{defn:moving action}
Let us define the action 
$\rho:S^1\times M^G_{\mathcal{O}}\rightarrow M^G_{\mathcal{O}}$
by setting
$$
\rho_{e^{\imath\vartheta}}(m):=\phi_{h_m\,\kappa_{\boldsymbol{\nu}}(
e^{\imath\vartheta})\,h_m^{-1}}(m),
$$
where $\kappa_{\boldsymbol{\nu}}:S^1\rightarrow T^1_{\boldsymbol{\nu}^\perp}$
is as in (\ref{eqn:T1nu parametrized}).
\end{defn}

Thus the $\rho$-orbit of $m\in M^G_{\mathcal{O}}$ is $T_m\cdot m$.
The following facts are more or less well-known, and are either discussed in \cite{gp}, or can
be deduced using arguments in \cite{gs-hq}, \cite{gp}:

\begin{lem}
\label{lem:isotropic leaves O}
$M^G_{\mathcal{O}}\subset M$ is a compact and connected $G$-invariant hypersurface,
and $\rho$ is locally free.
The isotropic leaves
of $M_{\mathcal{O}}^{G}$ are the 
$\rho$-orbits. Hence, the quotient $\overline{M}_{\mathcal{O}}^{G}$ is an orbifold of real dimension $2d-2$, with a reduced symplectic structure $\omega_{•\overline{M}_{\mathcal{O}}^{G}}$.
\end{lem}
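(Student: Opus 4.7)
The plan is to verify the clauses of the Lemma in turn, relying on transversality of $\Phi$ to $\mathcal{C}(\mathcal{O})$ and equivariance. Since $\nu_1 \neq \nu_2$, the orbit $\mathcal{O}$ is $G$-equivariantly diffeomorphic to $G/T \cong S^2$, and the cone $\mathcal{C}(\mathcal{O}) = \mathbb{R}_+\,\mathcal{O}$ is therefore a $G$-invariant smooth submanifold of $\mathfrak{g}^\vee$ of real codimension one; transversality then exhibits $M^G_{\mathcal{O}} = \Phi^{-1}(\mathcal{C}(\mathcal{O}))$ as a smooth hypersurface. $G$-invariance is immediate from equivariance of $\Phi$, compactness follows from closedness in the compact $M$, and connectedness is Theorem 1.2 of \cite{gp}. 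Smoothness of $\lambda_{\boldsymbol{\nu}}$ and of $m \mapsto h_mT \in G/T$ is obtained by composing $\Phi|_{M^G_{\mathcal{O}}}$ with the smooth identification $\mathcal{C}(\mathcal{O}) \cong \mathbb{R}_+ \times (G/T)$, and well-definedness of $\rho$ holds because $T$ normalises $T^1_{\boldsymbol{\nu}_\perp}$.

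The heart of the argument is the identification of the characteristic (null) foliation of the coisotropic hypersurface $M^G_{\mathcal{O}}$ with the orbit foliation of $\rho$. Fix $m \in M^G_{\mathcal{O}}$; after conjugating by $h_m^{-1}$ it suffices to treat the case $\Phi(m) = \imath\,\lambda\, D_{\boldsymbol{\nu}}$. From the Hamiltonian identity $\omega_m\big(\xi_M(m), v\big) = -\langle \mathrm{d}_m\Phi(v), \xi\rangle$ and the inclusion $T_{\Phi(m)}\mathcal{O} \subseteq \mathrm{d}_m\Phi(T_mM^G_{\mathcal{O}})$, an element $\xi \in \mathfrak{g}$ can produce a null vector $\xi_M(m) \in \big(T_mM^G_{\mathcal{O}}\big)^{\omega}$ only if $\xi$ centralises $\Phi(m)$, i.e.\ $\xi \in \mathfrak{t} = \mathbb{R}\,\imath D_{\boldsymbol{\nu}} \oplus \mathbb{R}\,\imath D_{\boldsymbol{\nu}_\perp}$. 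Transversality, unpacked as in \S 4.1.1 of \cite{gp}, ensures that $\mathrm{d}_m\Phi(T_mM^G_{\mathcal{O}})$ additionally contains the radial direction $\mathbb{R}\,\Phi(m)$, so that $\xi$ must moreover annihilate $\Phi(m)$ under the Killing-type pairing; inside the two-dimensional $\mathfrak{t}$ this pins the null-generating direction down to $\mathbb{R}\,\imath D_{\boldsymbol{\nu}_\perp}$. Pushed forward by $h_m$, this is exactly the infinitesimal generator of $\rho$ at $m$; non-vanishing of this generator translates into local freeness of $\rho$, which is the global content of the transversality criterion recalled in \S 2.2 of \cite{pao-tori}.

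Finally, once the null foliation is identified as the $\rho$-orbit foliation, $\overline{M}^G_{\mathcal{O}} = M^G_{\mathcal{O}}/\sim$ is the orbit space of a locally free $S^1$-action on a compact manifold of real dimension $2d - 1$, hence a compact orbifold of real dimension $2d - 2$; the closed $2$-form $\iota^*\omega$, vanishing on the null directions by the very definition of the characteristic foliation, is basic and descends to a reduced orbifold symplectic form $\omega_{\overline{M}^G_{\mathcal{O}}}$ in the standard Marsden--Weinstein fashion, adapted to coadjoint-orbit-cones as in \cite{gs-hq}. The main obstacle lies in the second paragraph: within the two-dimensional centraliser $\mathrm{Ad}_{h_m}(\mathfrak{t})$ one must distinguish the radial direction $\imath D_{\boldsymbol{\nu}}$ from the orthogonal direction $\imath D_{\boldsymbol{\nu}_\perp}$ and show that it is precisely the latter that generates the one-dimensional null direction in $T_mM^G_{\mathcal{O}}$, which is the delicate step where transversality of $\Phi$ to the ray through $\Phi(m)$ is genuinely used.
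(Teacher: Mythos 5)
Your overall strategy (identify the characteristic foliation of the hypersurface $\Phi^{-1}(\mathcal{C}(\mathcal{O}))$ with the $\rho$-orbits, then reduce) is the right one, and your first and last paragraphs are fine; but the central step contains a claim that is false in general, and it is exactly the step you yourself flag as delicate. You assert that transversality forces the radial direction $\mathbb{R}\,\Phi(m)$ to lie in $\mathrm{d}_m\Phi\big(T_mM^G_{\mathcal{O}}\big)$. Since $\mathrm{image}(\mathrm{d}_m\Phi)$ is the annihilator of the isotropy algebra $\mathfrak{g}_m$, the radial direction lies in it only if $\mathfrak{g}_m\subseteq\ker\Phi(m)$; this fails precisely in the paper's main examples (and in all of "Case 1"): for $M=\mathbb{P}(W_{\mathbf{L},\mathbf{K}})$ the center acts trivially, so $\imath\,I_2\in\mathfrak{g}_m$ while $\langle\Phi(m),\imath I_2\rangle=\lambda(\nu_1+\nu_2)\neq 0$, so $\mathbb{R}\,\Phi(m)$ is not even in the image of $\mathrm{d}_m\Phi$. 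Transversality only gives $\mathrm{d}_m\Phi(T_mM)+\mathbb{R}\,\Phi(m)=\mathfrak{g}^\vee$, not radial containment. Without it, your necessary condition ($\xi\perp \mathrm{d}_m\Phi(T_mM^G_{\mathcal{O}})$) only places $\xi$ in the two-dimensional centralizer $\mathrm{Ad}_{h_m}(\mathfrak{t})$, so the argument does not single out the $\rho$-direction. There are two further loose ends: you never show that the one-dimensional characteristic line is spanned by an induced vector field $\xi_M(m)$ at all (you only argue "if it is, then $\xi\in\dots$"), and local freeness of $\rho$ is attributed to the circle-bundle criterion of \cite{pao-tori}, which is not available in this section, where $M$ is only assumed K\"{a}hler.

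The repair is short and runs in the opposite direction. Set $\beta:=\mathrm{Ad}_{h_m}(\imath\,D_{\boldsymbol{\nu}_\perp})$, the infinitesimal generator of $\rho$ at $m$. Since $T_{\Phi(m)}\mathcal{C}(\mathcal{O})=\mathbb{R}\,\Phi(m)\oplus[\mathfrak{g},\Phi(m)]$ and $\beta$ is orthogonal to both summands, one has $2\,\omega_m\big(\beta_M(m),v\big)=\langle\mathrm{d}_m\Phi(v),\beta\rangle=0$ for every $v\in T_mM^G_{\mathcal{O}}$, i.e.\ $\beta_M(m)$ lies in the null space; moreover $\beta\in\ker\big(\Phi(m)\big)$, so $\beta_M(m)\neq 0$ by the Proposition proved immediately before the Lemma (injectivity of $\mathrm{val}_m$ on $\ker(\Phi(m))$, which is where transversality to the ray is genuinely used), giving local freeness of $\rho$; and since the characteristic space of a hypersurface is exactly one-dimensional, the null line equals $\mathbb{R}\,\beta_M(m)$, i.e.\ the tangent line to the $\rho$-orbit. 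Equivalently, one can use the general identity $\big(T_m\Phi^{-1}(S)\big)^{\perp_\omega}=\mathrm{val}_m\big(\mathrm{Ann}(T_{\Phi(m)}S)\big)$ for $\Phi$ transverse to $S$, with $\mathrm{Ann}(T_{\Phi(m)}\mathcal{C}(\mathcal{O}))=\mathbb{R}\,\beta$. (For calibration: the paper does not prove this Lemma but refers to \cite{gp} and \cite{gs-hq}; the argument above is the intended one.)
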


Let $p:M^G_{\mathcal{O}}\rightarrow \overline{M}^G_{\mathcal{O}}$
be the projection. Then $p(M^G_{\boldsymbol{\nu}})$ is diffeomorphic to
$\overline{M}^G_{\boldsymbol{\nu}}$ in (\ref{eqn:MTnu MGnu}); 
with abuse of notation, we shall write
$\overline{M}^G_{\boldsymbol{\nu}} = p(M^G_{\boldsymbol{\nu}})$.
We have seen that $\overline{M}^G_{\boldsymbol{\nu}}$ has an intrinsic
symplectic structure $\omega_{•\overline{M}^G_{\boldsymbol{\nu}}}$, and that
$(\overline{M}^G_{\boldsymbol{\nu}},\omega_{•\overline{M}^G_{\boldsymbol{\nu}}})$ 
is a symplectic suborbifold of $(\overline{M}^T_{\boldsymbol{\nu}},
\omega_{•\overline{M}^T_{\boldsymbol{\nu}}})$.
Arguing as in \S \ref{sctn:kahler structure}
one obtains the following.

\begin{lem}
\label{lem:sympl suborb MGnu}
Under the previous identification,
$(\overline{M}^G_{\boldsymbol{\nu}},\omega_{•\overline{M}^G_{\boldsymbol{\nu}}})$ 
is a symplectic suborbifold of $(\overline{M}^T_{\mathcal{O}},
\omega_{•\overline{M}^T_{\mathcal{O}}})$.
\end{lem}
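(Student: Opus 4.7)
The plan is to transcribe the construction of \S\ref{sctn:kahler structure}, with the pair $\big(M^G_{\mathcal{O}},\rho\big)$ replacing $\big(M^T_{\boldsymbol{\nu}},\psi_{\boldsymbol{\nu}_\perp}\big)$. The essential geometric input is that on the common locus $M^G_{\boldsymbol{\nu}}$ the null foliations of $M^G_{\mathcal{O}}$ and of $M^T_{\boldsymbol{\nu}}$ actually coincide, so that both reductions contain $\overline{M}^G_{\boldsymbol{\nu}}$ tautologically, and both restrict $\omega$ in exactly the same manner.

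First I would verify the coincidence of the two null foliations along $M^G_{\boldsymbol{\nu}}$. If $m\in M^G_{\boldsymbol{\nu}}$, then $\Phi(m)=\imath\,\lambda_{\boldsymbol{\nu}}(m)\,D_{\boldsymbol{\nu}}$, so in (\ref{eqn:PhiGhm}) we may take $h_m=I_2$; by (\ref{eqn:Tnum}) and the commutativity of $T$, it follows that $T^1_{\boldsymbol{\nu}_\perp,m}=T^1_{\boldsymbol{\nu}_\perp}$. Hence the $\rho$-orbit of $m$ coincides with its $\psi_{\boldsymbol{\nu}_\perp}$-orbit, and by Lemma \ref{lem:isotropic leaves O} this agrees with the null leaf at $m$ in both $M^G_{\mathcal{O}}$ and $M^T_{\boldsymbol{\nu}}$. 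Consequently, the inclusion $M^G_{\boldsymbol{\nu}}\hookrightarrow M^G_{\mathcal{O}}$ descends to a well-defined, continuous injection $\overline{\iota}_G:\overline{M}^G_{\boldsymbol{\nu}}\hookrightarrow\overline{M}^G_{\mathcal{O}}$ compatible with the embedding $\overline{\iota}_T$ into $\overline{M}^T_{\boldsymbol{\nu}}$.

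Next I would upgrade $\overline{\iota}_G$ to a codimension-two orbifold embedding. Since the intersection $M^G_{\mathcal{O}}\cap M^T_{\boldsymbol{\nu}}=M^G_{\boldsymbol{\nu}}$ is tangential (Theorem 1.2 of \cite{gp}), for $m\in M^G_{\boldsymbol{\nu}}$ we have $T_m M^G_{\mathcal{O}}=T_m M^T_{\boldsymbol{\nu}}$, and Lemma \ref{lem:Upsilon}(3) then gives the decomposition
\begin{equation*}
T_m M^G_{\mathcal{O}}=T_m M^G_{\boldsymbol{\nu}}\oplus\mathfrak{a}_M(m).
\end{equation*}
The infinitesimal generator of $\rho$ at $m$ lies in $\mathfrak{t}_M(m)\subset T_m M^G_{\boldsymbol{\nu}}$, so an $F_m$-invariant slice $\Sigma\subset M^G_{\mathcal{O}}$ at $m$ transverse to the $\rho$-orbit can be chosen to contain the complement $\mathfrak{a}_M(m)$. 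Then $\Sigma\cap M^G_{\boldsymbol{\nu}}$ is a slice at $m$ to the $T^1_{\boldsymbol{\nu}_\perp}$-orbit inside $M^G_{\boldsymbol{\nu}}$, and the resulting local uniformizers provide the required lifting of $\overline{\iota}_G$ to an embedding of local slices.

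Finally, the reduced forms $\omega_{\overline{M}^G_{\mathcal{O}}}$ and $\omega_{\overline{M}^G_{\boldsymbol{\nu}}}$ are both characterized by pulling back to $\omega$ along the respective inclusions. Consequently $\overline{\iota}_G^*\big(\omega_{\overline{M}^G_{\mathcal{O}}}\big)$ and $\omega_{\overline{M}^G_{\boldsymbol{\nu}}}$ agree after further pull-back to $M^G_{\boldsymbol{\nu}}$, where both equal $\omega|_{M^G_{\boldsymbol{\nu}}}$; surjectivity of the projection $M^G_{\boldsymbol{\nu}}\to\overline{M}^G_{\boldsymbol{\nu}}$ forces $\overline{\iota}_G^*\big(\omega_{\overline{M}^G_{\mathcal{O}}}\big)=\omega_{\overline{M}^G_{\boldsymbol{\nu}}}$, which is non-degenerate since the latter is already known to be a symplectic orbifold form. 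The main obstacle I anticipate is the careful choice of the slice $\Sigma$ compatible with $M^G_{\boldsymbol{\nu}}$ and its $F_m$-action; once this geometric compatibility is in place the remaining verifications are formal consequences of the definitions and of Lemma \ref{lem:symplectic direct sum}.
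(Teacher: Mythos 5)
Your proposal is correct and follows essentially the same route as the paper, whose proof is simply the instruction to argue as in \S\ref{sctn:kahler structure}: the coincidence of the null foliations along the $T$-invariant locus $M^G_{\boldsymbol{\nu}}$ (where $h_m\in T$, so $T^1_{\boldsymbol{\nu}_\perp,m}=T^1_{\boldsymbol{\nu}_\perp}$), the tangential intersection $T_mM^G_{\mathcal{O}}=T_mM^T_{\boldsymbol{\nu}}$ combined with Lemma \ref{lem:Upsilon}(3) and Lemma \ref{lem:symplectic direct sum}, and the fact that both reduced forms are induced from $\omega$, which is exactly what you spell out. You have merely made explicit (via the slice construction and the pull-back identity through the projections) the details the paper leaves implicit, and you correctly read $\overline{M}^T_{\mathcal{O}}$ in the statement as the intended $\overline{M}^G_{\mathcal{O}}$.
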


Furthermore, we have:

\begin{lem}
\label{lem:commuting actions}
For every $e^{\imath\vartheta}\in S^1$, 
$g\in G$, $m\in M_{\mathcal{O}}^{G}$ we have
$$
\rho_{e^{\imath\vartheta}}\circ \phi_g(m)
=\phi_g\circ \rho_{e^{\imath\vartheta}}(m).
$$
\end{lem}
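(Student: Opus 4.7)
The plan is to unwind the definition of $\rho$ and use the equivariance property $h_{\phi_g(m)}\,T = g\,h_m\,T$ together with the fact that $T^1_{\boldsymbol{\nu}_\perp} \leqslant T$ and $T$ is abelian. These are the only ingredients needed.

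First I would verify that $\rho$ is well defined, i.e.\ that the element $h_m\,\kappa_{\boldsymbol{\nu}}(e^{\imath\vartheta})\,h_m^{-1}\in G$ does not depend on the choice of representative $h_m$ of the coset $h_m T$. If $h_m$ is replaced by $h_m\,t$ with $t\in T$, then since $\kappa_{\boldsymbol{\nu}}(e^{\imath\vartheta})\in T^1_{\boldsymbol{\nu}_\perp}\leqslant T$ and $T$ is abelian, $t\,\kappa_{\boldsymbol{\nu}}(e^{\imath\vartheta})\,t^{-1}=\kappa_{\boldsymbol{\nu}}(e^{\imath\vartheta})$ and the expression is unchanged.

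Next, by (\ref{eqn:Tnumg}) (or directly from the equivariance of $\Phi$), we have $h_{\phi_g(m)}\,T=g\,h_m\,T$, so we may write $h_{\phi_g(m)}=g\,h_m\,t_0$ for some $t_0\in T$. Applying the same abelian-conjugation argument,
\[
h_{\phi_g(m)}\,\kappa_{\boldsymbol{\nu}}(e^{\imath\vartheta})\,h_{\phi_g(m)}^{-1}
=g\,h_m\,t_0\,\kappa_{\boldsymbol{\nu}}(e^{\imath\vartheta})\,t_0^{-1}\,h_m^{-1}\,g^{-1}
=g\,\bigl[h_m\,\kappa_{\boldsymbol{\nu}}(e^{\imath\vartheta})\,h_m^{-1}\bigr]\,g^{-1}.
\]
Denoting for brevity $\gamma_m:=h_m\,\kappa_{\boldsymbol{\nu}}(e^{\imath\vartheta})\,h_m^{-1}$, this says $\gamma_{\phi_g(m)}=g\,\gamma_m\,g^{-1}$.

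Finally, using that $\phi$ is a group action,
\[
\rho_{e^{\imath\vartheta}}\circ\phi_g(m)=\phi_{\gamma_{\phi_g(m)}}\bigl(\phi_g(m)\bigr)
=\phi_{g\,\gamma_m\,g^{-1}}\bigl(\phi_g(m)\bigr)
=\phi_g\bigl(\phi_{\gamma_m}(m)\bigr)
=\phi_g\circ\rho_{e^{\imath\vartheta}}(m),
\]
which is the claim. There is no real obstacle here: the whole statement is a direct consequence of the abelianness of $T$ and the transformation rule for $h_m$ under the $G$-action, which is essentially built into the definition.
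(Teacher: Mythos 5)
Your proof is correct and is exactly the intended argument: the paper itself gives no explicit proof of this lemma (it is attributed to facts "more or less well-known" from \cite{gp}, \cite{gs-hq}), and the natural verification is precisely the one you give, namely well-definedness of $\rho$ from the abelianness of $T$ together with the coset equivariance $h_{\phi_g(m)}\,T=g\,h_m\,T$ stated after (\ref{eqn:PhiGhm}), followed by the conjugation computation $\gamma_{\phi_g(m)}=g\,\gamma_m\,g^{-1}$ and the group-action property of $\phi$. No gaps.
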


\begin{cor}
$\phi$ (restricted to
$M_{\mathcal{O}}^{G}$) descends to a smooth action 
$$
\overline{\phi}:G\times \overline{M}_{\mathcal{O}}^{G}\rightarrow
\overline{M}_{\mathcal{O}}^{G}.
$$
Furthermore,
$\overline{\phi}$ is symplectic for
$\overline{\omega}$.

\end{cor}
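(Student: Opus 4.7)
The plan is to establish the descent of $\phi$ in three stages: well-definedness as a set-theoretic action on $\overline{M}^G_{\mathcal{O}}$, smoothness in the orbifold sense, and compatibility with the reduced form $\omega_{\overline{M}^G_{\mathcal{O}}}$. The backbone of the argument is the commutation relation $\rho_{e^{\imath\vartheta}}\circ \phi_g=\phi_g\circ \rho_{e^{\imath\vartheta}}$ from Lemma~\ref{lem:commuting actions}, combined with the identification of the $\rho$-orbits as the isotropic (null) leaves of $M^G_{\mathcal{O}}$ from Lemma~\ref{lem:isotropic leaves O}.

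First, I would show that for each $g\in G$ the map $\phi_g:M\to M$ restricts to a diffeomorphism of the $G$-invariant hypersurface $M^G_{\mathcal{O}}$, and that it carries $\rho$-orbits to $\rho$-orbits. Indeed, if $m'=\rho_{e^{\imath\vartheta}}(m)$, then by Lemma~\ref{lem:commuting actions} we have $\phi_g(m')=\rho_{e^{\imath\vartheta}}(\phi_g(m))$, so $p(\phi_g(m'))=p(\phi_g(m))$. Consequently the assignment $\overline{\phi}_g(\overline{m}):=p\circ \phi_g(m)$ (for any lift $m$ of $\overline{m}$) is well defined on $\overline{M}^G_{\mathcal{O}}$, and the group identities $\overline{\phi}_{gh}=\overline{\phi}_g\circ \overline{\phi}_h$ and $\overline{\phi}_e=\mathrm{id}$ follow directly.

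Next, smoothness in the orbifold sense follows because $p:M^G_{\mathcal{O}}\to \overline{M}^G_{\mathcal{O}}$ is an orbifold submersion with respect to the reduction construction of Lemma~\ref{lem:isotropic leaves O}; the smooth map $p\circ \phi_g\circ \iota_m$ (where $\iota_m$ is a local slice through $m$ transverse to the $\rho$-orbit) provides the required smooth local lifts of $\overline{\phi}_g$. Joint smoothness in $(g,\overline{m})$ follows similarly from the joint smoothness of $\phi$ on $G\times M^G_{\mathcal{O}}$.

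For the symplectic property, let $\iota:M^G_{\mathcal{O}}\hookrightarrow M$ denote the inclusion, so that $\omega_{\overline{M}^G_{\mathcal{O}}}$ is characterized by $\iota^*(\omega)=p^*\bigl(\omega_{\overline{M}^G_{\mathcal{O}}}\bigr)$. Using $p\circ \phi_g=\overline{\phi}_g\circ p$ on $M^G_{\mathcal{O}}$, together with $\phi_g^*(\omega)=\omega$, we compute
\begin{equation*}
p^*\bigl(\overline{\phi}_g^*\,\omega_{\overline{M}^G_{\mathcal{O}}}\bigr)
=\phi_g^*\bigl(\iota^*\omega\bigr)
=\iota^*\bigl(\phi_g^*\omega\bigr)
=\iota^*\omega
=p^*\bigl(\omega_{\overline{M}^G_{\mathcal{O}}}\bigr).
\end{equation*}
Since $p$ is an orbifold submersion, $p^*$ is injective on orbifold $2$-forms, so $\overline{\phi}_g^*\bigl(\omega_{\overline{M}^G_{\mathcal{O}}}\bigr)=\omega_{\overline{M}^G_{\mathcal{O}}}$, as required. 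The only point that requires care is the orbifold smoothness near singular strata of $\overline{M}^G_{\mathcal{O}}$, where the slice argument must be phrased in local uniformizing charts; this is the mildly delicate part, but it is standard once one notes that the $\rho$-action commutes with $\phi_g$ and that local slices can be chosen $G$-equivariantly on a neighborhood of any given orbit.
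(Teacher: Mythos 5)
Your argument is correct and is exactly the deduction the paper intends: the paper states this corollary without a separate proof, as an immediate consequence of Lemma \ref{lem:isotropic leaves O} (the null leaves are the $\rho$-orbits) and Lemma \ref{lem:commuting actions} (the commutation $\rho_{e^{\imath\vartheta}}\circ\phi_g=\phi_g\circ\rho_{e^{\imath\vartheta}}$), combined with the defining property $\iota^*(\omega)=p^*\big(\omega_{\overline{M}^G_{\mathcal{O}}}\big)$ of the reduced form and the injectivity of $p^*$. Your filling in of the well-definedness, orbifold smoothness via $\rho$-invariance of $p\circ\phi_g$, and the pullback computation is the standard route and contains no gaps.
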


In view of (\ref{eqn:PhiGhm}) and Definition
\ref{defn:moving action}, $\left.\Phi\right|_{M_{\mathcal{O}}^{G}•}$ is $\rho$-invariant, and therefore it descends to a smooth function $\overline{\Phi}:
\overline{M}_{\mathcal{O}}^{G}\rightarrow
\mathfrak{g}$.

\begin{cor}
\label{cor:hamiltonian action mu}
$\overline{\phi}$ is Hamiltonian for
$2\,\overline{\omega}$, with moment map
$\overline{\Phi}$.

\end{cor}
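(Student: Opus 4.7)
The plan is to deduce the Hamiltonian property of $\overline{\phi}$ from the Hamiltonian property of $\phi$ by a standard reduction argument, once the $\rho$-invariance of $\Phi|_{M^G_{\mathcal{O}}}$ is established so that $\overline{\Phi}$ is well defined.

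First I would verify that $\Phi$ is constant along the $\rho$-orbits. By (\ref{eqn:PhiGhm}), $\Phi(m)=\imath\,\lambda_{\boldsymbol{\nu}}(m)\,h_m\,D_{\boldsymbol{\nu}}\,h_m^{-1}$; the element $h_m\,\kappa_{\boldsymbol{\nu}}(e^{\imath\vartheta})\,h_m^{-1}$ defining $\rho_{e^{\imath\vartheta}}(m)$ lies in $T_m=h_m\,T\,h_m^{-1}$, and in particular commutes with $h_m\,D_{\boldsymbol{\nu}}\,h_m^{-1}$. By $G$-equivariance of $\Phi$ we get $\Phi\circ \rho_{e^{\imath\vartheta}}=\Phi$ on $M^G_{\mathcal{O}}$, so $\Phi|_{M^G_{\mathcal{O}}}$ descends to a smooth map $\overline{\Phi}:\overline{M}^G_{\mathcal{O}}\to\mathfrak{g}$. $G$-equivariance of $\overline{\Phi}$ is then immediate from that of $\Phi$ and from the fact (Lemma \ref{lem:commuting actions}) that the projection $p:M^G_{\mathcal{O}}\to\overline{M}^G_{\mathcal{O}}$ intertwines $\phi$ with $\overline{\phi}$.

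Next I would establish the moment map identity. Let $\iota:M^G_{\mathcal{O}}\hookrightarrow M$ be the inclusion; by definition of the reduced symplectic structure one has $\iota^{*}\omega=p^{*}\,\omega_{\overline{M}^G_{\mathcal{O}}}$. Fix $\xi\in\mathfrak{g}$. Since $M^G_{\mathcal{O}}$ is $G$-invariant, the fundamental vector field $\xi_M$ is tangent to $M^G_{\mathcal{O}}$; its restriction $\xi_{M^G_{\mathcal{O}}}$ projects under $dp$ to the fundamental field $\xi_{\overline{M}^G_{\mathcal{O}}}$ of $\overline{\phi}$, again by Lemma \ref{lem:commuting actions}. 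Given any $\overline{v}\in T_{\overline{m}}\overline{M}^G_{\mathcal{O}}$, choose a lift $v\in T_mM^G_{\mathcal{O}}$ with $dp(v)=\overline{v}$; then, using that $\phi$ is Hamiltonian on $(M,2\omega)$ with moment map $\Phi$,
\begin{align*}
d\overline{\Phi}^{\xi}(\overline{v})
&= d\bigl(\Phi^{\xi}\circ\iota\bigr)(v)
= 2\,\omega_m\bigl(\xi_M(m),\,d\iota(v)\bigr)
= 2\,(\iota^{*}\omega)_m\bigl(\xi_{M^G_{\mathcal{O}}}(m),v\bigr)\\
&= 2\,(p^{*}\omega_{\overline{M}^G_{\mathcal{O}}})_m\bigl(\xi_{M^G_{\mathcal{O}}}(m),v\bigr)
= 2\,\omega_{\overline{M}^G_{\mathcal{O}}}\bigl(\xi_{\overline{M}^G_{\mathcal{O}}}(\overline{m}),\overline{v}\bigr),
\end{align*}
which is the moment map equation for $\overline{\phi}$ with respect to $2\,\omega_{\overline{M}^G_{\mathcal{O}}}$.

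The only subtle point is of an orbifold-theoretic nature: $\overline{M}^G_{\mathcal{O}}$ is only an orbifold and $p$ is an orbifold submersion, so the above computation must be read on local orbifold charts (or, equivalently, on the smooth locus where $\rho$ is free, with the identities extending by continuity and then reinterpreted upstairs in a uniformizing chart around an orbifold point). This is routine but is the main thing to spell out carefully; all remaining ingredients—the $\rho$-invariance of $\Phi|_{M^G_{\mathcal{O}}}$, the equivariance of $p$, and the defining property $\iota^{*}\omega=p^{*}\omega_{\overline{M}^G_{\mathcal{O}}}$—have either been established above or are built into the construction of the reduction in Lemma \ref{lem:isotropic leaves O}.
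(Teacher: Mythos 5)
Your argument is correct and is exactly the argument the paper intends: the corollary is stated without proof as an immediate consequence of the $\rho$-invariance of $\Phi|_{M^G_{\mathcal{O}}}$ (which, as you note, follows from (\ref{eqn:PhiGhm}) and Definition \ref{defn:moving action}), the equivariance of the projection $p$ guaranteed by Lemma \ref{lem:commuting actions}, and the defining relation $\iota^*\omega=p^*\omega_{\overline{M}^G_{\mathcal{O}}}$ of the reduced structure from Lemma \ref{lem:isotropic leaves O}. You have simply spelled out the standard descent computation, including the routine passage to uniformizing charts, so there is nothing to add.
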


\subsection{Case 1)}

In this case, we shall establish in Theorem \ref{thm:Delta simplettomorfismo}
that $\overline{M}_{\mathcal{O}}^{G}$ factors symplectically as the product of
$\overline{M}_{\boldsymbol{\nu}}^T$ and a coadjoint orbit.

\begin{prop}
\label{prop:centro triviale}
If the rank of $\phi$ along $M_{\mathcal{O}}^{G}$ is generically $3$,
then it is $3$ everywhere on $M_{\mathcal{O}}^{G}$. 
Furthermore, the stabilizer $F_m\leqslant G$ of any 
$m\in M_{\mathcal{O}}^{G}$ is 1-dimensional subgroup $F_m\leqslant T_m$,
transverse to $T^1_{\boldsymbol{\nu}_\perp ,m}$ in $T_m$. 

\end{prop}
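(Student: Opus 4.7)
The plan is to establish the two assertions in order. First I will upgrade ``generically $3$'' to ``identically $3$'' using the preceding Proposition together with lower semicontinuity of rank; second, I will identify $F_m$ inside $T_m$ and deduce transversality from local freeness of the auxiliary action $\rho$.

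For the first assertion, recall that $\mathbf{0}\not\in\Phi(M)$, so $\Phi(m)\neq 0$ and $\ker\bigl(\Phi(m)\bigr)\subset\mathfrak{g}$ is $3$-dimensional. By the preceding Proposition, $\mathrm{val}_m$ is injective on $\ker\bigl(\Phi(m)\bigr)$, whence the rank of $\phi$ is everywhere at least $3$ on $M^G_{\mathcal{O}}$, equivalently $\dim F_m\leq 1$. Since the rank function is lower semicontinuous, the set on which $\mathrm{val}$ has rank $\geq 4$ is open; by hypothesis it is disjoint from a dense subset of $M^G_{\mathcal{O}}$, hence empty. Therefore the rank is identically $3$ and $\dim F_m=1$ for every $m\in M^G_{\mathcal{O}}$.

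For the second assertion, the $G$-equivariance of $\Phi$ gives $\mathrm{Ad}_g\bigl(\Phi(m)\bigr)=\Phi(m)$ for every $g\in F_m$. By (\ref{eqn:PhiGhm}) we have $\Phi(m)=\imath\,\lambda_{\boldsymbol{\nu}}(m)\,h_m\,D_{\boldsymbol{\nu}}\,h_m^{-1}$ with $\nu_1\neq\nu_2$; as the centralizer in $U(2)$ of a skew-Hermitian matrix with distinct eigenvalues coincides with the maximal torus through it, the centralizer of $\Phi(m)$ in $G$ is exactly $T_m=h_m\,T\,h_m^{-1}$. Hence $F_m\leqslant T_m$, and $F_m$ is a closed $1$-dimensional subgroup of the $2$-torus $T_m$.

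Finally, since $T_m$ is a $2$-torus and both $F_m$ and $T^1_{\boldsymbol{\nu}_\perp,m}$ are $1$-dimensional closed subgroups, transversality in $T_m$ amounts to the intersection $F_m\cap T^1_{\boldsymbol{\nu}_\perp,m}$ being discrete. By Definition \ref{defn:moving action}, an element $e^{\imath\vartheta}\in S^1$ lies in the $\rho$-isotropy of $m$ precisely when $h_m\,\kappa_{\boldsymbol{\nu}}(e^{\imath\vartheta})\,h_m^{-1}\in F_m$, that is, when the corresponding element of $T^1_{\boldsymbol{\nu}_\perp,m}$ lies in $F_m$. Local freeness of $\rho$ (Lemma \ref{lem:isotropic leaves O}) forces this isotropy to be discrete, which yields the desired transversality. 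The only point requiring a moment's care is the semicontinuity argument for the rank; once the lower bound from the preceding Proposition is in place, everything else is purely formal.
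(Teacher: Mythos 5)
Your proof is correct and takes essentially the same route as the paper's: identify $F_m$ inside the maximal torus through $\Phi(m)$ via the centralizer argument (using $\nu_1\neq\nu_2$), then deduce transversality to $T^1_{\boldsymbol{\nu}_\perp,m}$ from local freeness of the circle action — the paper simply argues first at $m\in M^G_{\boldsymbol{\nu}}$ and conjugates via (\ref{eqn:Tnumg}), while you work at a general point using $\rho$ and Lemma \ref{lem:isotropic leaves O}. Your explicit lower-semicontinuity argument upgrading "generically $3$" to "identically $3$" (combined with the rank bound from the preceding Proposition) is sound and supplies a detail the paper leaves implicit.
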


This will be the case, for instance, if $\mu$ is associated to a uniform representation,
in which case the connected component of $F_m$ is $Z(G)$.

\begin{proof}
[Proof of Proposition \ref{prop:centro triviale}]
Let us first assume that $m\in M_{\boldsymbol{\nu}}^{G}$. 
Then any $g\in F_m$ commutes with $\Phi (m)$, therefore
$g\in T$ since $\nu_1\neq \nu_2$. Thus $F_m\leqslant T$. Since the action of $T^1_{\boldsymbol{\nu}_\perp}$
is locally free at $m$, $F_m$ has to be transverse to $T^1_{\boldsymbol{\nu}_\perp}$
in $T$.
The general case follows from this and (\ref{eqn:Tnumg}).

\end{proof}

For $\overline{m}\in \overline{M}^G_{\mathcal{O}}$, let
$\overline{F}_{\overline{m}}$ denote the stabilizer of $\overline{m}$ for $\overline{\mu}$.

\begin{cor}
\label{cor:stabilizer overline m}
Under the hypothesis of Proposition \ref{prop:centro triviale},
$\overline{F}_{\overline{m}}=T_m$, for any $\overline{m}\in \overline{M}^G_{\mathcal{O}}$ and
$m\in p^{-1}(\overline{m})$. In particular, 
$\overline{F}_{\overline{m}}=T$, for any $\overline{m}\in \overline{M}^G_{\boldsymbol{\nu}}$.
\end{cor}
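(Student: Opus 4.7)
The plan is to unpack the definition of the stabilizer in the quotient and reduce it directly to facts already established. First, I would observe that by construction of $\overline{M}^G_{\mathcal{O}}$, a group element $g\in G$ fixes $\overline{m}=p(m)$ for the induced action $\overline{\phi}$ if and only if $\phi_g(m)$ and $m$ lie on the same isotropic leaf, i.e., in the same $\rho$-orbit. Since the $\rho$-orbit of $m$ coincides with $T_m\cdot m$ (as noted in the paragraph following Definition \ref{defn:moving action}), this translates into the condition that there exists $h\in T_m$ with $\phi_g(m)=\phi_h(m)$, equivalently $h^{-1}g\in F_m$. Hence
\[
\overline{F}_{\overline{m}}\;=\;T_m\cdot F_m\;\leqslant\;G.
\]

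The second step is to invoke Proposition \ref{prop:centro triviale}, which under the present hypothesis asserts $F_m\leqslant T_m$. Combined with the previous identity, this gives $\overline{F}_{\overline{m}}=T_m$, proving the first claim. I should also note the well-definedness of this statement as $m$ ranges over $p^{-1}(\overline{m})$: if $m'=\rho_{e^{\imath\vartheta}}(m)$ then $m'\in T_m\cdot m$, so $T_{m'}=T_m$ by the remark following (\ref{eqn:Tnumg}), and the identification is consistent.

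For the second assertion, I would restrict attention to $\overline{m}\in \overline{M}^G_{\boldsymbol{\nu}}$ and choose the representative $m\in M^G_{\boldsymbol{\nu}}\subseteq p^{-1}(\overline{m})$. At such an $m$, $\Phi(m)=\imath\,\lambda_{\boldsymbol{\nu}}(m)\,D_{\boldsymbol{\nu}}$ is already diagonal, so in (\ref{eqn:PhiGhm}) we may take $h_m\in T$; by (\ref{eqn:Tnum}) this yields $T_m=h_m\,T\,h_m^{-1}=T$, and the previous step gives $\overline{F}_{\overline{m}}=T$.

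There is essentially no obstacle here: once the quotient-stabilizer identity $\overline{F}_{\overline{m}}=T_m\cdot F_m$ is correctly derived from the commuting actions (Lemma \ref{lem:commuting actions}) and the identification of isotropic leaves with $\rho$-orbits (Lemma \ref{lem:isotropic leaves O}), the conclusion is an immediate consequence of Proposition \ref{prop:centro triviale}. The only point that requires a modicum of care is verifying that the equation $\overline{F}_{\overline{m}}=T_m\cdot F_m$ does not depend on the chosen representative $m\in p^{-1}(\overline{m})$, which follows from the conjugation rule $T_{\phi_g(m)}=g\,T_m\,g^{-1}$ in (\ref{eqn:Tnumg}) applied along the $\rho$-orbit.
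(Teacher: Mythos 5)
Your proof is correct and is essentially the intended argument (the paper states this corollary without a separate proof): one identifies $\overline{F}_{\overline{m}}$ as the product of the leaf direction with $F_m$ and then applies Proposition \ref{prop:centro triviale}, with the second assertion following by choosing a representative $m\in M^G_{\boldsymbol{\nu}}$ so that $h_m\in T$ and hence $T_m=T$. The only point worth noting is that, strictly, the $\rho$-orbit is $T^1_{\boldsymbol{\nu}_\perp,m}\cdot m$, and its coincidence with $T_m\cdot m$ (hence the identity $\overline{F}_{\overline{m}}=T^1_{\boldsymbol{\nu}_\perp,m}\cdot F_m=T_m$) uses the full strength of Proposition \ref{prop:centro triviale} --- that $F_m$ is one-dimensional and transverse to $T^1_{\boldsymbol{\nu}_\perp,m}$ in $T_m$, not merely $F_m\leqslant T_m$ --- which is harmless here since you work under exactly that hypothesis.
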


\begin{cor}
\label{cor:trivial central action}
Under the hypothesis of Proposition \ref{prop:centro triviale},
$\overline{\phi}$ is trivial on $Z(G)$. If, in addition, $\nu_1+\nu_2\neq 0$, 
then $\lambda_{\boldsymbol{\nu}}$
is constant.
\end{cor}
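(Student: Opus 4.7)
The plan for Corollary \ref{cor:trivial central action} splits naturally into the two assertions, and both follow quickly from what has already been established.

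For the first claim, the starting point is Corollary \ref{cor:stabilizer overline m}: the stabilizer of any $\overline{m}\in\overline{M}^G_{\mathcal{O}}$ under $\overline{\phi}$ equals $T_m=h_m\,T\,h_m^{-1}$, where $m\in p^{-1}(\overline{m})$. Since the center $Z(G)\leqslant U(2)$ consists of scalar matrices, it lies in every maximal torus; in particular $Z(G)\leqslant T$, and by centrality $Z(G)=h_m\,Z(G)\,h_m^{-1}\leqslant h_m\,T\,h_m^{-1}=T_m$. Hence $Z(G)\leqslant \overline{F}_{\overline{m}}$ for every $\overline{m}\in\overline{M}^G_{\mathcal{O}}$, which is exactly the statement that $\overline{\phi}$ is trivial on $Z(G)$.

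For the second claim, I would argue via the induced moment map. By Corollary \ref{cor:hamiltonian action mu}, $\overline{\phi}$ is Hamiltonian on $(\overline{M}^G_{\mathcal{O}},2\,\overline{\omega})$ with moment map $\overline{\Phi}$. Pick $\zeta=\imath\,I_2\in\mathfrak{z}(\mathfrak{g})$. Since $\overline{\phi}$ is trivial on $Z(G)$ by the previous step, the induced vector field $\zeta_{\overline{M}^G_{\mathcal{O}}}$ vanishes identically, so
\begin{equation*}
\mathrm{d}\langle\overline{\Phi},\zeta\rangle=2\,\iota_{\zeta_{\overline{M}^G_{\mathcal{O}}}}\overline{\omega}=0,
\end{equation*}
i.e., $\langle\overline{\Phi},\zeta\rangle$ is locally constant on $\overline{M}^G_{\mathcal{O}}$, and hence globally constant since $M^G_{\mathcal{O}}$ (and therefore its quotient) is connected by Theorem 1.2 of \cite{gp}.

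It remains to identify this constant with $\lambda_{\boldsymbol{\nu}}(\nu_1+\nu_2)$ up to a non-zero factor. Using the pairing $\langle\beta_1,\beta_2\rangle=\mathrm{trace}(\beta_1\,\overline{\beta}_2^t)$ one has $\langle\beta,\imath\,I_2\rangle=-\imath\,\mathrm{trace}(\beta)$; applying this to (\ref{eqn:PhiGhm}) gives
\begin{equation*}
\langle\Phi(m),\imath\,I_2\rangle=-\imath\,\mathrm{trace}\bigl(\imath\,\lambda_{\boldsymbol{\nu}}(m)\,h_m\,D_{\boldsymbol{\nu}}\,h_m^{-1}\bigr)=\lambda_{\boldsymbol{\nu}}(m)\,(\nu_1+\nu_2),
\end{equation*}
since trace is conjugation-invariant. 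As the left-hand side is $\overline{\phi}$-invariant and descends to the constant function $\langle\overline{\Phi},\zeta\rangle$, the product $\lambda_{\boldsymbol{\nu}}\,(\nu_1+\nu_2)$ is constant on $M^G_{\mathcal{O}}$; the hypothesis $\nu_1+\nu_2\neq 0$ then forces $\lambda_{\boldsymbol{\nu}}$ itself to be constant. There is no real obstacle here: the whole argument is a bookkeeping exercise combining Corollary \ref{cor:stabilizer overline m}, the Hamiltonian property from Corollary \ref{cor:hamiltonian action mu}, and the trace computation for (\ref{eqn:PhiGhm}).
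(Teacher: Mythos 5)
Your proposal is correct and follows essentially the same route as the paper: the first claim comes from the fact that the stabilizer $\overline{F}_{\overline{m}}=T_m$ is a maximal torus and hence contains $Z(G)$, and the second from observing that the Hamiltonian of the (trivial) $Z(G)$-action is $\langle\overline{\Phi},\imath\,I_2\rangle=\lambda_{\boldsymbol{\nu}}\,(\nu_1+\nu_2)$, which must be constant, so $\nu_1+\nu_2\neq 0$ forces $\lambda_{\boldsymbol{\nu}}$ constant. Your version merely spells out the trace computation and the connectedness step that the paper leaves implicit.
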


\begin{proof}
[Proof of Corollary \ref{cor:trivial central action}]
For any $\overline{m}\in \overline{M}^G_{\mathcal{O}}$,
$\overline{F}_{\overline{m}}$ is a maximal torus, hence contains
$Z(G)$. This proves the first statement.
As to the second, $\lambda_{\boldsymbol{\nu}}$ descends to a well-defined smooth
function on $\overline{M}^G_{\mathcal{O}}$, which we shall denote by the same symbol.
Furthermore, the Hamiltonian function for the (trivial) action of 
$Z(G)$ on $(\overline{M}^G_{\mathcal{O}},\,2\omega_{\overline{M}^G_{\mathcal{O}}•})$
is $\langle\overline{\Phi},\imath\,I_2\rangle =\lambda_{\boldsymbol{\nu}}\,
(\nu_1+\nu_2)$. Since $\nu_1+\nu_2\neq 0$, $\lambda_{\boldsymbol{\nu}}$
needs to be contant.
\end{proof}

By (\ref{eqn:PhiGhm}), if 
$\overline{m}\in 
\overline{M}^G_{\mathcal{O}}$ 
and $m\in p^{-1}(\overline{m})$ we have 
$$
\mu_{h_m^{-1}}(m)\in M^G_{\boldsymbol{\nu}},
\qquad 
\overline{\mu}_{•h_m^{-1}}(\overline{m})\in \overline{M}^G_{\boldsymbol{\nu}}.
$$
Thus we obtain well-defined and $\mathcal{C}^\infty$ orbifold maps 
\begin{equation}
\label{eqn:Delta map}
\Delta:\overline{m}\in \overline{M}^G_{\mathcal{O}}\mapsto
\left(\overline{\mu}_{•h_m^{-1}}(\overline{m}),
h_{\overline{m}}\,T\right)\in \overline{M}^G_{\boldsymbol{\nu}}\times (G/T),
\end{equation}
and
\begin{equation}
\label{eqn:Theta map}
\Theta:
\left(\overline{m},\,h\,T\right)\in \overline{M}^G_{\boldsymbol{\nu}}\times (G/T)
\mapsto \overline{\mu}_h(\overline{m})\in  \overline{M}^G_{\mathcal{O}}.
\end{equation}
Notice that $\Delta$ and $\Theta$ are well-defined by Corollary \ref{cor:stabilizer overline m},
and $\Theta=\Delta^{-1}$.
Hence $\Delta$ and $\Theta$ are diffeomorphism. 
Furthermore, $G$ acts on $\overline{M}^G_{\boldsymbol{\nu}}\times (G/T)$ 
by
$$
\alpha_g\left(\overline{m},h\,T\right):=\left(\overline{m},\,g h\,T\right).
$$
It is clear from (\ref{eqn:Theta map}) that $\Theta$ intertwines $\alpha$ and $\overline{\phi}$,
that is,
$\Theta\circ \alpha_g=\overline{\phi}_g\circ \Theta$ for all $g\in G$.

Let us identify $G/T$ with $\mathbb{P}^1$ by the equivariant diffeomorphism 
$$
\sigma:h\,T\in G/T\mapsto [h\,e_2]\in \mathbb{P}^1,
$$ where $(e_1,e_2)$ is the
standard basis of $\mathbb{C}^2$.
We have proved the following:

\begin{prop}
\label{prop:product structure}
Under the hypothesis of Proposition \ref{prop:centro triviale},
$\overline{M}^G_{\mathcal{O}}$ is equivariantly diffeomorphic to 
$\overline{M}^G_{\boldsymbol{\nu}}\times \mathbb{P}^1$.
\end{prop}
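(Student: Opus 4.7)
The plan is to show that the maps $\Theta$ in (\ref{eqn:Theta map}) and $\Delta$ in (\ref{eqn:Delta map}) are mutually inverse smooth $G$-equivariant orbifold maps, after identifying $G/T\cong\mathbb{P}^1$ via the map $\sigma$. The main work is to verify that both $\Theta$ and $\Delta$ descend unambiguously to the indicated quotients; the rest is a formal verification.

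First I would check that $\Delta$ is well defined. Given $\overline{m}\in\overline{M}^G_{\mathcal{O}}$, choose $m\in p^{-1}(\overline{m})$; by the decomposition (\ref{eqn:PhiGhm}), the pair $(\lambda_{\boldsymbol{\nu}}(m),h_m T)$ is determined by $\Phi(m)$, and by equivariance (\ref{eqn:Tnumg}) the coset $h_m T$ depends only on the $\rho$-orbit of $m$, hence only on $\overline{m}$. Moreover, $\Phi(\mu_{h_m^{-1}}(m))=\mathrm{Ad}_{h_m^{-1}}\Phi(m)=\imath\,\lambda_{\boldsymbol{\nu}}(m)\,D_{\boldsymbol{\nu}}$, so $\mu_{h_m^{-1}}(m)\in M^G_{\boldsymbol{\nu}}$. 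To see that $\overline{\mu}_{h_m^{-1}}(\overline{m})\in\overline{M}^G_{\boldsymbol{\nu}}$ is independent of the choice of representative $h_m\in h_m T$, suppose we replace $h_m$ by $h_m t$ for some $t\in T$: by Corollary \ref{cor:stabilizer overline m}, the stabilizer of $\overline{\mu}_{h_m^{-1}}(\overline{m})\in\overline{M}^G_{\boldsymbol{\nu}}$ under $\overline{\phi}$ equals the full torus $T$, so $\overline{\mu}_{(h_m t)^{-1}}(\overline{m})=\overline{\mu}_{t^{-1}}\overline{\mu}_{h_m^{-1}}(\overline{m})=\overline{\mu}_{h_m^{-1}}(\overline{m})$. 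An analogous argument, again using Corollary \ref{cor:stabilizer overline m}, shows that $\Theta$ is well defined on $\overline{M}^G_{\boldsymbol{\nu}}\times (G/T)$.

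Next I would verify that $\Delta\circ\Theta$ and $\Theta\circ\Delta$ are the identities. Given $(\overline{m},hT)\in\overline{M}^G_{\boldsymbol{\nu}}\times(G/T)$, pick $m\in p^{-1}(\overline{m})$ with $\Phi(m)=\imath\,\lambda\,D_{\boldsymbol{\nu}}$; then the representative of $\overline{\phi}_h(\overline{m})$ given by $\mu_h(m)$ satisfies $\Phi(\mu_h(m))=\imath\,\lambda\,h D_{\boldsymbol{\nu}}h^{-1}$, so $h_{\mu_h(m)}T=hT$ and $\mu_{h^{-1}}\mu_h(m)=m\in M^G_{\boldsymbol{\nu}}$; thus $\Delta\circ\Theta(\overline{m},hT)=(\overline{m},hT)$. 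The reverse composition follows by retracing the definitions. Smoothness of both maps as orbifold maps is clear from the construction (the assignment $m\mapsto h_m T$ is smooth because $\Phi$ is smooth and maps $M^G_{\mathcal{O}}$ submersively to $\mathcal{C}(\mathcal{O})$, while $G\to G/T$ is a smooth principal $T$-bundle).

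For equivariance, from (\ref{eqn:Theta map}) one has directly $\Theta(\overline{m},ghT)=\overline{\phi}_g\overline{\phi}_h(\overline{m})=\overline{\phi}_g\Theta(\overline{m},hT)$, i.e.\ $\Theta\circ\alpha_g=\overline{\phi}_g\circ\Theta$. Composing with the $G$-equivariant identification $\sigma:G/T\xrightarrow{\sim}\mathbb{P}^1$ yields the claimed equivariant diffeomorphism. The main obstacle is the well-definedness step for $\Delta$: it depends essentially on Corollary \ref{cor:stabilizer overline m}, which in turn rests on the constant-rank assumption of Proposition \ref{prop:centro triviale}; without the hypothesis that $\phi$ has rank exactly $3$ along $M^G_{\mathcal{O}}$ one cannot conclude that the full torus $T$ stabilizes points of $\overline{M}^G_{\boldsymbol{\nu}}$, and the argument collapses.
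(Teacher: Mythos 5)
Your argument is correct and follows essentially the same route as the paper's own proof: construct $\Delta$ and $\Theta$, check well-definedness via Corollary \ref{cor:stabilizer overline m} (which is where the constant-rank hypothesis of Proposition \ref{prop:centro triviale} enters), verify they are mutually inverse smooth orbifold maps, observe that $\Theta$ intertwines $\alpha$ with $\overline{\phi}$, and identify $G/T\cong\mathbb{P}^1$ by $\sigma$. One small correction: smoothness of $m\mapsto h_m\,T$ should not be attributed to $\left.\Phi\right|_{M^G_{\mathcal{O}}}$ being a submersion onto $\mathcal{C}(\mathcal{O})$ (in the rank-$3$ case it is not, since $\mathrm{d}_m\Phi$ has $3$-dimensional image meeting the $3$-dimensional tangent space of the cone in a $2$-dimensional subspace), but simply to the fact that it is the composition of the smooth map $\left.\Phi\right|_{M^G_{\mathcal{O}}}$, whose image avoids the vertex, with the smooth retraction $\mathcal{C}(\mathcal{O})\setminus\{\mathbf{0}\}\rightarrow\mathcal{O}\cong G/T$.
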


By the K\H{u}nneth formula, we obtain:

\begin{cor}
Under the hypothesis of Proposition \ref{prop:centro triviale}, there is a ring isomorphism
$H^*(\overline{M}^G_{\mathcal{O}})\cong H^*(\overline{M}^G_{\boldsymbol{\nu}})
\otimes H^*(\mathbb{P}^1)$.
\end{cor}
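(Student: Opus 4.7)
The plan is to derive the cohomology ring isomorphism directly from the equivariant diffeomorphism established in Proposition \ref{prop:product structure}, together with the classical Künneth formula. Since Proposition \ref{prop:product structure} gives an (equivariant) diffeomorphism
$\Theta:\overline{M}^G_{\boldsymbol{\nu}}\times \mathbb{P}^1\;\longrightarrow\;\overline{M}^G_{\mathcal{O}}$
of orbifolds, it induces an isomorphism
$\Theta^*:H^*(\overline{M}^G_{\mathcal{O}})\xrightarrow{\;\cong\;}H^*(\overline{M}^G_{\boldsymbol{\nu}}\times \mathbb{P}^1)$
of graded rings, where we interpret $H^*$ as the singular cohomology of the underlying topological spaces (or equivalently, working over $\mathbb{Q}$ or $\mathbb{R}$, as the orbifold de Rham cohomology of the presentations).

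Next I would apply Künneth. The crucial input is that $H^*(\mathbb{P}^1;\mathbb{Z})\cong \mathbb{Z}[t]/(t^2)$ is a finitely generated free abelian group in each degree; hence the Künneth theorem (in its ring-theoretic form, for topological spaces of finite type) yields a natural isomorphism of graded rings
\[
H^*(\overline{M}^G_{\boldsymbol{\nu}}\times \mathbb{P}^1)\;\cong\;H^*(\overline{M}^G_{\boldsymbol{\nu}})\otimes H^*(\mathbb{P}^1).
\]
Composing with $\Theta^*$ gives the claimed ring isomorphism.

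The only subtlety, and the one point worth checking rather than brushing aside, is that $\overline{M}^G_{\mathcal{O}}$ and $\overline{M}^G_{\boldsymbol{\nu}}$ are a priori orbifolds, not smooth manifolds, so one must specify which cohomology theory is intended and verify that $\Theta$ induces an isomorphism in it. Two natural options resolve this without difficulty: (i) take $H^*$ to mean the singular cohomology of the underlying coarse moduli spaces, in which case $\Theta$ is a genuine homeomorphism and Künneth applies verbatim; (ii) take $H^*$ to mean orbifold de Rham (equivalently rational) cohomology, in which case the smoothness of the orbifold diffeomorphism $\Theta$ gives an isomorphism at the level of invariant forms, and Künneth for products of orbifolds (or equivalently for the finite-type topological spaces involved, since $H^*(\mathbb{P}^1)$ is free) yields the result. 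Either way the conclusion follows, so no serious obstacle arises beyond this bookkeeping.
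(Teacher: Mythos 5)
Your proposal is correct and follows exactly the paper's argument: the paper likewise deduces the corollary by applying the K\"{u}nneth formula to the equivariant diffeomorphism $\overline{M}^G_{\mathcal{O}}\cong \overline{M}^G_{\boldsymbol{\nu}}\times \mathbb{P}^1$ of Proposition \ref{prop:product structure}. Your additional remark on which cohomology theory is used (singular cohomology of the underlying spaces, where freeness of $H^*(\mathbb{P}^1)$ makes K\"{u}nneth apply verbatim) is sound bookkeeping that the paper leaves implicit.
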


Let us set $\omega_{G/T}:=\sigma^*(\omega_{FS})$, where
$\omega_{FS}$ is the Fubini-Study form.  
On $\overline{M}^G_{\boldsymbol{\nu}}\times (G/T)$ consider the product symplectic
structure $\omega_{•\overline{M}^G_{\boldsymbol{\nu}}}\oplus \omega_{G/T}$.
Let us assume that $\nu_1+\nu_2\neq 0$,
Then $\lambda_{\boldsymbol{\nu}}>0$ is a constant (Corollary
\ref{cor:trivial central action}), and we may consider the 
symplectic form
$$
\omega'_{G/T}:=2\,(\nu_1+\nu_2)\,\lambda_{\boldsymbol{\nu}}\,\omega_{G/T}.
$$
We can strengthen Proposition \ref{prop:product structure} in the following manner:

\begin{thm}
\label{thm:Delta simplettomorfismo}
Under the assumptions on Proposition
\ref{prop:centro triviale}, assume in addition that
$\nu_1+\nu_2\neq 0$. Then
$$\Delta:(\overline{M}^G_{\mathcal{O}},\omega_{\overline{M}^G_{\mathcal{O}}})\rightarrow
\big(\overline{M}^G_{\boldsymbol{\nu}}\times (G/T),
\omega_{•\overline{M}^G_{\boldsymbol{\nu}}}\oplus \omega'_{G/T}\big)$$
is a symplectomorphism.
\end{thm}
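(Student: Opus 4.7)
The plan is to verify the identity $\Theta^*\omega_{\overline{M}^G_{\mathcal{O}}} = \omega_{\overline{M}^G_{\boldsymbol{\nu}}} \oplus \omega'_{G/T}$ pointwise, since Proposition \ref{prop:product structure} already supplies that $\Theta = \Delta^{-1}$ is a diffeomorphism. Because $\Theta$ intertwines $\alpha$ with $\overline{\phi}$, and both symplectic forms are $G$-invariant ($\omega_{\overline{M}^G_{\mathcal{O}}}$ by Corollary \ref{cor:hamiltonian action mu}, and the product form by construction, since $\alpha$ acts trivially on the first factor and by pullback of the $U(2)$-invariant Fubini--Study form on the second), and since $G$ acts transitively on the $G/T$-factor, it suffices to check the identity at points of the form $(\overline{m}_0, eT)$ with $\overline{m}_0 \in \overline{M}^G_{\boldsymbol{\nu}}$.

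At such a base point, the tangent space decomposes as $T_{\overline{m}_0}\overline{M}^G_{\boldsymbol{\nu}} \oplus T_{eT}(G/T)$, and one identifies the second summand with $\mathfrak{a}$ via $\beta \mapsto (d/dt)|_{t=0}\,\exp(t\beta)T$. From the definition $\Theta(\overline{m}, hT) = \overline{\phi}_h(\overline{m})$, the differential $d\Theta_{(\overline{m}_0, eT)}$ is the identity on the first summand (landing in $T_{\overline{m}_0}\overline{M}^G_{\boldsymbol{\nu}} \subseteq T_{\overline{m}_0}\overline{M}^G_{\mathcal{O}}$) and sends $\beta \in \mathfrak{a}$ to $\beta_{\overline{M}^G_{\mathcal{O}}}(\overline{m}_0)$. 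Consequently $\Theta^*\omega_{\overline{M}^G_{\mathcal{O}}}$ splits into three contributions, which I will treat in turn.

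The block along $T_{\overline{m}_0}\overline{M}^G_{\boldsymbol{\nu}}$ reproduces $\omega_{\overline{M}^G_{\boldsymbol{\nu}}}$ by Lemma \ref{lem:sympl suborb MGnu}. The cross terms vanish: by the moment map identity, $\omega_{\overline{M}^G_{\mathcal{O}}}\bigl(\beta_{\overline{M}^G_{\mathcal{O}}}(\overline{m}_0), v\bigr)$ is a nonzero scalar multiple of $d_{\overline{m}_0}\overline{\Phi}^\beta(v)$ for $v \in T_{\overline{m}_0}\overline{M}^G_{\boldsymbol{\nu}}$; but $\overline{\Phi}|_{\overline{M}^G_{\boldsymbol{\nu}}} = \imath\,\lambda_{\boldsymbol{\nu}} D_{\boldsymbol{\nu}}$ with $\lambda_{\boldsymbol{\nu}}$ constant by Corollary \ref{cor:trivial central action} (this is where the hypothesis $\nu_1 + \nu_2 \neq 0$ enters), hence $\overline{\Phi}$ is constant on $\overline{M}^G_{\boldsymbol{\nu}}$ and $d_{\overline{m}_0}\overline{\Phi} = 0$ on $T_{\overline{m}_0}\overline{M}^G_{\boldsymbol{\nu}}$, matching the trivial cross terms of the product form.

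For the remaining $\mathfrak{a}$-block, an equivariance argument modeled on (\ref{eqn:xieta Phi-derivative1}) expresses $\omega_{\overline{M}^G_{\mathcal{O}}}\bigl(\eta_{\overline{M}^G_{\mathcal{O}}}(\overline{m}_0), \xi_{\overline{M}^G_{\mathcal{O}}}(\overline{m}_0)\bigr)$ as an explicit multiple of $\lambda_{\boldsymbol{\nu}}(\nu_1 - \nu_2)$, and similarly for the remaining pairings in a basis of $\mathfrak{a}$. On the other side, pulling $\omega_{FS}$ back via $\sigma: hT \mapsto [he_2]$ and computing in the affine chart around $[e_2] \in \mathbb{P}^1$ yields $\omega_{G/T}$ at $eT$ on the vectors induced by $\eta, \xi$; the constant $2(\nu_1+\nu_2)\lambda_{\boldsymbol{\nu}}$ in the definition of $\omega'_{G/T}$ is exactly what is needed for the two quantities to agree. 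The main obstacle will be precisely this final matching: it requires careful bookkeeping of the normalization conventions (Hamiltonian action with respect to $\omega$ versus $2\omega$, the numerical value of the Fubini--Study form at $[e_2]$, and the effect of $d\sigma$ on the infinitesimal generators in $\mathfrak{a}$) to confirm that the numerical factor in $\omega'_{G/T}$ is exactly the one prescribed.
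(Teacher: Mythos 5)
Your proposal follows essentially the same route as the paper's proof: reduce by $G$-equivariance (via $\Delta\circ\overline{\phi}_g=\alpha_g\circ\Delta$) to points of $\overline{M}^G_{\boldsymbol{\nu}}$, observe that under the identification $T\overline{M}^G_{\boldsymbol{\nu}}\oplus\mathfrak{a}$ the differential of $\Delta$ is the identity, obtain the first block and the vanishing of the cross terms from Lemma \ref{lem:sympl suborb MGnu} together with the moment-map argument of Lemma \ref{lem:symplectic direct sum}, and finish by matching the $\mathfrak{a}$-block constants, comparing $\langle[\eta,\overline{\Phi}(\overline{m})],\xi\rangle$ with $\sigma^*(\omega_{FS})$ at $I_2\,T$ — exactly the computation with which the paper concludes. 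The one step you leave open (the numerical bookkeeping) is precisely that final displayed computation; note only that your remark that the $\mathfrak{a}$-block is a multiple of $\lambda_{\boldsymbol{\nu}}(\nu_1-\nu_2)$, which is what the bracket $[\eta,\imath\,\lambda_{\boldsymbol{\nu}}D_{\boldsymbol{\nu}}]$ in the spirit of (\ref{eqn:xieta Phi-derivative1}) actually produces, sits in tension with the factor $\nu_1+\nu_2$ appearing in $\omega'_{G/T}$, a discrepancy that is present verbatim in the paper's last line as well and should be resolved by redoing that one-line computation carefully.
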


\begin{rem} 
The assumption that $\nu_1+\nu_2\neq 0$ is guaranteed in the case 
of $\mathbb{P}(W_{\mathbf{L},\mathbf{K}}))$, 
by Corollary \ref{cor:non zero trace moment}.
\end{rem}

\begin{proof}
[Proof of Theorem \ref{thm:Delta simplettomorfismo}]
$\overline{M}^G_{\mathcal{O}}$ is the $\overline{\mu}$-saturation of $\overline{M}^G_{\boldsymbol{\nu}}$; furthermore, $\overline{M}^G_{\boldsymbol{\nu}}$
maps diffeomorphically under $\Delta$ onto
$\overline{M}^G_{\boldsymbol{\nu}}\times \{I_2\,T\}$. Since 
$\overline{\phi}$ is symplectic on $(\overline{M}^G_{\mathcal{O}},\omega_{\overline{M}^G_{\mathcal{O}}})$, 
$\alpha$ is symplectic on $\big(\overline{M}^G_{\boldsymbol{\nu}}\times (G/T),
\omega_{•\overline{M}^G_{\boldsymbol{\nu}}}\oplus \omega_{G/T}\big)$,
and $\Delta$ intertwines the two symplectic actions,
it suffices to prove the statement along $\overline{M}^G_{\boldsymbol{\nu}}$.
Explicitly, suppose $\overline{m}_0\in \overline{M}^G_{\boldsymbol{\nu}}$ and
$\overline{m}=\overline{\mu}_g(\overline{m}_0)$ for some $g\in G$;
then $\Delta\circ \overline{\phi}_{g}=\alpha_{g}\circ \Delta$ implies 
$\mathrm{d}_{\overline{m}}\Delta\circ \mathrm{d}_{\overline{m}_0}\overline{\phi}_{g}
=\mathrm{d}_{\Delta(\overline{m}_0)}\alpha_{g}\circ \mathrm{d}_{\overline{m}_0}\Delta$.
Hence if $\mathrm{d}_{\overline{m}}\Delta$ is a linear symplectomorphism for every
$\overline{m}\in \overline{M}^G_{\boldsymbol{\nu}}$, then it is so also for every 
$\overline{m}\in \overline{M}^G_{\mathcal{O}}$.

For every $\upsilon\in \mathfrak{g}$, 
let $\upsilon_{\overline{M}^G_{\mathcal{O}}•}$
denote the corresponding orbifold vector field on $\overline{M}^G_{\mathcal{O}}$ 
(see [LT]). 
If $\xi,\,\eta,\,\mathfrak{a}$ are as in (\ref{eqn:defn of xieta}),
Lemma \ref{lem:symplectic direct sum} and Corollary \ref{cor:aM symplectic} imply
that there is a symplectic direct sum of orbifold (uniformized) tangent bundles
$$\jmath^*(T\overline{M}^G_{\mathcal{O}})=T\overline{M}^G_{\boldsymbol{\nu}}
\oplus \jmath^*(\mathfrak{a}_{\overline{M}^G_{\mathcal{O}}•}),$$ where $\jmath:\overline{M}^G_{\boldsymbol{\nu}}\hookrightarrow \overline{M}^G_{\mathcal{O}}$ is the inclusion.

Let us fix $\overline{m}\in \overline{M}^G_{\boldsymbol{\nu}}$, so
that $\Delta(\overline{m})=(\overline{m},I_2\,T)$. We have 
$$
T_{•(\overline{m},I_2\,T)}\big(\overline{M}^G_{\boldsymbol{\nu}}\times (G/T)\big)
\cong T_{•\overline{m}}(\overline{M}^G_{\boldsymbol{\nu}})\oplus
T_{I_2\,T}(G/T)\cong T_{•\overline{m}}(\overline{M}^G_{\boldsymbol{\nu}})\times
\mathfrak{a};
$$
in both cases, the two summands are symplectically orthogonal.
Furthermore, it is apparent from our definition of $\Delta$ that, in terms of
the previous isomorphisms 
$T_{\overline{m}}\overline{M}^G_{\mathcal{O}}\cong 
T_{•\overline{m}}(\overline{M}^G_{\boldsymbol{\nu}})\times
\mathfrak{a}\cong T_{•(\overline{m},I_2\,T)}\big(\overline{M}^G_{\boldsymbol{\nu}}\times (G/T)\big)$, 
$\mathrm{d}_{\overline{m}}\Delta$ corresponds to the identity map
$T_{•\overline{m}}(\overline{M}^G_{\boldsymbol{\nu}})\times
\mathfrak{a}\rightarrow T_{•\overline{m}}(\overline{M}^G_{\boldsymbol{\nu}})\times
\mathfrak{a}$. Therefore, we are reduced to comparing the symplectic structures 
on $\mathfrak{a}$ coming from $\omega_{G/T}$ and from 
$\overline{M}^G_{\mathcal{O}}$.

On the one hand, with $\omega_0$ the standard symplectic structure 
on $\mathbb{C}^2$,
$$
\omega_{G/T,I_2\,T}(\xi,\eta)=\omega_0(\xi\,e_1,\eta_1)=\frac{\imath}{•2}\,
\left(\sum_{j=1}^2\mathrm{d}z_j\wedge\mathrm{d}\overline{z}_j\right)\left(
\begin{pmatrix}
\imath\\
0
\end{pmatrix},\begin{pmatrix}
1\\0
\end{pmatrix}\right)=-1.
$$
On the other, 
\begin{eqnarray*}
\omega_{\overline{M}^G_{\mathcal{O}},\overline{m}}\big(\xi_{\overline{M}^G_{\mathcal{O}}}(\overline{m}), 
\eta_{\overline{M}^G_{\mathcal{O}}}(\overline{m})\big)
&=&\mathrm{d}_{\overline{m}}\overline{\Phi}^\xi\left(
\eta_{\overline{M}^G_{\mathcal{O}}}(\overline{m})\right)\\
&=&\left\langle \left[\eta,\overline{\Phi}(\overline{m})\right],
  \xi    \right\rangle = -2\,(\nu_1+\nu_2)\,\lambda_{\boldsymbol{\nu}}.
\end{eqnarray*}

\end{proof}

\subsection{Case 2)}

Let us relax the assumption that the rank of $\mu$ is everywhere $3$ on
$M^G_{\boldsymbol{\nu}}$. 
On $\overline{M}^G_{\boldsymbol{\nu}}
\times \overline{•B(\mathbf{0},\pi/2)}$ let us define a relation $\sim$ as follows:
$(\overline{m}_1,z_1)\sim (\overline{m}_2,z_2)$ if and only if either
$(\overline{m}_1,z_1)= (\overline{m}_2,z_2)$, or else 
$z_j=(\pi/2)\,e^{\imath\,\theta_j}$, 
$j=1,2$, and $\overline{m_2}
=\overline{\mu}_{D(\theta_1,\theta_2)}(\overline{m}_1)$,
where 
$$
D(\theta_1,\theta_2):=\begin{pmatrix}
e^{\imath\,(\theta_2-\theta_1)}&0\\
0&e^{\imath\,(\theta_1-\theta_2)}
\end{pmatrix}.
$$
Let $\hat{M}^G_{\mathcal{O}}:=\overline{M}^G_{\boldsymbol{\nu}}
\times \overline{•B(\mathbf{0},\delta)}/\sim$ 
denote the corresponding identification space.
If the rank of $\mu$ along $M^G_{\mathcal{O}}$ is constant and equal to three, as in
Proposition \ref{prop:product structure}, then $T$ acts trivially on $\overline{M}^G_{\boldsymbol{\nu}}$; hence there is a homeomorphism
$\hat{M}^G_{\mathcal{O}}=\overline{M}^G_{\boldsymbol{\nu}}\times S^2$.

\begin{thm}
\label{thm:MGO general case}
Suppose that $0\not\in \Phi(M)$, and that $\Phi$ is transverse to
$\mathcal{C}(\mathcal{O})$. Then:
\begin{enumerate}
\item $M^G_{\mathcal{O}}$ is homeomorphic to 
$\hat{M}^G_{\mathcal{O}}$.
\item For every $q$ we have an isomorphism
$$
H^{q}\big(\overline{M}^G_{\mathcal{O}}\big)\cong H^{q-2}\big(\overline{M}^G_{\boldsymbol{\nu}}\big)
\oplus H^{q}\big(\overline{M}^G_{\boldsymbol{\nu}}\big).
$$
\end{enumerate}

\end{thm}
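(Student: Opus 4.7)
For part~(1), the plan is to exhibit an explicit continuous surjection
\[
\Theta:\overline{M}^G_{\boldsymbol{\nu}}\times \overline{B(\mathbf{0},\pi/2)}\longrightarrow \overline{M}^G_{\mathcal{O}},
\quad (\overline{m},z)\longmapsto \overline{\phi}_{E(z)}(\overline{m}),
\]
where $E(z):=\exp\begin{pmatrix}0 & z\\ -\bar z & 0\end{pmatrix}=\cos|z|\,I_2+\tfrac{\sin|z|}{|z|}\begin{pmatrix}0 & z\\ -\bar z & 0\end{pmatrix}$. The essential geometric fact is that $z\mapsto E(z)T$ realizes $G/T\cong \mathbb{P}^1$ as the one-point compactification of the open disk: $E$ is injective on $B(\mathbf{0},\pi/2)$ and collapses the boundary circle $|z|=\pi/2$ to the $T$-fixed coset $[e_1]$. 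Surjectivity of $\Theta$ is then immediate, since $\overline{M}^G_{\mathcal{O}}$ is the $\overline{\phi}$-saturation of the $T$-invariant locus $\overline{M}^G_{\boldsymbol{\nu}}$, and every coset $hT\in G/T$ admits a representative $E(z)$. Two pairs $(\overline{m}_j,z_j)$ have the same image under $\Theta$ precisely when $E(z_2)^{-1}E(z_1)\in T$ and $\overline{m}_2=\overline{\phi}_{E(z_2)^{-1}E(z_1)}(\overline{m}_1)$; the first condition forces either $z_1=z_2$ or $|z_1|=|z_2|=\pi/2$, and in the latter boundary case the matrix computation with $z_j=(\pi/2)e^{\imath\theta_j}$ yields $E(z_2)^{-1}E(z_1)=D(\theta_1,\theta_2)$, recovering exactly the equivalence relation $\sim$. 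Thus $\Theta$ descends to a continuous bijection $\hat{M}^G_{\mathcal{O}}\to\overline{M}^G_{\mathcal{O}}$, which is a homeomorphism by compactness of the source and Hausdorffness of the target.

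For part~(2), I would use the identification just obtained and run Mayer--Vietoris on $\hat{M}^G_{\mathcal{O}}$. Cover it by $U$, an open tubular neighborhood of the central slice $\overline{M}^G_{\boldsymbol{\nu}}\times\{0\}$, and $V$, an open neighborhood of the boundary quotient; both deformation retract onto $\overline{M}^G_{\boldsymbol{\nu}}$, while $U\cap V$ deformation retracts onto $\overline{M}^G_{\boldsymbol{\nu}}\times S^1$. The inclusion $U\cap V\hookrightarrow U$ is homotopic to the first projection, while the inclusion $U\cap V\hookrightarrow V$, after retracting $V$ via the slice $\theta=0$, becomes the \lq twist\rq\, map $(\overline{m},\theta)\mapsto \overline{\phi}_{D(\theta,0)}(\overline{m})$. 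By Cartan's magic formula, the pullback of any closed $T$-invariant form $\alpha$ under this twist is represented by $\pi_1^*\alpha+\pi_2^*d\theta\wedge \pi_1^*\iota_X\alpha$, where $X$ generates the residual $S^1$-action $\theta\mapsto D(\theta,0)$ on $\overline{M}^G_{\boldsymbol{\nu}}$. Hence the Mayer--Vietoris restriction map $H^n(U)\oplus H^n(V)\to H^n(U\cap V)\cong H^n(\overline{M}^G_{\boldsymbol{\nu}})\oplus H^{n-1}(\overline{M}^G_{\boldsymbol{\nu}})$ reads $(\alpha,\beta)\mapsto (\alpha-\beta,-[\iota_X\beta])$.

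The main obstacle is showing that $[\iota_X]:H^*(\overline{M}^G_{\boldsymbol{\nu}})\to H^{*-1}(\overline{M}^G_{\boldsymbol{\nu}})$ is the zero map. I would handle this by invoking the equivariant formality of Hamiltonian torus actions on compact symplectic orbifolds: the residual $T/T^1_{\boldsymbol{\nu}_\perp}$-action on $\overline{M}^G_{\boldsymbol{\nu}}$ is Hamiltonian (its moment map being the descent of $\lambda_{\boldsymbol{\nu}}$), so every closed $T$-invariant form $\alpha$ admits a lift to an equivariantly closed form $\tilde\alpha=\alpha+u\,\beta_1+u^2\beta_2+\cdots$ in the Cartan model $\bigl(\Omega^*(\overline{M}^G_{\boldsymbol{\nu}})^T\otimes \mathbb{R}[u],\,d-u\,\iota_X\bigr)$; the $u$-linear coefficient of $(d-u\iota_X)\tilde\alpha=0$ reads $\iota_X\alpha=d\beta_1$, forcing $[\iota_X\alpha]=0$. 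With $[\iota_X]=0$, the Mayer--Vietoris long exact sequence breaks into short exact sequences
\[
0\longrightarrow H^{n-2}(\overline{M}^G_{\boldsymbol{\nu}})\longrightarrow H^n(\hat{M}^G_{\mathcal{O}})\longrightarrow H^n(\overline{M}^G_{\boldsymbol{\nu}})\longrightarrow 0,
\]
split (over $\mathbb{Q}$, or using the torsion-freeness guaranteed by equivariant formality) via pullback along the zero section $\overline{m}\mapsto (\overline{m},0)$, which yields the claimed decomposition.
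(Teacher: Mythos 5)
Your proposal is correct and follows essentially the same architecture as the paper's proof, but it is worth recording where it adds something. For part (1), your map $\Theta$ is the paper's $F(\overline{m},z)=\overline{\mu}_{e^{B_z}}(\overline{m})$ up to a rotation of the $z$-plane, with the same exponential formula and the same injectivity computation; one point to make explicit is that equal images only give $\overline{m}_2=\overline{\phi}_g(\overline{m}_1)$ with $g=E(z_2)^{-1}E(z_1)$, and the membership $g\in T$ is then forced by equivariance of $\overline{\Phi}$ together with $\nu_1\neq\nu_2$ (this is how the paper argues), after which your dichotomy ($z_1=z_2$ or $|z_1|=|z_2|=\pi/2$ with $E(z_2)^{-1}E(z_1)=D(\theta_1,\theta_2)$) is exactly the paper's. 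Your compact-to-Hausdorff argument for the homeomorphism is a clean substitute for the paper's properness/quotient-topology discussion. For part (2), your cover (neighborhood of the central slice and of the boundary quotient) coincides, under $F$ and $F'$, with the paper's cover $\{U,U'\}$ by neighborhoods of $\overline{M}^G_{\boldsymbol{\nu}}$ and $\overline{M}^G_{\boldsymbol{\nu}'}$; the genuine difference is that you identify the clutching map $(\overline{m},\theta)\mapsto\overline{\phi}_{D(\theta,0)}(\overline{m})$ on the overlap and observe that the Mayer--Vietoris sequence breaks into the asserted short exact sequences only once the degree $-1$ operation $[\iota_X\cdot]$ on $H^*(\overline{M}^G_{\boldsymbol{\nu}})$ is known to vanish -- a step the paper asserts without comment. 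Your justification via equivariant formality of the residual Hamiltonian circle action (Cartan model, averaging to invariant representatives) is valid in orbifold de Rham/real coefficients, which is the setting implicitly used, so you have in fact supplied the missing justification rather than a different proof. One small slip at the end: the pullback to the central slice is the surjection in the short exact sequence, not a splitting of it; over a field the splitting of a short exact sequence of vector spaces is automatic, which is all that the stated direct-sum isomorphism requires.
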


\begin{proof}
[Proof of Theorem \ref{thm:MGO general case}]
Let us consider the $\mathbb{R}$-linear isomorphism 

\begin{equation}
\label{defn:Beta}
B:z\in\mathbb{C}\mapsto B_z:=\imath\,
\begin{pmatrix}
0&z\\
\overline{z}&0
\end{pmatrix}\in  \mathfrak{a}\subset \mathfrak{g}.
\end{equation}

\begin{lem}
\label{lem:exp B eta}
For any $z\in \mathbb{C}$, we have
$$
e^{B_z}=
\begin{pmatrix}
\cos(|z|) & \imath\,\frac{\sin(|z|)}{|z|}\,z\\
\imath\,\frac{\sin(|z|)}{|z|}\,\overline{•z}&\cos(|z|)
\end{pmatrix}=
\cos(|z|)\,I_2+B_{\sin(|z|)\,z/|z|}.
$$
\end{lem}

The previous expression is well-defined only for $z\neq 0$, but
$\sin(w)/w$ extends to an even analytic function $F(w^2)$ on $\mathbb{C}$; therefore
$\sin(|z|)\,z/|z|=F(|z|^2)\,z$ extends to a real-analytic function of 
$z$. We shall regard $e^{B_z}$ as a real-analytic function 
$\mathbb{C}\rightarrow G$.

\begin{proof}
[Proof of Lemma \ref{lem:exp B eta}]
The statement follows from a computation based on the identities
$$
B_z^{2k}=(-1)^k\,|z|^{2k}\,I_2=(\imath\,|z|)^{2k}\,I_2,\quad B_z^{2k+1}=
(-1)^k\,|z|^{2k}\,B_z=(\imath\,|z|)^{2k}\,B_z.
$$
\end{proof}

Let $D_{\boldsymbol{\nu}}$ be the diagonal matrix with diagonal entries $\begin{pmatrix}
\nu_1&\nu_2
\end{pmatrix}$.
Then by Lemma \ref{lem:exp B eta} we have
\begin{eqnarray}
\label{eqn:e b eta D}
\lefteqn{e^{B_z}\,D_{\boldsymbol{\nu}}\,e^{-B_z}}\\
&=&
\begin{pmatrix}
\nu_1\,\cos(|z|)^2+\nu_2\,\sin(|z|)^2&\imath\,(\nu_2-\nu_1)\,\cos(|z|)\,•\sin(|z|)\,
\frac{z}{|z|•}\\
\imath\,(\nu_1-\nu_2)\,\cos(|z|)\,\sin(|z|)\,\frac{\overline{z}•}{|z|•}\,
&\nu_2\,\cos(|z|)^2+\nu_1\,\sin(|z|)^2
\end{pmatrix}.\nonumber
\end{eqnarray}

The function $\lambda_{\boldsymbol{\nu}}:M^G_{\mathcal{O}}\rightarrow \mathbb{R}$,
being $G$-invariant, descends to a smooth function on $\overline{M}^G_{\mathcal{O}}$,
that will be denoted by the same symbol.

\begin{cor}
\label{cor:moment map eta}
Let $\overline{\Phi}_{T^1_{\boldsymbol{\nu}_\perp}}:\overline{M}^G_{\mathcal{O}}
\rightarrow \imath\,\mathbb{R}$ be the moment map for the
Hamiltonian action of $T^1_{\boldsymbol{\nu}_\perp}$ on the symplectic orbifold
$(\overline{M}^G_{\mathcal{O}},\omega_{•\overline{M}^G_{\mathcal{O}}})$. 
Let us identify $T^1_{\boldsymbol{\nu}_\perp}$ with $S^1$ by the isomorphism
$\kappa_{\boldsymbol{\nu}}$ in (\ref{eqn:T1nu parametrized}).
Then for every
$\overline{m}\in \overline{M}^G_{\boldsymbol{\nu}}$ and $z\in \mathbb{C}$ we have
$$
\overline{\Phi}_{T^1_{\boldsymbol{\nu}_\perp}}\big(\overline{\mu}_{e^{B_z}}(\overline{m}    \big)
=\imath\,\left(\nu_1^2-\nu_2^2\right)\,\lambda_{\boldsymbol{\nu}}(\overline{m})\,
\sin(|z|)^2.
$$
\end{cor}

Let us set $\boldsymbol{\nu}':=\begin{pmatrix}
\nu_2&\nu_1
\end{pmatrix}$, $M^G_{\boldsymbol{\nu}'}:=
\Phi^{-1}(\mathbb{R}_+\cdot \boldsymbol{\nu}')$.
Hence,
$$
\overline{M}^G_{\boldsymbol{\nu}'}:=\overline{\Phi}^{-1}(\mathbb{R}_+\cdot \boldsymbol{\nu}')
=p\left(M^G_{\boldsymbol{\nu}'}\right).$$
Furthermore,
\begin{equation}
\label{eqn:defn di gamma}
M^G_{\boldsymbol{\nu}'}=\mu_\gamma\left(M^G_{\boldsymbol{\nu}}\right),\quad
\overline{M}^G_{\boldsymbol{\nu}'}=\overline{\mu}_\gamma\left(\overline{M}^G_{\boldsymbol{\nu}}\right),
\quad \gamma:=\begin{pmatrix}
0&\imath\\
\imath&0
\end{pmatrix}=e^{B_{\pi/2}}.
\end{equation}

\begin{prop}
\label{prop:surjective map}

The map 
$$
F:(\overline{m},z)\in \overline{M}^G_{\boldsymbol{\nu}}\times \overline{•B(\mathbf{0},\delta)}\mapsto
\overline{\mu}_{e^{B_z}}(\overline{m})\in \overline{M}^G_{\mathcal{O}}.
$$
satisfies the following properties:
\begin{enumerate}
\item $F$ is surjective;
\item $F$ restricts to a diffeomorphism 
$\overline{M}^G_{\boldsymbol{\nu}}\times •B(\mathbf{0},\pi/2)
\rightarrow \overline{M}^G_{\mathcal{O}}\setminus\overline{M}^G_{\boldsymbol{\nu}}$;
\item $F$ induces a homeomorphism between $\hat{M}^G_{\mathcal{O}}\cong \overline{M}^G_{\mathcal{O}}$. 
\end{enumerate}

\end{prop}

\begin{proof}
[Proof of Proposition \ref{prop:surjective map}]
Let us prove that $F$ is surjective. First note that $\overline{M}^G_{\boldsymbol{\nu}}=F\big(\overline{M}^G_{\boldsymbol{\nu}}\times \{0\}\big)$ and that
$\overline{M}^G_{\boldsymbol{\nu}'}=F\big(\overline{M}^G_{\boldsymbol{\nu}}\times \{\pi/2\}\big)$ by (\ref{eqn:defn di gamma}).
Pick $\overline{m}\in 
\overline{M}^G_{\mathcal{O}}\setminus \big(\overline{M}^G_{\boldsymbol{\nu}}\cup\overline{M}^G_{\boldsymbol{\nu}'}\big)$.
Then there exists $g\in G$ such that
$\overline{m}\in \overline{\mu}_g(\overline{M}^G_{\boldsymbol{\nu}})$, and we need to show that $g$ may be chosen of the form $e^{B_z}$, 
for some $z\in B(\mathbf{0},\pi/2)$.
We know that $g$ is neither diagonal nor antidiagonal.
Furthermore, since $\overline{M}^G_{\boldsymbol{\nu}}$ is $T$-invariant, we are free to replace
$g$ by any element in $g\,T$. 
In particular, we may assume $g\in SU(2)$ and then, muliplying by a suitable diagonal matrix in $SU(2)$, that it has the form 
$$
g=\begin{pmatrix}
\cos(x)& -\sin(x)\,e^{-\imath\,\gamma}\\
\sin(x)\,e^{\imath\,\gamma}&\cos(x)
\end{pmatrix}.
$$
Perhaps multiplying by $-I_2$, we may further assume that $\cos(x)>0$, and since $g$ is not diagonal we may assume $x\in (-\pi/2,0)\cup (0,\pi/2)$. If $x\in (0,\pi/2)$, set $z=\imath\, x\,e^{-\imath\,\gamma}$; 
we conclude from Lemma \ref{lem:exp B eta} that 
$g=e^{B_z}$. If $x\in (-\pi/2,0)$, replace it by $x'=-x\in (0,\pi/2)$ to reach the 
same conclusion. 

Let us prove that $F$ is injective on 
$\overline{M}^G_{\boldsymbol{\nu}}\times •B(\mathbf{0},\pi/2)$. Suppose
$(\overline{m}_j,z_j)\in \overline{M}^G_{\boldsymbol{\nu}}\times •B(\mathbf{0},\pi/2)$ and 
$F(\overline{m}_1,z_1)=F(\overline{m}_2,z_2)$.
We may assume that $|z_j|>0$ for $j=1,2$.
We have, by definition of $F$,
$$
\overline{\mu}_{e^{B_{z_1}}}\left(
\overline{m}_1\right)=
\overline{\mu}_{e^{B_{z_2}}}\left(
\overline{m}_2\right)\quad
\Rightarrow\quad \overline{m}_2=
\overline{\mu}_{e^{-B_{z_2}}\,e^{B_{z_1}}}\left(
\overline{m}_1\right).
$$
Since $\nu_1\neq \nu_2$, this forces
$$
e^{B_{-z_2}}\,e^{B_{z_1}}=e^{-B_{z_2}}\,e^{B_{z_1}}\in T.
$$
Computing the $(1,2)$ entry of the latter product by Lemma \ref{lem:exp B eta},
we obtain
$$
\imath\,\left[\cos(|z_2|)\,\sin(|z_1|)
\frac{z_1}{•|z_1|}-
\sin(|z_2|)\,\cos(|z_1|)\,\frac{z_2}{•|z_2|}\right]=0.
$$
Given that $|z_j|\in (0,\pi/2)$, 
this implies $z_1=z_2$; 
it also follows therefore that
$\overline{m}_1=\overline{m}_2$.

Let us prove that $F$ is an orbifold embedding
on $\overline{M}^G_{\boldsymbol{\nu}}\times •B(\mathbf{0},\pi/2)$. 
We can lift (the restriction of)
$F$ to 
a map
$$
\tilde{F}:
(m,z)\in 
M^G_{\boldsymbol{\nu}}\times •B(\mathbf{0},\pi/2)\mapsto \mu_{e^{B_z}}(m)\in  M^G_{\mathcal{O}}\setminus 
\overline{M}^G_{\boldsymbol{\nu}'}.
$$
Let $S^1$ act on 
$M^G_{\boldsymbol{\nu}}\times •B(\mathbf{0},\pi/2)$ by the product of
the action of $T^1_{\boldsymbol{\nu}_\perp}\cong S^1$ on $M^G_{\boldsymbol{\nu}}$
and the trivial action on $B(\mathbf{0},\pi/2)$.
If $\rho$ is as in Definition \ref{defn:moving action}, 
it follows from Lemma \ref{lem:commuting actions} that $\tilde{F}$ is $S^1$-equivariant,
and $F$ is the map induced by $\tilde{F}$
on the quotient spaces.
To prove the claim, it thus suffices to show that
$\tilde{F}$ is a (local) diffeomorphism.
We know that $\tilde{F}$ is a local diffeorphism along $M^G_{\boldsymbol{\nu}}\times\{0\}$. If $m\in M^G_{\boldsymbol{\nu}}$
and $v\in T_mM^G_{\boldsymbol{\nu}}$,
then for any $z\in B(\mathbf{0},\pi/2)$ we have
\begin{equation}
\label{eqn:dF 1st comp}
\mathrm{d}_{(m,z)}F\big((v,0)\big)
=\mathrm{d}_{m}\mu_{e^{B_z}}(v),
\end{equation}
which is tangent to 
$\mu_{e^{B_z}}(M^G_{\boldsymbol{\nu}})$ 
at $\mu_{e^{B_z}}(m)$.
On the other hand, for $\delta\sim 0\in \mathbb{C}$ we have 
$$e^{B_{z+\delta}}
=e^{B_z}\,e^{B_\delta}=
e^{B_z}\,e^{B_\delta
-\frac{1}{2•}\,[B_z,B_\delta]+R_3(\delta)}.$$
Hence
\begin{equation}
\label{eqn:df 2nd component}
\mathrm{d}_{(m,z)}F
\big((0,\delta)\big)=
\mathrm{d}_{m}\mu_{e^{B_z}}
\left((B_\delta)_M(m)-\frac{1}{•2}\,
[B_z,B_\delta]_M(m)\right).
\end{equation}

Since $[B_z,B_\delta]$ is diagonal
and $T_mM^G_{\boldsymbol{\nu}}$ is
$T$-invariant,
$[B_z,B_\delta]_M(m)\in T_mM^G_{\boldsymbol{\nu}}$. On the other hand,
$(B_\delta)_M(m)\neq 0$ for $\delta\neq 0$,
and is normal to $M^G_{\boldsymbol{\nu}}$.
Hence it follows (\ref{eqn:dF 1st comp})
and (\ref{eqn:df 2nd component})
that 
$$
\mathrm{d}_{(m,0)}F:
T_{(m,z)}\left(M^G_{\boldsymbol{\nu}}\times B(\mathbf{0},\pi/2)\right)\cong 
T_mM^G_{\boldsymbol{\nu}}\times \mathbb{C}\rightarrow
T_{\mu_{e^{B_z}}(m)}M^G_{\mathcal{O}}
$$
is an isomorphism of real vector spaces.

Finally, let us show that the topology of
$\overline{M}^G_{\mathcal{O}}$ is indeed the quotient topology of $F$. Clearly $F$ is continuous, hence $F^{-1}(U)$ is open for every $U\subset \overline{M}^G_{\mathcal{O}}$.
Suppose by contradiction that $F^{-1}(U)$ is open for some $U\subset \overline{M}^G_{\mathcal{O}}$ which is not open.
Let $\overline{m}\in U$ be such that there exists a sequence $\overline{m}_j\in \overline{M}^G_{\mathcal{O}}$, $j=1,2,\ldots$, such that 
$\overline{m}_j\rightarrow \overline{m}$ and 
$\overline{m}_j\not\in U$ for every $j$.
The subset $R:=\{\overline{m}\}_j\cup\{\overline{m}\}\subset \overline{M}^G_{\mathcal{O}}$ is compact, and since $F$ is proper so is $F^{-1}(R)$. Consider $(\overline{n}_j,z_j)\in M^G_{\boldsymbol{\nu}}\times  \overline{B(\mathbf{0},\pi/2)}$ such that 
$F(\overline{n}_j,z_j)=\overline{m}_j$ for
every $j$. Perhaps passing to a subsequence,
we may assume $\overline{n}_j\rightarrow\overline{n}\in M^G_{\boldsymbol{\nu}}$ and 
$z_j\rightarrow z\in \overline{B(\mathbf{0},\pi/2)}$, and therefore
by continuity and uniqueness of the limit $F(\overline{n},z)=\overline{m}\in U$.
Hence $(\overline{n},z)\in F^{-1}(U)$, and
since the latter is open by assumption we
need to have $(\overline{n}_j,z_j)\in F^{-1}(U)$ for all $j\gg 0$. But then
$\overline{m}_j=F(\overline{n}_j,z_j)\in U$,
a contradiction.

\end{proof}

These considerations may be repeated inverting the roles of 
$\boldsymbol{\nu}$ and $\boldsymbol{\nu}'$.
We can define a map $F':(\overline{m},\eta)\in M^G_{\boldsymbol{\nu}}\times \overline{•B(\mathbf{0},\pi/2)} 
\mapsto \overline{\mu}_{e^{B_\eta}}(\overline{m})\in  M^G_{\mathcal{O}}$, and prove an analogue of
Proposition \ref{prop:surjective map}. In particular, we obtain two diffeomorphisms
$$
M^G_{\boldsymbol{\nu}}\times B^*(\mathbf{0},\pi/2)
\stackrel{F}{\longrightarrow}  
M^G_{\mathcal{O}}\setminus \left(M^G_{\boldsymbol{\nu}}\cup M^G_{\boldsymbol{\nu}'}\right)
\stackrel{F'}{\longleftarrow}
M^G_{\boldsymbol{\nu}'}\times B^*(\mathbf{0},\pi/2),
$$
where $B^*(\mathbf{0},\pi/2):=B(\mathbf{0},\pi/2)\setminus \{\mathbf{0}\}$.

\begin{lem}
\label{lem:eta modulo}
Suppose $(\overline{m}_1,z_1)\in M^G_{\boldsymbol{\nu}}\times B^*(\mathbf{0},\pi/2)$,
$(\overline{m}_2,z_2)\in M^G_{\boldsymbol{\nu}'}\times B^*(\mathbf{0},\pi/2)$, and
$F(\overline{m}_1,z_1)=F'(\overline{m}_2,z_2)$.
Then $|z_1|+|z_2|=\pi/2$. 

\end{lem}

\begin{proof}
[Proof of Lemma \ref{lem:eta modulo}]
Let $\overline{m}:=F(\overline{m}_1,z_1)$. Then $\overline{m},\,\overline{m}_1,\,\overline{m}_2$
are all in the same $G$-orbit. Therefore, $\lambda_{\boldsymbol{\nu}}(\overline{m}_1)=
\lambda_{\boldsymbol{\nu}}(\overline{m})=\lambda_{\boldsymbol{\nu}}(\overline{m}_2)$. 
By (\ref{eqn:e b eta D}) and Corollary \ref{cor:moment map eta} and their
analogues with $\boldsymbol{\nu}$ and $\boldsymbol{\nu}'$ interchanged, we have

$$
\overline{\Phi}_{T^1_{\boldsymbol{\nu}_\perp}}(\overline{m)}
=\imath\,\left(\nu_1^2-\nu_2^2\right)\,\lambda_{\boldsymbol{\nu}}(\overline{m})\,
\sin(|z_1|)^2=\imath\,\left(\nu_1^2-\nu_2^2\right)\,\lambda_{\boldsymbol{\nu}}(\overline{m})\,
\cos(|z_2|)^2.
$$
Since
$|z_1|,|z_2|\in (0,\pi/2)$, this forces 
$|z_1|+|z_2|=\pi/2$.

\end{proof}

Let us set
\begin{equation}
\label{eqn:def UU'}
U:=F\left( \overline{M}^G_{\boldsymbol{\nu}}\times B(\mathbf{0},3\,\pi/8)  \right),\quad
U':=F'\left( \overline{M}^G_{\boldsymbol{\nu}'}\times B(\mathbf{0},3\,\pi/8)  \right).
\end{equation}
Then $U,\,U'\subset \overline{M}^G_{\mathcal{O}}$ are open 
and diffeomorphic to $\overline{M}^G_{\boldsymbol{\nu}}\times B(\mathbf{0},3\,\pi/8)$ by Proposition 
\ref{prop:surjective map} and its analogue for $F'$.  
Furthermore, by Lemma 
\ref{lem:eta modulo}, 
\begin{eqnarray}
\label{eqn:U'c}
{U'}^c&:=&F'\left(\left\{(\overline{m},z)
\in \overline{M}^G_{\boldsymbol{\nu}'}\times \overline{•B(\mathbf{0},\pi/2)} :
|z|\ge \frac{3}{8•}\,\pi\right\}\right)
\nonumber\\
&=&F\left(\left\{(\overline{m},z)\in \overline{M}^G_{\boldsymbol{\nu}}\times \overline{•B(\mathbf{0},\pi/2)} :
|z|\le \frac{1}{8•}\,\pi\right\}\right)\subset U.
\end{eqnarray}
Hence $\{U,U'\}$ is an open cover of $M^G_{\mathcal{O}}$.
By (\ref{eqn:def UU'})
and (\ref{eqn:U'c}) we have
\begin{equation}
\label{eqn:UcapU'}
U\cap U'=F\left(\overline{M}^G_{\boldsymbol{\nu}}\times 
A\left(\mathbf{0},\frac{1}{8}\,\pi,\,\frac{3}{•8}\,\pi\right)\right),
\end{equation}
where for $a<b<0$ we set 
$A(\mathbf{0},a,b)=\{z\in \mathbb{C}\,:\,a<|z|<b\}$.
Also, $F$ induces a diffeomorphism
$\overline{M}^G_{\boldsymbol{\nu}}\times 
A\left(\mathbf{0},\pi/8,\,3\,\pi/8\right)$.

Therefore, the Mayer-Vietoris sequence for the open cover $\{U,U'\}$ of
$\overline{M}^G_{\mathcal{O}}$ has the form
\begin{eqnarray}
\label{eqn:mayer vietoris}
\lefteqn{...\rightarrow H^q\left(\overline{M}^G_{\mathcal{O}}\right)\rightarrow
H^q\big(\overline{M}^G_{\boldsymbol{\nu}}\big)\oplus 
H^q\big(\overline{M}^G_{\boldsymbol{\nu}}\big)}\\
&&\rightarrow 
H^q\big(\overline{M}^G_{\boldsymbol{\nu}}\big)\oplus 
H^{q-1}\big(\overline{M}^G_{\boldsymbol{\nu}}\big)\rightarrow
H^{q+1}\big(\overline{M}^G_{\mathcal{O}}\big)\rightarrow \cdots.
\nonumber
\end{eqnarray}
which splits in short exact sequences
$$
0\rightarrow H^{q-1}\big(\overline{M}^G_{\boldsymbol{\nu}}\big)\rightarrow
H^{q+1}\big(\overline{M}^G_{\mathcal{O}}\big)\rightarrow 
H^{q+1}\big(\overline{M}^G_{\boldsymbol{\nu}}\big)\rightarrow 0.
$$

\end{proof}

\end{document}